\begin{document}

\newtheorem{tm}{Theorem}[section]
\newtheorem{pp}[tm]{Proposition}
\newtheorem{lm}[tm]{Lemma}
\newtheorem{df}[tm]{Definition}
\newtheorem{tl}[tm]{Corollary}
\newtheorem{re}[tm]{Remark}
\newtheorem{eap}[tm]{Example}

\newcommand{\pof}{\noindent {\bf Proof} }
\newcommand{\ep}{$\quad \Box$}

\newcommand{\al}{\alpha}
\newcommand{\be}{\beta}
\newcommand{\var}{\varepsilon}
\newcommand{\la}{\lambda}
\newcommand{\de}{\delta}
\newcommand{\str}{\stackrel}
\newcommand{\rmn}{\romannumeral}

\renewcommand{\proofname}{\bf Proof}

\allowdisplaybreaks

\begin{frontmatter}

\title{Properties of several metric spaces of fuzzy sets
\tnoteref{usc}
 }
\tnotetext[usc]{Project supported by
 Natural Science Foundation of Fujian Province of China(No. 2020J01706)}
\author{Huan Huang}
 \ead{hhuangjy@126.com }
\address{Department of Mathematics, Jimei
University, Xiamen 361021, China}

\date{}

\begin{abstract}
This paper discusses the properties the spaces of fuzzy sets in a metric space
equipped with
 the endograph metric and the sendograph metric, respectively.
We first give some relations among
  the
endograph metric, the sendograph metric and the $\Gamma$-convergence, and then investigate the
level characterizations of the endograph metric and the
 $\Gamma$-convergence.
By using the above results, we give some relations among the endograph metric, the sendograph metric, the supremum metric and the $d_p^*$ metric, $p\geq 1$.
On the basis of the above results,
we
present the characterizations of total boundedness, relative compactness and compactness
in the space
of fuzzy sets whose $\al$-cuts are
compact when $\al>0$ equipped with the endograph metric,
and
in
the space
of
 compact support fuzzy sets equipped with the sendograph metric, respectively.
 Furthermore, we
give
completions of
these metric spaces, respectively.

\end{abstract}

\begin{keyword}
  Endograph metric; Sendograph metric; Hausdorff metric; Total boundedness; Relative compactness; Compactness; Completion
\end{keyword}

\end{frontmatter}

\date{}

\section{Introduction}

Fuzzy set is a fundamental tool to investigate fuzzy phenomenon
\cite{da, du, wu, garcia, wa, wang2, gong, grzegorzewski2}.
A fuzzy set can be identified with its endograph. Also, a fuzzy set can be identified with its sendograph.
So convergence structures
on fuzzy sets can be defined on their endographs or sendographs.
The the endograph metric $H_{\rm end}$ convergence, the sendograh metric $H_{\rm send}$ convergence and the
$\Gamma$-convergence are this kind of convergence structures.
These three convergence structures
are related to each other
\cite{rojas, huang}.

The endograph metric on fuzzy sets
is shown to
have significant advantages
 \cite{kloeden, kupka}.
The sendograph metric has attracted deserving attentions \cite{fan,greco}.
Compactness is one of the central concepts in topology and analysis and is useful in applications (see \cite{kelley, wa}).
There is a lot of work devoted to characterizations
of
compactness in various fuzzy set spaces endowed with different topologies \cite{fan,greco,greco3,huang,huang9,roman,trutschnig,wu2}.
 It is natural
to consider what the
 completion of a metric space is. The recent results on completions of fuzzy set spaces include \cite{huang, huang9}.

In \cite{huang},
we presented the relations and level characterizations of the endograph metric $H_{\rm end}$ and the $\Gamma$-convergence.
Based on this, we
have given the characterizations of total boundedness, relative compactness and compactness
of
fuzzy set spaces equipped with the endograph metric $H_{\rm end}$.
We also gave the completions of fuzzy set spaces
according to the endograph metric $H_{\rm end}$.

The common fuzzy sets used
in theoretical research and practical applications
are fuzzy sets in a metric space whose
$\al$-cuts are nonempty compact sets when $\al>0$.
Common compact fuzzy sets are common fuzzy sets whose support sets are compact.
Throughout this paper, we suppose that $X$ \emph{is a nonempty set and} $d$ \emph{is the metric on} $X$.
For simplicity,
we also use $X$
to denote the \emph{metric space} $(X, d)$.
The symbols $F^1_{USCG}(X)$ and $F^1_{USCB} (X)$ are used to denote
the family of common fuzzy sets in $X$ and
the family of common compact fuzzy sets in $X$, respectively.
We
use $F^1_{USC}(X)$ to denote the family of normal and upper semi-continuous fuzzy sets in $X$.
$F^1_{USCB} (X)$ is a subset of $F^1_{USCG}(X)$.
$F^1_{USCG}(X)$ is a
subset of $F^1_{USC}(X)$.

The results in \cite{huang} are obtained
 on the realm of fuzzy sets in the $m$-dimensional Euclidean space $\mathbb{R}^m$ ($\mathbb{R}^1$ is also written as $\mathbb{R}$).
$\mathbb{R}^m$ is a special type
of metric space.
Of course, it is worth to study the
fuzzy sets in a metric space
\cite{jarn, greco, greco3}.
In this paper, the results are obtained on the realm of fuzzy sets in a general metric space $X$.
We mainly discuss
 $H_{\rm end}$ metric and $H_{\rm send}$ metric on $F^1_{USC}(X)$
including
the relations among $H_{\rm end}$ metric, $H_{\rm send}$ metric and other convergence structures,
and properties of $H_{\rm end}$ metric and $H_{\rm send}$ metric.

The first part of this paper is devoted to the relations among the $H_{\rm end}$ metric, the $H_{\rm send}$ metric and the
$\Gamma$-convergence, the level characterizations of the endograph metric $H_{\rm end}$ and the $\Gamma$-convergence, and
the relations among the supremum metric $d_\infty$, the $H_{\rm end}$ metric, the $H_{\rm send}$ metric and
the $d_p^*$ metric.
The $d_p^*$ metric is an expansion of the $L_p$-type $d_p$ metric on $F^1_{USC}(X)$.

To aid discussion, we introduce the sets $P^1_{USC}(X)$ and $P^1_{USCB}(X)$.
An element of $P^1_{USC} (X)$ (respectively, $P^1_{USCB} (X)$) is a subset of
$X\times [0,1]$, which
is equal to or in a specific way slightly larger than
  the sendograph of a certain fuzzy set in $F^1_{USC} (X)$ (respectively, $F^1_{USCB} (X)$).
$F^1_{USC}(X)$ and $F^1_{USCB}(X)$ can be viewed as the subsets of $P^1_{USC}(X)$ and $P^1_{USCB}(X)$, respectively.
$P^1_{USCB}(X)$ is a subset of $P^1_{USC}(X)$.

We
define the
$H_{\rm send}$ distance and the $H_{\rm end}$ distance on $P^1_{USC}(X)$,
and give the relations among
the $H_{\rm send}$ distance, the $H_{\rm end}$ distance and the Kuratowski convergence on $P^1_{USC}(X)$.
Then, as corollaries, we obtain the
 relations among
the $H_{\rm send}$ metric, the $H_{\rm end}$ metric and the $\Gamma$-convergence on $F^1_{USC}(X)$.

We discuss the level characterizations
of the $\Gamma$-convergence and the
endograph metric $H_{\rm end}$ on
fuzzy sets in $F^1_{USC}(X)$.
It is shown that
under some conditions,
the $\Gamma$-convergence of fuzzy sets can be decomposed to
the Kuratowski convergence of certain $\al$-cuts,
and
the $H_{\rm end}$ metric convergence of fuzzy sets
can be decomposed to
the Hausdorff metric convergence of certain $\al$-cuts.

The understanding of the relations among the $H_{\rm end}$ metric, the $H_{\rm send}$ metric and the $\Gamma$-convergence
 is beneficial for the understanding of themselves.
The
 level characterizations
 help to study these three convergence structures on fuzzy sets by using
the
properties of the corresponding $\al$-cuts.

A
 $H_{\rm send}$ metric convergent sequence
must be
a
 $H_{\rm end}$ metric convergent sequence.
A
 $H_{\rm end}$ metric convergent sequence must be a $\Gamma$-convergent sequence.
So the
knowledge of the $\Gamma$-convergent sequences can help us to analyse the properties of the $H_{\rm end}$ convergent sequences and the $H_{\rm send}$ convergent sequences.
For this reason,
we
 give the level characterizations of a $\Gamma$-convergent sequence in this paper.

Based on the results in the first part, we give the other results of this paper.
 The second part of this paper
is devoted to the characterizations of total boundedness, relative compactness and compactness
in $(F^1_{USCG} (X), H_{\rm end})$ and $(F^1_{USCB} (X), H_{\rm end})$, respectively.
Here we mention
that
the characterization of
relative compactness
in
$(F^1_{USCB} (X), H_{\rm send})$ has already been given by Greco \cite{greco}.

The total boundedness is the key property of compactness in metric space.
We show that a set $U$ in $(F^1_{USCG} (X), H_{\rm end})$ is totally bounded (respectively, relatively compact)
if and only if
for each $\al\in (0,1]$,
the union of all the $\al$-cuts of $u\in U$ is totally bounded (respectively, relatively compact) in $(X,d)$.
We also show that
 a set $U$ in $(F^1_{USCB} (X), H_{\rm send})$ is totally bounded
if and only if
the union of all the $0$-cuts of $u\in U$ is totally bounded in $(X, d)$ (see Section 7).
These results indicate that
for a set $U$ in $(F^1_{USCG} (X), H_{\rm end})$ or $(F^1_{USCB} (X), H_{\rm send})$,
the total boundedness, relative compactness and compactness
of $U$
are closely related to
 the total boundedness, relative compactness and compactness    of the union of all the $\al$-cuts of $u\in U$ in $(X,d)$, respectively.

We point out that some part of the proof of the characterizations in this paper is similar to the corresponding part in \cite{huang}.
But in general, since a set in $X$ need not have the properties of the set in $\mathbb{R}^m$,
 the proof of the conclusions in this paper requires deep understandings of the problem.

The third part is devoted to the completions of several common fuzzy set spaces under the $H_{\rm end}$ metric and the $H_{\rm send}$ metric, respectively.

Let
 $\widetilde{X}$ denote the completion of $X$.
We show that the
space $(P^1_{USCB} (\widetilde{X}), H_{\rm send})$ is a completion of the fuzzy set space $(F^1_{USCB} (X), H_{\rm send})$.
Then
we show that $(F^1_{USCG} (\widetilde{X}), H_{\rm end})$
is a completion of $(F^1_{USCB} (X), H_{\rm end})$.
So, of course,
 $(F^1_{USCG} (\widetilde{X}), H_{\rm end})$ is also a completion of $(F^1_{USCG} (X), H_{\rm end})$.

These conclusions indicate that in the case of the $H_{\rm end}$ metric, a completion of
 the space of common compact fuzzy set in $X$ is the space of common fuzzy set in $\widetilde{X}$;
in the case of the $H_{\rm send}$ metric, a completion of
 the space of common compact fuzzy set in $X$
is a metric space in which the space of common compact fuzzy set in $\widetilde{X}$
can be isometrically embedded,
and
each element of which
is a nonempty compact set in $\widetilde{X}\times[0,1]$.

The conclusions for the completions of the spaces of fuzzy set in $X$
given in this paper apply to not only the cases that $X$ is a complete metric space
but also the cases that $X$ is an incomplete metric space.

The remainder of this paper is organized as follows.
In Section \ref{bas}, we recall and give some basic notions and fundamental results related to fuzzy sets
and
convergence structures on them.
 In Section \ref{fsm}, we discuss
the properties and relations of $H_{\rm send}$, $H_{\rm end}$ and Kuratowski convergence on
 $P^1_{USC} (X)$.
Based on this, we give some
 relations among
$H_{\rm end}$, $H_{\rm send}$
and $\Gamma$-convergence on $F^1_{USC} (X)$.
In Sections \ref{lcg} and \ref{lce}, we investigate the level characterizations
of
the $\Gamma$-convergence and the $H_{\rm end}$ convergence, respectively.
By using the above results, Section \ref{rem} discusses some relations
among the $d_\infty$ metric, the $d_p^*$ metric, the $H_{\rm end}$ metric
and the $H_{\rm send}$ metric.
In Section \ref{cmfuzzy},
on the basis of the conclusions in previous sections, we give characterizations of total boundedness, relative compactness and compactness
in $(F^1_{USCG} (X), H_{\rm end})$ and $(F^1_{USCB} (X), H_{\rm send})$, respectively.
In
Section 8, we give
completions of
$(F^1_{USCG} (X), H_{\rm end})$ and $(F^1_{USCB} (X), H_{\rm send})$, respectively.
At last, we draw the conclusions in Section 9.

\section{Fuzzy sets and convergence structures on them} \label{bas}

In this section, we recall and give some basic notions and fundamental results related to fuzzy sets
and
convergence structures on them.
Readers
can refer to \cite{wu, du,kle} for related contents.

A \emph{fuzzy set} $u$ in $X$ can be seen as a function $u:X \to [0,1]$.
A
subset $S$ of $X$ can be seen as a fuzzy set in $X$. If there is no confusion,
 the fuzzy set corresponding to $S$ is often denoted by $\chi_{S}$; that is,
\[ \chi_{S} (x) = \left\{
                    \begin{array}{ll}
                      1, & x\in S, \\
                      0, & x\in X \setminus S.
                    \end{array}
                  \right.
\]
For simplicity,
for
$x\in X$, we will use $\widehat{x}$ to denote the fuzzy set  $\chi_{\{x\}}$ in $X$.
In this paper, if we want to emphasize a specific metric space $X$, we will write the fuzzy set corresponding to $S$ in $X$ as
$S_{F(X)}$, and the fuzzy set corresponding to $\{x\}$ in $X$ as $\widehat{x}_{F(X)}$.

The symbol $F(X)$ is used
to
denote the set of
all fuzzy sets in $X$.
For
$u\in F(X)$ and $\al\in [0,1]$, let $\{u>\al \} $ denote the set $\{x\in X: u(x)>\al \}$, and let $[u]_{\al}$ denote the \emph{$\al$-cut} of
$u$, i.e.
\[
[u]_{\al}=\begin{cases}
\{x\in X : u(x)\geq \al \}, & \ \al\in(0,1],
\\
{\rm supp}\, u=\overline{    \{ u > 0 \}    }, & \ \al=0,
\end{cases}
\]
where $\overline{S}$
denotes
the topological closure of $S$ in $(X,d)$.

For
$u\in F(X)$,
define
\begin{gather*}
{\rm end}\, u:= \{ (x, t)\in  X \times [0,1]: u(x) \geq t\},
\\
{\rm send}\, u:= \{ (x, t)\in  X \times [0,1]: u(x) \geq t\} \cap  ([u]_0 \times [0,1]).
\end{gather*}
 $
{\rm end}\, u$ and ${\rm send}\, u$
 are called the \emph{endograph} and the \emph{sendograph} of $u$, respectively.

The symbol $K(X)$ and
 $C(X)$ are used
to
 denote the set of all nonempty compact subsets of $X$ and the set of all nonempty closed subsets of $X$, respectively.
The symbol $\mathbb{N}$ is used to denote the set of all positive integers.

Let
$F^1_{USC}(X)$
denote
the set of all normal and upper semi-continuous fuzzy sets $u:X \to [0,1]$,
i.e.,
$$F^1_{USC}(X) :=\{ u\in F(X) : [u]_\al \in  C(X)  \  \mbox{for all} \   \al \in [0,1]   \}.  $$

We introduce some subclasses of $F^1_{USC}(X)$, which will be discussed in this paper.
Define
\begin{gather*}
F^1_{USCB}(X):=\{ u\in  F^1_{USC}(X): [u]_0 \in K(X) \},
\\
F^1_{USCG}(X):=\{ u\in  F^1_{USC}(X): [u]_\al \in K(X) \ \mbox{for all} \   \al\in (0,1] \}.
 \end{gather*}
Clearly,
 $$F^1_{USCB}(X) \subseteq  F^1_{USCG}(X)   \subseteq  F^1_{USC}(X).$$

The following representation theorems for
$F^1_{USCB}(X)$, $F^1_{USCG}(X)$, $F^1_{USC}(X)$ are similar to Theorem 3.3 in \cite{huang}.

\begin{pp}\label{repm}
Let $u\in F^1_{USCB} (X)$ (respectively, $u\in F^1_{USCG} (X)$, $u\in F^1_{USC} (X)$). Then
(\romannumeral1) $[u]_\al\in K(X)$ for all $\al\in [0,1]$ (respectively, $[u]_\al\in K(X)$ for all $\al\in (0,1]$, $[u]_\al\in C(X)$ for all $\al\in [0,1]$);
(\romannumeral2) $[u]_\al= \bigcap_{\beta<\al} [u]_\beta$ for all $\al\in (0,1]$;
 (\rmn3) $[u]_0 =  \overline{\bigcup_{\al>0} [u]_\al}$.

Moreover, if the family of sets $\{v_\al: \al\in [0,1]\}$ satisfies (\romannumeral1) through (\romannumeral3), then
there exists a unique $u\in F^1_{USCB} (X)$ (respectively, $u\in F^1_{USCG} (X)$, $u\in F^1_{USC} (X)$) such that $[u]_\al= v_\al$ for each
$\al\in [0,1]$.
\end{pp}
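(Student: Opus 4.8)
The plan is to prove the two assertions separately, following the standard pattern for representation theorems of fuzzy sets (cf.\ Theorem 3.3 in \cite{huang}).

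\emph{Necessity.} First I would suppose $u$ lies in one of the three classes and check (i)--(iii). Property (i) is essentially immediate: for $F^1_{USC}(X)$ it is the definition; for $F^1_{USCG}(X)$ the cuts $[u]_\al$ with $\al>0$ are compact by definition and $[u]_0\in C(X)$ because $u\in F^1_{USC}(X)$; for $F^1_{USCB}(X)$ we have $[u]_0\in K(X)$ by definition, and for $\al>0$ the set $[u]_\al$ is a closed subset of the compact set $[u]_0$, hence compact. Property (iii) is just the definition of the $0$-cut together with the identity $\{u>0\}=\bigcup_{\al>0}[u]_\al$. For (ii), the inclusion $[u]_\al\subseteq\bigcap_{\be<\al}[u]_\be$ is the monotonicity of $\al$-cuts; conversely, if $x\in[u]_\be$ for every $\be<\al$, then $u(x)\geq\be$ for all such $\be$, so $u(x)\geq\al$, i.e.\ $x\in[u]_\al$.

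\emph{Sufficiency and uniqueness.} Now let $\{v_\al:\al\in[0,1]\}$ satisfy (i)--(iii). I would define $u:X\to[0,1]$ by $u(x):=\sup\{\al\in(0,1]:x\in v_\al\}$, with the convention $\sup\emptyset=0$. By (ii) the family $\{v_\al\}$ is nonincreasing on $(0,1]$, so $v_\al\subseteq v_\be$ whenever $0<\be<\al$. The claim to verify is that $[u]_\al=v_\al$ for every $\al\in[0,1]$. For $\al\in(0,1]$: if $x\in v_\al$ then $u(x)\geq\al$, giving $v_\al\subseteq[u]_\al$; if $u(x)\geq\al$ then, by nestedness, $x\in v_\be$ for every $\be\in(0,\al)$, whence $x\in\bigcap_{\be<\al}v_\be=v_\al$ by (ii), giving $[u]_\al\subseteq v_\al$. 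For $\al=0$: since $\{u>0\}=\bigcup_{\al>0}v_\al$, we get $[u]_0=\overline{\{u>0\}}=\overline{\bigcup_{\al>0}v_\al}=v_0$ by (iii). Hence the cuts of $u$ are exactly the $v_\al$, which by (i) have the prescribed closedness/compactness, and $[u]_1=v_1\neq\emptyset$ yields normality; so $u$ belongs to the appropriate class. For uniqueness, if $u'$ lies in the class with $[u']_\al=v_\al$ for all $\al$, then for each $x$ one has $u'(x)=\sup\{\al\in(0,1]:u'(x)\geq\al\}=\sup\{\al\in(0,1]:x\in v_\al\}=u(x)$, so $u'=u$.

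I expect this to be essentially routine; the only points needing a little care are the boundary case $\al=0$, where (iii) rather than (ii) is the relevant hypothesis, and the verification that the closedness/compactness in (i) is inherited by the reconstructed $u$. The latter uses only that a closed subset of a compact set is compact and that the closure operation behaves as in any metric space, so no new difficulty arises from working in a general $X$ in place of $\mathbb{R}^m$.
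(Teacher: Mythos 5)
Your proof is correct and follows exactly the standard route the paper alludes to (it gives no proof of its own, only the remark that the result is "similar to Theorem 3.3 in \cite{huang}", whose argument is the same cut-reconstruction $u(x)=\sup\{\al\in(0,1]:x\in v_\al\}$ you use). The only point worth making explicit is that in recovering $[u]_\al\subseteq v_\al$ you intersect over $\be\in(0,\al)$ while (\romannumeral2) intersects over $\be\in[0,\al)$; these agree because nestedness gives $\bigcap_{0<\be<\al}v_\be\subseteq v_{\al/2}\subseteq v_0$, so no gap results.
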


Using Proposition \ref{repm}, we can define a certain type fuzzy set
by giving the family of its $\al$-cuts.
For brevity,
in the sequel,
we often directly point out that what we defined is a certain type fuzzy set
 without mentioning the use of Proposition \ref{repm} since it is easy to see.

Let
$(X,d)$ be a metric space.
 We
use $\bm{H}$ to denote the \emph{\textbf{Hausdorff distance}}
on
 $C(X)$ induced by $d$, i.e.,
$$
\bm{H(U,V)}  =   \max\{H^{*}(U,V),\ H^{*}(V,U)\}
$$
for arbitrary $U,V\in C(X)$,
where
  $$
H^{*}(U,V)=\sup\limits_{u\in U}\,d\, (u,V) =\sup\limits_{u\in U}\inf\limits_{v\in
V}d\, (u,v).
$$

The metric $\overline{d}$ on $X \times [0,1]$ is defined
as follows: for $(x,\al), (y, \beta) \in X \times [0,1]$,
$$  \overline{d } ((x,\al), (y, \beta)) = d(x,y) + |\al-\beta| .$$

Throughout this paper, we suppose that \emph{the metric on} $X\times[0,1]$ \emph{is} $\overline{d}$.
For simplicity,
we also use $X \times [0,1]$
to denote the metric space $(X \times [0,1], \overline{d})$.

If
there is no confusion, we also use $H$ to denote the Hausdorff distance on $C(X\times [0,1])$ induced by $\overline{d}$.

\begin{re}
{\rm

$\rho$ is said to be a \emph{metric} on $Y$ if $\rho$ is a function from $Y\times Y$ into $\mathbb{R}$
satisfying
positivity, symmetry and triangle inequality. At this time, $(Y, \rho)$ is said to be a metric space.

  $\rho$ is said to be an \emph{extended metric} on $Y$ if $\rho$ is a function from $Y\times Y$ into $\mathbb{R} \cup \{+\infty\} $
satisfying
positivity, symmetry and triangle inequality. At this time, $(Y, \rho)$ is said to be an extended metric space.

We can see that for arbitrary metric space $(X,d)$, the Hausdorff distance $H$ on $K(X)$ induced by $d$ is a metric.
So
the Hausdorff distance $H$ on $K(X\times [0,1])$ induced by $\overline{d}$ on $X\times [0,1]$
is a metric.

The Hausdorff distance $H$ on $C(X)$ induced by $d$ on $X$
is an extended metric, but may not be a metric,
because
$H(A,B)$ could be equal to $+\infty$ for certain metric space $X$ and $A, B \in C(X)$.
Clearly, if $H$ on $C(X)$ induced by $d$
is not a metric, then $H$ on $C(X\times [0,1])$ induced by $\overline{d}$
is also not a metric.
So
the Hausdorff distance $H$ on $C(X\times [0,1])$ induced by $\overline{d}$ on $X\times [0,1]$
 is an extended metric but may not be a metric.
We can see that $H$ on $C(\mathbb{R}^m)$ is an extended metric but not a metric,
and then the same is $H$ on $C(\mathbb{R}^m\times [0,1])$.

We call the Hausdorff distance $H$ the Hausdorff metric (respectively, the Hausdorff extended metric) if
 $H$ is a metric (respectively, an extended metric).
In this paper, for simplicity,
 we refer to both the Hausdorff extended metric and the Hausdorff metric as the Hausdorff metric.

}
\end{re}

The Hausdorff metric has the following important properties.

\begin{tm} \cite{roman,kle}\label{bfc} Let $(X,d)$ be a metric space and let $H$ be the Hausdorff metric induced by $d$.
Then
\\
 ({\romannumeral1})\ $ (X,d)$ is complete $\Longleftrightarrow$  $(K(X), H)$ is complete;
 \\
  ({\romannumeral2})\ $ (X,d)$ is separable $\Longleftrightarrow$  $(K(X), H)$ is separable;
  \\
  ({\romannumeral3})\ $ (X,d)$ is compact $\Longleftrightarrow$  $(K(X), H)$ is compact.
\end{tm}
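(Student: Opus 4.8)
The plan is to derive each of the three equivalences from a single structural observation together with two or three elementary constructions, handling the ``reverse'' implications uniformly and the ``forward'' implications by hand. First I would record that the map $\iota : X \to K(X)$, $\iota(x) = \{x\}$, is an isometric embedding whose image $X_0 := \{\{x\} : x\in X\}$ is \emph{closed} in $(K(X),H)$: if $H(\{x_n\}, A)\to 0$ then $\mathrm{diam}\, A = 0$, so $A$ is a singleton. Consequently $(X,d)$ is isometric to the closed subspace $X_0$ of $(K(X),H)$. Since a closed subspace of a complete (respectively, compact) metric space is complete (respectively, compact), and since any subspace of a separable metric space is separable, this immediately yields the reverse implications $(K(X),H)\text{ complete}\Rightarrow X\text{ complete}$, $(K(X),H)\text{ compact}\Rightarrow X\text{ compact}$, and $(K(X),H)\text{ separable}\Rightarrow X\text{ separable}$.

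For the forward implication in (\romannumeral1), assume $(X,d)$ is complete and let $(A_n)$ be Cauchy in $(K(X),H)$. I would take the candidate limit
\[
A \;:=\; \bigcap_{n\in\mathbb{N}} \overline{\bigcup_{k\ge n} A_k},
\]
equivalently the set of all $x = \lim_j x_{n_j}$ with $x_{n_j}\in A_{n_j}$ and $n_1 < n_2 < \cdots$, and then check three points. (a) $A\neq\emptyset$: choosing $n_1 < n_2 < \cdots$ with $H(A_m,A_n) < 2^{-k}$ for $m,n\ge n_k$, one builds $x_{n_k}\in A_{n_k}$ with $d(x_{n_k},x_{n_{k+1}})$ summable, so by completeness the sequence converges to a point of $A$. (b) $A\in K(X)$: $A$ is closed by construction, and it is totally bounded because, for $N$ large, $H(A_n,A_N)<\varepsilon$ for all $n\ge N$ gives $\bigcup_{n\ge N}A_n\subseteq N_\varepsilon(A_N)$, hence $A\subseteq N_{2\varepsilon}(A_N)$ with $A_N$ compact, so $A$ admits a finite $3\varepsilon$-net. (c) $H(A_n,A)\to 0$: the estimates $H^*(A_n,A)\to 0$ and $H^*(A,A_n)\to 0$ both follow from the Cauchy tails by the standard two-sided argument. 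This proves $(K(X),H)$ is complete.

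For the forward implication in (\romannumeral2), if $D\subseteq X$ is countable and dense I would show the countable family $\mathcal{F}$ of all nonempty finite subsets of $D$ is dense in $(K(X),H)$: given $A\in K(X)$ and $\varepsilon>0$, cover $A$ by finitely many balls $B(a_i,\varepsilon)$ with $a_i\in A$, pick $d_i\in D$ with $d(a_i,d_i)<\varepsilon$, and verify $H(A,\{d_1,\dots,d_m\})<2\varepsilon$. For the forward implication in (\romannumeral3), I would invoke the characterisation ``$X$ compact $\iff$ $X$ complete and totally bounded'': by part (\romannumeral1) $(K(X),H)$ is then complete, and it is totally bounded since a finite $\varepsilon$-net $F$ of $X$ yields the finite $\varepsilon$-net $\{S : \emptyset\neq S\subseteq F\}$ of $K(X)$ (each $A\in K(X)$ lies within $H$-distance $\varepsilon$ of $\{f\in F : d(f,A)\le\varepsilon\}$); hence $(K(X),H)$ is compact.

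The reverse implications and the finite-subset/finite-net approximations in (\romannumeral2) and (\romannumeral3) are routine once the isometric embedding is in place; the genuine work is concentrated in steps (a)--(c) of (\romannumeral1), and among these the subtlest is the total boundedness of the candidate limit $A$ in step (b), which is exactly where the compactness of the individual sets $A_n$ is used in an essential way.
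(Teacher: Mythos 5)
Your proposal is correct. The paper states this theorem as a known classical result, citing \cite{roman,kle}, and gives no proof of its own, so there is nothing to compare against in the text; your argument is the standard one from the hyperspace literature (isometric closed embedding $x\mapsto\{x\}$ for the reverse directions; $\limsup_n A_n$ as the candidate limit of a Cauchy sequence, with nonemptiness from a fast subsequence, total boundedness from the Cauchy tails, and the two-sided Hausdorff estimate; finite subsets of a countable dense set for separability; finite $\varepsilon$-nets of subsets of a finite net for total boundedness of $K(X)$). All steps check out, including the subtle point you flag that compactness of the individual $A_N$ is what converts the inclusion $A\subseteq N_{2\varepsilon}(A_N)$ into a finite net for $A$.
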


Rojas-Medar and Rom\'{a}n-Flores \cite{rojas} introduced
the $\Gamma$-convergence of a sequence
of fuzzy sets in a metric space based on the Kuratowski convergence
of a sequence of sets in a certain metric space.

Let $(X,d)$ be a metric space.
Let $C$ be a set in $X$ and
$\{C_n\}$ a sequence of sets in $X$.
 $\{C_n\}$ is said to \emph{\textbf{Kuratowski converge}} to
$C$, if
$$
C
=
\liminf_{n\rightarrow \infty} C_{n}
=
\limsup_{n\rightarrow \infty} C_{n},
$$
where
\begin{gather*}
\liminf_{n\rightarrow \infty} C_{n}
 =
 \{x\in X: \  x=\lim\limits_{n\rightarrow \infty}x_{n},    x_{n}\in C_{n}\},
\\
\limsup_{n\rightarrow \infty} C_{n}
=
\{
 x\in X : \
 x=\lim\limits_{j\rightarrow \infty}x_{n_{j}},x_{n_{j}}\in C_{n_j}
\}
 =
 \bigcap\limits_{n=1}^{\infty}   \overline{   \bigcup\limits_{m\geq n}C_{m}    }.
\end{gather*}
In this case, we will write
\bm{  $C=\lim^{(K)}_{n\to\infty}C_n    $ }.

\begin{re} \label{ksc}
{\rm
Definition 3.1.4 in \cite{kle} gives the definitions of
$\liminf C_{n}$, $\limsup C_{n}$
 and
$\lim C_{n}$
for
 a net of subsets $\{C_n, n\in D\}$ in a topological space.
When $\{C_n, n=1,2,\ldots\}$ is
 a sequence of subsets of a metric space,
$\liminf C_{n}$, $\limsup C_{n}$
 and
$\lim C_{n}$
according to Definition 3.1.4 in \cite{kle}
are
$\liminf_{n\rightarrow \infty} C_{n}$, $\limsup_{n\rightarrow \infty} C_{n}$
and $\lim^{(K)}_{n\to\infty}C_n  $
according to
 the above definitions, respectively.
So Corollary 3.2.13 in \cite{kle} implies
Theorem \ref{infc}.

  }
\end{re}

  Let $u$, $u_n$, $n=1,2,\ldots$, be fuzzy sets in $F(X)$.
   $\{u_n\}$ is said to \bm{$\Gamma$}-\emph{\textbf{converge}}
  to
  $u$, denoted by \bm{$u = \lim_{n\to \infty}^{(\Gamma)}  u_n$},
  if
  ${\rm end}\, u= \lim_{n\to \infty}^{(K)}  {\rm end}\, u_n$.
Here $\lim_{n\to \infty}^{(K)} {\rm end}\, u_n$ is considered
with respect to
$(X \times [0,1], \overline{d})$.

Let $(X,d)$ be a metric space and let $u \in  F(X)$. Then from basic analysis, the following three properties
are
equivalent:
(\romannumeral1) $u$ is upper semi-continuous;
(\romannumeral2) ${\rm end}\, u$ is closed in $(X\times [0,1],   \overline{d})$;
(\romannumeral3) ${\rm send}\, u$ is closed in $(X\times [0,1],  \overline{d})$.

Kloeden \cite{kloeden2} introduced the endograph metric $H_{\rm end}$.
The endograph metric $H_{\rm end} $ and the sendograph metric $H_{\rm send} $
on $F^1_{USC}(X)$ are defined as follows.
For $u,v \in F^1_{USC}(X)$,
\begin{gather*}
\bm{  H_{\rm end}(u,v)    }: =  H({\rm end}\, u,  {\rm end}\, v ),
\\
\bm{  H_{\rm send}(u,v)    }: =  H({\rm send}\, u,  {\rm send}\, v ),
  \end{gather*}
where
 $H$
is
the Hausdorff
metric on $C(X \times [0,1])$ induced by $\overline{d}$ on $X \times [0,1]$.

The $d_\infty$ metric
  on $ F^1_{USC} (X)$
is defined as follows: for $u,v\in F^1_{USC} (X)$,
$$\bm{d_\infty (u,v)} :=   \sup\{ H([u]_\al, [v]_\al) :\al\in [0,1]  \}.$$
Below property \eqref{smr} may be known. We gave this kind of conclusion in \cite{huang91u}.
\begin{equation}\label{smr}
 \mbox{For each } u,v \in F^1_{USC} (X), \ d_\infty(u,v) \geq H_{\rm send}(u,v) \geq H_{\rm end}(u,v).
\end{equation}

\begin{re}
  {\rm

We can see that $H_{\rm end}$ is a metric on $F^1_{USC}(X)$ with $H_{\rm end}(u,v) \leq 1$ for all $u,v \in F^1_{USC}(X)$.
 Both $d_\infty$
and
$H_{\rm send}$ are metrics on $F^1_{USCB}(X)$.
However,
each one of
$d_\infty$ and $H_{\rm send}$ on $F^1_{USC}(X)$ is an extended metric but may  not be a metric. See also
Remark 3.3 in \cite{huang17}.
We can see that both $d_\infty$ and $H_{\rm send}$ on $F^1_{USCG}(\mathbb{R}^m)$ are not metrics, they are extended metrics.

For simplicity,
in this paper, we call
$H_{\rm send}$ on $F^1_{USC}(X)$ the $H_{\rm send}$ metric or the sendograph metric $H_{\rm send}$.
We call
$d_\infty$ on $F^1_{USC}(X)$ the $d_\infty$ metric or the supremum metric $d_\infty$.

}
\end{re}

\section{$H_{\rm end}$, $H_{\rm send}$ and Kuratowski convergence on $P^1_{USC} (X)$ } \label{fsm}

In this section, we introduce $P^1_{USC} (X)$ and its subset $P^1_{USCB} (X)$,
and
 define the $H_{\rm send}$ distance and the $H_{\rm end}$ distance on $P^1_{USC} (X)$.
We discuss
the properties and relations of $H_{\rm send}$, $H_{\rm end}$ and Kuratowski convergence on
 $P^1_{USC} (X)$.
Based on this, we give some
 relations among
$H_{\rm end}$, $H_{\rm send}$
and $\Gamma$-convergence on $F^1_{USC} (X)$.

For $u \subseteq  X \times [0,1]$ and $\al\in [0,1]$,
define
 $\bm{\langle u \rangle_\al} := \{ x:   (x,\al) \in u\}$.
 $P^1_{USC} (X)$ and $P^1_{USCB} (X)$
are
subsets of the power set of $X\times [0,1]$ defined by
\begin{gather*}
\begin{split}
\bm{P^1_{USC} (X)}: = \{u  \subseteq   X\times [0,1]:    \langle u \rangle_\al = &  \bigcap_{\beta <\al} \langle u \rangle_\beta    \  \mbox{for all} \  \al\in (0,1];
\\
&   \langle u \rangle_\al \in C(X) \ \mbox{for all} \  \al\in [0,1] \},
\end{split}
\\
\begin{split}
\bm{P^1_{USCB} (X)}: = \{u  \in  P^1_{USC} (X): \langle u\rangle_\al \in K(X) \ \mbox{for all} \  \al\in [0,1] \}.
\end{split}
\end{gather*}
Clearly $P^1_{USCB} (X) \subseteq P^1_{USC} (X)$.

By Proposition \ref{repm},
we can see $F^1_{USC} (X)$ (respectively, $F^1_{USCB} (X)$) as a subset of $P^1_{USC} (X)$ (respectively, $P^1_{USCB} (X)$) by identifying a fuzzy set with its
sendograph.
So the conclusions on $P^1_{USC} (X)$ and $P^1_{USCB} (X)$
are useful
for the discussions of fuzzy sets in this paper.

From the basic analysis, we can obtain that
\\
(\romannumeral1) if $u\in  P^1_{USC} (X)$,
then $u\in C(X\times [0,1])$;
\\
(\romannumeral2)
if $u\in C(X\times [0,1])$, then $\langle u \rangle_\al \in C(X) \cup \{\emptyset\}$ for all $\al\in [0,1]$;
\\
(\romannumeral3)
if $u\in  P^1_{USCB} (X)$, then $u\in K(X\times [0,1])$;
\\
(\romannumeral4)
if $u\in K(X\times [0,1])$, then $\langle u \rangle_\al \in K(X) \cup \{\emptyset\}$ for all $\al\in [0,1]$.

So we have
\begin{gather*}
\begin{split}
P^1_{USC} (X) = \{u  \in  C(X\times [0,1]):   \langle u \rangle_1 \not=\emptyset,  \langle u \rangle_\al =   \bigcap_{\beta <\al} \langle u \rangle_\beta    \  \mbox{for all} \  \al\in (0,1] \},
\end{split}
\\
\begin{split}
P^1_{USCB} (X) = \{u  \in  P^1_{USC} (X):  u  \in  K(X\times [0,1])\}.
\end{split}
\end{gather*}

 We define the $H_{\rm send}$ distance and the $H_{\rm end}$ distance on $P^1_{USC} (X)$: for each $u,v\in P^1_{USC} (X)$,
\begin{gather*}
 H_{\rm send} (u,v)  :=  H(u,v),
  \
  H_{\rm end} (u,v)  :=  H(\underline{u}, \underline{v}),
\end{gather*}
where $H$ is the Hausdorff
metric on $C(X \times [0,1])$ induced by $\overline{d}$ on $X \times [0,1]$,
and
$\underline{u}  :=   u \cup  (  X \times \{0\}   )$.

As $H_{\rm send}$ on $P^1_{USC} (X)$ is
the restriction to $P^1_{USC} (X)\times P^1_{USC} (X)$
of the Hausdorff extended
metric $H$ on $C(X \times [0,1])$,
we have that
$H_{\rm send}$ is an extended metric on $P^1_{USC} (X)$.

Obviously
$H_{\rm end} \leq 1$ on $P^1_{USC} (X)$.
$H_{\rm end}$ is a pseudometric on $P^1_{USC} (X)$; that is,
for all $u,v\in P^1_{USC}(X)$,
$H_{\rm end}(u,v) \in \mathbb{R}$ and $H_{\rm end}(u,u)=0$,
and $H_{\rm end}$
satisfies symmetry and triangle inequality.
$H_{\rm end}$ is a metric on $P^1_{USC} (X)$
if and only if $X$ is a singleton.
The ``if'' part is obvious. To show the ``only if'' part,
assume that $X$ contains more than one point.
Let $x,y\in X$ with $x\not=y$.
Define $u\in P^1_{USCB} (X)$ by putting $\langle u\rangle_\al = \{x\}$ for all $\al\in [0,1]$,
and
$v\in P^1_{USCB} (X)$
 by putting
$\langle v\rangle_\al = \{x\}$ for $\al\in (0,1]$ and $\langle v\rangle_0 = \{x,y\}$.
Then $u \not= v$
and
$H_{\rm end}(u,v) = 0$. So $H_{\rm end}$ does not satisfy
the
positivity and hence is not a metric on $P^1_{USC} (X)$.

Define a function
$f: F^1_{USC} (X) \to P^1_{USC} (X)$
by
$f(u) = {\rm send}\, u$.
Then 
$f$ is an isometric embedding
of
$(F^1_{USC} (X), H_{\rm send})$ in $(P^1_{USC} (X), H_{\rm send})$.
So $(F^1_{USC} (X), H_{\rm send})$
can be embedded isometrically in $(P^1_{USC} (X), H_{\rm send})$.
Note that
 $f(F^1_{USCB}(X) ) \subseteq P^1_{USCB} (X)$.
Thus
$(F^1_{USCB} (X), H_{\rm send})$ can be embedded isometrically in $(P^1_{USCB} (X), H_{\rm send})$.

For $u\in  F^1_{USC} (X) $,
we define
$\bm{\overrightarrow{u}}:= f(u)= {\rm send}\, u$. Then $\overrightarrow{u} \in P^1_{USC} (X) $.

Let
$v \in P^1_{USC} (X)$. Define
$\bm{v'} \in f(F^1_{USC} (X)) \subseteq P^1_{USC} (X)$ by putting
\[
\langle v' \rangle_\al =
\left\{
  \begin{array}{ll}
\langle v \rangle_\al, & \al\in (0,1],
\\
\overline{\cup_{\al>0} \langle v \rangle_\al}, & \al=0.
  \end{array}
\right.
\]
Define $\bm{\overleftarrow{v}} \in F^1_{USC} (X)$
by putting
$[\overleftarrow{v}]_\al = \langle v' \rangle_\al$ for $\al\in[0,1]$.
Then
$\overrightarrow{\overleftarrow{v}}=f(\overleftarrow{v}) = v'$.
$\langle v \rangle_0 \supseteq \langle v' \rangle_0$ as $\langle v \rangle_0\in C(X)$ and $\langle v \rangle_0 \supseteq \cup_{\al>0} \langle v \rangle_\al$. Note that $\langle v \rangle_0 = \langle v' \rangle_0 \cup \overline{\langle v \rangle_0\setminus\langle v' \rangle_0} $.
Thus we have (a) $v=v' \cup (\overline{\langle v \rangle_0\setminus\langle v' \rangle_0} \times \{0\})$.
So $P^1_{USC}(X) = \{
\overrightarrow{u} \cup (A\times\{0\}): u\in F^1_{USC}(X), A \in C(X)\cup \{\emptyset\} \}$ (``$\subseteq$'' follows from (a), and ``$\supseteq$'' is obvious).
Clearly $\langle v \rangle_0 \in K(X) $ if and only if $\langle v' \rangle_0\in K(X)$ and $\overline{\langle v \rangle_0\setminus\langle v' \rangle_0} \in K(X)\cup \{\emptyset\}$.
So $P^1_{USCB}(X) = \{
\overrightarrow{u} \cup (A\times\{0\}): u\in F^1_{USCB}(X), A\in K(X)\cup \{\emptyset\} \}$ (``$\supseteq$'' is obvious).
We can see that the above two expressions of $P^1_{USC}(X)$ and $P^1_{USCB}(X)$ remain true
if ``$\cup \{\emptyset\} $'' is deleted in them.

For a subset $U$ of $F^1_{USC}(X)$, we use
$\bm{\overrightarrow{U}}$ to denote
the set
$\{\overrightarrow{u}: u\in U\}$.
For
 a subset $U$ of $P^1_{USC}(X)$, we use
$\bm{\overleftarrow{U}}$ to denote
the set
$\{\overleftarrow{u}: u\in U\}$.
Clearly $\overleftarrow{P^1_{USC}(X)}=F^1_{USC}(X)$
and
$\overleftarrow{P^1_{USCB}(X)}=F^1_{USCB}(X)$.

\begin{pp} \label{pfbe}
Let $v \in P^1_{USC} (X)$. Suppose the conditions:
(\romannumeral1)
$v\in \overrightarrow{F^1_{USC}(X)}$;
(\romannumeral2)
$\langle v \rangle_0 = \overline{\bigcup_{\delta>0} \langle v\rangle_\delta}$;
(\romannumeral3)
$v=v'$;
(\romannumeral4)
$\langle v \rangle_0 = [\overleftarrow{v}]_0 $;
(\romannumeral5)
$v\in \overrightarrow{F^1_{USCB}(X)}$;
(\romannumeral6)
$\lim_{\delta\to 0+} H(\langle v\rangle_\delta, \langle v \rangle_0)=0$.
\\
(a) (\romannumeral5)$\Rightarrow$(\romannumeral6)$\Rightarrow$(\romannumeral1)$\Leftrightarrow$(\romannumeral2)$\Leftrightarrow$(\romannumeral3)$\Leftrightarrow$(\romannumeral4).
\\
(b) If $v \in P^1_{USCB} (X)$, then
(\rmn1)$\Leftrightarrow$(\rmn2)$\Leftrightarrow$(\rmn3)$\Leftrightarrow$(\rmn4)$\Leftrightarrow$(\rmn5)$\Leftrightarrow$(\rmn6).
\end{pp}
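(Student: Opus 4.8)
The plan is to split the argument into three blocks: the purely formal equivalences $(\romannumeral1)\Leftrightarrow(\romannumeral2)\Leftrightarrow(\romannumeral3)\Leftrightarrow(\romannumeral4)$, the two metric implications $(\romannumeral6)\Rightarrow(\romannumeral2)$ and $(\romannumeral5)\Rightarrow(\romannumeral6)$, and finally the single extra implication needed to close the cycle for part (b).

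\emph{The formal block.} Recall that any $v\in P^1_{USC}(X)$ is recovered from its level sets as $v=\bigcup_{\al\in[0,1]}\langle v\rangle_\al\times\{\al\}$, and that $\langle v'\rangle_\al=\langle v\rangle_\al$ for $\al\in(0,1]$ while $\langle v'\rangle_0=\overline{\bigcup_{\al>0}\langle v\rangle_\al}$. Hence $v=v'$ holds iff $\langle v\rangle_0=\overline{\bigcup_{\al>0}\langle v\rangle_\al}$, which is exactly $(\romannumeral2)\Leftrightarrow(\romannumeral3)$; and since $[\overleftarrow v]_0=\langle v'\rangle_0=\overline{\bigcup_{\al>0}\langle v\rangle_\al}$ by definition, condition $(\romannumeral4)$ is the same identity, giving $(\romannumeral2)\Leftrightarrow(\romannumeral4)$. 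For $(\romannumeral1)\Leftrightarrow(\romannumeral3)$: the paper already notes $v'=\overrightarrow{\overleftarrow v}\in\overrightarrow{F^1_{USC}(X)}$, so $(\romannumeral3)\Rightarrow(\romannumeral1)$; conversely, if $v=\overrightarrow w={\rm send}\,w$ with $w\in F^1_{USC}(X)$ then $\langle v\rangle_\al=[w]_\al$ for all $\al$, and Proposition \ref{repm}$(\romannumeral3)$ gives $[w]_0=\overline{\bigcup_{\al>0}[w]_\al}$, i.e. $(\romannumeral2)$, hence $(\romannumeral3)$.

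\emph{The metric block.} For $(\romannumeral6)\Rightarrow(\romannumeral2)$: the defining identity $\langle v\rangle_\de=\bigcap_{\be<\de}\langle v\rangle_\be$ yields (take $\be=0$) $\langle v\rangle_\de\subseteq\langle v\rangle_0$ for all $\de>0$, so $\overline{\bigcup_{\de>0}\langle v\rangle_\de}\subseteq\langle v\rangle_0$ since $\langle v\rangle_0$ is closed; for the reverse inclusion fix $x\in\langle v\rangle_0$ and $\var>0$, choose by $(\romannumeral6)$ a $\de>0$ with $H(\langle v\rangle_\de,\langle v\rangle_0)<\var$, so $d(x,\langle v\rangle_\de)<\var$, and let $\var\to 0$. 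For $(\romannumeral5)\Rightarrow(\romannumeral6)$: write $v={\rm send}\,w$ with $w\in F^1_{USCB}(X)$, so $\langle v\rangle_\al=[w]_\al$, the level sets decrease in $\al$, $\langle v\rangle_0\in K(X)$, and, by the formal block, $\langle v\rangle_0=\overline{\bigcup_{\de>0}\langle v\rangle_\de}$. Since $\langle v\rangle_\de\subseteq\langle v\rangle_0$ we have $H(\langle v\rangle_\de,\langle v\rangle_0)=\sup_{x\in\langle v\rangle_0}d(x,\langle v\rangle_\de)$, which is nondecreasing in $\de$, so its limit as $\de\to0+$ equals $\inf_{\de>0}$ of it; if this infimum were some $\var>0$, choose $\de_n\downarrow 0$ and $x_n\in\langle v\rangle_0$ with $d(x_n,\langle v\rangle_{\de_n})\ge\var$, pass to a subsequence with $x_n\to x\in\langle v\rangle_0$ by compactness, pick $\de_0>0$ and $y\in\langle v\rangle_{\de_0}$ with $d(x,y)<\var/2$ (using $x\in\overline{\bigcup_{\de>0}\langle v\rangle_\de}$), and observe that for large $n$ we have $\de_n\le\de_0$, hence $y\in\langle v\rangle_{\de_0}\subseteq\langle v\rangle_{\de_n}$ and $d(x_n,y)<\var$, contradicting $d(x_n,\langle v\rangle_{\de_n})\ge\var$. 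Together with the formal block this proves (a).

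\emph{Closing the cycle for (b) and the main obstacle.} It remains to show $(\romannumeral1)\Rightarrow(\romannumeral5)$ when $v\in P^1_{USCB}(X)$: if $v=\overrightarrow w={\rm send}\,w$ with $w\in F^1_{USC}(X)$, then $[w]_0=\langle v\rangle_0\in K(X)$ because $v\in P^1_{USCB}(X)$, so $w\in F^1_{USCB}(X)$ and $v\in\overrightarrow{F^1_{USCB}(X)}$; combined with (a) this gives $(\romannumeral1)\Leftrightarrow\cdots\Leftrightarrow(\romannumeral6)$. The only implication with genuine content is $(\romannumeral5)\Rightarrow(\romannumeral6)$ — the uniform Hausdorff convergence of the $\al$-cuts as $\al\to0+$ — and it rests entirely on the compactness of $\langle v\rangle_0$; this is exactly where the argument fails for a general $v\in\overrightarrow{F^1_{USC}(X)}$, which is why part (a) only asserts $(\romannumeral5)\Rightarrow(\romannumeral6)$ rather than $(\romannumeral1)\Rightarrow(\romannumeral6)$.
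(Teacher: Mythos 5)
Your proof is correct, and its skeleton is the same as the paper's: the four conditions (\romannumeral1)--(\romannumeral4) are formally equivalent, (\romannumeral6)$\Rightarrow$(\romannumeral2) and (\romannumeral5)$\Rightarrow$(\romannumeral6) give part (a), and (\romannumeral1)$\Rightarrow$(\romannumeral5) under $v\in P^1_{USCB}(X)$ closes the cycle for part (b) exactly as in the paper. The difference is in how the two metric implications are discharged. The paper cites Lemma \ref{gnc}(\romannumeral5) for (\romannumeral5)$\Rightarrow$(\romannumeral6) and Remark \ref{crg}(\romannumeral1) for (\romannumeral6)$\Rightarrow$(\romannumeral2); these in turn rest on Lemma \ref{c} (hence on the hyperspace compactness result, Theorem \ref{bfc}) and on Theorem \ref{hkg} (Hausdorff convergence implies Kuratowski convergence), and they appear only in later sections, so the paper's proof involves forward references. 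You instead give direct, self-contained $\varepsilon$-arguments: a one-line density argument for (\romannumeral6)$\Rightarrow$(\romannumeral2), and a monotonicity-plus-compactness contradiction for (\romannumeral5)$\Rightarrow$(\romannumeral6). Both are sound (in the latter, the choice of $x_n$ with $d(x_n,\langle v\rangle_{\delta_n})\geq\varepsilon$ is legitimate because the supremum defining $H(\langle v\rangle_{\delta_n},\langle v\rangle_0)$ is attained on the compact set $\langle v\rangle_0$). What your route buys is elementarity and independence from the later machinery; what the paper's route buys is brevity and reuse of lemmas it needs anyway. Your closing observation that compactness of $\langle v\rangle_0$ is precisely what fails for general $v\in\overrightarrow{F^1_{USC}(X)}$ is a correct and useful remark not made explicit in the paper.
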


\begin{proof}
Clearly (\rmn1)$\Leftrightarrow$(\rmn2)$\Leftrightarrow$(\rmn3)$\Leftrightarrow$(\rmn4).
By Lemma \ref{gnc}(\rmn5), (\romannumeral5)$\Rightarrow$(\romannumeral6).
From Remark \ref{crg}(\rmn1), (\romannumeral6)$\Rightarrow$(\romannumeral2).
So (a) is proved.

If $v \in P^1_{USCB} (X)$, then $\langle v\rangle_0\in K(X)$.
Assume that (\rmn1) is true; that is, there is a $u\in F^1_{USC}(X)$ with $v=\overrightarrow{u}$. Then $[u]_0=\langle v\rangle_0$, and thus
$u\in F^1_{USCB} (X)$.
Hence (\rmn1)$\Rightarrow$(\romannumeral5). So from (a), we have that (b) is true.
\end{proof}

\begin{tm} \label{pseu}
Let $(X,d)$ be a metric space.
For $u, v \in P^1_{USC} (X)$:
\\
 (\romannumeral1) \ $H_{\rm end}(u, v) \leq H_{\rm send}(u, v)$;
\\
(\romannumeral2) \ $H(\langle u \rangle_0, \langle v\rangle_0) \leq H_{\rm send}(u, v)$;
\\
(\romannumeral3) \ If $H_{\rm end} (u,v) < 1$, then
$
  H_{\rm send}(u,v) \leq H_{\rm end}(u,v) + H(\langle u\rangle_0, \langle v\rangle_0).
$
\\
For a sequence $\{u_n\}$ in $P^1_{USC} (X)$ and $u$ in $P^1_{USC} (X)$:
\\
(\romannumeral4) \
  $H_{\rm send}(u_n, u) \to 0$ if and only if $H_{\rm end}(u_n, u) \to 0$ and $H(\langle u_n\rangle_0, \langle u \rangle_0) \to 0$;
\\
(\romannumeral5) \ $\lim_{n\to \infty}^{(K)} u_n = u $ if and only if
$\lim_{n\to \infty}^{(K)}  \underline{u_n } = \underline{u} $
and
$ \lim_{n\to \infty}^{(K)} \langle u_n \rangle_0 = \langle u \rangle_0$.
\end{tm}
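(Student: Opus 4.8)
The plan is to read off (\rmn1) and (\rmn2) straight from the definition of the Hausdorff distance, to treat (\rmn3) as the real content, to obtain (\rmn4) formally from (\rmn1)--(\rmn3), and to prove (\rmn5) by an analogous bookkeeping at the level of Kuratowski limits. Two elementary facts about any $w\in P^1_{USC}(X)$ will be used throughout: the inclusions $w\subseteq\underline w$ and $X\times\{0\}\subseteq\underline w$ hold by definition, and $\emptyset\neq\langle w\rangle_1\subseteq\langle w\rangle_0$ so that all Hausdorff distances below make sense; moreover $\{\langle w\rangle_\al\}_{\al\in[0,1]}$ is a decreasing family, since for $0\leq\al_1<\al_2\leq 1$ the defining identity gives $\langle w\rangle_{\al_2}=\bigcap_{\be<\al_2}\langle w\rangle_\be\subseteq\langle w\rangle_{\al_1}$; in particular $(y,\be)\in w$ forces $y\in\langle w\rangle_\be\subseteq\langle w\rangle_0$.

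For (\rmn1), write $H^*(\underline u,\underline v)=\max\{\sup_{p\in u}d(p,\underline v),\ \sup_{p\in X\times\{0\}}d(p,\underline v)\}$; the second supremum is $0$ since $X\times\{0\}\subseteq\underline v$, and the first is at most $\sup_{p\in u}d(p,v)=H^*(u,v)$ since $v\subseteq\underline v$, so $H^*(\underline u,\underline v)\leq H^*(u,v)$ and, symmetrically, $H^*(\underline v,\underline u)\leq H^*(v,u)$; hence $H_{\rm end}(u,v)=H(\underline u,\underline v)\leq H(u,v)=H_{\rm send}(u,v)$. For (\rmn2), given $x\in\langle u\rangle_0$ we have $(x,0)\in u$, so $d((x,0),v)\leq H^*(u,v)$; taking $(y_k,\be_k)\in v$ with $\overline d((x,0),(y_k,\be_k))\to d((x,0),v)$ and using $d(x,y_k)\leq\overline d((x,0),(y_k,\be_k))$ together with $y_k\in\langle v\rangle_{\be_k}\subseteq\langle v\rangle_0$ gives $d(x,\langle v\rangle_0)\leq d((x,0),v)\leq H_{\rm send}(u,v)$; symmetrising yields $H(\langle u\rangle_0,\langle v\rangle_0)\leq H_{\rm send}(u,v)$.

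The main obstacle is (\rmn3). Set $r=H_{\rm end}(u,v)$ and $s=H(\langle u\rangle_0,\langle v\rangle_0)$; by symmetry it suffices to prove $d((x,\al),v)\leq r+s$ for every $(x,\al)\in u$, and the idea is to distinguish $\al>r$ from $\al\leq r$. If $\al>r$, then $d((x,\al),\underline v)\leq H^*(\underline u,\underline v)\leq r$, so for $\var>0$ with $r+\var<\al$ there is $(y,\be)\in\underline v$ with $\overline d((x,\al),(y,\be))<r+\var$; any point at level $0$, say $(y',0)$, satisfies $\overline d((x,\al),(y',0))=d(x,y')+\al\geq\al>r+\var$, so necessarily $\be>0$ and hence $(y,\be)\in v$, giving $d((x,\al),v)<r+\var$; letting $\var\to 0$ yields $d((x,\al),v)\leq r\leq r+s$. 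If $\al\leq r$, then $x\in\langle u\rangle_0$, so $d(x,\langle v\rangle_0)\leq s$; choosing $z\in\langle v\rangle_0$ with $d(x,z)\leq s+\var$ and noting $(z,0)\in v$, we get $d((x,\al),v)\leq\overline d((x,\al),(z,0))=d(x,z)+\al\leq s+\var+r$, hence $d((x,\al),v)\leq r+s$. The care needed is the $\var$-bookkeeping for possibly non-attained infima and the observation that a near-optimal point of $\underline v$ in the first case cannot sit on the floor $X\times\{0\}$. The hypothesis $H_{\rm end}(u,v)<1$ is available throughout, although the estimate itself does not really require it; where it matters is only in invoking (\rmn3) inside (\rmn4), and there it holds automatically for all large $n$.

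Indeed (\rmn4) is then formal: ``$\Rightarrow$'' is (\rmn1) together with (\rmn2), and ``$\Leftarrow$'' follows from (\rmn3) applied for all $n$ large enough that $H_{\rm end}(u_n,u)<1$, so that $H_{\rm send}(u_n,u)\leq H_{\rm end}(u_n,u)+H(\langle u_n\rangle_0,\langle u\rangle_0)\to 0$. For (\rmn5) I would verify $\limsup\subseteq(\cdot)\subseteq\liminf$ for each family, using the same two structural facts. Assuming $u=\lim_{n\to\infty}^{(K)}u_n$: a convergent sequence of points of $\underline{u_n}$ either has a subsequence lying in the $u_n$ (limit in $\limsup u_n=u\subseteq\underline u$) or is eventually on the floor (limit at level $0$, hence in $\underline u$), giving $\limsup\underline{u_n}\subseteq\underline u$; conversely a point of $u=\liminf u_n$ is approximated from $u_n\subseteq\underline{u_n}$ and a floor point by the constant sequence in $X\times\{0\}\subseteq\underline{u_n}$, giving $\underline u\subseteq\liminf\underline{u_n}$; reading the same sequences at level $0$ and using $x_n\in\langle u_n\rangle_{\al_n}\subseteq\langle u_n\rangle_0$ whenever $(x_n,\al_n)\in u_n$ yields $\langle u\rangle_0=\lim_{n\to\infty}^{(K)}\langle u_n\rangle_0$. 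For the converse one runs this in reverse; the only subtle point is that when pulling a floor point $(x,0)\in u$ back into the $u_n$ the points of $\underline{u_n}$ approximating it may have positive level, so one instead uses $\langle u\rangle_0=\liminf\langle u_n\rangle_0$ to produce points genuinely in $u_n$, while any point of $\limsup u_n$ automatically lies in $\limsup\underline{u_n}=\underline u$, hence is in $u$ or at level $0$, and in the latter case also in $\limsup\langle u_n\rangle_0=\langle u\rangle_0$, so it is in $u$ anyway.
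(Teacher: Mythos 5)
Your proposal is correct and follows essentially the same route as the paper: (\romannumeral1) and (\romannumeral2) directly from the definitions, (\romannumeral3) by splitting on $\al\le H_{\rm end}(u,v)$ versus $\al>H_{\rm end}(u,v)$ and observing that in the latter case a near-optimal point of $\underline v$ cannot lie on the floor, (\romannumeral4) formally from (\romannumeral1)--(\romannumeral3), and (\romannumeral5) by the same four inclusions the paper checks. One cosmetic remark on (\romannumeral5): the genuine obstruction when lifting a floor point $(x,0)\in u$ back into the $u_n$ is not that approximants in $\underline{u_n}$ may have positive level (such points automatically belong to $u_n$), but that they may be points of $X\times\{0\}$ not belonging to $u_n$; your remedy via $\langle u\rangle_0=\liminf_{n\to\infty}\langle u_n\rangle_0$ is exactly the paper's and is correct either way.
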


\begin{proof}
Clearly (\romannumeral1) and (\romannumeral2) are true.
To show (\romannumeral3),
let $(x,\alpha) \in  u$. It suffices to verify the following property
(a) if $H_{\rm end} (u,v) < 1$, then
\begin{equation}\label{gen}
 \overline{d}((x,\al),  v)\leq H_{\rm end}(u,v) + H(\langle u\rangle_0, \langle v\rangle_0).
\end{equation}

  We claim that
(b) $ \overline{d}((x,\al),  v)
\leq
\al + d(x, \langle v\rangle_0)\leq
\al + H(\langle u\rangle_0, \langle v\rangle_0)$, and
(c)
if $\overline{d}((x,\al),  \underline{v})<\al$,
then
$
\overline{d}((x,\al),  v)
=
\overline{d}((x,\al),  \underline{v})\leq H_{\rm end}(u,v).
$

Notice that $ \overline{d}((x,\al),  v)
\leq \overline{d}((x,\al), \langle v\rangle_0 \times \{0\})
\leq \overline{d}((x,\al), (x,0)) + \overline{d}((x,0), \langle v\rangle_0 \times \{0\})
= \al + d(x, \langle v\rangle_0)$.
Thus (b) is true.
Note that $\overline{d}((x,\al),  \underline{v}) = \min\{\overline{d}((x,\al),  v), \ \overline{d}((x,\al), X\times\{0\}) \} = \min\{\overline{d}((x,\al),  v), \ \alpha \}$.
So if $\overline{d}((x,\al),  \underline{v}) < \al $,
then
$
\overline{d}((x,\al),  v)
=
\overline{d}((x,\al),  \underline{v}).
$
Thus (c) is true.

If $\al\in [0, H_{\rm end}(u,v)]$, then by (b), \eqref{gen} holds.
If $H_{\rm end} (u,v) < 1$ and $\al\in (H_{\rm end}(u,v), 1]$,
then
$\overline{d}((x,\al),  \underline{v})
< \alpha$.
Hence by (c), $
     \overline{d}((x,\al),  v) \leq H_{\rm end}(u,v)$, and thus
    \eqref{gen} holds. So (a) is true and then
(\romannumeral3) is proved.

Let $u, v \in P^1_{USC} (X)$. Suppose the following conditions (d-1) $H_{\rm end}(u,v) =0$,
  (d-2) $H(\langle u\rangle_0, \langle v\rangle_0) =0 $, and
 (d-3) $H(\langle u\rangle_0, \langle v\rangle_0) = +\infty$.
If (d-1) holds, then by (\rmn2) and (\rmn3), $H_{\rm send}(u,v) =  H(\langle u\rangle_0, \langle v\rangle_0)$.
If (d-2) holds, then $\langle u\rangle_0 = \langle v\rangle_0$ and so $H_{\rm send}(u,v) = H_{\rm end}(u,v)$.
If (d-3) holds, then by (\rmn2), $H_{\rm send}(u,v)=+\infty$.
So if one of the  conditions (d-1),
  (d-2), and (d-3) holds,
then $
  H_{\rm send}(u,v) = H_{\rm end}(u,v) + H(\langle u\rangle_0, \langle v\rangle_0)
$.
However, the converse is false.

(\romannumeral4) follows immediately from (\romannumeral1), (\romannumeral2) and (\romannumeral3).
Below we verify (\romannumeral5).

Suppose that
$\lim_{n\to \infty}^{(K)} u_n = u $. To show
$\lim_{n\to \infty}^{(K)}  \underline{u_n } = \underline{u} $
and
$ \lim_{n\to \infty}^{(K)} \langle u_n \rangle_0 = \langle u \rangle_0$,
we only need to show
that
\begin{gather*}
 \underline{u} \subseteq \liminf_{n\to\infty} \underline{u_n}, \
\limsup_{n\to\infty} \underline{u_n} \subseteq \underline{u},\\
\langle u\rangle_0 \subseteq \liminf_{n\to\infty} \langle u_n \rangle_0,\
\limsup_{n\to\infty} \langle u_n \rangle_0  \subseteq   \langle u \rangle_0.
\end{gather*}

Let $(x,\al) \in \underline{u}$. If $\al=0$, then clearly  $(x,\al) \in \liminf_{n\to\infty} \underline{u_n}$.
If
 $\al>0$, then $(x,\al) \in u$, and thus $(x,\al) \in  \liminf_{n\to\infty} u_n \subseteq \liminf_{n\to\infty} \underline{u_n}$.
So $\underline{u} \subseteq \liminf_{n\to\infty} \underline{u_n}$.

Let $(x,\al) \in \limsup_{n\to\infty} \underline{u_n}$.
If $\al=0$, then clearly  $(x,\al) \in \underline{u}$.
If
 $\al>0$, then $(x,\al) \in \limsup_{n\to\infty} u_n = u \subseteq \underline{u}$.
So $\limsup_{n\to\infty} \underline{u_n} \subseteq \underline{u}$.

Let $x\in \langle u\rangle_0 $.
Then $(x,0) \in u = \liminf_{n\to\infty} u_n $. Thus there is a sequence $\{(x_n, \al_n)\}$
such that
$(x_n, \al_n)\in u_n$, $n=1,2,\ldots$ and $(x,0)=\lim_{n\to \infty} (x_n,\al_n)$.
Hence
$x_n\in \langle u_n \rangle_0$ and $x=\lim_{n\to \infty} x_n$.
So
 $\langle u\rangle_0 \subseteq \liminf_{n\to\infty} \langle u_n \rangle_0$.

Let $x\in \limsup_{n\to\infty} \langle u_n \rangle_0$.
Then
there is a sequence $\{x_{n_i}\}$
such that
$x_{n_i} \in \langle u_{n_i} \rangle_0$, $i=1,2,\ldots$
and
$x= \lim_{i\to \infty} x_{n_i}$.
Thus
$(x,0) = \lim_{i\to \infty} (x_{n_i}, 0) \in \limsup _{n\to\infty}  u_n =u$.
Hence
$x\in \langle u \rangle_0$.
So
$\limsup_{n\to\infty} \langle u_n \rangle_0  \subseteq   \langle u \rangle_0$.

Suppose that
$\lim_{n\to \infty}^{(K)}  \underline{u_n } = \underline{u} $
and
$ \lim_{n\to \infty}^{(K)} \langle u_n \rangle_0 = \langle u \rangle_0$.
To show
$\lim_{n\to \infty}^{(K)} u_n = u $,
we only need to show that
\begin{equation*}
  u \subseteq \liminf_{n\to\infty} u_n, \  \limsup_{n\to\infty} u_n \subseteq u.
\end{equation*}

Let $(x,\al)\in u$.
If $\al=0$, then $x\in \langle u \rangle_0 = \lim_{n\to \infty}^{(K)} \langle u_n \rangle_0$.
Thus there is a sequence $\{x_n\}$ such that $x_n \in \langle u_n \rangle_0$, $n=1,2,\ldots$
and
$x = \lim_{n\to \infty} x_n$.
Hence
$(x,\alpha) = (x,0) = \lim_{n\to \infty} (x_n, 0) \in \liminf_{n\to\infty} u_n$.

If
$\al>0$, then from
 $(x,\al) \in \underline{u}= \lim_{n\to \infty}^{(K)}  \underline{u_n }$,
 there is a sequence  $\{(x_n,\al_n)\}$ such that $(x,\al) = \lim_{n\to \infty} (x_n,\al_n)$,
and
$(x_n,\al_n) \in  \underline{u_n}$ and $\al_n>0$ for $n=1,2,\ldots$.
Thus
$(x_n,\al_n) \in   u_n$, $n=1,2,\ldots$ and hence
$(x,\al) \in  \liminf_{n\to\infty} u_n $.

Let $(x,\al) \in \limsup_{n\to\infty} u_n$.
Then
there is a sequence $\{(x_n,\al_n)\}$ such that for each $n\in \mathbb{N}$,
$(x_n,\al_n) \in u_n$,
and
$\overline{d}((x_n,\al_n), (x,\al)) \to 0$.
So for each $n\in \mathbb{N}$,
$x_n \in \langle u_n\rangle_0$,
and
 $d(x_n, x) \to 0$, and hence
$x \in \limsup_{n\to\infty} \langle u_n \rangle_0 = \langle u \rangle_0$.
Clearly $(x,\al) \in \limsup_{n\to\infty} \underline{u_n} = \underline{u}$.
Thus
$(x,\al) \in u$.
\end{proof}

For each $u,v\in P^1_{USC} (X)$, $\underline{u}={\rm end}\,\overleftarrow{u}$
and so
$H_{\rm end}(u,v) = H_{\rm end}(\overleftarrow{u}, \overleftarrow{v})$.

For each $u\in F^1_{USC} (X)$, $[u]_0=\langle \overrightarrow{u} \rangle_0$  and ${\rm end}\,u = \underline{(\overrightarrow{u})}$.

Let $u,v \in  F^1_{USC} (X) $. Then $\overrightarrow{u}, \overrightarrow{v} \in P^1_{USC} (X) $, and
$
 H_{\rm send} (u,v)  =  H_{\rm send}  (\overrightarrow{u}, \overrightarrow{v})$,
 $
  H_{\rm end} (u,v)  =  H_{\rm end} (\overrightarrow{u}, \overrightarrow{v})
$, and
$
H([u]_0, [v]_0) = H(\langle   \overrightarrow{u} \rangle_0, \langle   \overrightarrow{v} \rangle_0)$.
 Thus (\romannumeral1), (\romannumeral2) and (\romannumeral3) in Theorem \ref{pseu} imply that (the following \eqref{emsf} is part of \eqref{smr})
\begin{gather}
  H_{\rm end}(u, v) \leq H_{\rm send}(u, v),  \label{emsf}
\\
   H( [u]_0, [v]_0) \leq H_{\rm send}(u, v),  \label{umsf}
\\
\mbox{if } H_{\rm end} (u,v) < 1, \mbox{then }
  H_{\rm send}(u,v) \leq H_{\rm end}(u,v) + H([u]_0, [v]_0).\label{secf}
\end{gather}

\begin{pp} \label{sge}
Let $u$, $u_n$, $n=1,2,\ldots$, be fuzzy sets in $F^1_{USC} (X)$.
\\
(\romannumeral1)
  $H_{\rm send} (u_n, u) \to 0$ if and only if $  H_{\rm end} (u_n, u) \to 0 $ and  $ H([u_n]_0, [u]_0) \to 0$.
\\
(\romannumeral2)
 $   \lim_{n\to \infty}^{(K)} {\rm send}\, u_n= {\rm send}\,u $ if and only if
$\lim_{n\to \infty}^{(\Gamma)}  u_n = u $
and
$ \lim_{n\to \infty}^{(K)} [u_n]_0 = [u]_0$.

\end{pp}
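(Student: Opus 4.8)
The plan is to obtain both claims as immediate corollaries of Theorem \ref{pseu}, using the identification of each $w\in F^1_{USC}(X)$ with its sendograph $\overrightarrow{w}={\rm send}\,w\in P^1_{USC}(X)$. The paragraph just before this proposition already supplies the dictionary I need: for $w,z\in F^1_{USC}(X)$ one has $H_{\rm send}(w,z)=H_{\rm send}(\overrightarrow{w},\overrightarrow{z})$, $H_{\rm end}(w,z)=H_{\rm end}(\overrightarrow{w},\overrightarrow{z})$ and $H([w]_0,[z]_0)=H(\langle\overrightarrow{w}\rangle_0,\langle\overrightarrow{z}\rangle_0)$, and moreover ${\rm end}\,w=\underline{\overrightarrow{w}}$ and $[w]_0=\langle\overrightarrow{w}\rangle_0$. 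So everything reduces to rewriting the statements of Theorem \ref{pseu} under these equalities.

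For part (\romannumeral1), I would apply Theorem \ref{pseu}(\romannumeral4) to the sequence $\{\overrightarrow{u_n}\}$ and the point $\overrightarrow{u}$ of $P^1_{USC}(X)$: this gives $H_{\rm send}(\overrightarrow{u_n},\overrightarrow{u})\to 0$ if and only if $H_{\rm end}(\overrightarrow{u_n},\overrightarrow{u})\to 0$ and $H(\langle\overrightarrow{u_n}\rangle_0,\langle\overrightarrow{u}\rangle_0)\to 0$; substituting the three displayed equalities term by term turns this into exactly (\romannumeral1).

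For part (\romannumeral2), I would apply Theorem \ref{pseu}(\romannumeral5) to $\{\overrightarrow{u_n}\}$ and $\overrightarrow{u}$: $\lim_{n\to\infty}^{(K)}\overrightarrow{u_n}=\overrightarrow{u}$ if and only if $\lim_{n\to\infty}^{(K)}\underline{\overrightarrow{u_n}}=\underline{\overrightarrow{u}}$ and $\lim_{n\to\infty}^{(K)}\langle\overrightarrow{u_n}\rangle_0=\langle\overrightarrow{u}\rangle_0$. Since $\overrightarrow{u_n}={\rm send}\,u_n$ and $\overrightarrow{u}={\rm send}\,u$, the left-hand side is the assertion $\lim_{n\to\infty}^{(K)}{\rm send}\,u_n={\rm send}\,u$; since $\underline{\overrightarrow{u_n}}={\rm end}\,u_n$ and $\underline{\overrightarrow{u}}={\rm end}\,u$, the first condition on the right is ${\rm end}\,u=\lim_{n\to\infty}^{(K)}{\rm end}\,u_n$, which by definition means $\lim_{n\to\infty}^{(\Gamma)}u_n=u$; and since $\langle\overrightarrow{u_n}\rangle_0=[u_n]_0$ and $\langle\overrightarrow{u}\rangle_0=[u]_0$, the second condition is $\lim_{n\to\infty}^{(K)}[u_n]_0=[u]_0$. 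This is precisely (\romannumeral2).

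I do not expect any real obstacle here. The only points needing a word of justification are that the Kuratowski limits appearing in Theorem \ref{pseu} and the $\Gamma$-limit of fuzzy sets are all taken in the same ambient space $(X\times[0,1],\overline{d})$, which holds by definition, and that the map $f$ sending $w$ to $\overrightarrow{w}={\rm send}\,w$ is an isometric embedding, which is recorded in Section \ref{fsm}. Hence the entire argument is a direct unwinding of definitions once Theorem \ref{pseu} is in hand.
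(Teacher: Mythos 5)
Your proposal is correct and follows exactly the route the paper takes: the paper's own proof states that (\romannumeral1) is a special case of Theorem \ref{pseu}(\romannumeral4) (equivalently, it follows from \eqref{emsf}--\eqref{secf}) and (\romannumeral2) is a special case of Theorem \ref{pseu}(\romannumeral5), using the same identifications $H_{\rm send}(u,v)=H_{\rm send}(\overrightarrow{u},\overrightarrow{v})$, ${\rm end}\,u=\underline{(\overrightarrow{u})}$ and $[u]_0=\langle\overrightarrow{u}\rangle_0$ recorded just before the proposition. Your write-up merely makes the substitution more explicit.
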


\begin{proof}
  (\romannumeral1) follows immediately from
\eqref{emsf}, \eqref{umsf} and \eqref{secf}.
(\romannumeral1) can be seen as a special case of the clause (\romannumeral4) in Theorem \ref{pseu}.
 (\romannumeral2)
can be seen as
a special case of the clause (\romannumeral5) in Theorem \ref{pseu}.
\end{proof}

The following Theorem \ref{hkg}
is an already known conclusion\footnote{
Theorem \ref{hkg} may come from \cite{akin}.
However we cannot obtain \cite{akin}.
A reviewer kindly recommended \cite{akin} and the symbol $\chi_{S}$ in Paragraph 2 of Section 2.
}, which is useful in this paper.
  It can be proved in a similar fashion to Theorem 4.1 in \cite{huang}.
In Theorem 4.1 in \cite{huang}, we include
the case that $C=\emptyset$.
In Theorem \ref{hkg}, we exclude the case that $C=\emptyset$.

\begin{tm} \label{hkg}
Suppose that $C$, $C_n$ are sets in $C(X)$, $n=1,2,\ldots$. Then $H(C_n, C) \to 0$ implies that $\lim_{n\to \infty}^{(K)} C_n \, =C$.
\end{tm}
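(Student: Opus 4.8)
The plan is to verify the two set inclusions $C \subseteq \liminf_{n\to\infty} C_n$ and $\limsup_{n\to\infty} C_n \subseteq C$. Since $\liminf_{n\to\infty} C_n \subseteq \limsup_{n\to\infty} C_n$ holds automatically, these two inclusions force $C = \liminf_{n\to\infty} C_n = \limsup_{n\to\infty} C_n$, which is exactly the assertion $\lim_{n\to \infty}^{(K)} C_n = C$. As a preliminary remark, $H(C_n,C)\to 0$ entails both $H^{*}(C,C_n) = \sup_{x\in C} d(x,C_n) \to 0$ and $H^{*}(C_n,C) = \sup_{x\in C_n} d(x,C) \to 0$; in particular $H(C_n,C)$ is finite for all large $n$, so the fact that $H$ on $C(X)$ is only an extended metric causes no difficulty.

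For the first inclusion, I would fix $x\in C$. Since $C_n\neq\emptyset$, for each $n$ one can choose $x_n\in C_n$ with $d(x,x_n)\leq d(x,C_n)+1/n$. Then $d(x,x_n)\leq H^{*}(C,C_n)+1/n\to 0$, so $x=\lim_{n\to\infty}x_n$ with $x_n\in C_n$ for every $n$, i.e. $x\in\liminf_{n\to\infty}C_n$ by the defining description of $\liminf$ recalled in Section \ref{bas}.

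For the second inclusion, I would take $x\in\limsup_{n\to\infty}C_n$; by the defining description of $\limsup$ there are indices $n_1<n_2<\cdots$ and points $x_{n_j}\in C_{n_j}$ with $x_{n_j}\to x$. For each $j$ we have $d(x,C)\leq d(x,x_{n_j})+d(x_{n_j},C)\leq d(x,x_{n_j})+H^{*}(C_{n_j},C)$, and letting $j\to\infty$ gives $d(x,C)=0$ (using that $\{H^{*}(C_{n_j},C)\}$ is a subsequence of a sequence tending to $0$). Since $C$ is closed, $x\in C$.

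I do not expect a serious obstacle here: this is the standard "Hausdorff convergence implies Kuratowski convergence" fact, and the only points that need a little care are the nonemptiness of each $C_n$ (used to pick the approximating points $x_n$, and guaranteed by $C_n\in C(X)$) and the closedness of $C$ (used to pass from $d(x,C)=0$ to $x\in C$). The argument parallels the proof of Theorem 4.1 in \cite{huang}, with the simplification afforded by the standing hypothesis $C\neq\emptyset$.
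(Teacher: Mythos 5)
Your proof is correct and is exactly the standard argument the paper has in mind: the paper itself gives no details here, only remarking that the statement ``can be proved in a similar fashion to Theorem 4.1 in \cite{huang}'' (with the simplification that $C\neq\emptyset$), and that argument is the same two-inclusion verification you carry out. Both directions are handled properly, including the only delicate points (nonemptiness of each $C_n$ for choosing the approximants, closedness of $C$ to conclude from $d(x,C)=0$).
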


\begin{re} \label{hkr}
  {\rm
From Theorem \ref{hkg}, we obtain that
 for $u$, $u_n$, $n=1,2,\ldots$ in $P^1_{USC} (X)$:
(\romannumeral1) $H_{\rm end}(u_n,u) \to 0$
implies that
$\lim_{n\to \infty}^{(K)}  \underline{u_n} = \underline{u} $;
(\romannumeral2) $H_{\rm send}(u_n,u) \to 0$
implies that
$\lim_{n\to \infty}^{(K)}  u_n = u$.

So
for $u$, $u_n$, $n=1,2,\ldots$ in $F^1_{USC} (X)$:
(\romannumeral3) $H_{\rm end}(u_n,u) \to 0$
implies that
$\lim_{n\to \infty}^{(\Gamma)}  u_n = u $;
(\romannumeral4) $H_{\rm send}(u_n,u) \to 0$
implies that
$\lim_{n\to \infty}^{(K)} {\rm send}\, u_n = {\rm send}\, u$.

}
\end{re}

The converses of the implications in clauses (\romannumeral1), (\romannumeral2), (\romannumeral3) and (\romannumeral4) in Remark \ref{hkr}
are false.
Let $u=[0, +\infty)_{F(\mathbb{R})}$ and for $n=1,2,\ldots$,
let $u_n= [0,n]_{F(\mathbb{R})}$.
Then $\lim_{n\to \infty}^{(K)} {\rm send}\, u_n = {\rm send}\, u$,
but
$H_{\rm end}(u_n,u) = 1 \not\to 0$.
So combined with Proposition \ref{sge},
the converses of the implications in (\romannumeral3) and (\romannumeral4) are false,
and
thus
the converses of the implications in (\romannumeral1) and (\romannumeral2) are false.

\begin{re}  \label{crg}
{\rm
Let $U\in C(X)$ and
let $\{u(\al): \al\in [0,1]\}$ be a subset of $C(X)$ with $u(\al) \supseteq u(\beta)$ for all $0\leq \al \leq \beta \leq 1$.
\\
(\rmn1) \ If $\al\in [0,1)$ and
 $\lim_{\gamma\to\al+} H(u(\gamma), U) = 0$, then $U=\overline{\bigcup_{\gamma>\al}  u(\gamma)} $;
 \\
(\rmn2) \ If $\al\in (0,1]$ and
 $\lim_{\beta\to\al-} H(u(\beta), U) = 0$, then $U=\bigcap_{\beta<\al}  u(\beta) $.

  Assume that $\al\in [0,1)$ and
 $\lim_{\gamma\to\al+} H(u(\gamma), U) = 0$. Choose a decreasing sequence $\{\gamma_n\}$ in $(\al, 1]$ with $\gamma_n\to\al+$.
Then $\lim_{n\to\infty} H(u(\gamma_n), U) = 0$.
Thus by Theorem \ref{hkg}, $U = \lim_{n\to\infty}^{(K)} u(\gamma_n)= \overline{\bigcup_{n\in \mathbb{N}} u(\gamma_n)}= \overline{\bigcup_{\gamma>\al}  u(\gamma)}$, and hence (\rmn1)
is true.

Assume that $\al\in (0,1]$ and
 $\lim_{\beta\to\al-} H(u(\beta), U) = 0$. Choose an increasing sequence $\{\beta_n\}$ in $[0,\al)$ with $\beta_n\to\al-$.
Then $\lim_{n\to\infty} H(u(\beta_n), U) = 0$.
Thus by Theorem \ref{hkg}, $U = \lim_{n\to\infty}^{(K)} u(\beta_n)= \bigcap_{n\in \mathbb{N}} u(\beta_n) = \bigcap_{\beta<\al}  u(\beta)$, and hence (\rmn2) is true.

}
\end{re}

\section{ Level characterizations of $\Gamma$-convergence } \label{lcg}

In this section, we investigate the level characterizations
of
the $\Gamma$-convergence.
It is shown that
under some conditions,
the $\Gamma$-convergence of fuzzy sets can be decomposed to
the Kuratowski convergence of certain $\al$-cuts.

Throughout this paper, we suppose that the measure on
$\mathbb{R}$ is the
Lebesgue measure.

Rojas-Medar and Rom\'{a}n-Flores (\cite{rojas}) have presented
the following important and useful property
of the
$\Gamma$-convergence.

\begin{tm}
\cite{rojas} \label{Gcln}
   Suppose that $u$, $u_n$, $n=1,2,\ldots$, are fuzzy sets
  in $F^1_{USC} (X)$.
  Then
$\lim_{n\to \infty}^{(\Gamma)}  u_n = u $ if and only if for
all $\al\in (0,1],$
\begin{equation} \label{gcln}
   \{u>\alpha\}
   \subseteq
\liminf_{n\to \infty}[u_n]_{\alpha}
\subseteq
\limsup_{n\to \infty}[u_n]_\alpha
\subseteq
 [u]_\alpha.
\end{equation}

\end{tm}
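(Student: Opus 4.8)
The plan is to unwind $\Gamma$-convergence into Kuratowski-limit language and then translate it cut by cut. Since $\liminf_{n\to\infty}{\rm end}\,u_n\subseteq\limsup_{n\to\infty}{\rm end}\,u_n$ always holds, the condition ${\rm end}\,u=\lim_{n\to\infty}^{(K)}{\rm end}\,u_n$ is equivalent to the pair of inclusions ${\rm end}\,u\subseteq\liminf_{n\to\infty}{\rm end}\,u_n$ and $\limsup_{n\to\infty}{\rm end}\,u_n\subseteq{\rm end}\,u$. So I would prove that \eqref{gcln} holding for all $\alpha\in(0,1]$ is equivalent to this pair of inclusions, treating each inclusion separately and passing between points $(x,t)$ of endographs and membership $x\in[u_n]_\alpha$ via the defining relation $u_n(x)\ge t$.

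For the ``only if'' part, fix $\alpha\in(0,1]$. If $x\in\{u>\alpha\}$ then $(x,u(x))\in{\rm end}\,u$ with $u(x)>\alpha$, so the first inclusion gives $(x_n,t_n)\in{\rm end}\,u_n$ with $(x_n,t_n)\to(x,u(x))$; for large $n$ we have $t_n>\alpha$, hence $u_n(x_n)\ge t_n>\alpha$ and $x_n\in[u_n]_\alpha$, so $x\in\liminf_n[u_n]_\alpha$ (the finitely many leftover terms are filled by any point of $[u_n]_\alpha$, which is nonempty by normality, and altering finitely many terms does not affect the limit). If $x\in\limsup_n[u_n]_\alpha$, pick $x_{n_j}\in[u_{n_j}]_\alpha$ with $x_{n_j}\to x$; then $(x_{n_j},\alpha)\in{\rm end}\,u_{n_j}$ and $(x_{n_j},\alpha)\to(x,\alpha)$, so $(x,\alpha)\in\limsup_n{\rm end}\,u_n\subseteq{\rm end}\,u$, i.e.\ $x\in[u]_\alpha$. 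The middle inclusion of \eqref{gcln} is automatic.

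For the ``if'' part, assume \eqref{gcln} for every $\alpha\in(0,1]$. Points $(x,0)$ are trivial for both inclusions (the constant sequence $(x,0)\in{\rm end}\,u_n$ witnesses $(x,0)\in\liminf_n{\rm end}\,u_n$, and any $(x,0)$ lies in ${\rm end}\,u$). Given $(x,t)\in{\rm end}\,u$ with $t>0$: for each $\alpha\in(0,t)$ we have $u(x)\ge t>\alpha$, so $x\in\{u>\alpha\}\subseteq\liminf_n[u_n]_\alpha$ and there is a sequence inside ${\rm end}\,u_n$ converging to $(x,\alpha)$; a diagonal selection along any sequence $\alpha_k\uparrow t$ then produces $(y_n,s_n)\in{\rm end}\,u_n$ with $(y_n,s_n)\to(x,t)$, so $(x,t)\in\liminf_n{\rm end}\,u_n$. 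Given $(x,t)\in\limsup_n{\rm end}\,u_n$ with $t>0$, take $(x_{n_j},t_{n_j})\in{\rm end}\,u_{n_j}$ with $(x_{n_j},t_{n_j})\to(x,t)$; for a fixed $\alpha\in(0,t)$, eventually $t_{n_j}>\alpha$, so $x_{n_j}\in[u_{n_j}]_\alpha$ and hence $x\in\limsup_n[u_n]_\alpha\subseteq[u]_\alpha$, i.e.\ $u(x)\ge\alpha$; letting $\alpha\uparrow t$ gives $u(x)\ge t$, so $(x,t)\in{\rm end}\,u$.

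The main obstacle I anticipate is the diagonal construction in the ``if'' part, together with the careful handling of strict versus non-strict inequalities at the critical level $t=u(x)$: this is exactly why the left side of \eqref{gcln} must be the open cut $\{u>\alpha\}$ and not $[u]_\alpha$, since $[u]_\alpha\subseteq\liminf_n[u_n]_\alpha$ can fail. Beyond that, the only things to watch are the trivial edge cases at level $0$ and the appeal to normality of the $u_n$ to keep the $\liminf$-sequences well defined.
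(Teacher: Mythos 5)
Your proof is correct: both directions are handled properly, including the two places where care is genuinely needed (filling the finitely many leftover indices using normality of the $u_n$ so that the $\liminf$ witness is a full sequence, and the diagonal selection along $\alpha_k\uparrow t$ that explains why the open cut $\{u>\alpha\}$ rather than $[u]_\alpha$ must appear on the left of \eqref{gcln}). The paper states this theorem as a quoted result from \cite{rojas} and gives no proof of its own, so there is nothing internal to compare against; your argument is the standard unwinding of Kuratowski convergence of endographs into level-cut inclusions and is exactly what one would expect the cited proof to be.
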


Theorem \ref{infc} is Theorem 2.1 in \cite{huang}. Of course, the conclusion that
$\limsup_{n\rightarrow \infty} C_{n}$ are closed sets in $(X, d)$ in Theorem \ref{infc}
 can also be deduced from
the fact that $\limsup_{n\rightarrow \infty} C_{n}= \bigcap\limits_{n=1}^{\infty}   \overline{   \bigcup\limits_{m\geq n}C_{m}    }
$.

\begin{tm} \cite{kle} \label{infc}
Let $(X,d)$ be a metric space
and
let
$\{C_{n}\}$ be a sequence of sets in $X$.
  Then
    $\liminf_{n\rightarrow \infty} C_{n}$ and $\limsup_{n\rightarrow \infty} C_{n}$ are closed sets in $(X, d)$.
\end{tm}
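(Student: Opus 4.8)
The plan is to prove the two assertions separately. The statement about $\limsup_{n\to\infty}C_n$ is immediate: by the very formula recorded in its definition, $\limsup_{n\to\infty}C_n=\bigcap_{n=1}^{\infty}\overline{\bigcup_{m\ge n}C_m}$ is an intersection of topologically closed subsets of $(X,d)$, hence closed. (If one instead starts from the sequential description of $\limsup_{n\to\infty}C_n$, the first step would be the routine verification that it coincides with this intersection, after which the same conclusion follows.)

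For $\liminf_{n\to\infty}C_n$ I would argue directly with sequences. If the set is empty there is nothing to prove; otherwise let $y$ be the limit of a sequence $\{y_k\}$ contained in $\liminf_{n\to\infty}C_n$, and the task is to exhibit points $x_n\in C_n$ with $x_n\to y$. For each $k$, the membership $y_k\in\liminf_{n\to\infty}C_n$ furnishes a sequence $(x^{(k)}_n)_{n\ge1}$ with $x^{(k)}_n\in C_n$ and $x^{(k)}_n\to y_k$ as $n\to\infty$, hence an index $N_k$ — which we may take to be strictly increasing in $k$ — with $d(x^{(k)}_n,y_k)<1/k$ whenever $n\ge N_k$. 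Now combine these: put $x_n:=x^{(1)}_n$ for $n<N_1$ and $x_n:=x^{(k)}_n$ for $N_k\le n<N_{k+1}$. Then $x_n\in C_n$ for every $n$, and on the block $N_k\le n<N_{k+1}$ one has $d(x_n,y)\le d(x^{(k)}_n,y_k)+d(y_k,y)<1/k+d(y_k,y)$; since $k\to\infty$ as $n\to\infty$ and $y_k\to y$, a two-$\varepsilon$ estimate gives $x_n\to y$, so $y\in\liminf_{n\to\infty}C_n$, which is therefore closed.

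The only point requiring care is this block construction: the indices $N_k$ must be chosen strictly increasing so that every large enough $n$ lies in a unique block $[N_k,N_{k+1})$ with $k$ as large as desired, which is exactly what makes both $1/k$ and $d(y_k,y)$ small. I expect this to be the cleanest write-up; alternatively one can avoid the construction entirely by noting that each map $x\mapsto d(x,C_n)$ is $1$-Lipschitz (with the convention $d(x,\emptyset)=+\infty$), so $g(x):=\limsup_{n\to\infty}d(x,C_n)$ is $1$-Lipschitz, hence continuous, and — discarding the trivial case in which some $C_n$ is empty, which forces $\liminf_{n\to\infty}C_n=\emptyset$ — one has $\liminf_{n\to\infty}C_n=\{x\in X:g(x)=0\}=g^{-1}(\{0\})$, a closed set. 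This Lipschitz remark could be included as an aside.
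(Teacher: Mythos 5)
Your proof is correct. Note that the paper does not actually prove this statement: it is quoted from Klein--Thompson (the paper's Remark 2.3 points to Corollary 3.2.13 of \cite{kle}), and the only in-text justification offered is the observation, immediately before the theorem, that $\limsup_{n\to\infty}C_n=\bigcap_{n=1}^{\infty}\overline{\bigcup_{m\geq n}C_m}$ is an intersection of closed sets --- which is exactly your first paragraph. Your treatment of $\liminf_{n\to\infty}C_n$ therefore supplies the half the paper leaves to the reference, and the diagonal block construction is the standard and correct way to do it: the strictly increasing choice of the $N_k$ is indeed the one point that needs care, since it guarantees that the block index $k$ tends to infinity with $n$, making both $1/k$ and $d(y_k,y)$ small simultaneously. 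The Lipschitz alternative via $g(x)=\limsup_{n\to\infty}d(x,C_n)$ is also sound once the case of an empty $C_n$ is set aside (and it is worth noting, as you implicitly do, that $g$ may take the value $+\infty$, which does not affect the closedness of $g^{-1}(\{0\})$). What your self-contained argument buys over the paper's citation is independence from \cite{kle}; what the citation buys is brevity and the greater generality of nets in topological spaces, which the paper alludes to but does not need.
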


\begin{tm} \label{Gclnre}
   Suppose that $u$, $u_n$, $n=1,2,\ldots$, are fuzzy sets
  in $F^1_{USC} ( X )$.
  Then
 $\lim_{n\to \infty}^{(\Gamma)}  u_n = u $ if and only if for
all $\al\in (0,1],$
\begin{equation*}
  \overline{     \{u>\alpha\}     }
   \subseteq
\liminf_{n\to \infty}[u_n]_{\alpha}
\subseteq
 \limsup_{n\to \infty}[u_n]_\alpha
\subseteq
 [u]_\alpha.
\end{equation*}
\end{tm}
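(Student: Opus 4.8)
The plan is to deduce Theorem~\ref{Gclnre} directly from Theorem~\ref{Gcln} by observing that the two level conditions are in fact logically equivalent. The only difference between the inclusion chain stated here and the one in Theorem~\ref{Gcln} is that $\{u>\alpha\}$ has been replaced by its closure $\overline{\{u>\alpha\}}$; the middle inclusion $\liminf_{n\to\infty}[u_n]_\alpha\subseteq\limsup_{n\to\infty}[u_n]_\alpha$ and the right-hand inclusion $\limsup_{n\to\infty}[u_n]_\alpha\subseteq[u]_\alpha$ are word for word the same in both statements. So it suffices to show, for each fixed $\alpha\in(0,1]$, that $\{u>\alpha\}\subseteq\liminf_{n\to\infty}[u_n]_\alpha$ holds if and only if $\overline{\{u>\alpha\}}\subseteq\liminf_{n\to\infty}[u_n]_\alpha$ holds.

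The key point is that $\liminf_{n\to\infty}[u_n]_\alpha$ is a closed subset of $(X,d)$; this is exactly the content of Theorem~\ref{infc} (and can also be read off from the representation $\limsup_{n\to\infty}C_n=\bigcap_{n}\overline{\bigcup_{m\geq n}C_m}$ together with the analogous fact for $\liminf$). Since $\overline{\{u>\alpha\}}$ is by definition the smallest closed set containing $\{u>\alpha\}$, any closed set that contains $\{u>\alpha\}$ necessarily contains $\overline{\{u>\alpha\}}$; this gives the ``only if'' direction. The ``if'' direction is immediate from $\{u>\alpha\}\subseteq\overline{\{u>\alpha\}}$. Hence the displayed chain of this theorem is equivalent, for each $\alpha\in(0,1]$, to the displayed chain of Theorem~\ref{Gcln}, and therefore holds for all $\alpha\in(0,1]$ if and only if $\lim_{n\to\infty}^{(\Gamma)}u_n=u$.

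I do not expect any serious obstacle: once one recalls that the lower Kuratowski limit is automatically closed, the statement follows at once. The only things to be careful about are to apply the replacement at the correct place in the chain (only the left-most inclusion is altered) and to run the equivalence separately for each $\alpha$, which is harmless since both the hypothesis and the conclusion are quantified ``for all $\alpha\in(0,1]$''.
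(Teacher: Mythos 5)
Your proposal is correct and is essentially the paper's own proof: the paper likewise invokes Theorem~\ref{infc} to conclude that $\liminf_{n\to\infty}[u_n]_\alpha$ is closed and then reduces the statement to Theorem~\ref{Gcln}. You have merely written out the (elementary) equivalence between containing a set and containing its closure, which the paper leaves implicit.
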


\begin{proof}
By Theorem \ref{infc}, for each $\al\in [0,1]$, $\liminf_{n\to \infty}[u_n]_{\alpha}$ is a closed set in $(X,d)$.
  So the desired result follows from Theorem \ref{Gcln}.
\end{proof}

\begin{re}
\label{sur}
 {\rm Suppose that $u$, $u_n$, $n=1,2,\ldots$, are fuzzy sets
  in $F^1_{USC} ( X )$.
 If $\lim_{n\to \infty}^{(\Gamma)}  u_n = u $,
then (\romannumeral1)  $[u]_0 =
\overline{\cup_{\al>0}\liminf_{n\to \infty}[u_n]_{\al}}$,
  (\romannumeral2) $[u]_0 \subseteq
\liminf_{n\to \infty}[u_n]_{0}$, and
(\romannumeral3) $[u]_0
\subsetneqq
\liminf_{n\to \infty}[u_n]_0
$
may happen.

By Theorem \ref{Gcln},
 $[u]_0 =  \overline{\cup_{\al>0}\{ u>\al \}}
   \subseteq\overline{\cup_{\al>0}\liminf_{n\to \infty}[u_n]_{\al}}\subseteq \overline{ \cup_{\al>0}[u]_\al}= [u]_0$.
So (\romannumeral1) is true.
 By Theorem \ref{infc}, $\liminf_{n\to \infty}[u_n]_{0}$ is closed in $(X,d)$,
 so
$\overline{\cup_{\al>0}\liminf_{n\to \infty}[u_n]_{\al}} \subseteq
\liminf_{n\to \infty}[u_n]_{0}$, and then
 (\romannumeral2) follows from (\romannumeral1).
   We show (\romannumeral3) by the following example.
   Let $u= \widehat{0}_{F(\mathbb{R})}$
and for $n=1,2,\ldots$,
define $u_n \in F^1_{USCB}(\mathbb{R})$ as
\[
 u_n(x) = \left\{
            \begin{array}{ll}
              1, & x=0,\\
     1/n, & x\in (0,1],\\
0,& \mbox{otherwise}.
            \end{array}
          \right.
          \]
Then $H_{\rm end}(u_n, u) \to 0$, and therefore from Remark \ref{hkr} $\lim_{n\to \infty}^{(\Gamma)}  u_n = u $.
And
$[u]_0 = \{0\}
\subsetneqq
[0,1]=
\liminf_{n\to \infty}[u_n]_0
$.

Since $\lim_{n\to \infty}^{(\Gamma)}  u_n = u $ implies that $[u]_0 \subseteq
\liminf_{n\to \infty}[u_n]_{0}$,
then
by Proposition \ref{sge}, $   \lim^{(K)}_{n\to \infty} {\rm send}\, u_n= {\rm send}\,u $ if and only if
$\lim_{n\to \infty}^{(\Gamma)}  u_n = u $ and
$[u]_0
\supseteq
\limsup_{n\to \infty}[u_n]_0
$.
}
\end{re}

Let $u\in F(X)$. Denote
\begin{gather*}
  D(u) :=         \{ \al\in (0,1):  [u]_\al \nsubseteq \overline{\{u>\al\}} \},  \\
   P(u) :=         \{ \al\in (0,1):  \overline{\{u>\al\}} \subsetneqq  [u]_\al \}.
\end{gather*}
A
 number $\al$ in $P(u)$
is
called a platform point of $u$.
Clearly $P(u) \subseteq D(u)$.
$P(u)\subsetneqq D(u) $ could happen.
See Example \ref{pdr}.

\begin{eap}\label{pdr}
{\rm
  let $u\in F(\mathbb{R})$ be defined by
\[u(x)
=\left\{
   \begin{array}{ll}
    1,       & x\in (0,1),\\
     0.6,     & x\in [1,3],  \\
     0,        &  x\in \mathbb{R}\setminus (0,3].
   \end{array}
 \right.
\]
Then $P(u)=\emptyset$ and $D(u)=\{0.6 \}$.
So $P(u) \subsetneqq D(u)$.
}
\end{eap}

The symbol $l^2$ denotes the
 \textbf{Hilbert space} $\bm{l^2}:= \{(x_i)_{i=1}^{+\infty}: \sum_{i=1}^{+\infty} x_i^2 < +\infty \}$.
 $\|\cdot\|$ and $\langle\cdot,\cdot\rangle$ denote the norm and inner product on $l^2$, respectively.
For each $x = (x_i)_{i=1}^{+\infty}$ and $y = (y_i)_{i=1}^{+\infty}$ in $l^2$,
 $
 \langle x,y\rangle = \sum_{i=1}^{+\infty} x_i \cdot y_i$ and
 $
 \|x\|=\sqrt{\langle x,x\rangle}$.
 $\mathbb{R}^m$ can be seen as a subspace of $l^2$.
 $\|\cdot\|$ and $\langle\cdot,\cdot\rangle$ are also used to denote the Euclidean norm and Euclidean inner product on $\mathbb{R}^m$, respectively.

 A set $S$ is said to be countable if it is finite or countably infinite.
 In this sense, ``countable'' means the same as ``at most countable''.
 Theorem 5.1 in \cite{huang} says that
$D(u)$
is countable when
$u\in F (\mathbb{R}^m)$.

\begin{tm} \label{ldc}
   Let $u\in F(l^2)$. Then the set $D(u)$
   is countable.
\end{tm}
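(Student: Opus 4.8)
The plan is to exploit the separability of $l^2$ in order to replace the local/packing argument that works in $\mathbb{R}^m$ (Theorem 5.1 in \cite{huang}), which does not transfer directly since $l^2$ is not locally compact. First I would, for each $\al\in D(u)$, fix a \emph{witness}: a point $x_\al\in [u]_\al\setminus\overline{\{u>\al\}}$ together with a radius $r_\al>0$ such that the open ball $B(x_\al,r_\al)$ is disjoint from $\{u>\al\}$. Such an $r_\al$ exists precisely because $x_\al\notin\overline{\{u>\al\}}$, and note that $u(x_\al)\geq\al$ since $x_\al\in[u]_\al$.

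Next I would stratify $D(u)$ by the size of the radius: for $k\in\mathbb{N}$ put $D_k(u):=\{\al\in D(u): r_\al>1/k\}$, so that $D(u)=\bigcup_{k=1}^{\infty} D_k(u)$. The key observation is that for any two distinct $\al,\be\in D_k(u)$, say $\al<\be$, one has $\|x_\al-x_\be\|>1/k$. Indeed $u(x_\be)\geq\be>\al$, so $x_\be\in\{u>\al\}$, hence $x_\be\notin B(x_\al,r_\al)$, i.e. $\|x_\al-x_\be\|\geq r_\al>1/k$. Consequently $\{x_\al:\al\in D_k(u)\}$ is a $1/k$-separated subset of $l^2$, and the assignment $\al\mapsto x_\al$ is injective on $D_k(u)$.

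Finally, since $l^2$ is separable, every $1/k$-separated subset is countable: choosing a countable dense set $Q\subseteq l^2$ and, for each $x$ in the separated set, some $q(x)\in Q$ with $\|x-q(x)\|<1/(2k)$, the triangle inequality forces $q$ to be injective. Hence $\{x_\al:\al\in D_k(u)\}$ is countable, so $D_k(u)$ is countable by the injectivity of $\al\mapsto x_\al$, and therefore $D(u)=\bigcup_{k\geq 1} D_k(u)$ is countable, as claimed.

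There is no genuinely hard step here; the only point to watch is exactly the one noted above — one must not try to count the balls $B(x_\al,r_\al)$ by a volume/packing estimate as in $\mathbb{R}^m$, since $l^2$ admits no such estimate. Separability is what does the work, and in fact the same argument shows $D(u)$ is countable for $u\in F(Y)$ with $(Y,\rho)$ an arbitrary separable metric space.
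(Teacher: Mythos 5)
Your proof is correct, and it takes a genuinely different route from the paper. The paper proves Theorem \ref{ldc} by transplanting the support-function argument of Theorem 5.1 in \cite{huang}: it introduces $S_{u,t,r}(e,\al)=\sup\{\langle e,x-t\rangle: x\in[u]_\al\cap\overline{B(t,r)}\}$, counts the discontinuities of these monotone functions over directions $e$ and over a countable dense set of centers $t$ in $l^2_{\mathbb{Q}}$, and so leans on the inner-product structure of $l^2$; the general separable case (Theorem \ref{sdc}) is then obtained only afterwards, by a homeomorphic embedding of $([u]_0,d)$ into $l^2$. You instead argue directly: each $\al\in D(u)$ has a witness $x_\al\in[u]_\al\setminus\overline{\{u>\al\}}$ with a ball $B(x_\al,r_\al)$ missing $\{u>\al\}$; for $\al<\be$ in $D(u)$ the point $x_\be$ lies in $\{u>\al\}$ and hence outside that ball, so the witnesses with $r_\al>1/k$ form a $1/k$-separated (hence countable, by separability) family, and $D(u)=\bigcup_k D_k(u)$ is countable. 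This is shorter, uses no linear or inner-product structure, and --- since all witnesses lie in $\{u>0\}\subseteq[u]_0$ --- it proves Theorem \ref{sdc} and Corollary \ref{smc} in one stroke, making the embedding into $l^2$ unnecessary. What the paper's route buys is continuity with the method of \cite{huang} (and the explicit support-function apparatus, should it be wanted elsewhere); what yours buys is a more elementary and strictly more general argument. One small point of care, which you handle correctly: the separation constant must come from the radius attached to the \emph{smaller} level $\al$, which is why the stratification by $r_\al>1/k$ is needed before invoking separability.
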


\begin{proof}
  The proof is similar to
that of Theorem 5.1 in \cite{huang}.
A sketch of the proof is given as follows.
We use $\mathbf{S}^1$ to denote the set $\{ e\in l^2: \|e\|=1\}$.

For each $t\in l^2$ and $r>0$,
 define
$S_{u, t, r}(\cdot, \cdot) :  \mathbf{S}^1 \times [0,1] \to \{-\infty \} \cup \mathbb{R} $
by
\[
S_{u, t, r} (e,\al)
=\left\{
   \begin{array}{ll}
 -\infty,  & \mbox{if} \  [u]_\al \cap \overline{B(t,r)} = \emptyset,
\\
\sup \{     \langle e, x-t \rangle :  x\in  [u]_\al \cap \overline{B(t,r)}  \}, & \mbox{if} \  [u]_\al \cap \overline{B(t,r)} \not= \emptyset,
   \end{array}
 \right.
\]
where
 $B(t,r):= \{x\in l^2: \|x-t\| < r \}$ and $\overline{B(t,r)}:= \{x\in l^2: \|x-t\| \leq r \}$.

Let $t\in l^2$, $r>0$ and $e\in \mathbf{S}^1$. Define
a set $D(u,t,r,e)$ as follows:
$\al \in D(u,t,r,e)$ if and only if (\rmn1) $\al\in(0,1)$ and $S_{u, t, r}(e,\al)\in \mathbb{R}$,
(\rmn2) $S_{u, t, r}(e,\beta)=-\infty$ for all $\beta > \al$ or
$-\infty < \lim_{\beta\to\al+}S_{u, t, r}(e,\beta)< \lim_{\beta\to\al-}S_{u, t, r}(e,\beta)$.
From the monotonicity of $S_{u, t, r}(e,\cdot)$, $D(u,t,r,e)$ is countable.
Then proceed similarly to the proof of Lemma A.1 in \cite{huang} (Delete the ``,'' and add ``and all $e,f\in \mathbf{S}^1$,'' at the end of line 9 of page 82 in \cite{huang} and the narrative of this proof will become clearer),
in more detail, replacing $\mathbb{S}^{m-1}$ by $\mathbf{S}^1$ in this proof, we obtain
that $D(u,t,r) = \bigcup_{e\in \mathbf{S}^1} D(u,t,r,e)$ is countable.

Here we mention that
 the narrative of the proof of formula (A.6) in Theorem 5.1 in \cite{huang} can be slightly simplified.
The detailed operations are performed as follows:
replace
Lines 1 and 2 from the bottom in Page 82 and Lines 1 and 2 in Page 83 in \cite{huang}
by
\begin{itemize}
  \item
Since $2\langle a, b \rangle =  \| a \|^2 + \|b\|^2 - \|a-b\|^2$ for each $a,b \in \mathbb{R}^m$, then
$$
\langle e, x-q\rangle = \frac{    \langle   y-q,    x-q   \rangle    }    {  \|y-q\|    }  =  \frac{ \|x-q\|^2 + \|y-q\|^2 -\|x-y\|^2 } {2 \|y-q\| }
$$
\end{itemize}

Note that
$2\langle a, b \rangle =  \| a \|^2 + \|b\|^2 - \|a-b\|^2$ for each $a,b \in l^2$.
So proceed similarly to this slightly simplified proof of Theorem 5.1 in \cite{huang},
in more detail, replacing $\mathbb{R}^m$ by $l^2$, the definition of $\mathbb{Q}^m$ in Page 82 by $l^2_{\mathbb{Q}}:=\{(z_i)_{i=1}^{+\infty} \in l^2 : z_i\in \mathbb{Q} \mbox{ for each } i\in \mathbb{N}\}$, and $\mathbb{Q}^m$ by $l^2_{\mathbb{Q}}$ in this proof, we obtain that $D(u)$
is countable. Here we mention that $l^2_{\mathbb{Q}}$ is countable.
Of course, this fact can also be shown by proceeding similarly to the original proof of Theorem 5.1 in \cite{huang}.
\end{proof}

The following Theorem \ref{sdc} is
a generalization of
 Theorem 5.1 in \cite{huang} and Theorem \ref{ldc}.
Its proof is based on the well-known conclusion:
\begin{itemize}
  \item each separable metric space is homeomorphic to a subspace
of the Hilbert space $l^2$.
\end{itemize}

\begin{tm}\label{sdc}
 Let $(X,d)$ be a metric space and $u \in F(X)$. If $([u]_0, d)$ is separable, then the set
 $D(u)$
 is countable.
\end{tm}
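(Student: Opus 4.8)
The plan is to reduce the statement to Theorem~\ref{ldc} by transporting $u$, through a topological embedding of $[u]_0$, to a fuzzy set on $l^2$. The first thing to notice is that membership of a number $\al$ in $D(u)$ depends only on the restriction of $u$ to $S:=[u]_0$ and on the subspace topology of $S$: for $\al\in(0,1]$ we have $[u]_\al\subseteq\{u>0\}\subseteq[u]_0$, and $[u]_0=\overline{\{u>0\}}$ is closed in $(X,d)$, so for $\al\in(0,1)$ the closure $\overline{\{u>\al\}}$ taken in $X$ coincides with the closure of $\{u>\al\}$ taken in $S$. Thus $\al\in D(u)$ if and only if $\al\in(0,1)$ and $[u]_\al$ is not contained in the $S$-closure of $\{u>\al\}$.

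Since $([u]_0,d)$ is separable, the cited embedding fact provides a homeomorphism $\varphi$ of $S$ onto a subspace $Y$ of $l^2$. Define $v\in F(l^2)$ by setting $v(y):=u(\varphi^{-1}(y))$ for $y\in Y$ and $v(y):=0$ for $y\in l^2\setminus Y$. Because $v$ vanishes off $Y$, for each $\al\in(0,1]$ we get $[v]_\al=\varphi([u]_\al)\subseteq Y$, and for each $\al\in(0,1)$ we get $\{v>\al\}=\varphi(\{u>\al\})\subseteq Y$.

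Next I would match the closures. A homeomorphism preserves closures, and closures in $Y$ are the traces on $Y$ of closures in $l^2$; together with the observation above this yields, for $\al\in(0,1)$,
$$\overline{\{v>\al\}}\cap Y=\varphi\!\left(\overline{\{u>\al\}}\right),$$
where $\overline{\{v>\al\}}$ denotes the closure in $l^2$. Since $[v]_\al\subseteq Y$, the inclusion $[v]_\al\subseteq\overline{\{v>\al\}}$ is equivalent to $[v]_\al\subseteq\overline{\{v>\al\}}\cap Y$, and applying the bijection $\varphi$ this is in turn equivalent to $[u]_\al\subseteq\overline{\{u>\al\}}$. Hence $D(u)=D(v)$. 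As $v\in F(l^2)$, Theorem~\ref{ldc} gives that $D(v)$ is countable, and therefore $D(u)$ is countable.

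The part needing the most care is the bookkeeping with closures across the three spaces $X$, $S=[u]_0$ and $l^2$: one must use $\al>0$ together with the closedness of $[u]_0$ to ensure that all the cuts involved stay inside $S$, so that the homeomorphism $\varphi$ may legitimately be applied to them, and one must check that extending $v$ by $0$ outside $Y$ introduces no new points into $[v]_\al$ or $\{v>\al\}$ for $\al>0$. The remaining verifications are routine.
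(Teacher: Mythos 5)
Your proposal is correct and follows essentially the same route as the paper's own proof: embed $([u]_0,d)$ homeomorphically into $l^2$, transport $u$ to a fuzzy set on $l^2$ extended by zero, verify $D(u)=D(u_f)$ by tracking closures across $X$, $[u]_0$, and $l^2$ (using that $[u]_0$ is closed and that all $\al$-cuts with $\al>0$ lie inside it), and then invoke Theorem~\ref{ldc}. The only cosmetic difference is that you phrase the equivalence via set inclusions while the paper phrases it pointwise.
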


\begin{proof}
Let $f$ be a homeomorphism
from
$([u]_0, d)$ to a subspace
of
$l^2$.
Consider $u_f \in F(l^2)$ defined by
\[
u_f(t)
=\left\{
   \begin{array}{ll}
     u(f^{-1} (t)), & t\in  f([u]_0), \\
          0, & t\in l^2 \setminus   f([u]_0).
   \end{array}
 \right.
\]
Then by Theorem \ref{ldc}, $D(u_f)$ is countable.
To show
that
$D(u)$
 is countable, it suffices to show that
$ D(u)= D(u_f)$.

For $S \subseteq [u]_0$,
let
$ \overline{S}^{[u]_0}$ denote the topological closure of $S$ in $([u]_0, d)$.
For
 $W \subseteq f([u]_0)$,
let
 $\overline{W}^{l^2} $ denote
the topological closure of $W$ in $l^2$,
and let
$\overline{W}^{f([u]_0)} $ denote the topological closure of $W$ in $f([u]_0)$, here we see $f([u]_0)$ as a subspace of $l^2$.
As described previously, for $S \subseteq X$, we use $\overline{S}$
to denote
the topological closure of $S$ in $X$.

We claim that
\\
(\romannumeral1) \ $D(u)=\{\al \in (0,1):  \mbox{ there exists an } x\in \{u>0\} \mbox{ such that } x \in [u]_\al \mbox{ and } x \notin \overline{\{u>\al\}}     \}$;
\\
(\romannumeral2) \ $ D(u_f)=\{\al\in (0,1): \mbox{ there exists a } y \in \{u_f>0\}\mbox{ such that } y\in [u_f]_\al \mbox{ and } y \notin \overline{\{u_f>\al\}}^{l^2}   \}$;
\\
(\romannumeral3) \
$ D(u_f)=\{\al\in (0,1): \mbox{ there exists an } x\in  \{u>0\}  \mbox{ such that } f(x) \in [u_f]_\al \mbox{ and } f(x) \notin \overline{\{u_f>\al\}}^{l^2}   \}$;
\\
(\romannumeral4) \
Let $\al\in [0,1]$ and $x\in [u]_0$. Then
\\
(\romannumeral4-1)  $x\in [u]_\al    \Leftrightarrow   f(x) \in [u_f]_\al, $
and
\\
 (\romannumeral4-2)
$x \notin \overline{\{u>\al\}} \Leftrightarrow  x \notin \overline{\{u>\al\}}^{[u]_0}
  \Leftrightarrow    f(x) \notin \overline{\{u_f>\al\}}^{f([u]_0)}
\Leftrightarrow
f(x) \notin \overline{\{u_f>\al\}}^{l^2} $.

Clearly (\romannumeral1) and (\romannumeral2) are true.
Let $y \in l^2$. Then $y\in \{u_f > 0 \}$ if and only if there exists a unique $x\in  \{u>0\}$ such that $y = f(x)$.
So by (\romannumeral2), (\romannumeral3) is true.

Note that
for each $x\in [u]_0$, $u(x)=u_f(   f(x))$.
So (\romannumeral4-1) is true.

Now we show (\romannumeral4-2).
Since $x\in [u]_0$, the first ``$\Leftrightarrow$'' holds. Since $f(x)\in f([u]_0)$, the third ``$\Leftrightarrow$'' holds.  Note that $f(\{u>\al\}) = \{u_f >\al\}$. As
$f$ is a homeomorphism
from
$([u]_0, d)$ to $f([u]_0)$, it follows that $f(\overline{\{u>\al\}}^{[u]_0})= \overline{\{u_f>\al\}}^{f([u]_0)} $. Thus the second ``$\Leftrightarrow$'' holds.
So (\romannumeral4-2) is true.

From (\romannumeral1), (\romannumeral3) and (\romannumeral4),
$D(u) = D(u_f)$.
\end{proof}

\begin{tl}\label{smc}
   Let $(X,d)$ be a separable metric space and $u \in F(X)$. Then the set
 $D(u)$
 is countable.
\end{tl}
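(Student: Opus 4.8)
The plan is to reduce Corollary \ref{smc} to Theorem \ref{sdc}, whose hypothesis is only that $([u]_0, d)$ is separable. So the entire task is to observe that separability of the ambient space $(X,d)$ is inherited by the subspace $[u]_0$. First I would note that $[u]_0 \subseteq X$, hence $([u]_0, d)$ is a metric subspace of $(X,d)$; then I would invoke the classical fact that \emph{every subspace of a separable metric space is separable}. This is exactly the point where metrizability is used — the statement is false for arbitrary topological spaces, but true here.

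Concretely, if $\{x_n : n\in\mathbb{N}\}$ is a countable dense subset of $X$, then for each pair $(n,k)\in\mathbb{N}\times\mathbb{N}$ such that $[u]_0 \cap B(x_n, 1/k)\neq\emptyset$ one picks a point $y_{n,k}\in [u]_0$ with $d(y_{n,k}, x_n) < 1/k$. The set $\{y_{n,k}\}$ is countable, and a short routine argument shows it is dense in $([u]_0,d)$: given $z\in[u]_0$ and $\varepsilon>0$, choose $k$ with $2/k<\varepsilon$ and $n$ with $d(z, x_n)<1/k$; then $z\in [u]_0\cap B(x_n,1/k)$, so $y_{n,k}$ exists and $d(z, y_{n,k}) \leq d(z,x_n)+d(x_n,y_{n,k}) < 2/k < \varepsilon$. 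Hence $([u]_0, d)$ is separable.

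With separability of $([u]_0, d)$ in hand, Theorem \ref{sdc} applies directly and gives that $D(u)$ is countable, which is the assertion of Corollary \ref{smc}. There is essentially no obstacle here; the only thing to be careful about is that the well-known subspace-separability fact genuinely requires the metric (not merely topological) structure, and the argument above records why. This makes Corollary \ref{smc} a clean special case of Theorem \ref{sdc}, just as Theorems 5.1 in \cite{huang} and \ref{ldc} are, since $\mathbb{R}^m$ and $l^2$ are separable.
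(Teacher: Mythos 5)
Your proposal is correct and follows exactly the paper's own route: observe that $([u]_0,d)$, being a subspace of the separable metric space $(X,d)$, is itself separable, and then apply Theorem \ref{sdc}. The extra detail you supply on why subspace separability holds in metric spaces is a standard fact the paper simply cites without proof.
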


\begin{proof}
  As $([u]_0,d)$ is a subspace of the separable metric space $(X,d)$, it follows that $([u]_0,d)$ is separable. Thus,
by Theorem \ref{sdc}, $D(u)$ is countable.

\end{proof}

\begin{re}{\rm
It is well-known that
  both $\mathbb{R}^m$ and $l^2$ are separable metric spaces.
Thus
both Theorem 5.1 in \cite{huang}
and
Theorem \ref{ldc}
are special cases of Corollary \ref{smc}, which is a corollary of Theorem \ref{sdc}.
So
Theorem \ref{sdc} is a generalization
of Theorem 5.1 in \cite{huang}
and
Theorem \ref{ldc}.

}
\end{re}

\begin{re} \label{spc}
{\rm
 Theorems \ref{ldc}, \ref{sdc} and Corollary \ref{smc}
remain true if $D(u)$ is replaced by $P(u)$, since $P(u) \subseteq D(u)$ for each $u\in F(X)$.

Let $u\in F(X)$. Define $F(u) := \{ \al\in (0,1): \overline{\{u>\al\}} \not= [u]_\al\}$. Clearly $(0,1) \setminus F(u)   \subseteq (0,1) \setminus D(u)$; that is, $D(u) \subseteq F(u)$.
$D(u) \subsetneqq F(u)$ may happen.
For instance, let $u$ be the fuzzy set defined in Example \ref{pdr}. Then $F(u) = (0,1)\supsetneqq \{0.6\}=D(u)$.

For each $u\in F(X)$, $P(u) \subseteq D(u)\subseteq F(u)$.
We claim that if $u\in F^1_{USC} (X)$, then $P(u)= D(u)=F(u)$.
Let
$u\in F^1_{USC} (X)$.
Then for each $\al\in [0,1]$, $\overline{\{u>\al\}} \subseteq [u]_\al$.
Thus
$F(u)\subseteq P(u)$, and so $P(u)= D(u)=F(u)$.

}
\end{re}

\begin{tm} \label{gusc}
    Suppose that $u$, $u_n$, $n=1,2,\ldots$, are fuzzy sets
  in $F^1_{USC} ( X )$. Then the following statements are true.
\\
(\romannumeral1) If there is a dense set $P$ in $(0,1)$ such that $[u]_\al=\lim^{(K)}_{n\to \infty} [u_n]_\al $ for $\al\in P$, then $u = \lim_{n\to \infty}^{(\Gamma)}  u_n$.
\\
(\romannumeral2)    If $u = \lim_{n\to \infty}^{(\Gamma)}  u_n $, then $[u]_\al=\lim^{(K)}_{n\to \infty} [u_n]_\al $ for all $\al\in (0,1) \setminus P(u)$.

\end{tm}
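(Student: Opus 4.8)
The plan is to use the level characterization of $\Gamma$-convergence from Theorem \ref{Gcln} (equivalently Theorem \ref{Gclnre}) as the bridge between the Kuratowski convergence of individual $\al$-cuts and the $\Gamma$-convergence of the fuzzy sets. The key structural facts I will exploit are the nesting of cuts ($[u_n]_\al \supseteq [u_n]_\be$ for $\al \le \be$), the left-continuity identity $[u]_\al = \bigcap_{\be<\al}[u]_\be$ from Proposition \ref{repm}(ii), and the description of $\{u>\al\}$ via $\{u>\al\} = \bigcup_{\be>\al}[u]_\be$.

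For part (\romannumeral1), suppose $[u]_\al = \lim^{(K)}_{n\to\infty}[u_n]_\al$ for all $\al$ in a dense subset $P$ of $(0,1)$. Fix an arbitrary $\al\in(0,1]$; I want to verify the chain \eqref{gcln} of Theorem \ref{Gcln}. For the middle-to-right inclusion $\limsup_n [u_n]_\al \subseteq [u]_\al$: pick $\be\in P$ with $\be<\al$; then $[u_n]_\al \subseteq [u_n]_\be$ for all $n$, so $\limsup_n[u_n]_\al \subseteq \limsup_n[u_n]_\be = [u]_\be$, and intersecting over such $\be$ (using density of $P$ below $\al$, plus Proposition \ref{repm}(ii)) gives $\limsup_n[u_n]_\al \subseteq \bigcap_{\be<\al}[u]_\be = [u]_\al$; the case $\al=1$ needs a separate small remark since there may be no $\be\in P$ above — but there is always $\be\in P$ below $1$, so the same argument works. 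For the left inclusion $\{u>\al\} \subseteq \liminf_n[u_n]_\al$: take $x\in\{u>\al\}$, so $x\in[u]_\gamma$ for some $\gamma>\al$; choose $\be\in P$ with $\al<\be<\gamma$ (density), so $x\in[u]_\be = \liminf_n[u_n]_\be$, and since $[u_n]_\be\subseteq[u_n]_\al$ we get $x\in\liminf_n[u_n]_\al$. This establishes \eqref{gcln} for every $\al\in(0,1]$, hence $u=\lim^{(\Gamma)}_n u_n$ by Theorem \ref{Gcln}.

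For part (\romannumeral2), assume $u=\lim^{(\Gamma)}_n u_n$ and fix $\al\in(0,1)\setminus P(u)$; since $u\in F^1_{USC}(X)$, Remark \ref{spc} gives $P(u)=D(u)=F(u)$, so $\overline{\{u>\al\}}=[u]_\al$. From Theorem \ref{Gcln} we already have $\limsup_n[u_n]_\al \subseteq [u]_\al$, so it remains to show $[u]_\al \subseteq \liminf_n[u_n]_\al$. Theorem \ref{Gcln} gives $\{u>\al\}\subseteq\liminf_n[u_n]_\al$, and by Theorem \ref{infc} the set $\liminf_n[u_n]_\al$ is closed, hence $\overline{\{u>\al\}}\subseteq\liminf_n[u_n]_\al$; but $\overline{\{u>\al\}}=[u]_\al$ by the choice of $\al$. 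Combining, $[u]_\al\subseteq\liminf_n[u_n]_\al\subseteq\limsup_n[u_n]_\al\subseteq[u]_\al$, so all are equal and $[u]_\al=\lim^{(K)}_n[u_n]_\al$.

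The main obstacle — really the only delicate point — is the careful bookkeeping in part (\romannumeral1) with the density argument: one must ensure that for each target level $\al$ there are enough points of $P$ both just above $\al$ (to reach into $\{u>\al\}$) and just below $\al$ (to trap $[u]_\al$ from above via left-continuity), and handle the endpoint $\al=1$ where no point of $P$ lies above. Everything else is a direct application of the already-established Theorems \ref{Gcln}, \ref{infc} and Remark \ref{spc} together with monotonicity of cuts.
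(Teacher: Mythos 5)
Your proposal is correct and follows essentially the same route as the paper: part (\romannumeral2) is read off from Theorem \ref{Gclnre} (your use of Theorem \ref{infc} to pass from $\{u>\al\}$ to its closure is exactly how that theorem is obtained), and part (\romannumeral1) is the same density-plus-monotonicity argument the paper invokes, approximating each level $\al$ by points of $P$ from above and below and using $[u]_\al=\bigcap_{\beta<\al}[u]_\beta$, with the same separate remark for $\al=1$.
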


\begin{proof}

 The proof of (\romannumeral1)
is similar to
  ``(\romannumeral2) $\Rightarrow$ (\romannumeral1)''
in the proof of
   Theorem 6.2 in \cite{huang}. Here we mention the following basic facts.
 Let $\al\in(0,1)$. Then there exist two sequences $\{\al_n\}$ and $\{\beta_n\}$ in $P$ with $\al_n\to\al+$ and
   $\beta_n\to\al-$. Indeed, we can get two such sequences by
  choosing $\al_n$ in $P\cap (\al, \al+(1-\al)/n)$ and $\beta_n$ in $P\cap ((1-1/n)\al,\al)$ for each $n\in \mathbb{N}$.
 Let $\al=1$. Then there exists a sequence $\{\gamma_n\}$ in $P$ with $\gamma_n\to 1-$. Indeed, we can get a such sequence by
 choosing $\gamma_n$ in $P\cap (1-1/n, 1)$ for each $n\in \mathbb{N}$.

 (\romannumeral2) follows immediately from Theorem \ref{Gclnre}.
\end{proof}

The following theorem gives some conditions under which
the $\Gamma$-convergence of fuzzy sets can be
decomposed to
the Kuratowski convergence of certain $\al$-cuts.
For simplicity, we use
``a.e.'' to denote ``almost everywhere''.

\begin{tm} \label{gdm}
Suppose that $u$, $u_n$, $n=1,2,\ldots$, are fuzzy sets
  in $F^1_{USC} ( X )$. If $([u]_0, d)$ is separable, then the following are equivalent:
\\
(\romannumeral1) $\lim_{n\to \infty}^{(\Gamma)}  u_n = u $;
\\
(\romannumeral2) $\lim^{(K)}_{n\to \infty} [u_n]_\al = [u]_\al$ holds a.e. on $\al \in (0,1)$;
\\
(\romannumeral3) $\lim^{(K)}_{n\to \infty} [u_n]_\al = [u]_\al$ holds for all $\al\in (0,1) \setminus P(u)$;
\\
(\romannumeral4) There is a dense subset $P$ of $(0,1) \backslash P(u)$ such that $\lim^{(K)}_{n\to \infty} [u_n]_\alpha = [u]_\al$ holds for
$\al\in P$;
\\
(\romannumeral5) There is a countable dense subset $P$ of $(0,1) \backslash P(u)$ such that $\lim^{(K)}_{n\to \infty} [u_n]_\alpha = [u]_\al$ holds for $\al\in P$.

\end{tm}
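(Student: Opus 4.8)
The plan is to deduce everything from Theorem \ref{gusc}, which does the analytic work, together with the fact that $P(u)$ is countable; the latter is exactly where the hypothesis that $([u]_0,d)$ is separable is used, via Theorem \ref{sdc} and the identity $P(u)=D(u)$ recorded in Remark \ref{spc}. Concretely, I would prove the cycle $(\romannumeral1)\Rightarrow(\romannumeral3)\Rightarrow(\romannumeral4)\Rightarrow(\romannumeral5)\Rightarrow(\romannumeral1)$, and then bring $(\romannumeral2)$ in by showing $(\romannumeral3)\Rightarrow(\romannumeral2)$ and $(\romannumeral2)\Rightarrow(\romannumeral1)$.

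First, $(\romannumeral1)\Rightarrow(\romannumeral3)$ is precisely Theorem \ref{gusc}(\romannumeral2). For $(\romannumeral3)\Rightarrow(\romannumeral4)$ one simply takes $P=(0,1)\setminus P(u)$, which is trivially a dense subset of itself, so nothing is needed. For $(\romannumeral4)\Rightarrow(\romannumeral5)$ I would use that $(0,1)\setminus P(u)$, as a subspace of $\mathbb{R}$, is second countable: choosing from a given dense $P$ one point in the (nonempty) trace on $P$ of each member of a countable base produces a countable subset of $P$ still dense in $(0,1)\setminus P(u)$. For $(\romannumeral5)\Rightarrow(\romannumeral1)$: since $P(u)$ is countable, $(0,1)\setminus P(u)$ is $(0,1)$ with a countable set removed, hence dense in $(0,1)$; therefore a set $P$ dense in $(0,1)\setminus P(u)$ is dense in $(0,1)$, and $\lim^{(K)}_{n\to\infty}[u_n]_\al=[u]_\al$ for $\al\in P$, so Theorem \ref{gusc}(\romannumeral1) gives $\lim_{n\to\infty}^{(\Gamma)}u_n=u$.

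It remains to incorporate $(\romannumeral2)$. For $(\romannumeral3)\Rightarrow(\romannumeral2)$: $P(u)$ being countable is Lebesgue-null, so convergence for every $\al\in(0,1)\setminus P(u)$ is convergence a.e. on $(0,1)$. For $(\romannumeral2)\Rightarrow(\romannumeral1)$: the set $E\subseteq(0,1)$ on which $\lim^{(K)}_{n\to\infty}[u_n]_\al=[u]_\al$ holds has full measure, hence is dense in $(0,1)$, and Theorem \ref{gusc}(\romannumeral1) applies. Together with $(\romannumeral1)\Rightarrow(\romannumeral3)$ this shows all five statements are equivalent. I expect the substantive content to be entirely inherited from Theorems \ref{gusc} and \ref{sdc}; the only points needing care — and hence the mild ``main obstacle'' — are bookkeeping: keeping track of whether density is meant in $(0,1)$ or in $(0,1)\setminus P(u)$ and noting these coincide because $P(u)$ is countable, and the routine fact that a dense subset of a second-countable space contains a countable dense subset.
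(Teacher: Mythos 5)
Your proposal is correct and follows essentially the same route as the paper: both deduce everything from Theorem \ref{gusc} together with the countability of $P(u)$ obtained from Theorem \ref{sdc} and Remark \ref{spc}, using exactly the implications (\romannumeral1)$\Rightarrow$(\romannumeral3) via Theorem \ref{gusc}(\romannumeral2), the density arguments for (\romannumeral4)/(\romannumeral5)$\Rightarrow$(\romannumeral1) and (\romannumeral2)$\Rightarrow$(\romannumeral1) via Theorem \ref{gusc}(\romannumeral1), and the measure-zero observation for (\romannumeral3)$\Rightarrow$(\romannumeral2). The only cosmetic difference is that the paper organizes the implications into intermediate claims holding under weaker hypotheses on $P(u)$, while you run a single cycle; the substance is identical.
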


\begin{proof}
Suppose that $u$, $u_n$, $n=1,2,\ldots$, are fuzzy sets
  in $F^1_{USC} ( X )$. We claim that
\\
(a) (\romannumeral2)$\Rightarrow$(\romannumeral1)$\Rightarrow$(\romannumeral3)$\Rightarrow$(\romannumeral4)$\Leftrightarrow$(\romannumeral5).
\\
(b) If $(0,1) \backslash P(u)$ is dense in $(0,1)$, then
(\romannumeral1)$\Leftrightarrow$(\romannumeral3)$\Leftrightarrow$(\romannumeral4)$\Leftrightarrow$(\romannumeral5).
\\
(c) If $P(u)$ is a set of measure zero, then
(\romannumeral1)$\Leftrightarrow$(\romannumeral2)$\Leftrightarrow$(\romannumeral3)$\Leftrightarrow$(\romannumeral4)$\Leftrightarrow$(\romannumeral5).

(\romannumeral2)$\Rightarrow$(\romannumeral1) follows from Theorem \ref{gusc} (\romannumeral1). (\romannumeral1)$\Rightarrow$(\romannumeral3) is Theorem \ref{gusc} (\romannumeral2).
Clearly
 (\romannumeral3)$\Rightarrow$(\romannumeral4)$\Leftrightarrow$(\romannumeral5).
So (a) is true.
If $(0,1) \backslash P(u)$ is dense in $(0,1)$, then a
dense subset
 $P$ of $(0,1) \backslash P(u)$ is also a dense subset of $(0,1)$.
 So
by
Theorem \ref{gusc} (\romannumeral1),
 (\romannumeral4)$\Rightarrow$(\romannumeral1). Combined with (a), we have that (b) is true.

 Assume that $P(u)$ is a set of measure zero.
Then (\romannumeral3)$\Rightarrow$(\romannumeral2).
At this time, $(0,1) \backslash P(u)$ is dense in $(0,1)$.
So from (a) and (b), we obtain that (c) is true.

By Theorem \ref{sdc} and Remark \ref{spc}, we know that if $([u]_0, d)$ is separable, then $P(u)$ is countable,
 and hence
$P(u)$ is a set of measure zero.
So by (c), we obtain the desired results.
\end{proof}

\begin{re} \label{pvr}
{\rm
By Lemma \ref{gnc}, if $u\in F^1_{USCG}(X)$ then $P(u)=P_0(u)$ is countable.
So by affirmation (c) in the proof of Theorem \ref{gdm},
 Theorem \ref{gdm}
remains true if  ``$([u]_0, d)$ is separable'' is replaced by ``$u\in F^1_{USCG}(X)$''.

Here we mention that the condition ``$u\in F^1_{USCG}(X)$'' implies the condition ``$([u]_0, d)$ is separable''.
Let
$u\in F^1_{USCG}(X)$. Then for each $\al\in (0,1]$, $([u]_\al, d)$ is separable, because each compact metric space is separable.
Since $[u]_0 = \overline{\cup_{n=1}^{+\infty} [u]_{1/n} }$,
it follows that
 $([u]_0, d)$ is separable
(For $n=1,2,\ldots$, let $A_n$ be a countable dense set of $([u]_{1/n},d)$.
Then
$\bigcup_{n=1}^{+\infty}A_n$ is a countable dense set of $([u]_0, d)$).

  }
\end{re}

\section{Level characterizations of endograph metric convergence}\label{lce}

In this section, we discuss the level characterizations of endograph metric convergence.
It is shown that under some condition,
the $H_{\rm end}$ metric convergence of fuzzy sets
can be decomposed to
the Hausdorff metric convergence of certain $\al$-cuts.

Let $u$ be a fuzzy set in $F^1_{USC} (X)$. Denote
$$P_0(u):   = (0,1) \setminus   \{\al\in (0,1) :   \lim_{\beta \to \al}  H([u]_\beta,  [u]_\al) = 0 \}.$$
If $\al\in P_0(u)$, then $\lim_{\beta \to \al}  H([u]_\beta,  [u]_\al)$ may not exist.
In
this paper the $\lim$ quantities are allowed to take the value $+\infty$.

\begin{pp}\label{prea}
 Let $u\in F^1_{USC} (X)$. Then $P(u) \subseteq   P_0(u)$.
\end{pp}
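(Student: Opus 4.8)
The plan is to prove the contrapositive on the complement: show that if $\al \in (0,1)$ satisfies $\lim_{\beta\to\al} H([u]_\beta,[u]_\al)=0$, then $\al\notin P(u)$, i.e. $\overline{\{u>\al\}}=[u]_\al$. Since $u\in F^1_{USC}(X)$, we always have $\overline{\{u>\al\}}\subseteq [u]_\al$ (as recalled in Remark~\ref{spc}), so only the reverse inclusion $[u]_\al\subseteq \overline{\{u>\al\}}$ needs to be established. The key observation is that the one-sided limit $\lim_{\gamma\to\al+}H([u]_\gamma,[u]_\al)=0$ already suffices: I would invoke Remark~\ref{crg}(\romannumeral1) with the nested family $u(\gamma):=[u]_\gamma$ (which is a subset of $C(X)$, decreasing in $\gamma$, by Proposition~\ref{repm}) and the set $U:=[u]_\al$, to conclude $[u]_\al=\overline{\bigcup_{\gamma>\al}[u]_\gamma}$. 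Then since $\bigcup_{\gamma>\al}[u]_\gamma\subseteq \{u>\al\}$ (any $x$ with $u(x)\geq\gamma>\al$ has $u(x)>\al$), we get $[u]_\al=\overline{\bigcup_{\gamma>\al}[u]_\gamma}\subseteq\overline{\{u>\al\}}$, which is exactly what we want.

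So the steps, in order, are: (1) fix $\al\in(0,1)\setminus P_0(u)$, which by definition of $P_0(u)$ means $\lim_{\beta\to\al}H([u]_\beta,[u]_\al)=0$, and in particular $\lim_{\gamma\to\al+}H([u]_\gamma,[u]_\al)=0$; (2) check the hypotheses of Remark~\ref{crg}(\romannumeral1) — namely that $\{[u]_\gamma:\gamma\in[0,1]\}\subseteq C(X)$ with $[u]_\gamma\supseteq[u]_\delta$ for $\gamma\leq\delta$ — which is immediate from Proposition~\ref{repm}; (3) apply Remark~\ref{crg}(\romannumeral1) to obtain $[u]_\al=\overline{\bigcup_{\gamma>\al}[u]_\gamma}$; (4) observe $\bigcup_{\gamma>\al}[u]_\gamma\subseteq\{u>\al\}$ and hence $[u]_\al\subseteq\overline{\{u>\al\}}$; (5) combine with the trivial reverse inclusion $\overline{\{u>\al\}}\subseteq[u]_\al$ (valid since $u$ is upper semi-continuous, so $[u]_\al$ is closed) to conclude $\overline{\{u>\al\}}=[u]_\al$, i.e. $\al\notin P(u)$. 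Taking contrapositives gives $P(u)\subseteq P_0(u)$.

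I do not expect any serious obstacle here; the statement is essentially a repackaging of Remark~\ref{crg}(\romannumeral1). The only point requiring a little care is the bookkeeping about which one-sided limit is used: membership in $P_0(u)$ is defined via the two-sided limit $\lim_{\beta\to\al}$, and I only need the right-hand limit $\gamma\to\al+$; this is fine because the two-sided limit being $0$ forces the one-sided limit to be $0$. One could alternatively run the argument through the left-hand limit using Remark~\ref{crg}(\romannumeral2), which would give $[u]_\al=\bigcap_{\beta<\al}[u]_\beta$, but that identity holds for every $\al\in(0,1]$ by Proposition~\ref{repm} anyway and does not directly yield the platform-point conclusion, so the right-hand version is the one to use.
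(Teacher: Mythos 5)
Your proof is correct and is essentially the contrapositive of the paper's own argument: both rest on the observation that $[u]_\gamma\subseteq\overline{\{u>\al\}}$ for $\gamma>\al$, so the right-hand Hausdorff limit at $\al$ detects any gap between $\overline{\{u>\al\}}$ and $[u]_\al$. The paper phrases this directly and quantitatively (if $\xi:=H(\overline{\{u>\al\}},[u]_\al)>0$ then $H([u]_\gamma,[u]_\al)\geq\xi$ for all $\gamma>\al$), which avoids invoking Remark~\ref{crg}, but the content is the same.
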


\begin{proof}
 If $\al\in P(u)$, then
 $\overline{\{u>\al\}} \subsetneqq  [u]_\al$, i.e. $\xi:= H(\overline{\{u>\al\}},   [u]_\al) >0$.
Thus $\lim_{\gamma \to \al+}  H([u]_\gamma,  [u]_\al)\geq \xi >0.$
Hence $\al \in P_0(u)$. So $P(u) \subseteq   P_0(u)$.
\end{proof}

$P(u) \subsetneqq   P_0(u)$ could happen.
See Example \ref{rse}.

\begin{eap} \label{rse}
{\rm
Let $u \in F^1_{USC} (\mathbb{R}^2)$ be defined by putting
\[
[u]_\al=\{0\} \cup  \{z: \arg z \in [\al,1]\} \ \mbox{for each} \ \al\in [0,1],
\]
here we write each $(x,y) \in \mathbb{R}^2$ as a complex number $z = x+iy$.
Then $P(u) = \emptyset$ and $P_0(u) = (0,1)$. So $P(u) \subsetneqq   P_0(u)$.

This example also shows that for $u\in F^1_{USC}(X)$,
$P_0(u)$ need not be countable even $X$ is a separable metric space.
  }
\end{eap}

Below Lemma \ref{c} and Lemma \ref{gnc}(\rmn1)(\rmn2)(\rmn5) may be known. Here we
give their
proofs for the completeness of this paper.

\begin{lm} \label{c}
For $n=1,2,\ldots$, let $U_n\in K(X)$ and $V_n \in K(X)$.
\\
(\romannumeral1) \ If $U_1\supseteq U_2 \supseteq \ldots \supseteq U_n\supseteq \ldots$, then
$U=\bigcap_{n=1}^{+\infty}  U_n    \in K(X)$ and $H(U_n, U) \to 0$ as $n\to +\infty$.
\\
(\romannumeral2) \ If $V_1\subseteq V_2 \subseteq \ldots \subseteq V_n\subseteq \ldots$
and
$V = \overline{\bigcup_{n=1}^{+\infty}  V_n }   \in K(X)$, then $H(V_n, V) \to 0$ as $n\to +\infty$.
\end{lm}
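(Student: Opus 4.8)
The plan is to treat the two parts separately but with a common strategy: in each case one of the two directed Hausdorff distances $H^{*}$ vanishes by an inclusion, so $H$ reduces to a single supremum over one of the sets, and then a subsequence argument inside a fixed compact set finishes the job.

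For (\romannumeral1), I would first note that $U=\bigcap_{n=1}^{+\infty}U_n$ is closed (an intersection of closed sets) and is contained in the compact set $U_1$, hence is itself compact; it is nonempty by the finite intersection property applied to the decreasing chain of nonempty compacta. Since $U\subseteq U_n$ for every $n$, we have $H^{*}(U,U_n)=0$, so $H(U_n,U)=H^{*}(U_n,U)=\sup_{x\in U_n}d(x,U)$, which is non-increasing in $n$. To see this tends to $0$, argue by contradiction: if not, there are $\var>0$ and points $x_n\in U_n$ with $d(x_n,U)\geq\var$ for all $n$; since every $x_n$ lies in the compact set $U_1$, pass to a subsequence $x_{n_k}\to x$. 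For each fixed $m$ one has $x_{n_k}\in U_m$ for all large $k$, and $U_m$ is closed, so $x\in U_m$; hence $x\in U$. Then $d(x_{n_k},U)\to d(x,U)=0$, contradicting $d(x_{n_k},U)\geq\var$.

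For (\romannumeral2) the roles flip: now $V$ is compact by hypothesis and $V_n\subseteq V$, so $H^{*}(V_n,V)=0$ and $H(V_n,V)=H^{*}(V,V_n)=\sup_{x\in V}d(x,V_n)$, again non-increasing in $n$ by monotonicity of the $V_n$. Arguing by contradiction, take $\var>0$ and $x_n\in V$ with $d(x_n,V_n)\geq\var$, and extract $x_{n_k}\to x\in V$ using compactness of $V$. Since $V=\overline{\bigcup_{m}V_m}$, pick $m$ and $z\in V_m$ with $d(x,z)<\var/2$; for $n_k\geq m$ monotonicity gives $z\in V_{n_k}$, so $d(x_{n_k},V_{n_k})\leq d(x_{n_k},x)+d(x,z)<\var$ once $k$ is large enough that $d(x_{n_k},x)<\var/2$ — a contradiction. (Equivalently, one could run a direct finite-cover argument: cover $V$ by finitely many $\var/2$-balls centred in $V$, replace each centre by a nearby point of some $V_m$, and take $N$ past all indices $m$ that appear.)

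The only genuinely non-routine point is this reduction step together with, in part (\romannumeral2), correctly combining the density of $\bigcup_m V_m$ in $V$ with the monotonicity of the $V_n$ so that the approximating point $z$ is guaranteed to lie in $V_{n_k}$ for all large $k$; the remaining compactness bookkeeping is standard.
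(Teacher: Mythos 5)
Your proof is correct, and it takes a genuinely different route from the paper's. You argue directly: in each part one of the two directed Hausdorff distances vanishes because of an inclusion ($U\subseteq U_n$, respectively $V_n\subseteq V$), so $H$ reduces to a single supremum, and you then derive a contradiction by extracting a convergent subsequence of witnesses inside a fixed compact set ($U_1$, respectively $V$); the only points needing care — that the limit point lands in $U=\bigcap_n U_n$ in part (\romannumeral1), and that the approximating point $z\in V_m$ stays inside $V_{n_k}$ for all large $k$ in part (\romannumeral2) — are both handled correctly. The paper instead proves only (\romannumeral2) (declaring (\romannumeral1) similar) by invoking Theorem \ref{bfc}(\romannumeral3): since $V_n\in K(V)$ and $(K(V),H)$ is compact, some subsequence $\{V_{n_k}\}$ converges to a $C\in K(V)$; monotonicity and the squeeze $H(V_n,C)\leq\max\{H^*(V_{n_{k_2}},C),H^*(C,V_{n_{k_1}})\}$ upgrade this to convergence of the whole sequence; and Theorem \ref{hkg} plus the explicit form of the Kuratowski limit of an increasing sequence identifies $C=V$. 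Your version is more elementary and self-contained — it needs neither the hyperspace compactness theorem nor the Hausdorff-to-Kuratowski implication — while the paper's version is shorter given that machinery and is consistent with the paper's overall strategy of routing Hausdorff-metric statements through Kuratowski limits.
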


\begin{proof}
Here we give a proof using Theorem \ref{bfc}.
We only show (\romannumeral2). (\romannumeral1) can be shown in a similar way.
Clearly for $n=1,2,\ldots$, $V_n \in K(V)$. Then by Theorem \ref{bfc}, $\{V_n\}$ has a subsequence $\{V_{n_k}\}$ which
converges to
$C\in K(V)$.
Then $H(V_n, C) \to 0$ since for $V_{n_{k_1}}\subseteq V_n \subseteq V_{n_{k_2}}$, $H(V_n, C) \leq \max\{H^*(V_{n_{k_2}},C), H^*(C, V_{n_{k_1}})\} = \max\{H(V_{n_{k_2}},C), H(C, V_{n_{k_1}})\} $. Thus by Theorem \ref{hkg}, $C = \lim^{(K)}_{n\to\infty} V_n = V$.
So (\romannumeral2) is true.
\end{proof}

\begin{lm} \label{gnc}
Let $u\in F^1_{USCG}(X)$.
\\
(\romannumeral1) \ For each $\al\in (0,1]$, $\lim_{\beta\to \al-} H([u]_\beta, [u]_\al) =0$.
\\
(\romannumeral2) \ For each $\al\in (0,1)$, $\lim_{\gamma\to \al+} H([u]_\gamma, \overline{\{u>\al\}}) =0$.
\\
(\romannumeral3) \ $P(u) =   P_0(u)$.
\\
(\romannumeral4) \ $P_0 (u)$ is countable.
\\
(\romannumeral5) \ If $u\in F^1_{USCB}(X)$, then $\lim_{\gamma\to 0+} H([u]_\gamma, [u]_0) =0$.
\end{lm}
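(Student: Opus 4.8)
The plan is to establish the five clauses in the order (\romannumeral1), (\romannumeral2), (\romannumeral5), (\romannumeral3), (\romannumeral4), since the last two rest on the monotone-convergence clauses and (\romannumeral4) additionally on the separability results of Section \ref{lcg}. Throughout I would exploit that the cuts are nested ($[u]_\beta\supseteq[u]_\al$ whenever $\beta\le\al$), that $[u]_\al\in K(X)$ for $\al\in(0,1]$, and the identities $[u]_\al=\bigcap_{\beta<\al}[u]_\beta$ for $\al\in(0,1]$ and $[u]_0=\overline{\bigcup_{\al>0}[u]_\al}$ from Proposition \ref{repm}. The workhorse is Lemma \ref{c}: a decreasing sequence of compact sets converges in Hausdorff distance to its intersection, and an increasing sequence of compact sets whose closed union is compact converges to that union. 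A recurring small observation I would use is that for nested cuts the relevant Hausdorff distance reduces to a one-sided distance $H^{*}$ which is monotone in the varying level, so a sequential limit automatically upgrades to a genuine one-sided limit.

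For (\romannumeral1), fix $\al\in(0,1]$ and pick an increasing sequence $\beta_n\uparrow\al$ in $(0,\al)$; then $\{[u]_{\beta_n}\}$ is a decreasing sequence in $K(X)$ with $\bigcap_n[u]_{\beta_n}=\bigcap_{\beta<\al}[u]_\beta=[u]_\al$, so Lemma \ref{c}(\romannumeral1) gives $H([u]_{\beta_n},[u]_\al)\to0$, and since $[u]_\al\subseteq[u]_\beta$ for $\beta<\al$ the quantity $H([u]_\beta,[u]_\al)=H^{*}([u]_\beta,[u]_\al)$ is non-increasing as $\beta\uparrow\al$, hence the one-sided limit exists and equals $0$. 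Clause (\romannumeral5) is the same argument with $\al=0$: for $u\in F^1_{USCB}(X)$ we have $[u]_0\in K(X)$ and $[u]_0=\overline{\bigcup_{\gamma>0}[u]_\gamma}$, so taking $\gamma_n\downarrow0$ the sets $[u]_{\gamma_n}$ form an increasing sequence in $K(X)$ with compact closed union $[u]_0$, and Lemma \ref{c}(\romannumeral2) together with the monotonicity of $H^{*}([u]_0,[u]_\gamma)$ finishes it. Clause (\romannumeral2) follows the same pattern applied to $\overline{\{u>\al\}}=\overline{\bigcup_{\gamma>\al}[u]_\gamma}$; the one point to check is that $\overline{\{u>\al\}}$ is compact, which holds because it is a closed subset of $[u]_\al\in K(X)$ (valid since $\al\in(0,1)$), after which Lemma \ref{c}(\romannumeral2) applies to $\gamma_n\downarrow\al$.

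For (\romannumeral3), Proposition \ref{prea} already gives $P(u)\subseteq P_0(u)$, so I only need the reverse inclusion. Take $\al\in(0,1)\setminus P(u)$; since $u\in F^1_{USC}(X)$ forces $\overline{\{u>\al\}}\subseteq[u]_\al$, failing to be a platform point means $\overline{\{u>\al\}}=[u]_\al$. Then (\romannumeral1) gives $H([u]_\beta,[u]_\al)\to0$ as $\beta\uparrow\al$ and (\romannumeral2) gives $H([u]_\gamma,[u]_\al)=H([u]_\gamma,\overline{\{u>\al\}})\to0$ as $\gamma\downarrow\al$, so the two-sided limit $\lim_{\beta\to\al}H([u]_\beta,[u]_\al)$ vanishes and $\al\notin P_0(u)$; hence $P_0(u)\subseteq P(u)$ and equality holds. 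For (\romannumeral4), I would first note that $[u]_0=\overline{\bigcup_{n}[u]_{1/n}}$ is separable (each $[u]_{1/n}$ is compact metric, hence separable, a countable union of separable sets is separable, and so is its closure), then invoke Theorem \ref{sdc} together with Remark \ref{spc} to conclude that $P(u)$ is countable, and finally transfer this to $P_0(u)$ via (\romannumeral3).

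The only genuinely delicate point is clause (\romannumeral4): the tempting shortcut of citing Remark \ref{pvr} is circular, because that remark itself is proved using Lemma \ref{gnc}, so I must route through the separability of $[u]_0$ and Theorem \ref{sdc} instead. Everything else is routine once Lemma \ref{c} is available, the only repeated subtlety being the monotonicity of the one-sided Hausdorff distances between nested cuts that upgrades sequential limits to honest one-sided limits.
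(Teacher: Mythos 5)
Your proposal is correct and follows essentially the same route as the paper: Lemma \ref{c} applied to monotone sequences of cuts for (\romannumeral1), (\romannumeral2) and (\romannumeral5) (the paper phrases this as a proof by contradiction, you phrase it directly via monotonicity of the one-sided Hausdorff distance, which is an immaterial difference), the identity $\overline{\{u>\al\}}=[u]_\al$ off $P(u)$ for (\romannumeral3), and separability of $[u]_0$ plus Theorem \ref{sdc}, Remark \ref{spc} and clause (\romannumeral3) for (\romannumeral4). Your caution about circularity in (\romannumeral4) is slightly overstated — the paper cites Remark \ref{pvr} only for the separability of $([u]_0,d)$, which is proved there without using Lemma \ref{gnc} — but the argument you give is exactly that self-contained separability argument, so the substance coincides.
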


\begin{proof}
Assume that (\rmn1) is not true. Then there is an $\al\in (0,1]$, an $\varepsilon>0$
and
an increasing sequence $\{\beta_n\}$ in $[0,\al)$ with $\beta_n \to \al-$
such that
$H([u]_{\beta_n}, [u]_\al)>\varepsilon$.
By Lemma \ref{c} (\rmn1), $H([u]_{\beta_n}, [u]_\al)\to 0$. This is a contradiction.
So (\rmn1) is true. Similarly, by Lemma \ref{c} (\rmn2),
 (\romannumeral2) and  (\romannumeral5) are true.

Now we show (\romannumeral3).
Given $\al\in (0,1)\setminus P(u)$. Then $[u]_\al = \overline{\{u>\al\}}$ .
By
 (\romannumeral1) and (\romannumeral2), we have that $\lim_{\delta\to \al} H([u]_\delta, [u]_\al) =0$.
Thus
$\al\in (0,1)\setminus P_0(u)$.
So $ (0,1)\setminus P(u) \subseteq (0,1)\setminus P_0(u)$.
This means that $P_0(u) \subseteq P(u)$.
Combined with Proposition \ref{prea}, we obtain
that $P(u) = P_0(u)$.
So (\romannumeral3) is proved.

(\romannumeral4) is Lemma 6.12 in \cite{huang17}.
(\romannumeral4)
can also be shown in such a way:
let $u\in F^1_{USCG}(X)$, then by
Remark \ref{pvr}, $([u]_0, d)$ is separable, and thus by Theorem \ref{sdc} and Remark \ref{spc},
$P(u)$ is countable. So by (\romannumeral3),
 $P_0 (u)$ is countable.
\end{proof}

\begin{pp}\label{ecn}
  (\romannumeral1) \ If $u,v \in  F^1_{USCG}(X)$, then $H([u]_\al, [v]_\al) $ is
 left-continuous at each $\al\in (0,1]$.
  (\romannumeral2) \ If $u,v \in  F^1_{USCB}(X)$, then $H([u]_\al, [v]_\al) $
is
 right-continuous at $\al=0$.
\end{pp}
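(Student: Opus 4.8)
The plan is to deduce both statements from Lemma~\ref{gnc} together with the ``reverse triangle inequality'' for the Hausdorff metric: if $A,B,C,D\in C(X)$ and the four distances $H(A,B)$, $H(C,D)$, $H(A,C)$, $H(B,D)$ are finite, then
$$|H(A,B)-H(C,D)|\le H(A,C)+H(B,D),$$
which follows by expanding $H(A,B)$ first through $C$ and then through $D$, and using the symmetric estimate. So the first thing I would record is a finiteness check, since $H$ is only an extended metric in general: for $u,v\in F^1_{USCG}(X)$ one has $[u]_\beta,[v]_\beta\in K(X)$ for every $\beta\in(0,1]$, hence $H([u]_\beta,[v]_\beta)<+\infty$ for all such $\beta$; and for $u,v\in F^1_{USCB}(X)$ also $[u]_0,[v]_0\in K(X)$, hence $H([u]_0,[v]_0)<+\infty$. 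Thus no indeterminate $\infty-\infty$ arises in the estimates below.

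For part (\rmn1), fix $\al\in(0,1]$. Left-continuity at $\al$ concerns only $\beta\to\al-$, so I may restrict attention to $\beta\in(\al/2,\al)$, where in particular $\beta>0$ and $[u]_\beta,[v]_\beta$ are compact. By Lemma~\ref{gnc}(\rmn1) applied to $u$ and to $v$, $H([u]_\beta,[u]_\al)\to0$ and $H([v]_\beta,[v]_\al)\to0$ as $\beta\to\al-$. Taking $A=[u]_\beta$, $B=[v]_\beta$, $C=[u]_\al$, $D=[v]_\al$ in the reverse triangle inequality gives
$$|H([u]_\beta,[v]_\beta)-H([u]_\al,[v]_\al)|\le H([u]_\beta,[u]_\al)+H([v]_\beta,[v]_\al)\longrightarrow 0\quad(\beta\to\al-),$$
which is exactly the left-continuity of $\al\mapsto H([u]_\al,[v]_\al)$ at $\al$. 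Part (\rmn2) is the same argument with $\al=0$ and one-sided limits from the right, using Lemma~\ref{gnc}(\rmn5) (valid because $u,v\in F^1_{USCB}(X)$) in place of Lemma~\ref{gnc}(\rmn1): $H([u]_\gamma,[u]_0)\to0$ and $H([v]_\gamma,[v]_0)\to0$ as $\gamma\to0+$, and then the reverse triangle inequality yields $|H([u]_\gamma,[v]_\gamma)-H([u]_0,[v]_0)|\le H([u]_\gamma,[u]_0)+H([v]_\gamma,[v]_0)\to0$.

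I do not expect a genuine obstacle here: the proposition is essentially a one-line consequence of Lemma~\ref{gnc} and the triangle inequality. The only points deserving care are the finiteness check above --- needed precisely because $H$ may take the value $+\infty$ on $C(X)$ --- and, in part (\rmn1), the restriction to $\beta>0$ so that Lemma~\ref{gnc}(\rmn1), which is a statement about $F^1_{USCG}(X)$, is applicable.
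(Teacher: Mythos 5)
Your proof is correct and follows essentially the same route as the paper's: both rely on Lemma \ref{gnc}(\romannumeral1) (respectively (\romannumeral5)) together with the estimate $|H([u]_\al,[v]_\al)-H([u]_\beta,[v]_\beta)|\le H([u]_\al,[u]_\beta)+H([v]_\al,[v]_\beta)$ and the observation that the relevant Hausdorff distances are finite. Your explicit finiteness check is a slightly more careful rendering of the paper's remark that $H([u]_\al,[v]_\al)$ is finite on $(0,1]$, but there is no substantive difference.
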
 

 \begin{proof} Let $u,v \in  F^1_{USCG}(X)$.
 For each $\al\in (0,1]$,
by
 Lemma \ref{gnc}(\romannumeral1), $ \lim_{\beta\to \al- } ( H([u]_\al, [u]_\beta) + H([v]_\al,   [v]_\beta) ) = 0$.
 Note that $H([u]_\al, [v]_\al)$ is finite at each $\al \in (0,1]$
and
for each $\alpha, \beta \in (0,1]$,
$ |H([u]_\al, [v]_\al) - H([u]_\beta, [v]_\beta) |  \leq
   H([u]_\al, [u]_\beta) + H([v]_\al,   [v]_\beta).
$
So for each $\al\in (0,1]$,
  $\lim_{\beta \to \al-}  H([u]_\beta, [v]_\beta) = H([u]_\alpha, [v]_\alpha), $
  and (\rmn1) is true.
  Similarly, it can be shown that (\rmn2) is true by using Lemma \ref{gnc}(\romannumeral5).
   \end{proof}

\begin{pp}\label{aen}
 Let $u,v\in F^1_{USC}(X)$ and
$\varepsilon\in (0,1]$. Let $\alpha,\beta\in [0,1]$ with $\alpha-\beta\geq \varepsilon$.
If
$
H^*({\rm end}\, u, {\rm end}\, v)<\varepsilon$, then $H^*([u]_{\alpha}, [v]_\beta)\leq H^*({\rm end}\, u, {\rm end}\, v)<\varepsilon$.
\end{pp}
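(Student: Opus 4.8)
The plan is to unwind the definition of the one-sided Hausdorff distance $H^*$ on endographs and to exploit the incompatibility between a level shift of size at least $\varepsilon$ and a displacement of size strictly less than $\varepsilon$ in the metric $\overline d$ on $X\times[0,1]$. First I would record the small but essential observation that the hypothesis forces $\alpha\geq\varepsilon>0$: since $\alpha-\beta\geq\varepsilon$ and $\beta\geq 0$, we get $\alpha\geq\varepsilon$, so $[u]_\alpha=\{x\in X:u(x)\geq\alpha\}$ and hence $(x,\alpha)\in{\rm end}\,u$ for every $x\in[u]_\alpha$; in particular the case $\alpha=0$ never occurs. Because $u,v\in F^1_{USC}(X)$, the cuts $[u]_\alpha$ and $[v]_\beta$ are nonempty closed sets, so the quantity $H^*([u]_\alpha,[v]_\beta)=\sup_{x\in[u]_\alpha} d(x,[v]_\beta)$ is well defined and there is genuinely something to prove.

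Next, write $\lambda:=H^*({\rm end}\,u,{\rm end}\,v)$, so $\lambda<\varepsilon$. Fix $x\in[u]_\alpha$; then $(x,\alpha)\in{\rm end}\,u$, so $\overline d((x,\alpha),{\rm end}\,v)\leq\lambda$, and for every $\eta>0$ with $\lambda+\eta<\varepsilon$ there is a point $(y,s)\in{\rm end}\,v$ with $\overline d((x,\alpha),(y,s))=d(x,y)+|\alpha-s|<\lambda+\eta<\varepsilon$. I would use this $\eta$-argument rather than claiming the infimum is attained, since in a general metric space the distance to a closed set need not be realised. From $|\alpha-s|<\varepsilon\leq\alpha-\beta$ one gets $s>\beta$, hence $v(y)\geq s>\beta$, so $y\in[v]_\beta$ (this also covers $\beta=0$, where $v(y)>0$ gives $y\in\{v>0\}\subseteq[v]_0$). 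Therefore $d(x,[v]_\beta)\leq d(x,y)\leq\overline d((x,\alpha),(y,s))<\lambda+\eta$.

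Finally, letting $\eta\downarrow 0$ yields $d(x,[v]_\beta)\leq\lambda$ for every $x\in[u]_\alpha$, and taking the supremum over $x$ gives $H^*([u]_\alpha,[v]_\beta)\leq\lambda=H^*({\rm end}\,u,{\rm end}\,v)<\varepsilon$, which is the assertion. I do not expect a real obstacle here: the only points requiring a moment's care are noticing at the outset that the hypothesis automatically pushes $\alpha$ above $\varepsilon$, and handling the $\eta$-bookkeeping so that the level gap $\alpha-s$ stays \emph{strictly} below $\varepsilon$, which is exactly what forces $s>\beta$ and hence $y\in[v]_\beta$.
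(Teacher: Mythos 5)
Your proof is correct and follows essentially the same route as the paper's: both arguments rest on the observation that a point $(z,\gamma)\in{\rm end}\,v$ with $\gamma\leq\beta$ lies at $\overline d$-distance at least $\alpha-\beta\geq\varepsilon$ from $(x,\alpha)$, so the near-minimizers must satisfy $\gamma>\beta$ and hence project into $[v]_\beta$. The only cosmetic difference is that the paper phrases this as an inequality between infima (via the set $\overline{\{v>\beta\}}$) while you use explicit $\eta$-approximating points; both handle the non-attainment issue correctly.
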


\begin{proof}
Set $a:= H^*([u]_{\al}, \overline{\{v>\beta\}})$
and $b:= \sup\{\overline{d}((y,\al), {\rm end}\, v): y\in [u]_\al\}$.
We claim the following property (\rmn1) if
$
b<\varepsilon$, then $a\leq b$.

Assume that $b<\varepsilon$.
Let $y\in [u]_\al$.
Notice that $\overline{d}((y,\al), {\rm end}\, v)=\inf\{\overline{d}((y,\al), (z,\gamma)): (z,\gamma)\in {\rm end}\, v \}\leq b <\varepsilon$, and that for $(z,\gamma) \in {\rm end}\, v$ with $\gamma \leq \beta$,
$\overline{d}((y,\al), (z,\gamma)) \geq |\al-\beta|\geq \varepsilon$. Thus
$\overline{d}((y,\al), {\rm end}\, v)=\inf\{\overline{d}((y,\al), (z,\gamma)): (z,\gamma)\in {\rm end}\, v \mbox{ with } \gamma> \beta\}$.
Since
for each
$(z,\gamma)\in {\rm end}\, v \mbox{ with } \gamma > \beta$,
$\overline{d}((y, \al), (z, \gamma)) \geq d(y, z) \geq d(y, \overline{\{v>\beta\}})$,
it follows that
$\overline{d}((y,\al), {\rm end}\, v)
\geq d(y, \overline{\{v>\beta\}}) $.
Since $y\in [u]_\al$ is arbitrary, we have that $a\leq b$.
So (\rmn1) is true.

If $
H^*({\rm end}\, u, {\rm end}\, v)<\varepsilon$, then
$b<\varepsilon$, and hence by (\rmn1), $H^*([u]_{\alpha}, [v]_\beta)\leq a\leq
b \leq H^*({\rm end}\, u, {\rm end}\, v)<\varepsilon$.
So the proof is complete.
\end{proof}

\begin{tm} \label{lres}
  Let $u$, $u_n$, $n=1,2,\ldots$, be fuzzy sets in $F^1_{USC} (X)$.

(\romannumeral1) \ The following are equivalent:
\\
(\romannumeral1-1) \
$\lim_{n\to \infty} H^*({\rm end}\, u, {\rm end}\, u_n) = 0$;
\\
(\romannumeral1-2) \
For each $\al\in [0,1)$ and $\xi\in (0, 1-\alpha]$,
$\lim_{n\to \infty} H^*([u]_{\al+\xi},    [u_n]_\al) = 0$;
\\
(\romannumeral1-3) \
There is a dense subset $P$ of $[0,1)$
such that
for each $\al\in P$ and $\xi\in (0, 1-\alpha]$,
$\lim_{n\to \infty} H^*([u]_{\al+\xi},    [u_n]_\al) = 0$.

(\romannumeral2) \  The following are equivalent:
\\
(\romannumeral2-1) \
$\lim_{n\to\infty} H^*{(\rm end}\, u_n, {\rm end}\, u) = 0$;
\\
(\romannumeral2-2) \
for each $\al\in (0,1]$ and $\zeta \in (0, \al]$,
$\lim_{n\to \infty} H^*([u_n]_\al, [u]_{\al-\zeta}) = 0$;
\\
(\romannumeral2-3) \
There is a dense subset $P$ of $(0,1]$
such that for each $\al\in P$ and $\zeta \in (0, \al]$,
$\lim_{n\to \infty} H^*([u_n]_\al, [u]_{\al-\zeta}) = 0$.

\end{tm}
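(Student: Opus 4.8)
Both parts (\romannumeral1) and (\romannumeral2) assert that a one-sided Hausdorff convergence of endographs is equivalent to a family of one-sided Hausdorff convergences of shifted $\al$-cuts, and that it suffices to check the latter on a dense set of levels. The engine of the whole argument is Proposition \ref{aen}, which converts a bound on $H^*({\rm end}\,u,{\rm end}\,v)$ into a bound on $H^*([u]_\al,[v]_\beta)$ whenever $\al-\beta$ is at least the size of that bound. The plan is to prove (\romannumeral1) in the cyclic order (\romannumeral1-1)$\Rightarrow$(\romannumeral1-2)$\Rightarrow$(\romannumeral1-3)$\Rightarrow$(\romannumeral1-1), and then obtain (\romannumeral2) by an essentially symmetric argument (swapping the roles of $u$ and $u_n$ and reflecting levels $\al\mapsto 1-\al$), so I will give (\romannumeral1) in full and indicate the changes for (\romannumeral2).

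**Proof of (\romannumeral1).** For (\romannumeral1-1)$\Rightarrow$(\romannumeral1-2): fix $\al\in[0,1)$ and $\xi\in(0,1-\al]$, and let $\var\in(0,\xi]$ be arbitrary. By (\romannumeral1-1) there is $N$ with $H^*({\rm end}\,u,{\rm end}\,u_n)<\var$ for $n\geq N$; since $(\al+\xi)-\al=\xi\geq\var$, Proposition \ref{aen} (applied with the pair $(\al+\xi,\al)$ in place of $(\alpha,\beta)$) gives $H^*([u]_{\al+\xi},[u_n]_\al)\leq H^*({\rm end}\,u,{\rm end}\,u_n)<\var$ for $n\geq N$. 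As $\var$ was arbitrary, $H^*([u]_{\al+\xi},[u_n]_\al)\to 0$. The implication (\romannumeral1-2)$\Rightarrow$(\romannumeral1-3) is trivial, taking $P=[0,1)$.

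The substantive step is (\romannumeral1-3)$\Rightarrow$(\romannumeral1-1). Fix $\var>0$; I must produce $N$ with $H^*({\rm end}\,u,{\rm end}\,u_n)<\var$ for $n\geq N$, i.e. $\overline{d}((x,t),{\rm end}\,u_n)<\var$ for every $(x,t)\in{\rm end}\,u$. Choose finitely many levels $0=\al_0<\al_1<\cdots<\al_k$ with $\al_1\in P$ small, each $\al_{j}\in P$ for $j<k$, consecutive gaps less than $\var/2$, and $\al_k\geq 1$ handled as a limit (using that if $t=1$ then $(x,1)\in{\rm end}\,u$ forces $x\in[u]_1\subseteq[u]_\al$ for nearby $\al<1$, by Proposition \ref{repm}(\romannumeral2)); density of $P$ in $[0,1)$ makes this possible. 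Given $(x,t)\in{\rm end}\,u$ with $t>0$, pick $j$ with $\al_j<t\leq\al_{j+1}$ (or handle $t\leq\al_1$ separately, pushing $(x,t)$ down toward level $0$ where ${\rm end}\,u_n$ always contains $X\times\{0\}$-type points — more precisely, compare against $(x',0)$ for a nearby $x'\in[u_n]_0$, but note $[u]_0$ need not be in the hypothesis, so instead use $x\in[u]_t\subseteq[u]_{\al_j}$). Apply the hypothesis at level $\al_{j-1}\in P$ with shift $\xi=\al_j-\al_{j-1}$ (or at level $\al_j$ with shift $t-\al_j$; one must be careful to land on a level of $P$ from the left): for large $n$, $H^*([u]_{\al_j},[u_n]_{\al_{j-1}})<\var/2$, so there is $y\in[u_n]_{\al_{j-1}}$ with $d(x,y)<\var/2$; then $(y,\al_{j-1})\in{\rm end}\,u_n$ and $\overline{d}((x,t),(y,\al_{j-1}))=d(x,y)+|t-\al_{j-1}|<\var/2+2\cdot(\var/2)$ — which overshoots, so the mesh must be chosen as $\var/3$ rather than $\var/2$, and levels from $P$ must bracket $t$ from below at distance less than $\var/3$. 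The bookkeeping of which level of $P$ to invoke, and of the $t$ near $0$ and $t$ near $1$ edge cases, is where the care lies; uniformity in $n$ comes from using only finitely many levels.

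**The main obstacle and part (\romannumeral2).** The delicate point is that the hypotheses only control $H^*$ (not $H$) of shifted cuts and only on a dense $P$, with no assumption on $[u]_0$ or compactness of cuts, so I cannot appeal to Lemma \ref{gnc} or to left-continuity of $\al\mapsto[u]_\al$; I must live entirely inside the combinatorial estimate of Proposition \ref{aen} and choose the finite grid of levels with the right mesh (roughly $\var/3$) so that the accumulated error $d(x,y)+|t-\al_{j-1}|$ stays below $\var$. For (\romannumeral2), the roles reverse: (\romannumeral2-1)$\Rightarrow$(\romannumeral2-2) again applies Proposition \ref{aen}, now with $u_n$ playing the role of "$u$" and $u$ the role of "$v$", with the pair $(\al,\al-\zeta)$, so $H^*([u_n]_\al,[u]_{\al-\zeta})\leq H^*({\rm end}\,u_n,{\rm end}\,u)$; (\romannumeral2-2)$\Rightarrow$(\romannumeral2-3) is trivial; and (\romannumeral2-3)$\Rightarrow$(\romannumeral2-1) mirrors (\romannumeral1-3)$\Rightarrow$(\romannumeral1-1), given $(x,t)\in{\rm end}\,u_n$ one moves to a slightly lower level $\al\in P$ with $\al\leq t$, writes $x\in[u_n]_t\subseteq[u_n]_\al$, and uses the hypothesis at level $\al$ with shift $\zeta=\al-\al'$ for a grid point $\al'<\al$ in $[u]$'s levels to find a nearby point of ${\rm end}\,u$; the edge case $t$ near $1$ is now the easy one and $t$ near $0$ uses density of $P$ in $(0,1]$ down toward $0$. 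In both directions the only genuine work is the finite-grid construction; everything else is an application of Proposition \ref{aen} and the triangle inequality for $\overline{d}$.
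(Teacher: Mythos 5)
Your proposal is correct and follows essentially the same route as the paper: Proposition \ref{aen} for the implications out of the endograph condition, and a finite grid of levels of small mesh (together with the observation that $X\times\{0\}\subseteq {\rm end}\,u_n$ for the levels near $0$) for the converse. The only organizational difference is that the paper first reduces (\romannumeral1-3) to (\romannumeral1-2) by the one-line monotonicity estimate $H^*([u]_{\al+\xi},[u_n]_\al)\leq H^*([u]_{\al+\xi},[u_n]_\beta)$ for $\beta\in P\cap[\al,\al+\xi)$ and then runs the grid argument on the uniform partition $\{l/k\}$, whereas you draw the grid directly from $P$; both work.
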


\begin{proof} \ We only prove (\romannumeral1). (\romannumeral2) can be proved similarly.

To show (\romannumeral1-1)$\Rightarrow$(\romannumeral1-2),
assume that $\lim_{n\to \infty} H^*({\rm end}\, u, {\rm end}\, u_n) = 0$. Let $\al\in [0,1)$ and $\xi\in (0, 1-\alpha]$.
Then for each $\varepsilon \in (0, \xi)$, there
exists an $N(\varepsilon)$ such that for all $n\geq N$,
$
H^*({\rm end}\, u, {\rm end}\, u_n)  < \varepsilon
$, and hence by Proposition \ref{aen}, $H^*( [u]_{\al+\xi},  [u_n]_\al) < \varepsilon$.
Since $\varepsilon \in (0, \xi)$ is arbitrary,
it follows that
$\lim_{n\to \infty} H^*([u]_{\al+\xi},    [u_n]_\al) = 0$.
So
 (\romannumeral1-1)$\Rightarrow$(\romannumeral1-2).

Assume that
(\romannumeral1-2)
is true. To show that (\romannumeral1-1) is true,
let $\varepsilon>0$. Select a $k \in \mathbb{N}$ with $1/k\leq \varepsilon$.
From (\romannumeral1-2), we have that for $l=1,\ldots, k$,
$\lim_{n\to\infty} H^*(  [u]_{l/k},  [u_n]_{(l-1)/k}    ) = 0$.
So
there is an $N(\varepsilon)$ such that for all $n\geq N$ and $l=1,\ldots, k$,
\begin{equation}\label{lse}
  H^*(  [u]_{l/k},  [u_n]_{(l-1)/k}    )  \leq \varepsilon.
\end{equation}

Let $(x,\al) \in {\rm end}\, u$ and $n\in \mathbb{N}$. If $\al\leq \varepsilon$,
then
$\overline{d}((x,\al), {\rm end}\, u_n)\leq \overline{d}((x,\al), (x,0))\leq \varepsilon$.
If $\al > \varepsilon$,
then we can
choose an $l \in \{1,\ldots, k-1\}$ such that $l/k < \al  \leq (l+1)/k$,
and thus
\begin{align*}
&
\overline{d}((x,\al),  {\rm end}\, u_n ) \leq \overline{d}((x,\al),\  [u_n]_{(l-1)/k} \times \{(l-1)/k\}   )\\
&= \inf\{d(x,y) + |\al-\frac{l-1}{k}|: y \in  [u_n]_{(l-1)/k} \}=d(x, [u_n]_{(l-1)/k} ) + |\al-\frac{l-1}{k}|.
\end{align*}
So by \eqref{lse}, for $n\geq N$,
$$
H^* ( {\rm end}\, u, {\rm end}\, u_n )\leq \varepsilon \vee
 (\max\{H^*([u]_{l/k},  [u_n]_{(l-1)/k}   ), \ l=1,\ldots,k-1\} + 2/k) \leq 3\varepsilon.
$$
Since
 $\varepsilon>0$ is arbitrary, we have that
$\lim_{n\to\infty} H^* ( {\rm end}\, u, {\rm end}\, u_n ) = 0$.
So (\romannumeral1-2)$\Rightarrow$(\romannumeral1-1).

Let
  $\al\in [0,1)$ and $\xi\in (0, 1-\alpha]$.
Choose $\beta \in P \cap [\al, \al+\xi)$.
Then
\begin{equation*}
  H^*([u]_{\al+\xi},    [u_n]_\al) \leq H^*([u]_{\al+\xi},    [u_n]_\beta).
\end{equation*}
Using this fact, we obtain that
(\romannumeral1-3)$\Rightarrow$(\romannumeral1-2).

(\romannumeral1-2)$\Rightarrow$(\romannumeral1-3) is obvious.
Thus (\romannumeral1) is proved.
\end{proof}

\begin{re} \label{acme} {\rm
 Let $u$, $u_n$, $n=1,2,\ldots$, be fuzzy sets in $F^1_{USC} (X)$.
Clearly, (\romannumeral1-2) in Theorem \ref{lres} is equivalent to the following (\romannumeral1-2)$'$,
and
(\romannumeral2-2) in Theorem \ref{lres} is equivalent to the following (\romannumeral2-2)$'$:
\\
(\romannumeral1-2)$'$ \
For each $\al\in [0,1)$ there is a sequence $\{\xi_m\}$ in $(0, 1-\alpha]$ with $\xi_m \to 0+ $ satisfying
that
$\lim_{n\to \infty} H^*([u]_{\al+\xi_m},    [u_n]_\al) = 0$;
\\
(\romannumeral2-2)$'$ \
For each $\al\in (0,1]$ there is a sequence $\{\zeta_m\}$ in $(0, \al]$ with $\zeta_m \to 0+ $ satisfying
that
$\lim_{n\to \infty} H^*([u_n]_\al, [u]_{\al-\zeta_m}) = 0$.

Similarly, we can give
(\romannumeral1-3)$'$ and (\romannumeral2-3)$'$ which are equivalent to (\romannumeral1-3) and (\romannumeral2-3) in Theorem \ref{lres},
respectively.

}
\end{re}

\begin{tl} \label{lrec}
  Let $u$, $u_n$, $n=1,2,\ldots$, be fuzzy sets in $F^1_{USC} (X)$.
Then
the following are equivalent:
\\
(\romannumeral1) \
$\lim_{n\to\infty} H_{\rm end} (u_n, u) = 0$;
\\
(\romannumeral2) \ For each $\al\in (0,1)$,
$\lim_{n\to\infty} H^*([u]_{\al+\xi},    [u_n]_\al) = 0$ when $\xi\in (0, 1-\alpha]$,
and
$\lim_{n\to\infty} H^*([u_n]_\al, [u]_{\al-\zeta}) = 0$ when $\zeta \in (0, \al]$;
\\
(\romannumeral3) \ There is a dense subset $P$ of $(0,1)$
such that
for each $\al\in P$,
$\lim_{n\to\infty} H^*([u]_{\al+\xi},    [u_n]_\al) = 0$ when $\xi\in (0, 1-\alpha]$,
and
$\lim_{n\to\infty} H^*([u_n]_\al, [u]_{\al-\zeta}) = 0$ when $\zeta \in (0, \al]$.
\end{tl}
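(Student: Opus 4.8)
The plan is to derive Corollary \ref{lrec} directly from Theorem \ref{lres} together with the basic inequality $H_{\rm end}(u_n,u)\le 1$ and the fact that $H_{\rm end}(u_n,u)\to 0$ is equivalent to the conjunction of $H^*({\rm end}\,u,{\rm end}\,u_n)\to 0$ and $H^*({\rm end}\,u_n,{\rm end}\,u)\to 0$, since $H_{\rm end}(u_n,u)=H({\rm end}\,u_n,{\rm end}\,u)=\max\{H^*({\rm end}\,u_n,{\rm end}\,u),\,H^*({\rm end}\,u,{\rm end}\,u_n)\}$. So statement (\romannumeral1) here splits into (\romannumeral1-1) of Theorem \ref{lres}(\romannumeral1) and (\romannumeral2-1) of Theorem \ref{lres}(\romannumeral2).

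First I would prove (\romannumeral1)$\Rightarrow$(\romannumeral2). Assuming $H_{\rm end}(u_n,u)\to 0$, both $H^*({\rm end}\,u,{\rm end}\,u_n)\to 0$ and $H^*({\rm end}\,u_n,{\rm end}\,u)\to 0$ hold. Applying Theorem \ref{lres}(\romannumeral1), the implication (\romannumeral1-1)$\Rightarrow$(\romannumeral1-2) gives that for each $\al\in[0,1)$ and $\xi\in(0,1-\al]$, $H^*([u]_{\al+\xi},[u_n]_\al)\to 0$; in particular this holds for all $\al\in(0,1)$. Applying Theorem \ref{lres}(\romannumeral2), the implication (\romannumeral2-1)$\Rightarrow$(\romannumeral2-2) gives that for each $\al\in(0,1]$ and $\zeta\in(0,\al]$, $H^*([u_n]_\al,[u]_{\al-\zeta})\to 0$; in particular this holds for all $\al\in(0,1)$. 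Restricting both families of conditions to $\al\in(0,1)$ yields exactly (\romannumeral2).

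Next, (\romannumeral2)$\Rightarrow$(\romannumeral3) is trivial, taking $P=(0,1)$, which is dense in itself. For (\romannumeral3)$\Rightarrow$(\romannumeral1), suppose $P$ is a dense subset of $(0,1)$ witnessing (\romannumeral3). I would observe that $P$ is then also a dense subset of $[0,1)$ and a dense subset of $(0,1]$ (its closure contains $(0,1)$, hence contains $0$ and $1$ as limit points, so $\overline{P}\supseteq[0,1]$). Thus the first half of (\romannumeral3) verifies condition (\romannumeral1-3) of Theorem \ref{lres}(\romannumeral1), so (\romannumeral1-1) holds, i.e. $H^*({\rm end}\,u,{\rm end}\,u_n)\to 0$; and the second half of (\romannumeral3) verifies condition (\romannumeral2-3) of Theorem \ref{lres}(\romannumeral2), so (\romannumeral2-1) holds, i.e. $H^*({\rm end}\,u_n,{\rm end}\,u)\to 0$. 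Taking the maximum gives $H_{\rm end}(u_n,u)\to 0$, which is (\romannumeral1).

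There is no real obstacle here; the only point requiring a moment's care is the index-range bookkeeping — Theorem \ref{lres}(\romannumeral1) uses $\al\in[0,1)$ while Theorem \ref{lres}(\romannumeral2) uses $\al\in(0,1]$, whereas Corollary \ref{lrec} uses $\al\in(0,1)$ for both, so I must check that restricting to the common interval $(0,1)$ loses nothing in the forward direction and that density of $P$ in $(0,1)$ upgrades to density in $[0,1)$ and in $(0,1]$ in the backward direction. Everything else is a direct citation of Theorem \ref{lres} plus the identity $H=\max\{H^*,H^*\}$ for the Hausdorff distance.
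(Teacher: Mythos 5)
Your proposal is correct and follows essentially the same route as the paper: decompose $H_{\rm end}$ into the two one-sided distances $H^*({\rm end}\,u,{\rm end}\,u_n)$ and $H^*({\rm end}\,u_n,{\rm end}\,u)$ and invoke parts (\romannumeral1) and (\romannumeral2) of Theorem \ref{lres}. The index-range bookkeeping you flag (density of $P$ in $(0,1)$ versus $[0,1)$ and $(0,1]$) is a detail the paper leaves implicit, and your handling of it is fine.
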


\begin{proof}
Note that $\lim_{n\to\infty} H_{\rm end} (u_n, u) = 0$ if and only if
$\lim_{n\to \infty} H^*({\rm end}\, u, {\rm end}\, u_n) = 0$
and
$\lim_{n\to\infty} H^*{(\rm end}\, u_n, {\rm end}\, u) = 0$.
So
 Theorem \ref{lres} imply that (\romannumeral1)$\Rightarrow$(\romannumeral2)
and
 (\romannumeral3)$\Rightarrow$(\romannumeral1).
Clearly (\romannumeral2)$\Rightarrow$(\romannumeral3).
So
(\romannumeral1)$\Leftrightarrow$(\romannumeral2)$\Leftrightarrow$(\romannumeral3).
\end{proof}

\begin{lm} \label{emr}
 Let $u$, $u_n$, $n=1,2,\ldots$, be fuzzy sets in $F^1_{USC} (X)$, and
 let $P$ be a dense subset of
$(0,1)$.
\\
(\romannumeral1) \
  If for each $\al\in P$,
$\lim_{n\to\infty} H^*(\overline{\{u>\al\}},    [u_n]_\al) = 0$,
then
$\lim_{n\to \infty} H^*({\rm end}\, u, {\rm end}\, u_n) = 0$;
\\
(\romannumeral2) \
  If for each $\al\in P$,
$\lim_{n\to\infty} H^*([u_n]_\al, [u]_{\al}) = 0$,
then
$\lim_{n\to\infty} H^*{(\rm end}\, u_n, {\rm end}\, u) = 0$;
\\
(\romannumeral3) \
  If for each $\al\in P$,
$\lim_{n\to\infty} H^*(\overline{\{u>\al\}},    [u_n]_\al) = 0$
and
$\lim_{n\to\infty} H^*([u_n]_\al, [u]_{\al}) = 0$,
then
$\lim_{n\to\infty} H_{\rm end} (u_n, u) = 0$.
\end{lm}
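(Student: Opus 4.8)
The plan is to derive all three parts as straightforward consequences of Theorem \ref{lres}, which already characterises one-sided Hausdorff convergence of endographs in terms of inclusions of $\al$-cuts. The only genuine ingredients are the elementary monotonicity of the directed excess $H^*$ together with two trivial set containments, plus the observation that a set dense in $(0,1)$ is automatically dense in $[0,1)$ and in $(0,1]$.

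First I would record the monotonicity of $H^*$: if $A\subseteq B$ then $H^*(A,C)\le H^*(B,C)$, and if $C\subseteq D$ then $H^*(A,D)\le H^*(A,C)$; both follow at once from $H^*(A,C)=\sup_{a\in A}d(a,C)$ and the monotonicity of $d(x,\cdot)$ with respect to inclusion. For (\romannumeral1), fix $\al\in P$ and $\xi\in(0,1-\al]$. Since $\al+\xi>\al$ we have $[u]_{\al+\xi}\subseteq\{u>\al\}\subseteq\overline{\{u>\al\}}$, hence $H^*([u]_{\al+\xi},[u_n]_\al)\le H^*(\overline{\{u>\al\}},[u_n]_\al)\to 0$. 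Because $P$, being dense in $(0,1)$, is also dense in $[0,1)$ (every neighbourhood of a point of $[0,1)$, including of $0$, meets $(0,1)$ and hence meets $P$), the set $P$ verifies condition (\romannumeral1-3) of Theorem \ref{lres}(\romannumeral1); therefore (\romannumeral1-1) holds, i.e. $H^*({\rm end}\, u,{\rm end}\, u_n)\to 0$.

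For (\romannumeral2), fix $\al\in P$ and $\zeta\in(0,\al]$. Then $\al-\zeta\le\al$ gives $[u]_\al\subseteq[u]_{\al-\zeta}$, so $H^*([u_n]_\al,[u]_{\al-\zeta})\le H^*([u_n]_\al,[u]_\al)\to 0$. Since $P$ is dense in $(0,1)$ it is dense in $(0,1]$, so $P$ verifies condition (\romannumeral2-3) of Theorem \ref{lres}(\romannumeral2), and hence (\romannumeral2-1) holds, i.e. $H^*({\rm end}\, u_n,{\rm end}\, u)\to 0$. For (\romannumeral3), the hypothesis supplies, on the same dense set $P$, both families of limits appearing in (\romannumeral1) and (\romannumeral2); applying those two parts gives $H^*({\rm end}\, u,{\rm end}\, u_n)\to 0$ and $H^*({\rm end}\, u_n,{\rm end}\, u)\to 0$, whence $H_{\rm end}(u_n,u)=H({\rm end}\, u_n,{\rm end}\, u)=\max\{H^*({\rm end}\, u_n,{\rm end}\, u),\,H^*({\rm end}\, u,{\rm end}\, u_n)\}\to 0$.

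There is no serious obstacle here: the lemma is essentially a packaging of Theorem \ref{lres}, and the proof pattern mirrors that of Corollary \ref{lrec}. The only points I would be careful about are the directions of the two containments (so that $H^*$-monotonicity is applied the right way round) and the two density remarks; both are routine, and once they are in place Theorem \ref{lres} does all the work.
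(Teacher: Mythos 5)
Your proof is correct and takes exactly the paper's route: the paper's own proof of this lemma is the single line ``the desired results follow immediately from Theorem \ref{lres}'', and your argument simply fills in the intended details (the containments $[u]_{\al+\xi}\subseteq\overline{\{u>\al\}}$ and $[u]_\al\subseteq[u]_{\al-\zeta}$, the monotonicity of $H^*$, and the density of $P$ in $[0,1)$ and $(0,1]$). No issues.
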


\begin{proof}  The desired results follow immediately from Theorem \ref{lres}.
\end{proof}

\begin{lm} \label{acm}
   Let $u$, $u_n$, $n=1,2,\ldots$, be fuzzy sets in $F^1_{USC} (X)$.
\\
(\romannumeral1) \ Let $\al\in [0,1)$. If $\lim_{n\to\infty} H^*({\rm end}\, u, {\rm end}\, u_n) = 0$, and
$\lim_{\gamma \to \alpha+} H( [u]_{\gamma}, \overline{\{u>\al\}}) = 0$,
then
 $\lim_{n\to\infty} H^*(\overline{\{u>\al\}},    [u_n]_\al) = 0$.
\\
(\romannumeral2) \ Let $\al\in (0,1]$. If $\lim_{n\to\infty} H^*{(\rm end}\, u_n, {\rm end}\, u) = 0$,
and
$\lim_{\beta \to \alpha-} H( [u]_\al, [u]_{\beta}) = 0$,
then
 $\lim_{n\to\infty} H^*( [u_n]_\al, [u]_\al) = 0$.
\end{lm}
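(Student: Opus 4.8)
The plan is to prove both clauses using the level characterizations of one-sided endograph convergence established in Theorem \ref{lres}, together with a triangle-inequality argument for the Hausdorff quantity $H^*$. The two clauses are symmetric, so I would prove (\romannumeral1) in detail and remark that (\romannumeral2) follows by the same argument with the roles of the cuts reversed (using the ``from below'' part of Theorem \ref{lres}(\romannumeral2) in place of the ``from above'' part of Theorem \ref{lres}(\romannumeral1)).

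For clause (\romannumeral1): assume $\lim_{n\to\infty} H^*({\rm end}\, u, {\rm end}\, u_n) = 0$ and $\lim_{\gamma\to\al+} H([u]_\gamma, \overline{\{u>\al\}}) = 0$. Fix $\varepsilon > 0$. By the second hypothesis, choose $\xi \in (0, 1-\al]$ small enough that $H([u]_{\al+\xi}, \overline{\{u>\al\}}) < \varepsilon$, and in particular $H^*(\overline{\{u>\al\}}, [u]_{\al+\xi}) < \varepsilon$. By Theorem \ref{lres}(\romannumeral1) (the implication (\romannumeral1-1)$\Rightarrow$(\romannumeral1-2) applied with this $\al$ and this $\xi$), the first hypothesis gives $\lim_{n\to\infty} H^*([u]_{\al+\xi}, [u_n]_\al) = 0$, so there is $N$ with $H^*([u]_{\al+\xi}, [u_n]_\al) < \varepsilon$ for all $n\geq N$. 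Then for $n\geq N$ the triangle inequality for $H^*$ yields
\[
H^*(\overline{\{u>\al\}}, [u_n]_\al) \leq H^*(\overline{\{u>\al\}}, [u]_{\al+\xi}) + H^*([u]_{\al+\xi}, [u_n]_\al) < 2\varepsilon.
\]
Since $\varepsilon > 0$ was arbitrary, $\lim_{n\to\infty} H^*(\overline{\{u>\al\}}, [u_n]_\al) = 0$, which is the conclusion of (\romannumeral1).

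I do not anticipate a serious obstacle here: the statement is essentially a bookkeeping consequence of Theorem \ref{lres} plus subadditivity of $H^*(\cdot,\cdot)$ in its first argument along a nested family of cuts. The one point to be careful about is that $H^*$ is only ``one-sided,'' so I must keep the two arguments in the correct order throughout the triangle-inequality step (and, for clause (\romannumeral2), use the nesting $[u]_\al \subseteq [u]_\beta$ for $\beta < \al$ with the hypothesis $\lim_{\beta\to\al-} H([u]_\al,[u]_\beta)=0$ to control $H^*([u_n]_\al,[u]_\al)$ via $H^*([u_n]_\al,[u]_{\al-\zeta})$ for a suitably chosen $\zeta$). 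Also worth noting: the hypotheses on the one-sided limits of $H$ along the cuts are exactly what Lemma \ref{gnc}(\romannumeral1)(\romannumeral2) supply automatically when $u \in F^1_{USCG}(X)$, so this lemma is the bridge that will later let the $H_{\rm end}$ convergence be decomposed levelwise under that hypothesis.
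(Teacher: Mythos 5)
Your proposal is correct and follows essentially the same route as the paper: fix $\varepsilon>0$, use the one-sided continuity hypothesis to pick a nearby cut $[u]_{\al+\xi}$ (respectively $[u]_{\al-\zeta}$) within $\varepsilon$ of the target set, invoke Theorem \ref{lres} to approximate that cut by $[u_n]_\al$, and conclude by the triangle inequality for $H^*$. The paper's proof is the same argument with $\gamma=\al+\xi$ and an $\varepsilon/2$--$\varepsilon/2$ split, and it likewise dispatches clause (\romannumeral2) by symmetry.
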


\begin{proof} \ We only prove (\romannumeral1). (\romannumeral2) can be proved similarly.

  Let $\varepsilon>0$. Since $\lim_{\gamma \to \alpha+} H( [u]_{\gamma}, \overline{\{u>\al\}}) = 0$,
then there is a $\gamma( \alpha) \in (\al, 1]$
such that
$H(\overline{\{u>\al\}}, [u]_\gamma)<\varepsilon/2$.
By Theorem \ref{lres} (\romannumeral1),  $\lim_{n\to\infty} H^*({\rm end}\, u, {\rm end}\, u_n) = 0$ implies that
$\lim_{n\to \infty} H^*([u]_{\gamma},    [u_n]_\al) = 0$.
Then there is an $N\in \mathbb{N}$ such that
for all $n\geq N$,
$H^*([u]_{\gamma},    [u_n]_\al) < \varepsilon/2$.
Hence
for all $n\geq N$,
$$H^*(\overline{\{u>\al\}},    [u_n]_\al) \leq H(\overline{\{u>\al\}}, [u]_\gamma)  + H^*([u]_{\gamma},    [u_n]_\al) < \varepsilon.$$
From the arbitrariness of $\varepsilon>0$,
we thus have
$\lim_{n\to\infty} H^*(\overline{\{u>\al\}},    [u_n]_\al) = 0$.

\end{proof}

The assumption that
$\lim_{\gamma \to \alpha+} H( [u]_{\gamma}, \overline{\{u>\al\}}) = 0$
in
(\romannumeral1) of
Lemma \ref{acm} can not be omitted.
The assumption that
$\lim_{\beta \to \alpha-} H( [u]_\al, [u]_{\beta}) = 0$
in
(\romannumeral2) of
Lemma \ref{acm} also can not be omitted.
The following Examples \ref{snc} and \ref{fnc}
are counterexamples.

\begin{eap} \label{snc}
  {\rm

Let $u$ be a fuzzy set in $F^1_{USC}(\mathbb{\mathbb{R}})$ defined by putting
\[
[u]_\al
=\left\{
   \begin{array}{ll}
(-\infty, \frac{\frac{2}{3}}{\alpha-\frac{1}{3}}], & \al\in (\frac{1}{3}, 1], \\
    (-\infty, +\infty), & \al\in [0,\frac{1}{3}].
   \end{array}
 \right.
\]
For $n=1,2,\ldots$, let $u_n$ be a fuzzy set in $F^1_{USC}(\mathbb{\mathbb{R}})$ given by putting
\[
[u_n]_\al
=\left\{
   \begin{array}{ll}
(-\infty, \frac{1-\frac{1}{3}\frac{n-1}{n}}{\alpha-\frac{1}{3}\frac{n-1}{n}}], & \al\in (\frac{1}{3}\frac{n-1}{n}, 1], \\
    (-\infty, +\infty), & \al\in [0,   \frac{1}{3}\frac{n-1}{n}].
   \end{array}
 \right.
\]
Clearly
 $\lim_{\gamma \to \frac{1}{3}+} H( [u]_{\gamma}, \overline{\{u>\frac{1}{3}\}})
= \lim_{\gamma \to \frac{1}{3}+} H( [u]_{\gamma}, (-\infty, +\infty))
= +\infty \not= 0$.

It can be seen that
$H_{\rm end} (u, u_n) \to 0$.
However for each $n\in \mathbb{N}$,
$$H^*(\overline{\{u > \frac{1}{3}\}},   [u_n]_\frac{1}{3} )
=H^*([u]_\frac{1}{3},   [u_n]_\frac{1}{3} )
=
H^*((-\infty, +\infty),\ (-\infty, \frac{3-\frac{n-1}{n}}{1-\frac{n-1}{n}}]) =  +\infty,$$
and so $H^*(\overline{\{u > \frac{1}{3}\}},   [u_n]_\frac{1}{3} ) \not\to 0.$

}
\end{eap}

\begin{eap} \label{fnc} {\rm
Let $u$ be a fuzzy set in $F^1_{USC}(\mathbb{\mathbb{R}})$ defined by putting
\[
[u]_\al
=
\left\{
  \begin{array}{ll}
    \{1\}, & \al=1, \\
   \{1\} \cup (-\infty, -\frac{1}{1-\al}], & \al\in [0,1),
  \end{array}
\right.
\]
For $n=1,2,\ldots$,
 let $u_n$ be a fuzzy set in $F^1_{USC}(\mathbb{\mathbb{R}})$ given by putting
\[
[u_n]_\al
 =
\left\{
  \begin{array}{ll}
    [u]_\al, & \al\in [0, 1-\frac{1}{n}], \\
   \mbox{} [u]_{1-\frac{1}{n}}, &\al\in [1-\frac{1}{n},1],
  \end{array}
\right.
n=1,2,\ldots.
\]
Clearly $\lim_{\beta \to 1-} H( [u]_1, [u]_{\beta}) = +\infty \not= 0$.

We can see that
$H_{\rm end} (u, u_n) \to 0$.
However for each $n\in \mathbb{N}$,
$$H^*([u_n]_1, [u]_1) = H^*( \{1\} \cup (-\infty, -n],\  \{1\}) =  +\infty,$$
and so
$H^*([u_n]_1, [u]_1) \not\to 0$.

}
\end{eap}

\begin{tm} \label{uscg}
  Let $u$ be a fuzzy set in $F^1_{USCG} (X)$ and let $u_n$, $n=1,2,\ldots$, be fuzzy sets in $F^1_{USC} (X)$.

(\romannumeral1) \ The following are equivalent:
\\
(\romannumeral1-1) \
$\lim_{n\to\infty} H^*({\rm end}\, u, {\rm end}\, u_n) = 0$;
\\
(\romannumeral1-2) \
For each $\al\in (0,1)$,
$\lim_{n\to\infty} H^*(\overline{\{u>\al\}},    [u_n]_\al) = 0$;
\\
(\romannumeral1-3) \
There is a dense subset $P$ of $(0,1)$ such that
for each $\al\in P$,
$\lim_{n\to\infty} H^*(\overline{\{u>\al\}},    [u_n]_\al) = 0$.

(\romannumeral2) \ The following are equivalent:
\\
(\romannumeral2-1) \
$\lim_{n\to\infty} H^*{(\rm end}\, u_n, {\rm end}\, u) = 0$;
\\
(\romannumeral2-2) \
For each $\al\in (0,1]$,
$\lim_{n\to\infty} H^*([u_n]_\al, [u]_{\al}) = 0$;
\\
(\romannumeral2-3) \
There is a dense subset $P$ of $(0, 1]$
such that for each $\al\in P$,
$\lim_{n\to\infty} H^*([u_n]_\al, [u]_{\al}) = 0$.
\end{tm}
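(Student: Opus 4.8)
The plan is to obtain Theorem \ref{uscg} by combining the general level characterization of endograph convergence in Theorem \ref{lres} (via its consequences Lemmas \ref{emr} and \ref{acm}) with the one-sided continuity of the cut map that is available precisely because $u\in F^1_{USCG}(X)$, namely Lemma \ref{gnc}. Since parts (\romannumeral1) and (\romannumeral2) are entirely parallel, I would prove (\romannumeral1) in detail and then remark that (\romannumeral2) follows in the same manner, using Lemma \ref{gnc}(\romannumeral1) and Lemma \ref{acm}(\romannumeral2) in place of Lemma \ref{gnc}(\romannumeral2) and Lemma \ref{acm}(\romannumeral1).

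For part (\romannumeral1): the implication (\romannumeral1-2)$\Rightarrow$(\romannumeral1-3) is trivial (take $P=(0,1)$), and (\romannumeral1-3)$\Rightarrow$(\romannumeral1-1) is exactly Lemma \ref{emr}(\romannumeral1). The only implication that genuinely uses the hypothesis $u\in F^1_{USCG}(X)$ is (\romannumeral1-1)$\Rightarrow$(\romannumeral1-2): fix $\al\in(0,1)$; Lemma \ref{gnc}(\romannumeral2) gives $\lim_{\gamma\to\al+}H([u]_{\gamma},\overline{\{u>\al\}})=0$, so Lemma \ref{acm}(\romannumeral1) applies and converts $\lim_{n}H^*({\rm end}\,u,{\rm end}\,u_n)=0$ into $\lim_{n}H^*(\overline{\{u>\al\}},[u_n]_\al)=0$. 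This closes the cycle (\romannumeral1-1)$\Rightarrow$(\romannumeral1-2)$\Rightarrow$(\romannumeral1-3)$\Rightarrow$(\romannumeral1-1). For part (\romannumeral2): (\romannumeral2-1)$\Rightarrow$(\romannumeral2-2) is handled the same way, with $\al\in(0,1]$, using $\lim_{\beta\to\al-}H([u]_\al,[u]_{\beta})=0$ from Lemma \ref{gnc}(\romannumeral1) to invoke Lemma \ref{acm}(\romannumeral2); (\romannumeral2-2)$\Rightarrow$(\romannumeral2-3) is trivial; and (\romannumeral2-3)$\Rightarrow$(\romannumeral2-1) is Lemma \ref{emr}(\romannumeral2).

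The one minor bookkeeping point is that Lemma \ref{emr}(\romannumeral2) is stated for a set dense in $(0,1)$ while (\romannumeral2-3) supplies a set $P$ dense in $(0,1]$; I would first note that $P\cap(0,1)$ is dense in $(0,1)$ (any point of $(0,1)$ is a limit of points of $P$, which must eventually lie below $1$) and apply the lemma to $P\cap(0,1)$. Apart from this, there is no real obstacle here: the theorem is essentially an assembly of the preceding lemmas, the substantive input being the left-continuity at every $\al\in(0,1]$ and the right-continuity onto $\overline{\{u>\al\}}$ at every $\al\in(0,1)$ guaranteed by membership in $F^1_{USCG}(X)$ — exactly the property whose absence makes Lemma \ref{acm} fail, as witnessed by Examples \ref{snc} and \ref{fnc}.
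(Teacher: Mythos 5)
Your proposal is correct and follows essentially the same route as the paper, which obtains the theorem by combining Lemmas \ref{gnc}, \ref{emr} and \ref{acm} exactly as you do; your write-up merely supplies the details (including the harmless adjustment of the dense set from $(0,1]$ to $(0,1)$ for Lemma \ref{emr}(\romannumeral2)) that the paper leaves as routine.
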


\begin{proof} \  The desired results follow from Lemmas \ref{gnc}, \ref{emr} and \ref{acm} (or Lemmas \ref{gnc} and \ref{acm} and Theorem \ref{lres}). The proof is routine.
\end{proof}

\begin{tm} \label{uscgre}
  Let $u$ be a fuzzy set in $F^1_{USCG} (X)$ and let $u_n$, $n=1,2,\ldots$, be fuzzy sets in $F^1_{USC} (X)$.
Then the following are equivalent:
\\
(\romannumeral1) \
 $\lim_{n\to\infty} H_{\rm end}(u_n, u) = 0$;
\\
(\romannumeral2) \
For each $\al\in (0,1)$,
$\lim_{n\to\infty} H^*(\overline{\{u>\al\}},    [u_n]_\al) = 0$
and
$\lim_{n\to\infty} H^*([u_n]_\al, [u]_{\al}) = 0$;
\\
(\romannumeral3) \
There is a dense subset $P$ of
$(0,1)$ such that for each $\al\in P$,
$\lim_{n\to\infty} H^*(\overline{\{u>\al\}},    [u_n]_\al) = 0$
and
$\lim_{n\to\infty} H^*([u_n]_\al, [u]_{\al}) = 0$;
\\
(\romannumeral4) \
For each $\al\in (0,1)\setminus P_0(u)$,
$\lim_{n\to\infty} H( [u]_{\al},    [u_n]_\al) = 0$;
\\
(\romannumeral5) \
There is a dense subset $P$ of
$(0,1)\setminus P_0(u)$ such that
for each $\al\in P$,
$\lim_{n\to\infty} H( [u]_{\al},    [u_n]_\al) = 0$;
\\
(\romannumeral6) \
There is a countable dense subset $P$ of
$(0,1)\setminus P_0(u)$ such that
for each $\al\in P$,
$\lim_{n\to\infty} H( [u]_{\al},    [u_n]_\al) = 0$;
\\
(\romannumeral7) \
 $H([u_n]_\al, [u]_\al) \rightarrow 0$ holds a.e. on $\al\in (0,1)$.

\end{tm}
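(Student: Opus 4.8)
The plan is to reduce everything to Theorem~\ref{uscg}, together with the structural facts recorded in Lemma~\ref{gnc} that for $u\in F^1_{USCG}(X)$ the set $P_0(u)$ equals $P(u)$ and is countable, and the elementary identity (see Remark~\ref{spc}) that, since $u\in F^1_{USC}(X)$, one has $\overline{\{u>\al\}}=[u]_\al$ exactly when $\al\notin P(u)$. This identity is the bridge that converts the asymmetric estimates built from $H^*$ and $\overline{\{u>\al\}}$ (conditions (\romannumeral1)--(\romannumeral3)) into the symmetric Hausdorff estimates built from $H$ and $[u]_\al$ (conditions (\romannumeral4)--(\romannumeral7)).

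First I would prove $(\romannumeral1)\Leftrightarrow(\romannumeral2)\Leftrightarrow(\romannumeral3)$. Since $H_{\rm end}(u_n,u)=\max\{H^*({\rm end}\,u,{\rm end}\,u_n),\,H^*({\rm end}\,u_n,{\rm end}\,u)\}$, the convergence $H_{\rm end}(u_n,u)\to 0$ is equivalent to the conjunction of $H^*({\rm end}\,u,{\rm end}\,u_n)\to 0$ and $H^*({\rm end}\,u_n,{\rm end}\,u)\to 0$. Applying Theorem~\ref{uscg}(\romannumeral1) to the first of these and Theorem~\ref{uscg}(\romannumeral2) to the second (using the dense-subset formulations (\romannumeral1-3) and (\romannumeral2-3) to match the index ranges $(0,1)$ and $(0,1]$) yields at once that (\romannumeral1), (\romannumeral2) and (\romannumeral3) are mutually equivalent.

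Next I would fold the remaining conditions into the same equivalence class. For $(\romannumeral2)\Rightarrow(\romannumeral4)$: if $\al\in(0,1)\setminus P_0(u)=(0,1)\setminus P(u)$ then $\overline{\{u>\al\}}=[u]_\al$, so the two limits in (\romannumeral2) read $H^*([u]_\al,[u_n]_\al)\to 0$ and $H^*([u_n]_\al,[u]_\al)\to 0$, i.e. $H([u]_\al,[u_n]_\al)\to 0$. For $(\romannumeral4)\Rightarrow(\romannumeral6)$: the subspace $(0,1)\setminus P_0(u)$ of $\mathbb{R}$ is separable, so it has a countable dense subset, on which (\romannumeral4) supplies the required convergence; $(\romannumeral6)\Rightarrow(\romannumeral5)$ is immediate. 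For $(\romannumeral5)\Rightarrow(\romannumeral3)$: since $P_0(u)$ is countable, a dense subset $P$ of $(0,1)\setminus P_0(u)$ is automatically dense in $(0,1)$, and for $\al\in P$ the equality $\overline{\{u>\al\}}=[u]_\al$ turns $H([u]_\al,[u_n]_\al)\to 0$ into the two estimates required in (\romannumeral3). Finally, for the measure-theoretic clause, $(\romannumeral2)\Rightarrow(\romannumeral7)$ holds because (as in $(\romannumeral2)\Rightarrow(\romannumeral4)$) $H([u_n]_\al,[u]_\al)\to 0$ for every $\al$ in the conull set $(0,1)\setminus P(u)$; and $(\romannumeral7)\Rightarrow(\romannumeral5)$ holds because, if $A\subseteq(0,1)$ is the conull set on which $H([u_n]_\al,[u]_\al)\to 0$, then $A\setminus P_0(u)$ is still conull, hence dense in $(0,1)$ and therefore dense in the subset $(0,1)\setminus P_0(u)$ of $(0,1)$ containing it. Chaining $(\romannumeral1)\Leftrightarrow(\romannumeral2)\Leftrightarrow(\romannumeral3)$, $(\romannumeral2)\Rightarrow(\romannumeral4)\Rightarrow(\romannumeral6)\Rightarrow(\romannumeral5)\Rightarrow(\romannumeral3)$, and $(\romannumeral2)\Rightarrow(\romannumeral7)\Rightarrow(\romannumeral5)$ establishes the full seven-way equivalence.

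The genuinely nontrivial input is entirely external: Theorem~\ref{uscg} (already available) and the fact from Lemma~\ref{gnc} that $P_0(u)=P(u)$ is countable for $u\in F^1_{USCG}(X)$. Beyond that, the one step requiring care is keeping straight the dictionary between the one-sided, $\overline{\{u>\al\}}$-based formulations and the two-sided, $[u]_\al$-based ones, which rests solely on the observation that these coincide off the countable set $P(u)$; once that is in place, the rest is a routine diagram chase together with elementary density bookkeeping (a countable set has dense complement, a conull set is dense, and a set dense in $(0,1)$ and contained in $S\subseteq(0,1)$ is dense in $S$).
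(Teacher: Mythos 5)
Your proposal is correct and follows essentially the same route as the paper: establish (\romannumeral1)$\Leftrightarrow$(\romannumeral2)$\Leftrightarrow$(\romannumeral3) from Theorem \ref{uscg}, then use the facts that $P_0(u)=P(u)$ is countable for $u\in F^1_{USCG}(X)$ and that $\overline{\{u>\al\}}=[u]_\al$ off $P(u)$ to chain in (\romannumeral4)--(\romannumeral7). The only differences are trivial reroutings of the implication diagram (e.g.\ you pass through (\romannumeral7)$\Rightarrow$(\romannumeral5) where the paper goes (\romannumeral7)$\Rightarrow$(\romannumeral3) directly via the inclusion $\overline{\{u>\al\}}\subseteq[u]_\al$), which do not change the substance.
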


\begin{proof}
 By Theorem \ref{uscg},
(\romannumeral1)$\Leftrightarrow$(\romannumeral2)$\Leftrightarrow$(\romannumeral3).
Since for each $\al\in (0,1)\setminus P_0(u) $,
$\overline{\{u>\al\}}  = [u]_\al$,
then (\romannumeral2)$\Rightarrow$(\romannumeral4).
Clearly
 (\romannumeral4)$\Rightarrow$(\romannumeral5)$\Leftrightarrow$(\romannumeral6).
Since $\overline{\{u>\al\}}\subseteq [u]_\al$,
(\romannumeral6)$\Rightarrow$(\romannumeral3).
By Lemma \ref{gnc}, $P_0 (u)$ is at most countable,
 and therefore (\romannumeral4)$\Rightarrow$(\romannumeral7).
Since $\overline{\{u>\al\}}\subseteq [u]_\al$,
(\romannumeral7)$\Rightarrow$(\romannumeral3).
So the proof is completed.

The desired conclusion can also be deduced from Theorems \ref{uscg} and \ref{aec}
as follows.
By Theorem \ref{uscg}, (\romannumeral1)$\Leftrightarrow$(\romannumeral2)$\Leftrightarrow$(\romannumeral3).
From
Theorem \ref{aec}, (\romannumeral1)$\Leftrightarrow$(\romannumeral4)$\Leftrightarrow$(\romannumeral5)$\Leftrightarrow$(\romannumeral6)$\Leftrightarrow$(\romannumeral7).
So
 Theorem \ref{uscgre} is proved.
\end{proof}

\begin{tm} \label{aedr}
Let $u$, $u_n$, $n=1,2,\ldots$, be fuzzy sets in $F^1_{USC} (X)$.
If there is a dense subset $P$ of $(0,1)$ such that
  $H([u_n]_\al, [u]_\al) \to   0$ for each $\al \in P$,
  then
  $H_{\rm end}(u_n, u) \to 0$.
\end{tm}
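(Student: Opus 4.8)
The plan is to deduce the statement from Lemma \ref{emr}(\romannumeral3), which already packages precisely this kind of level-wise hypothesis into endograph convergence, once the two-sided hypothesis is separated into its two one-sided halves.

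First I would note that for each $\al\in P$ the hypothesis $H([u_n]_\al,[u]_\al)\to 0$ is, by the definition of the Hausdorff distance, equivalent to $H^*([u_n]_\al,[u]_\al)\to 0$ together with $H^*([u]_\al,[u_n]_\al)\to 0$. The first of these is one of the two hypotheses of Lemma \ref{emr}(\romannumeral3). For the other, I would invoke the inclusion $\overline{\{u>\al\}}\subseteq[u]_\al$ (valid for $u\in F^1_{USC}(X)$; see Remark \ref{spc}) together with the monotonicity of $H^*$ in its first argument to get
\[
H^*(\overline{\{u>\al\}},[u_n]_\al)\le H^*([u]_\al,[u_n]_\al)\longrightarrow 0\qquad(\al\in P).
\]
Hence for every $\al\in P$ both $\lim_{n\to\infty}H^*(\overline{\{u>\al\}},[u_n]_\al)=0$ and $\lim_{n\to\infty}H^*([u_n]_\al,[u]_\al)=0$; since $P$ is a dense subset of $(0,1)$, Lemma \ref{emr}(\romannumeral3) gives $H_{\rm end}(u_n,u)\to 0$, which is the assertion.

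If one prefers to avoid Lemma \ref{emr}, the same two halves can be routed through Corollary \ref{lrec}(\romannumeral3) (equivalently Theorem \ref{lres}): fixing $\al\in P$ and $\xi\in(0,1-\al]$, pick an auxiliary level $\beta\in P\cap(\al,\al+\xi)$ — possible since $(\al,\al+\xi)$ is a nonempty open subinterval of $(0,1)$ and $P$ is dense — and use $[u]_{\al+\xi}\subseteq[u]_\beta$ together with $[u_n]_\beta\subseteq[u_n]_\al$ to get $H^*([u]_{\al+\xi},[u_n]_\al)\le H^*([u]_\beta,[u_n]_\beta)\le H([u]_\beta,[u_n]_\beta)\to 0$; symmetrically, for $\zeta\in(0,\al]$ choose $\beta\in P\cap(\al-\zeta,\al)$ and obtain $H^*([u_n]_\al,[u]_{\al-\zeta})\le H^*([u_n]_\beta,[u]_\beta)\le H([u_n]_\beta,[u]_\beta)\to 0$. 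Either route finishes the proof. I do not expect a genuine obstacle here: all the analytic work — translating between level cuts and endographs and filling in the missing levels by density — has already been carried out in Theorem \ref{lres} and Lemma \ref{emr}, so the only points needing care are getting the directions of the one-sided Hausdorff inequalities and of the set inclusions right, and checking that the auxiliary level $\beta$ can always be placed strictly inside the relevant open interval.
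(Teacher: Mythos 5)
Your proof is correct, but it takes a different route from the paper's own argument. The paper proves Theorem \ref{aedr} directly by contradiction: assuming $H_{\rm end}(u_n,u)\not\to 0$, it extracts a subsequence and points $(x_{n_k},\al_{n_k})$ in the offending endographs lying at distance $>\varepsilon$ from the other endograph, passes to a convergent subsequence of the levels $\al_{n_k}\to\al$, picks $\beta\in P$ with $\al\in(\beta,\beta+\varepsilon/2)$, and contradicts $H([u_{n_k}]_\beta,[u]_\beta)\to 0$; it is self-contained modulo the definition of $\overline{d}$. Your argument instead splits the two-sided hypothesis into its one-sided halves, observes $H^*(\overline{\{u>\al\}},[u_n]_\al)\le H^*([u]_\al,[u_n]_\al)$ from $\overline{\{u>\al\}}\subseteq[u]_\al$, and feeds the result into Lemma \ref{emr}(\romannumeral3); this is exactly the alternative the paper itself flags in the remark immediately following the theorem (``Theorem \ref{aedr} can also be deduced from Lemma \ref{emr} (\romannumeral3)''), and your second route through Theorem \ref{lres}/Corollary \ref{lrec}, with the auxiliary level $\beta\in P$ interpolated by density and the monotonicity of the cuts, is essentially an unwinding of how Lemma \ref{emr} is itself derived. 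All the inequalities and inclusions you use point in the right direction, so there is no gap; what your approach buys is brevity by reusing the level-characterization machinery, while the paper's contradiction argument does not presuppose Lemma \ref{emr} and so could be read independently of Section \ref{lce}'s earlier results.
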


\begin{proof} We proceed by contradiction. If $H_{\rm end}(u_n, u) \not\to 0$,
then
 $H^*({\rm end}\, u_{n},   {\rm end}\, u)  \not\to 0$ or
 $H^*({\rm end}\, u,   {\rm end}\, u_{n})  \not\to 0$.

Suppose that $H^*({\rm end}\, u_{n},   {\rm end}\, u)  \not\to 0$.
Then there is an $\varepsilon>0$ and a subsequence $\{ u_{n_k} \}$
of
$\{u_n\}$
such that
$H^*({\rm end}\, u_{n_k},   {\rm end}\, u) > \varepsilon$
for each $k\in \mathbb{N}$.
So for each $k\in \mathbb{N}$,
 there
exists
an $(x_{n_k}, \al_{n_k}) \in {\rm end}\, u_{n_k}$
such that
\begin{equation}\label{contpe}
\overline{d}(    (x_{n_k}, \al_{n_k}) ,     {\rm end}\, u) > \varepsilon.
  \end{equation}
  With no loss of generality we can assume that $\{\al_{n_k}\}$ converges to
  an element $\al$ in $[0,1]$. By \eqref{contpe}, $\al_{n_k} > \varepsilon$ for each $k\in \mathbb{N}$, and so $\al \geq   \varepsilon$.
Pick $\beta\in P$ satisfying $\alpha \in (\beta, \beta + \varepsilon/2)$.
Then there exists $K$ such that
$   \al_{n_k}  \in  (\beta,   \beta + \varepsilon/2) $    for all $k \geq  K$.
Thus
for each $k \geq K$,
\begin{align}
\overline{d}  &  (    (x_{n_k}, \al_{n_k}) ,     {\rm end}\, u)     \nonumber
\\
& \leq   \overline{d}    (    (x_{n_k}, \beta) ,     {\rm end}\, u)   +
\overline{d}  (    (x_{n_k}, \al_{n_k}),  (x_{n_k}, \beta))
  <  \overline{d}    (    (x_{n_k}, \beta) ,     [u]_\beta \times \{\beta\})   +   \varepsilon/2\nonumber
\\
& = \overline{d}    (    x_{n_k} ,     [u]_\beta)   +   \varepsilon/2
\leq H([u_{n_k}]_\beta,  [u]_\beta)   +   \varepsilon/2.   \label{xne}
\end{align}
Note that $ H([u_{n_k}]_\beta,  [u]_\beta)  \to 0$, thus \eqref{contpe} contradicts \eqref{xne}.
So
the supposition is false.

For $H^*({\rm end}\, u,   {\rm end}\, u_n)  \not\to 0$,
we can similarly derive a contradiction.
\end{proof}

\begin{re}{\rm
It can be seen
that
Theorem \ref{aedr} can also be deduced from Lemma \ref{emr} (\romannumeral3).
 Fan (Lemma 1 in \cite{fan3}) proved a result of Theorem \ref{aedr} type.

}
\end{re}

\begin{tm} \label{hepc}
   Let $u$, $u_n$, $n=1,2,\ldots$, be fuzzy sets in $F^1_{USC} (X)$. If
  $H_{\rm end}(u_n, u) \to 0$,
  then
    $H([u_n]_\al, [u]_\al)  \to  0$ for each $\al \in (0,1) \setminus   P_0 (u)$.
\end{tm}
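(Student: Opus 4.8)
The plan is to deduce the conclusion from the level characterization of endograph metric convergence in Theorem \ref{lres}. First I would observe that $H_{\rm end}(u_n,u)=H({\rm end}\,u_n,{\rm end}\,u)=\max\{H^*({\rm end}\,u,{\rm end}\,u_n),\,H^*({\rm end}\,u_n,{\rm end}\,u)\}$, so the hypothesis $H_{\rm end}(u_n,u)\to 0$ gives $H^*({\rm end}\,u,{\rm end}\,u_n)\to 0$ and $H^*({\rm end}\,u_n,{\rm end}\,u)\to 0$. Applying Theorem \ref{lres}(\romannumeral1) (with $P=[0,1)$) and Theorem \ref{lres}(\romannumeral2) (with $P=(0,1]$), this yields: for every $\al\in[0,1)$ and $\xi\in(0,1-\al]$, $H^*([u]_{\al+\xi},[u_n]_\al)\to 0$; and for every $\al\in(0,1]$ and $\zeta\in(0,\al]$, $H^*([u_n]_\al,[u]_{\al-\zeta})\to 0$.

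Next, fix $\al\in(0,1)\setminus P_0(u)$. By the definition of $P_0(u)$ this means $\lim_{\beta\to\al}H([u]_\beta,[u]_\al)=0$; in particular $H([u]_{\al+\xi},[u]_\al)\to 0$ as $\xi\to 0+$ and $H([u]_{\al-\zeta},[u]_\al)\to 0$ as $\zeta\to 0+$. To prove $H([u_n]_\al,[u]_\al)\to 0$ I would estimate the two Hausdorff excesses separately. Given $\varepsilon>0$, choose $\xi\in(0,1-\al]$ with $H([u]_{\al+\xi},[u]_\al)<\varepsilon/2$ and $\zeta\in(0,\al]$ with $H([u]_{\al-\zeta},[u]_\al)<\varepsilon/2$. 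Using the triangle inequality for $H^*$,
\[
H^*([u]_\al,[u_n]_\al)\le H^*([u]_\al,[u]_{\al+\xi})+H^*([u]_{\al+\xi},[u_n]_\al)<\varepsilon/2+H^*([u]_{\al+\xi},[u_n]_\al),
\]
\[
H^*([u_n]_\al,[u]_\al)\le H^*([u_n]_\al,[u]_{\al-\zeta})+H^*([u]_{\al-\zeta},[u]_\al)<H^*([u_n]_\al,[u]_{\al-\zeta})+\varepsilon/2,
\]
where the first bounds use $H^*([u]_\al,[u]_{\al+\xi})\le H([u]_{\al+\xi},[u]_\al)$ and $H^*([u]_{\al-\zeta},[u]_\al)\le H([u]_{\al-\zeta},[u]_\al)$. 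Since the remaining terms tend to $0$ by the two convergences from the previous paragraph, $H([u_n]_\al,[u]_\al)=\max\{H^*([u]_\al,[u_n]_\al),H^*([u_n]_\al,[u]_\al)\}<\varepsilon$ for all large $n$; as $\varepsilon>0$ is arbitrary, $H([u_n]_\al,[u]_\al)\to 0$.

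The argument is essentially routine once Theorem \ref{lres} is available; the only point needing care is the bookkeeping of which direction of the Hausdorff excess goes with which inclusion. Since $[u]_{\al+\xi}\subseteq[u]_\al\subseteq[u]_{\al-\zeta}$, the excesses $H^*([u]_{\al+\xi},[u]_\al)$ and $H^*([u]_\al,[u]_{\al-\zeta})$ vanish automatically, while it is the potentially large excesses $H^*([u]_\al,[u]_{\al+\xi})$ and $H^*([u]_{\al-\zeta},[u]_\al)$ that the condition $\al\notin P_0(u)$ forces to $0$. I do not expect any substantive obstacle beyond this; in particular, the possibility that the defining limit in $P_0(u)$ equals $+\infty$ is irrelevant here, because for $\al\notin P_0(u)$ that limit equals $0$.
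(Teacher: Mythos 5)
Your proof is correct and follows essentially the same route as the paper's: both sandwich $[u]_\al$ between $[u]_{\al-\zeta}$ and $[u]_{\al+\xi}$, use $\al\notin P_0(u)$ to make $H([u]_{\al\pm\delta},[u]_\al)$ small, and control the cross terms $H^*([u]_{\al+\xi},[u_n]_\al)$ and $H^*([u_n]_\al,[u]_{\al-\zeta})$ via the endograph convergence (the paper invokes Proposition \ref{aen} directly where you cite Theorem \ref{lres}, which rests on the same estimate). No gaps.
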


\begin{proof}
 Let $\al \in (0,1) \setminus  P_0(u)$.
Given $\varepsilon>0$.        Then there exists a $\delta (\al, \varepsilon)  \in (0, \varepsilon/2) $
such that
$[\al-\delta, \al+\delta] \subset [0,1]$
and
$
 H([u]_\beta,  [u]_\al) <   \varepsilon / 2
$ for all $\beta \in [\al-\delta, \al+\delta]$.

From    $H_{\rm end}(u_n, u) \to 0$,
there
exists an $N(\delta)$ such that
\begin{equation}\label{unds}
H_{\rm end} (u_n, u) < \delta
\end{equation} for all $n\geq N$.
Thus by Proposition \ref{aen},
$H^*([u_n]_\al, [u]_{\al-\delta}) <  \delta  <   \varepsilon/2.$
So,
 for each $n\geq N$,
\begin{align}\label{len}
 & H^*([u_n]_\al, [u]_{\al})  \nonumber  \\
& \leq  H^*([u_n]_\al, [u]_{\al-\delta})   +   H([u]_\al, [u]_{\al-\delta})  \nonumber    \\
& <  \varepsilon/2 + \varepsilon/2 = \varepsilon.
  \end{align}

Similarly, it follows from \eqref{unds} and Proposition \ref{aen} that
$H^*( [u]_{\al+\delta},  [u_n]_\al) < \delta < \varepsilon/2,$
and
then, for each $n\geq N$,
\begin{align}\label{rehn}
 & H^*([u]_\al, [u_n]_{\al})  \nonumber  \\
& \leq   H([u]_\al, [u]_{\al+\delta})   +    H^*( [u]_{\al+\delta},    [u_n]_\al )     \nonumber    \\
& <  \varepsilon/2 + \varepsilon/2=\varepsilon.
  \end{align}

Combined with \eqref{len} and \eqref{rehn}, we obtain that $H([u]_\al, [u_n]_{\al})<\varepsilon$ for each $n\geq N$. Since $\varepsilon>0$ is arbitrary, it follows that
$H([u]_\al,  [u_n]_\al) \to 0.$
\end{proof}

\begin{re}{\rm
Note that
for each $\al\in (0,1) \setminus P_0(u)$,
$\lim_{\lambda \to \alpha}H( [u]_\al, [u]_{\lambda}) = 0$
and
$[u]_\al = \overline{\{u>\al \}}$.
Thus
Theorem \ref{hepc} can also be deduced from Lemma \ref{acm}.

}
\end{re}

The following theorem gives some conditions under
which
 the $H_{\rm end}$ convergence of fuzzy sets
can be decomposed to
the Hausdorff metric convergence of certain $\al$-cuts.

\begin{tm} \label{aec}
Let $u$ be a fuzzy set in $F^1_{USCG} (X)$ and let $u_n$, $n=1,2,\ldots$, be fuzzy sets in $F^1_{USC} (X)$.
Then
the following are equivalent:
\\
(\romannumeral1)
  $H_{\rm end}(u_n, u) \to 0$;
\\
(\romannumeral2)
 $H([u_n]_\al, [u]_\al) \rightarrow 0$ holds a.e. on $\al\in (0,1)$;
\\
(\romannumeral3)    $H ([u_n]_\al, [u]_\al) \to 0 $ for all $\al\in (0,1) \setminus P_0(u)$;
\\
(\romannumeral4)
   There is a dense subset $P$ of $(0,1) \backslash P_0(u)$ such that $H ([u_n]_\al, [u]_\al) \to 0 $ for
$\al\in P$;
\\
(\romannumeral5)     There is a countable dense subset $P$ of $(0,1) \backslash P_0(u)$
such that
 $H ([u_n]_\al, [u]_\al) \to 0 $ for
$\al\in P$.
\end{tm}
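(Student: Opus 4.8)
The plan is to close the cycle $(\romannumeral1)\Rightarrow(\romannumeral3)\Rightarrow(\romannumeral2)\Rightarrow(\romannumeral4)\Rightarrow(\romannumeral1)$ and then append $(\romannumeral4)\Leftrightarrow(\romannumeral5)$, using Theorems \ref{hepc} and \ref{aedr} as the two substantive ingredients and Lemma \ref{gnc}(\romannumeral4) to control the exceptional set. The key preliminary observation is that, since $u\in F^1_{USCG}(X)$, Lemma \ref{gnc}(\romannumeral4) guarantees that $P_0(u)$ is countable; hence $P_0(u)$ has Lebesgue measure zero and $(0,1)\setminus P_0(u)$ is dense in $(0,1)$. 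This single fact is what lets one pass freely between ``a.e.\ on $(0,1)$'', ``for all $\alpha\in(0,1)\setminus P_0(u)$'', and ``on a dense subset of $(0,1)$''.

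Here are the steps I would carry out. First, $(\romannumeral1)\Rightarrow(\romannumeral3)$ is precisely Theorem \ref{hepc}. Next, $(\romannumeral3)\Rightarrow(\romannumeral2)$: since $P_0(u)$ is a null set, convergence $H([u_n]_\alpha,[u]_\alpha)\to 0$ assumed on all of $(0,1)\setminus P_0(u)$ already holds a.e.\ on $(0,1)$. Then $(\romannumeral2)\Rightarrow(\romannumeral4)$: let $E\subseteq(0,1)$ be the null set on which the convergence fails, and put $P:=(0,1)\setminus(E\cup P_0(u))$; then $P\subseteq(0,1)\setminus P_0(u)$, $P$ has full measure in $(0,1)$ hence is dense in $(0,1)$ hence dense in $(0,1)\setminus P_0(u)$, and by construction $H([u_n]_\alpha,[u]_\alpha)\to 0$ for every $\alpha\in P$. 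Next, $(\romannumeral4)\Rightarrow(\romannumeral1)$: if $P$ is dense in $(0,1)\setminus P_0(u)$, then by transitivity of density together with the density of $(0,1)\setminus P_0(u)$ in $(0,1)$, $P$ is dense in $(0,1)$, so Theorem \ref{aedr} applies and yields $H_{\rm end}(u_n,u)\to 0$. Finally, $(\romannumeral4)\Leftrightarrow(\romannumeral5)$ is routine: $(\romannumeral5)\Rightarrow(\romannumeral4)$ is trivial, and conversely a dense subset $P$ of $(0,1)\setminus P_0(u)$, being a subset of the separable metric space $\mathbb{R}$, contains a countable subset $P'$ dense in $P$, which is then dense in $(0,1)\setminus P_0(u)$ and still satisfies the convergence.

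The only genuinely nontrivial input is Theorem \ref{hepc} (the direction that $H_{\rm end}$-convergence forces Hausdorff convergence of the $\alpha$-cuts off $P_0(u)$); everything else in this proof is bookkeeping about density and measure-zero sets, so I expect no real obstacle. The two points requiring care are: (a) that a dense subset of $(0,1)\setminus P_0(u)$ is dense in $(0,1)$ — this is false in general and is exactly why $P_0(u)$ must be countable, i.e.\ why the hypothesis $u\in F^1_{USCG}(X)$ enters through Lemma \ref{gnc}(\romannumeral4); and (b) that ``a.e.'' and ``off $P_0(u)$'' are interchangeable only because $P_0(u)$ is itself null. An alternative route would go through Theorem \ref{uscg}, replacing $[u]_\alpha$ by $\overline{\{u>\alpha\}}$ on $(0,1)\setminus P_0(u)$ where the two agree by Lemma \ref{gnc}(\romannumeral3), but the route above via Theorems \ref{hepc} and \ref{aedr} is more direct.
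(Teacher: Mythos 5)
Your proof is correct and uses exactly the same ingredients as the paper's: Theorem \ref{hepc} for (\romannumeral1)$\Rightarrow$(\romannumeral3), Theorem \ref{aedr} to return to (\romannumeral1), and Lemma \ref{gnc} to make $P_0(u)$ countable (hence null and with dense complement), so that the a.e., off-$P_0(u)$, and dense-subset formulations interchange. The only difference is the ordering of the implication cycle, which is immaterial.
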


\begin{proof}
Suppose that $u$, $u_n$, $n=1,2,\ldots$, are fuzzy sets
  in $F^1_{USC} ( X )$. We claim that
\\
(a) (\romannumeral2)$\Rightarrow$(\romannumeral1)$\Rightarrow$(\romannumeral3)$\Rightarrow$(\romannumeral4)$\Leftrightarrow$(\romannumeral5).
\\
(b) If $(0,1) \backslash P_0(u)$ is dense in $(0,1)$, then
(\romannumeral1)$\Leftrightarrow$(\romannumeral3)$\Leftrightarrow$(\romannumeral4)$\Leftrightarrow$(\romannumeral5).
\\
(c) If $P_0(u)$ is a set of measure zero, then
(\romannumeral1)$\Leftrightarrow$(\romannumeral2)$\Leftrightarrow$(\romannumeral3)$\Leftrightarrow$(\romannumeral4)$\Leftrightarrow$(\romannumeral5).

(\romannumeral2)$\Rightarrow$(\romannumeral1) follows from Theorem \ref{aedr}. (\romannumeral1)$\Rightarrow$(\romannumeral3) is Theorem \ref{hepc}.
Clearly
 (\romannumeral3)$\Rightarrow$(\romannumeral4)$\Leftrightarrow$(\romannumeral5).
So (a) is true.
If $(0,1) \backslash P_0(u)$ is dense in $(0,1)$, then a
dense subset
 $P$ of $(0,1) \backslash P_0(u)$ is also a dense subset of $(0,1)$.
 So
by
Theorem \ref{aedr},
 (\romannumeral4)$\Rightarrow$(\romannumeral1). Combined with (a), we have that (b) is true.

 Assume that $P_0(u)$ is a set of measure zero.
Then (\romannumeral3)$\Rightarrow$(\romannumeral2).
At this time, $(0,1) \backslash P_0(u)$ is dense in $(0,1)$.
So from (a) and (b), we obtain that (c) is true.

By Lemma \ref{gnc}, for each $u \in F^1_{USCG} (X)$,
 $P_0(u)$ is countable, and then
$P_0(u)$ is a set of measure zero.
So by (c), we obtain the desired result.
\end{proof}

\begin{re}{\rm
  Clearly, Theorem \ref{uscgre} implies Theorem \ref{aec}.
}
\end{re}

\section{Relations among metrics on $F^1_{USC}(X)$} \label{rem}

In this section, we discuss the relation
among the $H_{\rm end}$ metric,
the $H_{\rm send}$ metric, the $d_\infty$ metric, and the $d_p^*$ metric on $F^1_{USC}(X)$.

For $u,v\in F^1_{USC}(X)$, the $d_p$ distance given by
$$d_p(u,v)= \left(\int_0^1 H([u]_\al, [v]_\al)^p  \,   d\al   \right)^{1/p}$$
 is well-defined if and only if
$H([u]_\al, [v]_\al)$ is a measurable function of $\al$ on $[0, 1]$ (here we see $H([u]_\al, [v]_\al)$ as a function of $\al$ from $[0, 1]$ to $\mathbb{R} \cup \{+\infty\}$).
In the sequel, we suppose that the $d_p$ distance satisfying
 $p \geq 1$.

Since $H([u]_\al, [v]_\al)$ could be a non-measurable function
of $\al$ on
$[0,1]$ (see Example 2.13 in \cite{huang17c}),
we introduce the
 $d_p^*$ distance on $F^1_{USC} (X)$, $p\geq 1$, in
\cite{huang17},
which is defined by
\begin{gather*}
\begin{split}
d_p^*(u,v) := \inf \{  \  \left(\int_0^1 f(\al)^p  \,   d\al   \right)^{1/p}
 :
& \ f \mbox{ is a measurable function from } [0,1] \mbox{ to } \mathbb{R} \cup \{+\infty\}    ;
\\
&   \  f(\al)  \geq H([u]_\al, [v]_\al) \mbox{ for } \al\in [0,1]  \  \}
\end{split}
\end{gather*}
for
$u,v \in F^1_{USC} (X)$.

For each $u,v \in F^1_{USC} (X)$,
if $d_p(u,v)$ is well-defined then clearly 
$d_p^*(u,v) = d_p(u,v)$.
In \cite{huang17, huang17c} and references therein, we have given several conclusions on the well-definedness
of the $d_p$ distance including: the $d_p$ distance is well-defined on $F^1_{USC}(\mathbb{R}^m)$, and the $d_p$ distance is well-defined on $F^1_{USCG}(X)$.

Let $u,v \in  F^1_{USCG}(X)$.
By Proposition \ref{ecn}(\rmn1), the function $H([u]_\al, [v]_\al) $ of $\al$ is
 left-continuous on $(0,1]$. So from Proposition \ref{lem} below, $H([u]_\al, [v]_\al) $ is a measurable function of $\al$ on $[0,1]$.
 Proposition \ref{lem} may be known. Here we give a proof for the 
completeness of this paper. A conclusion stronger than Proposition \ref{lem} was given in
 \cite{huang17c} and the references therein.

\begin{pp}\label{lem}
   If a function $f: [0,1] \to \mathbb{R}\cup \{+\infty\}$
 is left-continuous on $(0,1]$, then $f$ is a measurable function on $[0,1]$.
\end{pp}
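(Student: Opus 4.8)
The plan is to realize $f$ as the pointwise limit of a sequence of step functions, each of which is obviously measurable, and then to invoke the standard fact that a pointwise limit of $\big(\mathbb{R}\cup\{+\infty\}\big)$-valued measurable functions is measurable (here the extended line $\mathbb{R}\cup\{+\infty\}$ carries its usual Borel $\sigma$-algebra, so that a function $g$ is measurable if and only if $\{g>c\}$ is measurable for every $c\in\mathbb{R}$).

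Concretely, for each $n\in\mathbb{N}$ I would define $f_n:[0,1]\to\mathbb{R}\cup\{+\infty\}$ by setting $f_n(0):=f(0)$ and, for $\alpha\in(0,1]$, $f_n(\alpha):=f\big(\tfrac{k-1}{n}\big)$, where $k\in\{1,\dots,n\}$ is the unique integer with $\tfrac{k-1}{n}<\alpha\leq\tfrac{k}{n}$. Then $f_n$ is constant on each of the finitely many intervals $\big(\tfrac{k-1}{n},\tfrac{k}{n}\big]$, $k=1,\dots,n$, so for every $c\in\mathbb{R}$ the level set $\{\alpha:f_n(\alpha)>c\}$ is a finite union of such intervals (together with $\{0\}$ in case $f(0)>c$), hence a Borel set; thus each $f_n$ is measurable. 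Next I would check that $f_n(\alpha)\to f(\alpha)$ for every $\alpha\in[0,1]$: this is trivial at $\alpha=0$, while for $\alpha\in(0,1]$ the point $\beta_n:=\tfrac{k-1}{n}$ (with $k=k(n,\alpha)$ as above) satisfies $\alpha-\tfrac1n\leq\beta_n<\alpha$, so $\beta_n\uparrow\alpha$ and, by left-continuity of $f$ at $\alpha$, $f_n(\alpha)=f(\beta_n)\to f(\alpha)$. Since $f=\lim_{n\to\infty}f_n$ pointwise and each $f_n$ is measurable, $f$ is measurable on $[0,1]$.

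This argument is essentially routine, so I do not anticipate a genuine obstacle. The only small subtleties are that one must verify $\beta_n$ tends to $\alpha$ \emph{strictly from below}, so that the hypothesis of left-continuity (rather than two-sided continuity) is exactly what gets used; and that the endpoint $\alpha=0$, where $f$ is not assumed left-continuous, must be handled separately — but this is harmless, since $\{0\}$ is a single point (alternatively one first concludes measurability of the restriction $f|_{(0,1]}$ and then notes that adjoining the single point $0$ cannot destroy measurability).
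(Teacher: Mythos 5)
Your proof is correct, but it takes a genuinely different route from the paper's. You realize $f$ as the pointwise limit of the left-shifted step functions $f_n(\alpha)=f\bigl(\tfrac{k-1}{n}\bigr)$ on $\bigl(\tfrac{k-1}{n},\tfrac{k}{n}\bigr]$, observe that each $f_n$ is trivially Borel, and invoke closure of (extended-real-valued) measurable functions under pointwise limits; left-continuity enters exactly where you say it does, namely in showing $f(\beta_n)\to f(\alpha)$ for the approximating points $\beta_n<\alpha$, and the exceptional point $0$ is harmless. The paper instead argues directly on the superlevel sets: for each $r\in\mathbb{R}$ it shows that $\{f>r\}\setminus\{0\}$ decomposes into pairwise disjoint maximal intervals, each of which has positive length because left-continuity at a point $\alpha$ of the set forces a whole left segment $[\alpha-\delta,\alpha]$ to lie in the set; a disjoint family of positive-length intervals is countable, so $\{f>r\}$ is a countable union of intervals and hence Borel. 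Your argument is shorter and leans on a standard limit theorem (essentially exhibiting $f$ as a Baire-class-one function off the origin), while the paper's is more self-contained — it needs no convergence theorem — and yields slightly finer structural information about the superlevel sets. Both handle the value $+\infty$ without difficulty, since measurability is tested only against $\{f>c\}$ for real $c$.
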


\begin{proof}
Let $r\in \mathbb{R}$. Denote the set $\{x\in [0,1]: f(x)>r \}$ by $\{f>r\}$.
If $\{f>r\}\setminus \{0\} = \emptyset$, then
$\{f>r\}\setminus \{0\}$ is a measurable set.
Suppose that $\{f>r\}\setminus \{0\}  \not= \emptyset$.
 For each $x \in \{f>r\}\setminus \{0\}   $,
 let $\overbrace{x} = \bigcup\{ [a,b] : x \in [a,b] \subseteq \{f>r\}\setminus \{0\}   \}$. Clearly $\overbrace{x}$ is an interval. Note that
 for each interval $A$ with $x\in A \subseteq \{f>r\}\setminus \{0\}$, $A=\cup\{[a,b]: x\in [a,b]\subseteq A\}\subseteq \overbrace{x}$.
So $\overbrace{x}$ is the largest interval in $\{f>r\}\setminus \{0\}$ which contains $x$.
For each $x\in X$,
  if $\al\in \overbrace{x}$,
 then there exists a $\delta (\al) > 0$
with $[\al-\delta(\al), \al]  \subseteq \{f>r\}\setminus \{0\}$ since $f$ is left-continuous at $\al$.
So
 $\overbrace{x}$ is a positive length interval.
For each
 $x,y \in \{f>r\}\setminus \{0\} $,
 if $\overbrace{x} \cap \overbrace{y} \not= \emptyset$,
 then $\overbrace{x} \cup \overbrace{y} $ is an interval
 with $\{x,y\}\subset \overbrace{x} \cup \overbrace{y} \subseteq \{f>r\}\setminus \{0\}$,
 and hence
 $\overbrace{x} = \overbrace{x} \cup \overbrace{y} = \overbrace{y}$.
  Thus
  $\{f > r\}\setminus \{ 0 \}   $ is a union of disjoint positive length intervals.
Clearly the set $S$ of all
these disjoint positive length intervals is countable (For each $B\in S$,
choose a $q_B$ in $B\cap \mathbb{Q}$. Then $q_{B_1} \not= q_{B_2}$ for each $B_1$ and $B_2$ in $S$ with $B_1\not= B_2$. Thus $\overline{\overline{S}}= \overline{\overline{\{q_B: B\in S\}}}\leq \overline{\overline{\mathbb{Q}}}$ and so $S$ is countable, where $\overline{\overline{C}}$ denotes the cardinality of a set $C$.).
So $\{f>r\} \setminus \{0\}$ is a countable union of intervals and hence is a measurable set.
Thus
$\{f>r\}$ is a measurable set as $\{f>r\}=\{f>r\} \setminus \{0\}$ or $\{f>r\}=(\{f>r\} \setminus \{0\})\cup \{0\}$. Since $r\in \mathbb{R}$
is arbitrary, it follows that
$f$ is a measurable function on $[0,1]$.
\end{proof}

$d_p^*$ is an extended metric but may not be a metric on $F^1_{USC}(X)$.
 See also
Remark 3.3 in \cite{huang17}.
We can see
that
the $d_p$ distance is a metric on $F^1_{USCB}(X)$.
The $d_p$ distance on $F^1_{USCG}(\mathbb{R}^m)$ is an extended metric but not a metric,
and the $d_p$ distance on $F^1_{USC}(\mathbb{R}^m)$ is an extended metric but not a metric. The $d_p$ distance on $F^1_{USCG}(\mathbb{R}^m)$ could take
the value $+\infty$.
Let $u\in F^1_{USCG}(\mathbb{R}^m)$ be defined by putting
\[
[u]_\al=
\{x\in \mathbb{R}^m: \|x\|\leq n\} \mbox{ for each } n\in \mathbb{N} \mbox{ and } \al\in (1/(n+1), 1/n].
\]
Denote the origin of $\mathbb{R}^m$ by $o$.
Then $d_p(u, \widehat{o}_{F(\mathbb{R}^m)}) = (\sum_{n=1}^{+\infty} n^p\cdot (1/n - 1/(n+1)) )^{1/p}= + \infty$.

For simplicity,
in this paper, we call the $d_p^*$ distance on $F^1_{USC}(X)$ the $d_p^*$ metric,
and call the $d_p$ distance on $F^1_{USC}(\mathbb{R}^m)$ or $F^1_{USCG}(X)$ the $d_p$ metric.

Clearly for $u,v\in F^1_{USC}(X)$,
\begin{equation}\label{spr}
  d_\infty(u,v) \geq d_p^*(u,v).
\end{equation}
The proof of \eqref{spr} is routine. Set $d_\infty(u,v) =\xi \in \mathbb{R} \cup \{+\infty\}$.
Define $f: [0,1] \to \mathbb{R} \cup \{+\infty\} $ by $f(\al)= \xi$ for each $\al\in [0,1]$.
Hence $f$ is a measurable function from $[0,1]$ to $\mathbb{R} \cup \{+\infty\} $ and
$f(\al)  \geq H([u]_\al, [v]_\al)$ for $ \al\in [0,1] $.
So
$d_p^*(u,v) \leq \left(\int_0^1 f(\al)^p  \,   d\al   \right)^{1/p} = \xi $.
Thus \eqref{spr} is true.

\begin{tm} \label{gdpn}
  Let $u\in F^1_{USC} (X)$ and for each positive integer $n$, let $u_n\in F^1_{USC} (X)$.
  If $d_p^*(u_n, u) \to 0$, then
  $H_{\rm end} (u_n, u) \to 0$.
\end{tm}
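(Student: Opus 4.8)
The plan is to deduce this from the level characterization of $H_{\rm end}$ convergence, namely Theorem \ref{aedr}, which says that if $H([u_n]_\al,[u]_\al)\to 0$ for all $\al$ in some dense subset of $(0,1)$, then $H_{\rm end}(u_n,u)\to 0$. The difficulty is that the hypothesis $d_p^*(u_n,u)\to 0$ only controls $H([u_n]_\al,[u]_\al)$ through a measurable majorant integrated against Lebesgue measure, so at best it yields almost-everywhere — not everywhere — smallness of the cut distances, and only along a subsequence. Hence I would run the argument by contradiction.

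First I would assume $H_{\rm end}(u_n,u)\not\to 0$ and fix $\varepsilon_0>0$ and a subsequence $\{u_{n_k}\}$ with $H_{\rm end}(u_{n_k},u)>\varepsilon_0$ for every $k$. Since $d_p^*(u_{n_k},u)\to 0$, I may discard finitely many indices and assume each $d_p^*(u_{n_k},u)$ is finite; then, by the very definition of the $d_p^*$ distance, I can choose for each $k$ a measurable $f_k:[0,1]\to\mathbb{R}\cup\{+\infty\}$ with $f_k(\al)\geq H([u_{n_k}]_\al,[u]_\al)$ on $[0,1]$ and $\left(\int_0^1 f_k(\al)^p\,d\al\right)^{1/p}\leq d_p^*(u_{n_k},u)+1/k$, so that $\|f_k\|_{L^p[0,1]}\to 0$.

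Next I would invoke the standard fact that an $L^p$-null sequence on $[0,1]$ converges to $0$ in measure, hence has a subsequence $\{f_{k_j}\}$ with $f_{k_j}(\al)\to 0$ for almost every $\al\in[0,1]$. Setting $P:=\{\al\in(0,1):\ f_{k_j}(\al)\to 0 \text{ as } j\to\infty\}$, the complement $(0,1)\setminus P$ is Lebesgue-null, so $P$ is dense in $(0,1)$, and on $P$ we have $0\leq H([u_{n_{k_j}}]_\al,[u]_\al)\leq f_{k_j}(\al)\to 0$. Applying Theorem \ref{aedr} to the sequence $\{u_{n_{k_j}}\}_j$ with this dense set $P$ gives $H_{\rm end}(u_{n_{k_j}},u)\to 0$, contradicting $H_{\rm end}(u_{n_{k_j}},u)>\varepsilon_0$; this contradiction proves the theorem.

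I expect the main obstacle to be conceptual rather than computational: recognizing that $H([u_n]_\al,[u]_\al)$ need not be measurable in $\al$ (which is precisely why the $d_p^*$ distance, not the $d_p$ distance, appears here) and that one must therefore work with the measurable majorants supplied by the definition of $d_p^*$, together with the fact that the passage from $L^p$-smallness to pointwise control costs a subsequence — which forces the contradiction/subsequence packaging around Theorem \ref{aedr}. Everything else (the choice of near-optimal majorants, $L^p$-convergence implying convergence in measure implying a.e.\ convergence along a subsequence, a conull subset of $(0,1)$ being dense) is routine.
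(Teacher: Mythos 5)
Your proposal is correct and follows essentially the same route as the paper's own proof: argue by contradiction, extract near-optimal measurable majorants from the definition of $d_p^*$, pass from $L^p$-smallness to a.e.\ convergence along a further subsequence, and then invoke Theorem \ref{aedr} on the resulting dense (conull) set of levels. The only cosmetic difference is that you use an additive slack $+1/k$ in choosing the majorants where the paper uses a multiplicative factor $\frac{n+1}{n}d_p^*(v_n,u)$; both work equally well.
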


\begin{proof} We prove by contradiction.
    If
  $H_{\rm end} (u_n, u) \not\to 0$, then there is an $\varepsilon>0$ and a subsequence $\{   v_n   \}$
of $\{u_n\}$
such that for each $n\in \mathbb{N}$,
\begin{equation}\label{gec}
 H_{\rm end} (v_n,  u ) \geq \varepsilon.
\end{equation}

By \eqref{gec},
for each $n\in \mathbb{N}$, $v_n\not=u$ and hence $d^*_p(v_n,  u)\not=0$.
From the definition of $d_p^*$, there exists a sequence $\{f_n\}$ of
measurable function
from $[0,1]$ to $\mathbb{R} \cup \{+\infty\}$
such that for each $n\in \mathbb{N}$,
\begin{gather}
 H([v_n]_\al,  [u]_\al)     \leq    f_n (\al)     \mbox{ for all }    \al\in [0,1],\label{smn}
 \\
 \left(\int_0^1   f_n (\al)^p   d\, \al     \right)^{1/p} \leq \frac{n+1}{n} d_p^*(v_n,  u). \label{tce}
\end{gather}
Since $d_p^*(v_n,  u) \to 0$, by \eqref{tce}, we have
$\left(\int_0^1   f_n (\al)^p   d\, \al     \right)^{1/p} \to 0$.
Thus there is a subsequence $\{f_{n_k}\}$ of $\{f_n\}$
such that
$\{f_{n_k}(\al) \}$ converges to $0$ a.e. on $\al\in [0,1]$.
Hence by \eqref{smn},
$H([v_{n_k}]_\al, [u]_\al) \rightarrow 0$ holds a.e. on $\al\in (0,1)$.
By
Theorem \ref{aedr}, this
  implies
   $H_{\rm end} (v_{n_k}, u) \to 0$, which
contradicts \eqref{gec}.
\end{proof}

 Theorem 4.1 in \cite{huang17c} says that
for $u\in F^1_{USCG} (X)$ and $v\in F^1_{USC} (X)$,
$H([u]_\al, [v]_\al)$ is a measurable function of $\al$ on $[0, 1]$.
So $d_p^*(u,v)
=
d_p(u,v)$ for $u\in F^1_{USCG} (X)$ and $v\in F^1_{USC} (X)$.

\begin{tm} \label{ecm} Suppose that $u\in  F^1_{USCG}(X)$ and $u_n \in F^1_{USC} (X)$, $n=1,2,\ldots$,
and
that there is a measurable function $F$ on $[0,1]$
such that $\int_0^1 F^p(\al) \,d\al < +\infty$
and
$H([u_n]_\al,  [u]_\al) \leq  F(\al)  $ for $n=1,2,\ldots$.
If
 $H_{\rm end} (u_n, u) \to 0$, then $d_p(u_n, u) \to 0$.
  \end{tm}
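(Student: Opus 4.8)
The plan is to reduce the claim to the Lebesgue dominated convergence theorem applied to the functions $\alpha\mapsto H([u_n]_\alpha,[u]_\alpha)^p$ on $[0,1]$.

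First I would record that $d_p(u_n,u)$ is genuinely well defined. Since $u\in F^1_{USCG}(X)$ and $u_n\in F^1_{USC}(X)$, Theorem 4.1 in \cite{huang17c} (quoted just above the statement) guarantees that $\alpha\mapsto H([u_n]_\alpha,[u]_\alpha)$ is a measurable function on $[0,1]$ for every $n$, so the integral defining $d_p(u_n,u)$ makes sense and coincides with $d_p^*(u_n,u)$. The hypothesis supplies a single measurable dominating function: $H([u_n]_\alpha,[u]_\alpha)^p\le F(\alpha)^p$ for all $n$ and all $\alpha\in[0,1]$, with $\int_0^1 F(\alpha)^p\,d\alpha<+\infty$. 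In particular $F(\alpha)<+\infty$ for a.e.\ $\alpha$, so the integrands are finite a.e.

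Next I would establish a.e.\ pointwise convergence of the integrands. Because $u\in F^1_{USCG}(X)$, Lemma \ref{gnc} tells us that $P_0(u)$ is countable, hence a Lebesgue-null subset of $[0,1]$. Since $H_{\rm end}(u_n,u)\to 0$, Theorem \ref{hepc} yields $H([u_n]_\alpha,[u]_\alpha)\to 0$ for every $\alpha\in(0,1)\setminus P_0(u)$, i.e.\ for almost every $\alpha\in[0,1]$; raising to the $p$-th power, $H([u_n]_\alpha,[u]_\alpha)^p\to 0$ a.e.\ on $[0,1]$. I emphasize that Theorem \ref{hepc} delivers convergence of the whole sequence $\{u_n\}$, not merely of a subsequence, so no diagonal or subsequence extraction is needed.

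Finally, applying the dominated convergence theorem with dominating function $F^p\in L^1([0,1])$ gives $\int_0^1 H([u_n]_\alpha,[u]_\alpha)^p\,d\alpha\to 0$, whence $d_p(u_n,u)=\big(\int_0^1 H([u_n]_\alpha,[u]_\alpha)^p\,d\alpha\big)^{1/p}\to 0$. I do not expect a serious obstacle here: the entire content lies in (a) invoking the measurability result of \cite{huang17c} so that $d_p$ is a legitimate quantity, and (b) combining Theorem \ref{hepc} with the null-ness of $P_0(u)$ to obtain a.e.\ convergence; after that it is a textbook application of dominated convergence. The only points requiring a little care are to keep $d_p$ and $d_p^*$ distinct in the bookkeeping and to note that the dominating bound forces $F$, hence each integrand, to be finite almost everywhere.
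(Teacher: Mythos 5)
Your proof is correct and follows essentially the same route as the paper: the paper cites Theorem \ref{aec} to get $H([u_n]_\al,[u]_\al)\to 0$ a.e.\ on $(0,1)$ and then applies dominated convergence, while you simply unpack Theorem \ref{aec} into its ingredients (Theorem \ref{hepc} plus the countability of $P_0(u)$ from Lemma \ref{gnc}). Your added care about measurability and the $d_p$ versus $d_p^*$ distinction matches the remark the paper places just before the theorem.
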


\begin{proof} \ By Theorem \ref{aec}, $H_{\rm end} (u_n, u) \to 0$
if and only if
 $H([u_n]_\al, [u]_\al) \rightarrow 0$ holds a.e. on $\al\in (0,1)$.
So the desired result follows from the Lebesgue's Dominated Convergence
Theorem.
\end{proof}

\begin{pp} \label{spu} Let
  $u\in F^1_{USCB} (X)$ and for each positive integer $n$, let $u_n\in F^1_{USC} (X)$.
Then
$H_{\rm send} (u_n, u) \to 0$
if and only if
$H([u_n]_0, [u]_0) \to 0$ and $d_p(u_n, u) \to 0$.
\end{pp}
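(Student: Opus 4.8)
The plan is to reduce both implications to Proposition~\ref{sge}(\romannumeral1), which already states that $H_{\rm send}(u_n,u)\to 0$ holds if and only if $H_{\rm end}(u_n,u)\to 0$ and $H([u_n]_0,[u]_0)\to 0$. Since $u\in F^1_{USCB}(X)\subseteq F^1_{USCG}(X)$, the distances $d_p(u_n,u)$ and $d_p^*(u_n,u)$ coincide (by Theorem~4.1 of \cite{huang17c} and the remark following Theorem~\ref{gdpn}), so it suffices to show that, once $H([u_n]_0,[u]_0)\to 0$ is known, the conditions $H_{\rm end}(u_n,u)\to 0$ and $d_p(u_n,u)\to 0$ are equivalent.

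For the direction ``$H_{\rm send}(u_n,u)\to 0$ implies $H([u_n]_0,[u]_0)\to 0$ and $d_p(u_n,u)\to 0$'': Proposition~\ref{sge}(\romannumeral1) gives $H_{\rm end}(u_n,u)\to 0$ and $H([u_n]_0,[u]_0)\to 0$ simultaneously. To obtain $d_p$-convergence I would invoke Theorem~\ref{ecm} with a \emph{constant} dominating function. The key elementary estimate is that for every $\al\in[0,1]$ the sets $[u_n]_\al$ and $[u]_\al$ are nonempty (normality) and contained in $[u_n]_0$ and $[u]_0$ respectively, so a one-line triangle-inequality argument yields $H([u_n]_\al,[u]_\al)\le {\rm diam}\,[u]_0 + H([u_n]_0,[u]_0)$, where ${\rm diam}\,[u]_0<+\infty$ because $[u]_0$ is compact. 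As $H([u_n]_0,[u]_0)\to 0$, the right-hand side is bounded by a constant $C$ uniformly in $\al$ and in all sufficiently large $n$, so $F\equiv C$ is an admissible dominating function for Theorem~\ref{ecm} (clearly $\int_0^1 C^p\,d\al<+\infty$), and Theorem~\ref{ecm} then gives $d_p(u_n,u)\to 0$.

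For the converse: assuming $H([u_n]_0,[u]_0)\to 0$ and $d_p(u_n,u)\to 0$, we have $d_p^*(u_n,u)=d_p(u_n,u)\to 0$, so Theorem~\ref{gdpn} gives $H_{\rm end}(u_n,u)\to 0$; combining this with $H([u_n]_0,[u]_0)\to 0$ and applying Proposition~\ref{sge}(\romannumeral1) yields $H_{\rm send}(u_n,u)\to 0$.

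The only step that is not pure bookkeeping is the uniform bound $H([u_n]_\al,[u]_\al)\le {\rm diam}\,[u]_0+H([u_n]_0,[u]_0)$, and it is precisely there — through the finiteness of ${\rm diam}\,[u]_0$ — that the hypothesis $u\in F^1_{USCB}(X)$ is genuinely used; I expect this to be the main (and still modest) obstacle. Without compactness of $[u]_0$ no constant, or even integrable, dominating function need exist, so this route to $d_p$-convergence would break down.
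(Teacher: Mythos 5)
Your proposal is correct and follows essentially the same route as the paper: both reduce the statement to Proposition~\ref{sge}(\romannumeral1), use Theorem~\ref{gdpn} to pass from $d_p^*=d_p$ convergence to $H_{\rm end}$ convergence, and use Theorem~\ref{ecm} with a constant (hence integrable) dominating function obtained from $H([u_n]_0,[u]_0)\to 0$ together with the compactness of $[u]_0$. Your explicit bound $H([u_n]_\al,[u]_\al)\le \mathrm{diam}\,[u]_0+H([u_n]_0,[u]_0)$ is just a more concrete version of the paper's observation that $\bigcup_{n>N}[u_n]_0$ is eventually bounded so that $d_\infty(u_n,u)<M$; both arguments, like the paper's, implicitly apply Theorem~\ref{ecm} to the tail of the sequence, which is harmless.
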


\begin{proof}From
Proposition \ref{sge}, $H_{\rm send} (u_n, u) \to 0$
if and only if
$H([u_n]_0, [u]_0) \to 0$ and $H_{\rm end} (u_n, u) \to 0$.
To prove the desired result, we only need to show
that
$$H([u_n]_0, [u]_0) \to 0 \mbox{ and } d_p(u_n, u) \to 0
\Leftrightarrow
H([u_n]_0, [u]_0) \to 0 \mbox{ and } H_{\rm end} (u_n, u) \to 0.$$

From Theorem \ref{gdpn}, ``$\Rightarrow$'' is true.
To show ``$\Leftarrow$'',
suppose that
 $H([u_n]_0, [u]_0) \to 0$ and $H_{\rm end} (u_n, u) \to 0$.
Then
there exists an $N \in \mathbb{N}$ such that $\bigcup_{n>N} [u_n]_0$ is bounded.
Hence there is an $M>0$ such that for $n \geq N$ and $\al\in [0,1]$,
 $H([u_n]_\al, [u]_\al)\leq d_\infty(u_n,u) < M$.
Thus
by Theorem \ref{ecm}, $d_p(u_n, u) \to 0$. So``$\Leftarrow$'' is true.
\end{proof}

However, for a sequence $\{u_n\}$ and an element $u$
in $F_{USCG}(X)$,
$H_{\rm send}(u_n, u) \to 0 $ does not necessarily imply $d_p(u_n, u) \to 0$.
See the following Example \ref{snp}.

\begin{eap} \label{snp}
  {\rm
 Define $u\in F^1_{USCG}(\mathbb{R})$ by putting
$$[u]_\al= [0,1/\al] \mbox{ for } \al\in(0,1].$$
For
each $n=1,2,\ldots$, define $u_n \in  F^1_{USCG}(\mathbb{R})$ by putting
\[[u_n]_\al
=\left\{
   \begin{array}{ll}
     [0,1/\al], & \al\in (1/n,1], \\
    \mbox{} [0, \frac{1}{\al} + n^2 ], & \al\in (0, 1/n].
   \end{array}
 \right.
\]
Then $H_{\rm send} (u_n,u) \to 0$ and $d_p(u_n,u)=n^{2-1/p} \not\to 0 $.

}
\end{eap}

In \cite{huang17}, we obtain that the Skorokhod metric convergence imply the sendograph metric convergence
on $F^1_{USC} (X)$
 (see Theorem 8.1 in \cite{huang17}),
and that
Skorokhod metric convergence need not imply the $d_p$ convergence on a subset of $F^1_{USCG}(X)$ (see the end of Section 5 in \cite{huang17}). From the above conclusions in \cite{huang17},
we can also deduce that
the sendograph metric convergence need not imply the $d_p$ convergence on $F^1_{USCG}(X)$.

\section{Characterizations of compactness in $(F^1_{USCG} (X), H_{\rm end})$ and $(F^1_{USCB} (X), H_{\rm send})$}
\label{cmfuzzy}

Based on the conclusions in previous sections, we give characterizations of
total boundedness, relative compactness and compactness
in
 $(F^1_{USCG} (X), H_{\rm end})$ and $(F^1_{USCB} (X), H_{\rm send})$.

\begin{itemize}
 \item A subset $Y$ of a topological space $Z$ is said to be \emph{compact} if for every set $I$
and every family of open sets, $O_i$, $i\in I$, such that $Y\subset \bigcup_{i\in I} O_i$ there exists
a finite family $O_{i_1}$, $O_{i_2}$ \ldots, $O_{i_n}$ such that
$Y\subseteq O_{i_1}\cup O_{i_2}\cup\ldots \cup O_{i_n}$.
In
the case of a metric topology, the criterion for compactness becomes that any sequence in $Y$ has a subsequence convergent in $Y$.

 \item
A \emph{relatively compact} subset $Y$ of a topological space $Z$ is a subset with compact closure. In the case of a metric topology, the criterion for relative compactness becomes that any sequence in $Y$ has a subsequence convergent in $X$.

 \item Let $(X, d)$ be a metric space. A set $U$ in $X$ is \emph{totally bounded} if and only if, for each $\varepsilon>0$, it contains a finite $\varepsilon$ approximation, where an $\varepsilon$ approximation to $U$ is a subset $S$ of $U$ such that $d(x,S)<\varepsilon$ for each $x\in U$.
     It is known that $U$ in $X$ is totally bounded if and only if, for each $\varepsilon>0$, there is a finite \emph{weak}
$\varepsilon$\emph{-net} of $U$, where
a \emph{weak}
$\varepsilon$\emph{-net} of $U$ is a subset $S$ of $X$
satisfying $d(x,S)<\varepsilon$ for each $x\in U$.

\end{itemize}

Let $(X, d)$ be a metric space. A set $U$ is compact in $(X,d)$ implies that $U$ is relatively compact
in $(X,d)$, which in turn
implies that $U$ is totally bounded in $(X,d)$.

We use $(\widetilde{X}, \widetilde{d})$ to denote the completion of $(X, d)$.
We see $(X, d)$ as a subspace of $(\widetilde{X}, \widetilde{d})$.
Let
$S \subseteq \widetilde{X}$.
The symbol $\widetilde{\overline{S}}$ is used to denote
the closure of $S$
in
 $(\widetilde{X}, \widetilde{d})$.

As defined in Section \ref{bas}, we have
$K(\widetilde{X})$,
 $C(\widetilde{X})$, $F^1_{USC}(\widetilde{X})$,
$F^1_{USCG} (\widetilde{X})$, etc. according to $(\widetilde{X}, \widetilde{d})$.
If there is no confusion,
 we also
use $H$ to denote the Hausdorff metric
on
 $C(\widetilde{X})$ induced by $\widetilde{d}$.
  We also
use $H$ to denote the Hausdorff metric
 on $C(\widetilde{X}\times [0,1])$ induced by $\overline{\widetilde{d}}$.
 We
 also use $H_{\rm end}$ to denote the endograph metric on $F^1_{USC}(\widetilde{X})$
defined by using $H$ on
$C(\widetilde{X} \times [0,1])$.

Let $U\subseteq X$. If $U$ is compact in $(X,d)$, then $U$ is compact in $(\widetilde{X}, \widetilde{d})$.
We see $(K(X), H)$ as a subspace of $(K(\widetilde{X}), H)$.

Define $j$ from $F(X)$ to $F(\widetilde{X})$ as follows:
for $u\in F(X)$, $j(u)\in F(\widetilde{X})$ is given by
\[
j(u) (t)
=
\left\{
  \begin{array}{ll}
    u(t), & t\in X,\\
0, & t\in \widetilde{X} \setminus X.
  \end{array}
\right.
\]

Let $u\in F^1_{USCG}(X)$. Then $j(u)\in F^1_{USCG} (\widetilde{X})$
because $[j(u)]_\al = [u]_\al \subseteq K(\widetilde{X})$ for each $\al\in (0,1]$.
Clearly for each $u,v\in F^1_{USCG}(X)$, $H_{\rm end}(u,v) = H_{\rm end}(j(u), j(v))$.
In the sequel, we treat
$(F^1_{USCG}(X), H_{\rm end})$ as a subspace of $(F^1_{USCG}(\widetilde{X}), H_{\rm end})$ by
identifying $u$ in $F^1_{USCG}(X)$ with $j(u)$ in $F^1_{USCG}(\widetilde{X})$.

\subsection{Characterizations of compactness in $(K(X), H)$} \label{cmpu}

In this subsection, we give characterizations of total boundedness, relative compactness and compactness in $(K(X), H)$.
The
results in this subsection are basis for contents in the sequel.

\begin{tm} \label{cpm} Let $(X, d)$ be a complete metric space and let $\{C_n\}$ be a Cauchy sequence in $(K(X), H)$.
Put
 $D= \overline{\bigcup_{n=1}^{+\infty} C_n}$. For $n=1,2,\ldots$, put $D_n= \bigcup_{l=1}^{n} C_l$.
 Then
 $D \in K(X)$
and
  $H(D_n, D) \to 0$ as $n\to\infty$.
\end{tm}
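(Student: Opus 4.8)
The plan is to reduce the statement to a completeness argument for the nested sequence $\{D_n\}$. First note the elementary facts: each $D_n=\bigcup_{l=1}^{n}C_l$ is a finite union of members of $K(X)$, hence $D_n\in K(X)$; the sequence is increasing, $D_1\subseteq D_2\subseteq\cdots$; and $\overline{\bigcup_{n=1}^{+\infty}D_n}=\overline{\bigcup_{l=1}^{+\infty}C_l}=D$. So it suffices to show that $D\in K(X)$ and $H(D_n,D)\to 0$, and both will come out of showing that $\{D_n\}$ is itself Cauchy in $(K(X),H)$.

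The key step is exactly that Cauchy estimate. Fix $\varepsilon>0$ and, using that $\{C_n\}$ is Cauchy, choose $N$ with $H(C_i,C_j)<\varepsilon$ whenever $i,j\geq N$. For $n\geq m\geq N$ we have $D_m\subseteq D_n$, so $H(D_m,D_n)=H^{*}(D_n,D_m)=\sup_{x\in D_n}d(x,D_m)$. Given $x\in D_n$, choose $l\leq n$ with $x\in C_l$: if $l\leq m$ then $C_l\subseteq D_m$ and $d(x,D_m)=0$; if $l>m$, then $l>m\geq N$, so $d(x,D_m)\leq d(x,C_m)\leq H^{*}(C_l,C_m)\leq H(C_l,C_m)<\varepsilon$. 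Hence $H(D_m,D_n)\leq\varepsilon$ for all $n\geq m\geq N$, so $\{D_n\}$ is Cauchy in $(K(X),H)$.

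Now invoke the structure theory: since $(X,d)$ is complete, Theorem \ref{bfc}(\romannumeral1) gives that $(K(X),H)$ is complete, so $H(D_n,E)\to 0$ for some $E\in K(X)$. By Theorem \ref{hkg}, $E=\lim_{n\to\infty}^{(K)}D_n$; and because $\{D_n\}$ is increasing one computes, exactly as in the proof of Lemma \ref{c}(\romannumeral2) and using Theorem \ref{infc} (the $\limsup$ equals $\overline{\bigcup_n D_n}$ and the closed set $\liminf$ sits between $\bigcup_n D_n$ and this), that $\lim_{n\to\infty}^{(K)}D_n=\overline{\bigcup_{n=1}^{+\infty}D_n}=D$. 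Therefore $E=D$, which gives both $D\in K(X)$ and $H(D_n,D)\to 0$. An alternative route avoiding completeness of $K(X)$: once $\{D_n\}$ is known Cauchy, show $D$ is totally bounded directly (a finite net of the compact set $D_N$ serves, up to the above estimate, as a net of $D$), conclude $D\in K(X)$ from completeness of $(X,d)$ plus closedness of $D$, and then apply Lemma \ref{c}(\romannumeral2) to the nested sequence $\{D_n\}$.

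I do not expect a genuine obstacle here; the only point needing care is the Cauchy estimate for $\{D_n\}$, where one must split according to which $C_l$ contains the chosen point, together with the routine observation that for a nested sequence the Kuratowski limit is the closure of the union.
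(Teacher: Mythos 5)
Your proposal is correct and follows essentially the same route as the paper: establish that the increasing sequence $\{D_n\}$ is Cauchy in $(K(X),H)$ by bounding $H(D_k,D_j)$ in terms of $H(C_i,C_j)$ for $i>j$, invoke completeness of $(K(X),H)$ from Theorem \ref{bfc} to obtain a limit $E$, and identify $E=D$ via Theorem \ref{hkg} and the fact that the Kuratowski limit of a nested increasing sequence is the closure of its union. Your pointwise case split on which $C_l$ contains $x$ is just an unpacked version of the paper's inequality $H(D_k,D_j)=\max\{H^*(C_i,D_j): i=j+1,\ldots,k\}\leq\max\{H^*(C_i,C_j): i=j+1,\ldots,k\}$, so there is nothing substantive to distinguish the two arguments.
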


\begin{proof}
 Let $k,j\in \mathbb{N}$ with $k>j$. Then $H^*(D_j, D_k)=0$, and for each $i\in \mathbb{N}$ with $1\leq i \leq j$,
$H^*(C_i, D_j)=0$.
 Thus
 $H(D_k, D_j) = H^*(D_k, D_j)
 =\max\{H^*(C_i, D_j) : i=1,\ldots,k\} =\max\{H^*(C_i, D_j) : i=j+1,\ldots,k \}\leq \max\{H^*(C_i, C_j) : i=j+1,\ldots,k \}$.
 Hence
  $$H(D_k, D_j)\leq \max\{H(C_i, C_j) : i=j+1,\ldots,k \}.$$
 So
  $\{D_n\}$ is a Cauchy sequence in $(K(X), H)$.
   From Theorem \ref{bfc}, $(K(X), H)$ is complete,
  and thus there is an
$E\in K(X)$ such that $H(D_n, E) \to 0$ as $n\to\infty$.
By Theorem \ref{hkg}, $E=\lim^{(K)}_{n\to \infty} D_n = D$.
  \end{proof}

\begin{tm} \label{tbe}
  Let $(X,d)$ be a metric space and $\mathcal{D}\subseteq K(X)$.
   Then $\mathcal{D}$ is totally bounded in $(K(X), H)$
if and only if
$\mathbf{ D} =   \bigcup \{C:  C \in  \mathcal{D} \} $ is totally bounded in $(X,d)$.
\end{tm}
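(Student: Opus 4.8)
The plan is to prove both implications directly from the definition of total boundedness in terms of finite weak $\varepsilon$-nets (recalled in the itemized list above), transferring nets back and forth between $(X,d)$ and $(K(X),H)$; note that completeness of $X$ plays no role here.

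For the ``only if'' direction, suppose $\mathcal{D}$ is totally bounded in $(K(X),H)$ and fix $\varepsilon>0$. Choose a finite weak $\varepsilon$-net $\{C_1,\dots,C_n\}\subseteq K(X)$ of $\mathcal{D}$. Each $C_i$ is compact, hence totally bounded in $(X,d)$, so it has a finite $\varepsilon$-net $S_i\subseteq X$; set $S=\bigcup_{i=1}^{n}S_i$, a finite subset of $X$. Given $x\in \mathbf{D}$, pick $C\in\mathcal{D}$ with $x\in C$, then $i$ with $H(C,C_i)<\varepsilon$; from $d(x,C_i)\le H^{*}(C,C_i)<\varepsilon$ there is $y\in C_i$ with $d(x,y)<\varepsilon$, and then $z\in S_i$ with $d(y,z)<\varepsilon$, so $d(x,S)<2\varepsilon$. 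Thus $S$ is a finite weak $2\varepsilon$-net of $\mathbf{D}$, and since $\varepsilon>0$ is arbitrary, $\mathbf{D}$ is totally bounded in $(X,d)$.

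For the ``if'' direction, suppose $\mathbf{D}$ is totally bounded in $(X,d)$ and fix $\varepsilon>0$; choose a finite weak $\varepsilon$-net $S=\{x_1,\dots,x_m\}\subseteq X$ of $\mathbf{D}$. For each $C\in\mathcal{D}$ put $T_C:=\{x_i\in S:\ d(x_i,C)<\varepsilon\}$. I would check three things: (a) $T_C$ is a nonempty finite set, hence $T_C\in K(X)$ (nonemptiness follows by taking any $x\in C\subseteq\mathbf{D}$ and an $x_i$ with $d(x,x_i)<\varepsilon$, which forces $d(x_i,C)<\varepsilon$, i.e. $x_i\in T_C$); (b) $H(C,T_C)<\varepsilon$, where the bound $H^{*}(T_C,C)<\varepsilon$ is immediate from the definition of $T_C$ together with its finiteness, while for $H^{*}(C,T_C)<\varepsilon$ one observes that any $x\in C$ admits $x_i\in S$ with $d(x,x_i)<\varepsilon$, whence $x_i\in T_C$ and $d(x,T_C)<\varepsilon$ (shrinking $\varepsilon$ at the outset, if one insists on keeping all inequalities strict, is harmless); and (c) each $T_C$ is a nonempty subset of the fixed $m$-element set $S$, so the family $\mathcal{T}:=\{T_C:C\in\mathcal{D}\}$ has at most $2^{m}-1$ members and is therefore a finite subset of $K(X)$. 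By (b) and (c), $\mathcal{T}$ is a finite weak $\varepsilon$-net of $\mathcal{D}$ in $(K(X),H)$, and letting $\varepsilon\to 0$ gives that $\mathcal{D}$ is totally bounded.

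The only genuinely substantive point is step (c): when producing a net in $K(X)$ out of a net in $X$ one must guarantee the new net stays finite, and the device of replacing each $C$ by the subset $T_C$ of the fixed finite set $S$ is exactly what secures this. Everything else is routine triangle-inequality bookkeeping with $H$ and $H^{*}$; I expect no further obstacle.
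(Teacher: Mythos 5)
Your proof is correct, but it takes a genuinely different route from the paper. The paper argues sequentially: for necessity it takes a sequence $\{x_n\}$ in $\mathbf{D}$ with $x_n\in C_n$, extracts a Cauchy subsequence $\{C_{n_k}\}$, and invokes Theorem \ref{cpm} in the completion $\widetilde{X}$ to conclude that $\widetilde{\overline{\bigcup_k C_{n_k}}}$ is compact, whence $\{x_{n_k}\}$ has a Cauchy subsequence; for sufficiency it passes to $\widetilde{\overline{\mathbf{D}}}\in K(\widetilde{X})$ and cites Theorem \ref{bfc}(\romannumeral3) to get compactness of $(K(\widetilde{\overline{\mathbf{D}}}),H)$. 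Your argument instead works directly with finite weak $\varepsilon$-nets and never leaves $(X,d)$: in one direction you refine a net $\{C_1,\dots,C_n\}$ for $\mathcal{D}$ into a net for $\mathbf{D}$ by taking $\varepsilon$-nets of the compact sets $C_i$; in the other you convert a finite net $S$ for $\mathbf{D}$ into the family of nonempty subsets $T_C\subseteq S$, whose cardinality is bounded by $2^m-1$ --- this is indeed the one nontrivial idea, and your handling of the strict-versus-nonstrict inequalities (shrink $\varepsilon$ at the outset, or note that $d(\cdot,T_C)$ attains its supremum on the compact $C$) is adequate. What the paper's approach buys is brevity given the hyperspace machinery it has already set up (and reuse of Theorem \ref{cpm}, which it needs elsewhere anyway); what yours buys is a fully elementary, quantitative proof that avoids the completion and the compactness theorem for $K(X)$ entirely, confirming your remark that completeness plays no role.
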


\begin{proof} If $\mathcal{D} = \emptyset$, then the desired result follows immediately.
Suppose that
$\mathcal{D} \not= \emptyset$.

\textbf{\emph{Necessity}}. \
 To prove that $\mathbf{ D}$ is totally bounded it suffices to show that
each sequence in $\mathbf{ D}$ has a Cauchy subsequence.

 Given a sequence $\{x_n\}$
in $\mathbf{D}$.
 Suppose that
$x_n \in C_n \in \mathcal{D}$ for $n=1,2,\ldots$.
Since $\mathcal{D}$ is totally bounded in $(K(X), H)$, then $\{C_n\}$ has a Cauchy subsequence $\{C_{n_k}\}$ in $(K(X), H)$. As $\{C_{n_k}\}$ is also a Cauchy sequence in $(K(\widetilde{X}), H)$,
 by Theorem \ref{cpm},
$\widetilde{\overline{\bigcup_{k=1}^{+\infty} C_{n_k} }}$ is in $K(\widetilde{X})$.
Thus
$\{x_{n_k}\} $
 has a Cauchy subsequence, and hence so does $\{x_n\}$.

\textbf{\emph{Sufficiency}}. \   If   $\mathbf{ D} $ is totally bounded in $X$, then $\widetilde{\overline{\mathbf{D}}}$ is in $K(\widetilde{X})$.
So, by Theorem \ref{bfc}, $(K(\widetilde{\overline{\mathbf{D}}}), H)$ is compact, and
thus
 $\mathcal{D} \subseteq K(\widetilde{\overline{\mathbf{D}}})$ is totally bounded in $(K(\widetilde{\overline{\mathbf{D}}}), H)$.
So $\mathcal{D}$ is obviously totally bounded in $(K(X), H)$.
\end{proof}

\begin{tm}\cite{greco} \label{rce}
  Let $(X,d)$ be a metric space and $\mathcal{D}\subseteq K(X)$. Then $\mathcal{D} $ is relatively compact in $(K(X), H)$
if and only if
$\mathbf{ D} =   \bigcup \{C:  C \in  \mathcal{D} \} $ is relatively compact in $(X,d)$.
\end{tm}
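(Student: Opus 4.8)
The plan is to establish the two implications separately, each time via the sequential criterion for relative compactness recalled at the start of this section; the case $\mathcal{D}=\emptyset$ is trivial (then $\mathbf{D}=\emptyset$), so assume $\mathcal{D}\neq\emptyset$.

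For \emph{necessity}, I would assume $\mathcal{D}$ is relatively compact in $(K(X),H)$ and show that every sequence in $\mathbf{D}$ has a subsequence convergent in $(X,d)$. Given $\{x_n\}\subseteq\mathbf{D}$, pick $C_n\in\mathcal{D}$ with $x_n\in C_n$; by hypothesis some subsequence $\{C_{n_k}\}$ satisfies $H(C_{n_k},C)\to 0$ for a $C\in K(X)$. Then $d(x_{n_k},C)\leq H^*(C_{n_k},C)\leq H(C_{n_k},C)\to 0$, so (the infimum being attained since $C$ is compact) there are $y_k\in C$ with $d(x_{n_k},y_k)\to 0$; compactness of $C$ yields a subsequence $y_{k_j}\to y\in C\subseteq X$, whence $d(x_{n_{k_j}},y)\leq d(x_{n_{k_j}},y_{k_j})+d(y_{k_j},y)\to 0$. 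Thus $\mathbf{D}$ is relatively compact in $(X,d)$.

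For \emph{sufficiency}, I would assume $\mathbf{D}$ is relatively compact in $(X,d)$, so $E:=\overline{\mathbf{D}}$ is a nonempty compact subset of $X$, i.e. $E\in K(X)$. By Theorem \ref{bfc} applied to the compact metric space $(E,d)$, the space $(K(E),H)$ is compact, and since every $C\in\mathcal{D}$ is compact with $C\subseteq\mathbf{D}\subseteq E$ we have $\mathcal{D}\subseteq K(E)$; moreover on members of $K(E)$ the Hausdorff distance computed in $E$ agrees with the one computed in $X$. Hence any sequence in $\mathcal{D}$ has a subsequence $H$-convergent to some element of $K(E)\subseteq K(X)$, so $\mathcal{D}$ is relatively compact in $(K(X),H)$.

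The main obstacle is the necessity direction: it is tempting to argue ``$\mathcal{D}$ relatively compact $\Rightarrow$ $\mathcal{D}$ totally bounded $\Rightarrow$ (Theorem \ref{tbe}) $\mathbf{D}$ totally bounded'', but in an incomplete $X$ total boundedness is strictly weaker than relative compactness, so this only recovers total boundedness of $\mathbf{D}$; the sequential argument is needed precisely to place the limit of $\{x_{n_{k_j}}\}$ inside $X$, namely in the compact set $C\subseteq X$ furnished by the hypothesis. As an alternative route for sufficiency, one could instead combine Theorem \ref{tbe} (to extract a Cauchy subsequence of any sequence in $\mathcal{D}$) with Theorem \ref{cpm} in the completion $(\widetilde{X},\widetilde{d})$, using that $\mathbf{D}$ relatively compact in $X$ forces $\widetilde{\overline{\mathbf{D}}}=\overline{\mathbf{D}}\subseteq X$, so the limit set produced there actually lies in $K(X)$.
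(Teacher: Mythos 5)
Your proof is correct. The sufficiency direction is essentially the paper's own argument: form $E=\overline{\mathbf{D}}\in K(X)$, invoke Theorem \ref{bfc} to get compactness of $(K(E),H)$, and observe $\mathcal{D}\subseteq K(E)$; your added remark that $H$ restricted to $K(E)$ agrees with $H$ on $K(X)$ is a point the paper leaves implicit. The necessity direction, however, follows a genuinely different route. The paper passes to the completion $(\widetilde{X},\widetilde{d})$, applies Theorem \ref{cpm} to the convergent subsequence $\{C_{n_k}\}$ to conclude that $\widetilde{\overline{\bigcup_{k}C_{n_k}}}\in K(\widetilde{X})$, extracts there a convergent subsequence of $\{x_{n_k}\}$, and finally argues that the limit lies in $C=\lim^{(K)}_{k\to\infty}C_{n_k}\subseteq X$. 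You instead work entirely inside $X$: from $d(x_{n_k},C)\leq H^*(C_{n_k},C)\to 0$ you pick nearest points $y_k\in C$ (the infimum being attained by compactness of $C$) and use compactness of $C$ to converge a subsequence of $\{y_k\}$, hence of $\{x_{n_k}\}$, to a point of $C\subseteq X$. Your version is more elementary and self-contained — it needs neither the completion nor Theorem \ref{cpm} — while the paper's version buys uniformity, since the same $\widetilde{X}$-plus-Theorem-\ref{cpm} template is reused verbatim in the proofs of Theorem \ref{tbe} and Lemma \ref{coms}. Your closing observation that total boundedness of $\mathbf{D}$ is all one can extract from Theorem \ref{tbe} when $X$ is incomplete correctly identifies why the sequential argument locating the limit inside $C$ is indispensable.
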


\begin{proof} \ If $\mathcal{D} = \emptyset$, then the desired result follows immediately.
Suppose that
$\mathcal{D} \not= \emptyset$.

\textbf{\emph{Necessity}}. \   To prove that $\mathbf{ D}$ is relatively compact it suffices to show that
each sequence in $\mathbf{ D}$ has a convergent subsequence in $(X,d)$.

Given a sequence $\{x_n\}$
in   $\mathbf{D}$.
 Suppose that
$x_n \in C_n \in \mathcal{D}$ for $n=1,2,\ldots$.
Since $\mathcal{D}$ is relatively compact in $(K(X), H)$, then $\{C_n\}$ has a subsequence $\{C_{n_k}\}$ which converges to an element $C$ in $(K(X), H)$.
Clearly $\{C_{n_k}\}$ converges to $C$ in $(K(\widetilde{X}), H)$.
Hence,
 by Theorem \ref{cpm},
$\widetilde{\overline{\bigcup_{k=1}^{+\infty} C_{n_k} }}$ is in $K(\widetilde{X})$
(Indeed, $\widetilde{\overline{\bigcup_{k=1}^{+\infty} C_{n_k} }}$ is in $K(X)$).
So $\{x_{n_k}\}$ has a subsequence which converges to $x$
in $\widetilde{\overline{\bigcup_{k=1}^{+\infty} C_{n_k} }}$,
and thus
$x\in C = \lim_{k\to \infty}^{(K)} C_{n_k} \subseteq X$.

\textbf{\emph{Sufficiency}}. \ If $\mathbf{ D} $ is relatively compact in $X$, then  $\overline{\mathbf{ D}} $ is in $K(X)$,
and
therefore $(K(\overline{\mathbf{ D}}), H)$ is compact.
Thus $\mathcal{D} \subseteq  K(\overline{\mathbf{ D}}) $ is relatively compact in $(K(\overline{\mathbf{ D}}), H)$. So clearly
$\mathcal{D}$ is relatively compact in $(K(X), H)$.
\end{proof}

\begin{lm} \label{coms}
   Let $(X,d)$ be a metric space and $\mathcal{D}\subseteq K(X)$. If $\mathcal{D} $ is compact in $(K(X), H)$,
then
$\mathbf{ D} =   \bigcup \{C:  C \in  \mathcal{D} \} $ is compact in $(X, d)$.
\end{lm}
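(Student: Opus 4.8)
The plan is to derive the statement directly from Theorem~\ref{rce} and Theorem~\ref{hkg}. First I would dispose of the trivial case $\mathcal{D}=\emptyset$, in which $\mathbf{D}=\emptyset$ is (vacuously) compact, so assume $\mathcal{D}\neq\emptyset$. Since every compact subset of a metric space is relatively compact, the hypothesis that $\mathcal{D}$ is compact in $(K(X),H)$ together with Theorem~\ref{rce} gives that $\mathbf{D}=\bigcup\{C:C\in\mathcal{D}\}$ is relatively compact in $(X,d)$, i.e. $\overline{\mathbf{D}}\in K(X)$. Hence it suffices to prove that $\mathbf{D}$ is closed in $(X,d)$; once this is done, $\mathbf{D}=\overline{\mathbf{D}}$ is compact and we are finished.

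To establish closedness I would take an arbitrary sequence $\{x_n\}$ in $\mathbf{D}$ that converges in $(X,d)$ to some $x\in X$, and verify $x\in\mathbf{D}$. For each $n$ choose $C_n\in\mathcal{D}$ with $x_n\in C_n$. Because $\mathcal{D}$ is compact in $(K(X),H)$, the sequence $\{C_n\}$ has a subsequence $\{C_{n_k}\}$ converging in $(K(X),H)$ to some $C\in\mathcal{D}$. Applying Theorem~\ref{hkg} to $\{C_{n_k}\}$ yields $C=\lim^{(K)}_{k\to\infty}C_{n_k}$, in particular $C=\liminf_{k\to\infty}C_{n_k}$. Since $x_{n_k}\in C_{n_k}$ for every $k$ and $x_{n_k}\to x$, the definition of $\liminf_{k\to\infty}C_{n_k}$ gives $x\in\liminf_{k\to\infty}C_{n_k}=C$. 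As $C\in\mathcal{D}$, we get $x\in C\subseteq\mathbf{D}$, so $\mathbf{D}$ is closed.

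I do not anticipate any real obstacle here: the argument is a short combination of results already available in the excerpt. The only points that need care are to pass to a subsequence of $\{C_n\}$ using the compactness of $\mathcal{D}$ \emph{before} invoking the Kuratowski characterization of Hausdorff-metric convergence in Theorem~\ref{hkg}, and to record that the limit set $C$ belongs to $\mathcal{D}$ so that its points lie in $\mathbf{D}$. (Note also that on $K(X)$ the Hausdorff distance $H$ is a genuine finite-valued metric, so these convergence statements are meaningful.)
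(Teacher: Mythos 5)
Your proof is correct, but it takes a slightly different route from the paper's. You decompose compactness as ``relatively compact $+$ closed'': Theorem~\ref{rce} (applied to the relatively compact set $\mathcal{D}$) immediately gives $\overline{\mathbf{D}}\in K(X)$, and you then only have to check that $\mathbf{D}$ is closed, which you do by sending a convergent sequence $x_{n_k}\in C_{n_k}$ into the Kuratowski limit $C=\lim^{(K)}_{k\to\infty}C_{n_k}\in\mathcal{D}$ via Theorem~\ref{hkg}. The paper instead proves sequential compactness of $\mathbf{D}$ from scratch: given an arbitrary (not a priori convergent) sequence $\{x_n\}$ in $\mathbf{D}$, it passes to a subsequence with $C_{n_k}\to C\in\mathcal{D}$, invokes Theorem~\ref{cpm} to see that $\overline{\bigcup_k C_{n_k}}$ is compact and hence extract a convergent subsequence of the $x_{n_k}$, and then concludes exactly as you do that the limit lies in $C\subseteq\mathbf{D}$. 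The final ``limit point belongs to the Kuratowski limit'' step is identical in both arguments; what your version buys is economy (it reuses Theorem~\ref{rce} and avoids Theorem~\ref{cpm} entirely, since in the closedness check the point sequence is already convergent), while the paper's version is self-contained modulo \ref{cpm} and does not lean on the relative-compactness characterization. Both are sound; the only care points you flag — passing to the subsequence of $\{C_n\}$ before applying Theorem~\ref{hkg}, and recording $C\in\mathcal{D}$ so that $C\subseteq\mathbf{D}$ — are exactly the right ones.
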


\begin{proof} \ If $\mathcal{D} = \emptyset$, then the desired result follows immediately.
Suppose that
$\mathcal{D} \not= \emptyset$.
  To show that $\mathbf{ D}$ is compact, we only need to show that
each sequence in $\mathbf{ D}$ has a subsequence which converges to a point in $\mathbf{ D}$.

Given a sequence $\{x_n\}$
in   $\mathbf{D}$.
 Suppose that
$x_n \in C_n \in \mathcal{D}$ for $n=1,2,\ldots$.
Since $\mathcal{D}$ is compact, then $\{C_n\}$ has a subsequence $\{C_{n_k}\}$ converges to $C \in \mathcal{D}$.
Clearly $\{C_{n_k}\}$ converges to $C$ in $(K(\widetilde{X}), H)$.
Hence, by Theorem \ref{cpm},
$\widetilde{\overline{\bigcup_{k=1}^{+\infty} C_{n_k} }}$ is in $K(\widetilde{X})$
(Indeed, $\widetilde{\overline{\bigcup_{k=1}^{+\infty} C_{n_k} }}$ is in $K(\mathbf{D})$).
So $\{ x_{n_k} \}$ has a subsequence which converges to $x$
in $\widetilde{\overline{\bigcup_{k=1}^{+\infty} C_{n_k} }}$.
Thus $x \in   C = \lim_{k\to \infty}^{(K)} C_{n_k}   \subseteq \mathbf{D}$.
\end{proof}

\begin{re}
{\rm
The converse of the implication in Lemma \ref{coms} does not hold.
Let $(X,d) = \mathbb{R}$ and
$\mathcal{D} = \{ [0,x]:  x\in (0.3,1] \}   \subset   K(\mathbb{R})$.
Then $\mathbf{D} = [0,1] \in K(\mathbb{R})$.
But
$\mathcal{D}$ is not compact in $(  K(\mathbb{R}),    H   )$.
}
\end{re}

\begin{tm} \label{come}
 Let $(X,d)$ be a metric space and $\mathcal{D}\subseteq K(X)$. Then
    the following are equivalent:
    \\
    (\romannumeral1) \ $\mathcal{D} $ is compact in $(K(X), H)$;
        \\
 (\romannumeral2) \
$\mathbf{ D} =   \bigcup \{C:  C \in  \mathcal{D} \} $ is relatively compact in $(X, d)$
and
$\mathcal{D} $ is closed in $(K(X), H)$;
    \\
 (\romannumeral3) \
$\mathbf{ D} =   \bigcup \{C:  C \in  \mathcal{D} \} $ is compact in $(X, d)$
and
$\mathcal{D} $ is closed in $(K(X), H)$.
\end{tm}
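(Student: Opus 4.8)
The plan is to prove the cycle of implications $(\romannumeral1)\Rightarrow(\romannumeral3)\Rightarrow(\romannumeral2)\Rightarrow(\romannumeral1)$, using the already-established Lemma \ref{coms} and Theorems \ref{rce} and \ref{cpm}. The implication $(\romannumeral3)\Rightarrow(\romannumeral2)$ is immediate, since a compact subset of $(X,d)$ is relatively compact. For $(\romannumeral1)\Rightarrow(\romannumeral3)$: if $\mathcal{D}$ is compact in $(K(X),H)$, then Lemma \ref{coms} directly gives that $\mathbf{D}=\bigcup\{C:C\in\mathcal{D}\}$ is compact in $(X,d)$; and a compact subset of any metric space is closed, so $\mathcal{D}$ is closed in $(K(X),H)$. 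Thus $(\romannumeral3)$ holds.

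The substantive step is $(\romannumeral2)\Rightarrow(\romannumeral1)$. Assume $\mathbf{D}$ is relatively compact in $(X,d)$ and $\mathcal{D}$ is closed in $(K(X),H)$. First I would dispose of the trivial case $\mathcal{D}=\emptyset$. Otherwise, to show $\mathcal{D}$ is compact it suffices (metric topology) to show every sequence $\{C_n\}$ in $\mathcal{D}$ has a subsequence converging in $(K(X),H)$ to an element of $\mathcal{D}$. By Theorem \ref{rce}, relative compactness of $\mathbf{D}$ in $(X,d)$ implies $\mathcal{D}$ is relatively compact in $(K(X),H)$, so $\{C_n\}$ has a subsequence $\{C_{n_k}\}$ converging to some $C$ in $(K(X),H)$. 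Since $\mathcal{D}$ is closed in $(K(X),H)$, the limit $C$ lies in $\mathcal{D}$. Hence $\{C_n\}$ has a subsequence converging in $(K(X),H)$ to a point of $\mathcal{D}$, so $\mathcal{D}$ is compact in $(K(X),H)$, i.e.\ $(\romannumeral1)$ holds. This closes the cycle.

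I do not anticipate a serious obstacle here: the proof is essentially an assembly of Lemma \ref{coms} (for the ``$\mathbf{D}$ compact'' direction) and Theorem \ref{rce} (for the ``$\mathcal{D}$ relatively compact'' direction), together with the elementary facts that compact sets are closed and that in metric spaces ``compact = closed $+$ relatively compact''. The only mild care needed is to note that relative compactness of $\mathbf{D}$ together with closedness of $\mathcal{D}$ in $(K(X),H)$ suffices — we do \emph{not} need $\mathbf{D}$ itself to be closed, because the convergent subsequence's limit $C$ is an element of $K(X)$ automatically (as $\{C_{n_k}\}$ converges in $(K(X),H)$), and $C\subseteq\overline{\mathbf{D}}$ which is compact, so everything stays consistent. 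If one prefers a direct argument over invoking Theorem \ref{rce}, one can instead extract, via Theorem \ref{cpm} applied in $(K(\widetilde{X}),H)$ to a Cauchy subsequence of $\{C_n\}$, a convergent subsequence and argue as above; but routing through Theorem \ref{rce} is cleaner.
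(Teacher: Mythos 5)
Your proposal is correct and uses exactly the same ingredients as the paper's proof: the equivalence ``compact = relatively compact $+$ closed'' in $(K(X),H)$ together with Theorem \ref{rce} for the equivalence of (\romannumeral1) and (\romannumeral2), and Lemma \ref{coms} for (\romannumeral1)$\Rightarrow$(\romannumeral3). The only difference is cosmetic — you arrange the implications as a cycle rather than proving (\romannumeral1)$\Leftrightarrow$(\romannumeral2) directly — so there is nothing substantive to distinguish the two arguments.
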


\begin{proof}
Note that $\mathcal{D} $ is compact in $(K(X), H)$ if and only if $\mathcal{D} $ is relatively compact and closed in $(K(X), H)$.
 Then from Theorem \ref{rce} we have (\romannumeral1)$\Leftrightarrow$(\romannumeral2). Clearly (\romannumeral3)$\Rightarrow$(\romannumeral2). We
shall complete the proof by showing that (\romannumeral1)$\Rightarrow$(\romannumeral3),
which can be deduced by Lemma \ref{coms}.
\end{proof}

\begin{re}
{\rm
    Theorem \ref{rce} is Proposition 5 in \cite{greco}.
   We cannot find the proof of Proposition 5 in \cite{greco}.
Some of the results in this subsection may be known. We cannot find the results 
of this subsection other than Theorem \ref{rce} in the references that we can obtain. So we give our proofs here.
}
\end{re}

\subsection{Characterizations of compactness in $(F^1_{USCG} (X), H_{\rm end})$}

In this subsection, we give characterizations of total boundedness, relative compactness and compactness in $(F^1_{USCG} (X), H_{\rm end})$.

Suppose that
$U$ is a subset of $F^1_{USC} (X)$ and $\al\in [0,1]$.
For
writing convenience,
we denote
\begin{itemize}
  \item  $U(\al):= \bigcup_{u\in U} [u]_\al$, and

\item  $U_\al : =  \{[u]_\al: u \in U\}$.
\end{itemize}

\begin{tm} \label{tbfegn}
  Let $U$ be a subset of $F^1_{USCG} (X)$. Then $U$ is totally bounded in $(F^1_{USCG} (X), H_{\rm end})$
if and only if
$U(\al)$
is totally bounded in $(X,d)$ for each $\al \in (0,1]$.
\end{tm}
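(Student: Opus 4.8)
The plan is to prove the two implications separately, with Proposition~\ref{aen} serving as the bridge between $H_{\rm end}$ and the $\al$-cuts, and Theorem~\ref{tbe} translating total boundedness of $U(\al)$ into total boundedness of $U_\al$ in $(K(X),H)$.

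\emph{Necessity.} Suppose $U$ is totally bounded, and fix $\al\in(0,1]$ and $\varepsilon>0$. Set $\delta=\min\{\al,\varepsilon\}/2\in(0,1)$ and take a finite $\delta$-net $\{v_1,\ldots,v_q\}\subseteq F^1_{USCG}(X)$ of $U$. For each $u\in U$ there is an $i$ with $H_{\rm end}(u,v_i)<\delta$, hence $H^*({\rm end}\,u,{\rm end}\,v_i)<\delta$; since the level gap $\al-(\al-\delta)=\delta\ge\delta$, Proposition~\ref{aen} gives $H^*([u]_\al,[v_i]_{\al-\delta})<\delta$, so $[u]_\al$ lies in the $\delta$-neighbourhood of $[v_i]_{\al-\delta}$. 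Consequently $U(\al)=\bigcup_{u\in U}[u]_\al$ is covered by the $\delta$-neighbourhoods of the finitely many sets $[v_1]_{\al-\delta},\ldots,[v_q]_{\al-\delta}$, each of which lies in $K(X)$ because $\al-\delta\in(0,1]$, and hence is totally bounded; amalgamating a finite $\varepsilon$-net of each $[v_i]_{\al-\delta}$ produces a finite $(\varepsilon+\delta)$-net of $U(\al)$. As $\varepsilon>0$ is arbitrary, $U(\al)$ is totally bounded in $(X,d)$.

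\emph{Sufficiency.} Assume $U(\al)$ is totally bounded for every $\al\in(0,1]$; by Theorem~\ref{tbe} each $U_\gamma=\{[u]_\gamma:u\in U\}$ is totally bounded in $(K(X),H)$ for $\gamma\in(0,1]$. It suffices to show that every sequence $\{u_n\}$ in $U$ has an $H_{\rm end}$-Cauchy subsequence. Enumerating $\mathbb{Q}\cap(0,1)=\{\gamma_1,\gamma_2,\ldots\}$ and using that each $\{[u_n]_{\gamma_i}\}_n$ has a Cauchy subsequence in $(K(X),H)$, a standard diagonal extraction yields a subsequence $\{u_{n_j}\}$ such that $\{[u_{n_j}]_{\gamma_i}\}_j$ is Cauchy in $(K(X),H)$ for every $i$. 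Fix $\varepsilon>0$ and pick rationals $\gamma^{(1)}<\cdots<\gamma^{(m)}$ in $(0,1)$ with $\gamma^{(1)}<\varepsilon$, $\gamma^{(l+1)}-\gamma^{(l)}<\varepsilon$ and $1-\gamma^{(m)}<\varepsilon$. For $(x,\al)\in{\rm end}\,u_{n_j}$: if $\al<\gamma^{(1)}$ then, as $(x,0)\in{\rm end}\,u_{n_k}$, $\overline{d}((x,\al),{\rm end}\,u_{n_k})\le\al<\varepsilon$; if $\al\ge\gamma^{(1)}$, letting $l$ be largest with $\gamma^{(l)}\le\al$, we have $x\in[u_{n_j}]_\al\subseteq[u_{n_j}]_{\gamma^{(l)}}$ and $\al-\gamma^{(l)}<\varepsilon$, so $\overline{d}((x,\al),{\rm end}\,u_{n_k})\le d(x,[u_{n_k}]_{\gamma^{(l)}})+(\al-\gamma^{(l)})\le H^*([u_{n_j}]_{\gamma^{(l)}},[u_{n_k}]_{\gamma^{(l)}})+\varepsilon$. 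Hence $H^*({\rm end}\,u_{n_j},{\rm end}\,u_{n_k})\le\varepsilon+\max_{1\le l\le m}H([u_{n_j}]_{\gamma^{(l)}},[u_{n_k}]_{\gamma^{(l)}})$, and by symmetry the same bound holds for $H_{\rm end}(u_{n_j},u_{n_k})$. Since the finitely many sequences $\{[u_{n_j}]_{\gamma^{(l)}}\}_j$ are Cauchy in $(K(X),H)$, the maximum is below $\varepsilon$ for all large $j,k$, so $H_{\rm end}(u_{n_j},u_{n_k})<2\varepsilon$ eventually; thus $\{u_{n_j}\}$ is $H_{\rm end}$-Cauchy, and $U$ is totally bounded.

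The routine points are the necessity direction, which is immediate from Proposition~\ref{aen}, and the diagonal extraction. The main obstacle is the estimate in the sufficiency direction: one must recover control of the full endograph distance $H_{\rm end}(u_{n_j},u_{n_k})$ from Hausdorff distances at a \emph{fixed finite} set of levels, which forces separate treatment of the bottom levels via the point $(x,0)$ (which always lies in every endograph) and of the top levels near $1$, and absorbs a level-shift error of order $\varepsilon$; Proposition~\ref{aen} is precisely what performs the analogous reduction painlessly in the necessity direction.
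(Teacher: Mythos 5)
Your proof is correct, but it takes a genuinely different route from the paper's in the sufficiency direction. For necessity both arguments rest on Proposition \ref{aen}; the paper extracts a Cauchy subsequence of $\{u_n\}$ and deduces that $\bigcup_l [u_{n_l}]_\al$ is totally bounded, whereas you run the same level-shift estimate through a finite weak $\delta$-net of $U$ and cover $U(\al)$ directly by $\delta$-neighbourhoods of the compact sets $[v_i]_{\al-\delta}$ --- the same idea, packaged as a covering rather than a sequential argument. The real divergence is in sufficiency: the paper passes to the completion $\widetilde{X}$, upgrades total boundedness of each $U(\al)$ to relative compactness there, invokes the sufficiency part of Theorem \ref{rcfegn} (whose own proof is only sketched, by reference to Theorem 7.1 of \cite{huang}), and then restricts back to $X$. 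You instead stay inside $(X,d)$, use Theorem \ref{tbe} to get total boundedness of each $U_{\gamma}$ in $(K(X),H)$, diagonalize over rational levels, and prove Cauchyness of the resulting subsequence by bounding $H_{\rm end}(u_{n_j},u_{n_k})$ by $\varepsilon$ plus the maximum of finitely many level Hausdorff distances --- essentially the computation the paper carries out in the proof of (\romannumeral1-2)$\Rightarrow$(\romannumeral1-1) of Theorem \ref{lres}, here applied to a pair of fuzzy sets. What your approach buys is a self-contained, completion-free proof that does not lean on the relative-compactness machinery or on the external reference; what the paper's approach buys is brevity and reuse of Theorem \ref{rcfegn}, which it needs anyway. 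Both are sound; your endograph estimate (treating levels below $\gamma^{(1)}$ via the point $(x,0)$ and absorbing the level-shift error of size $\varepsilon$) is carried out correctly.
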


\begin{proof}
   \textbf{\emph{Necessity}}.  Suppose that $U$ is totally bounded in $(F^1_{USCG} (X), H_{\rm end})$.
Let $\al \in (0,1]$.
To show
that
$U(\al)$
is totally bounded in $X$, we only need to show that
each sequence in $U(\al)$
has a Cauchy subsequence.

Given a sequence $\{x_n\} \subset U(\al)$. Suppose that $x_n \in  [u_n]_\al$,  $u_n  \in  U$, $n=1,2,\ldots$.
Then
$\{u_n\}$ has
 a Cauchy subsequence $\{u_{n_l}\}$. So given $\varepsilon\in (0, \al)$, there is a $K( \varepsilon) \in \mathbb{N}$
such that
$
H_{\rm end} (u_{n_l},  u_{n_K})  <   \varepsilon
$
for all $l \geq K$.
Thus by Proposition \ref{aen},
\begin{equation}\label{cuts}
H^*([u_{n_l}]_\al,       [u_{n_K}]_{\al-\varepsilon}) <  \varepsilon
\end{equation}
for all $l \geq K$.
From \eqref{cuts} and the arbitrariness of $\varepsilon$,  $\bigcup_{l=1} ^{+\infty} [u_{n_l}]_\al $ is totally bounded in $(X,d)$.
Thus
$\{x_{n_l}\}$, which is a subsequence of $\{x_n\}$,
has a Cauchy subsequence, and so does $\{x_n\}$.

In the following, we give
a detailed proof for the above conclusion that
        $\bigcup_{l=1} ^{+\infty} [u_{n_l}]_\al $ is totally bounded in $(X,d)$.
To show that
        $\bigcup_{l=1} ^{+\infty} [u_{n_l}]_\al $ is totally bounded in $(X,d)$, it suffices to show that
for each $\lambda>0$, there exists a finite weak $\lambda$-net of  $ \bigcup_{l=1} ^{+\infty} [u_{n_l}]_\al$.

Let $\lambda>0$. Set $\varepsilon= \min\{\lambda/2, \al/2\} $.
Then $\varepsilon \in (0, \al)$. Hence there is a $K(\varepsilon)$ such that \eqref{cuts} holds for all $l\geq K$.
Since $\bigcup_{l=1} ^{K}[u_{n_l}]_{\al-\varepsilon} $ is compact,
there is a finite $\varepsilon$ approximation $\{z_j\}_{j=1}^{m}$ to $\bigcup_{l=1} ^{K}[u_{n_l}]_{\al-\varepsilon} $. We claim that $\{z_j\}_{j=1}^{m}$
is a finite weak $\lambda$-net of $\bigcup_{l=1} ^{+\infty} [u_{n_l}]_\al$.

Let $z \in \bigcup_{l=1} ^{+\infty} [u_{n_l}]_\al$.
If $z \in \bigcup_{l=1} ^{K} [u_{n_l}]_\al$, then clearly $d(z, \{z_j\}_{j=1}^{m}) < \varepsilon<\lambda$.
If $z \in \bigcup_{l=K+1} ^{+\infty} [u_{n_l}]_\al$,
then
by \eqref{cuts},
  there exists a $y_z \in [u_{n_K}]_{\al-\varepsilon}$
such that
$d(z, y_z) <\varepsilon $,
and hence
$d(z, \{z_j\}_{j=1}^{m}) \leq  d(z, y_z) + d(y_z, \{z_j\}_{j=1}^{m} ) < 2\varepsilon \leq \lambda$.
Thus
$d(z, \{z_j\}_{j=1}^{m}) < \lambda$ for each $z \in \bigcup_{l=1} ^{+\infty} [u_{n_l}]_\al$.
So
$\{z_j\}_{j=1}^{m}$
is a finite weak $\lambda$-net of $\bigcup_{l=1} ^{+\infty} [u_{n_l}]_\al$.

\textbf{\emph{Sufficiency}}.
Suppose that $U(\al)$
is totally bounded in $(X,d)$ for each $\al \in (0,1]$.
Then
 $U(\al)$
is relatively compact in $(\widetilde{X},d)$ for each $\al \in (0,1]$.
So from
the sufficiency part of Theorem \ref{rcfegn},
we obtain
that
 $U$ is relatively compact in $(F^1_{USCG} (\widetilde{X}), H_{\rm end})$.
 Then
 $U$
  is totally bounded in $(F^1_{USCG} (\widetilde{X}), H_{\rm end})$,
 and so $U$
 is totally bounded in $(F^1_{USCG} (X), H_{\rm end})$.
\end{proof}

\begin{tm} \label{rcfegn}
  Let $U$ be a subset of $F^1_{USCG} (X)$. Then $U$ is relatively compact in $(F^1_{USCG} (X), H_{\rm end})$
if and only if
$U(\al)$
is relatively compact in $(X, d)$ for each $\al \in (0,1]$.
\end{tm}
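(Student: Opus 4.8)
The plan is to transfer everything to the level of $\al$-cuts, using the level characterisation of $H_{\rm end}$-convergence for $F^1_{USCG}$-limits (Theorems \ref{uscg} and \ref{aedr}) together with the compactness theory of $(K(\cdot),H)$ developed in Subsection \ref{cmpu}. The two implications are largely symmetric to the corresponding parts of Theorem \ref{tbfegn}, but here the conclusions must be phrased in terms of genuine convergence in $(X,d)$ rather than Cauchyness, which is why the $H_{\rm end}$-limit of the extracted subsequence will play an explicit role.

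For necessity, suppose $U$ is relatively compact in $(F^1_{USCG}(X),H_{\rm end})$ and fix $\al\in(0,1]$. Given a sequence $\{x_n\}\subseteq U(\al)$, write $x_n\in[u_n]_\al$ with $u_n\in U$ and pass to a subsequence $\{u_{n_l}\}$ that $H_{\rm end}$-converges to some $u\in F^1_{USCG}(X)$. Since $H^*({\rm end}\,u_{n_l},{\rm end}\,u)\to0$, Theorem \ref{uscg}(ii) gives $H^*([u_{n_l}]_\al,[u]_\al)\to0$, hence $d(x_{n_l},[u]_\al)\to0$. As $[u]_\al$ is compact, the sequence of nearest points to $x_{n_l}$ in $[u]_\al$ has a convergent subsequence, so a corresponding sub-subsequence of $\{x_n\}$ converges to a point of $[u]_\al\subseteq X$; thus $U(\al)$ is relatively compact in $(X,d)$.

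For sufficiency, assume $U(\al)$ is relatively compact in $(X,d)$ for all $\al\in(0,1]$, take a sequence $\{u_n\}\subseteq U$, and fix a countable dense set $Q\subseteq(0,1)$. For each $q\in Q$ the closure $\overline{U(q)}$ is compact in $X$, so $\{[u_n]_q\}$ lies in the compact space $(K(\overline{U(q)}),H)$ (Theorem \ref{bfc}); a diagonal argument extracts a subsequence $\{u_{n_k}\}$ with $[u_{n_k}]_q\to D_q$ in $(K(X),H)$ for every $q\in Q$, and Theorem \ref{hkg} shows $\{D_q\}_{q\in Q}$ is decreasing. Set $E_\al:=\bigcap_{q\in Q,\,q<\al}D_q$ for $\al\in(0,1]$; by density of $Q$ one checks $E_\al=\bigcap_{\beta<\al}E_\beta$, while Lemma \ref{c}(i), applied to a decreasing sequence $\{D_{q_n}\}$ with $q_n\uparrow1$, shows $E_1$ — hence every $E_\al$ — is a nonempty compact subset of $X$. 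By Proposition \ref{repm} there is $u\in F^1_{USCG}(X)$ with $[u]_\al=E_\al$ for $\al\in(0,1]$. It then suffices to find a dense set $P\subseteq(0,1)$ with $H([u_{n_k}]_\al,[u]_\al)\to0$ for $\al\in P$, for then Theorem \ref{aedr} gives $H_{\rm end}(u_{n_k},u)\to0$, so $\{u_n\}$ has an $H_{\rm end}$-convergent subsequence with limit in $F^1_{USCG}(X)$ and $U$ is relatively compact. (No completeness of $X$ is needed, since relative compactness of $U(\al)$ already makes each $\overline{U(q)}$ compact.)

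The heart of the argument, and its main obstacle, is the density of $P$. Put $G_\al:=\overline{\bigcup_{q\in Q,\,q>\al}D_q}$, so $G_\al\subseteq E_\al$; I would take $P:=\{\al\in(0,1):G_\al=E_\al\}$ and show its complement is countable. For each $\al$ with $G_\al\subsetneq E_\al$ choose $x_\al\in E_\al\setminus G_\al$ and $r_\al>0$ with $d(x_\al,D_q)\ge r_\al$ for all $q\in Q$ with $q>\al$; monotonicity of $\{D_q\}$ forces $d(x_\al,x_{\al'})\ge r_\al$ whenever $\al<\al'$, and since all $x_\al$ with $\al$ in a fixed interval $[1/n,1-1/n]$ lie in the separable compact set $D_q$ for a single $q<1/n$, only countably many radii $r_\al$ can exceed a given $\varepsilon>0$, so there are only countably many ``jump points''. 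For $\al\in P$, Lemma \ref{c} gives $H(D_q,E_\al)\to0$ as $q\to\al$ from either side within $Q$; squeezing $[u_{n_k}]_{q'}\subseteq[u_{n_k}]_\al\subseteq[u_{n_k}]_q$ for $q<\al<q'$ in $Q$ and combining with $[u_{n_k}]_q\to D_q$ and $[u_{n_k}]_{q'}\to D_{q'}$ yields $H([u_{n_k}]_\al,E_\al)\to0$. This produces the required dense $P$ and completes the sufficiency.
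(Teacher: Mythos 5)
Your proof is correct and follows the same overall strategy as the paper: reduce everything to $\al$-cuts, use the compactness theory of $(K(X),H)$ from Subsection \ref{cmpu}, and for sufficiency run a diagonal extraction over a countable dense set of levels followed by Theorem \ref{aedr}. There are two places where you diverge in execution. In the necessity direction the paper passes from the limit $u$ to an auxiliary level $\beta<\al$ at which the full Hausdorff convergence $H([u_{n_k}]_\beta,[u]_\beta)\to 0$ holds (via Theorem \ref{aec}) and then invokes Theorem \ref{rce}; you instead work at the level $\al$ itself, using the one-sided estimate $H^*([u_{n_l}]_\al,[u]_\al)\to 0$ from Theorem \ref{uscg} together with a nearest-point argument in the compact set $[u]_\al$ --- slightly more economical, and both routes are valid. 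In the sufficiency direction the paper only sketches the construction of the limit and defers the verification to the proof of Theorem 7.1 in \cite{huang}; your argument is self-contained, and in particular the $\varepsilon$-separated-set count showing that $\{\al\in(0,1): G_\al\subsetneq E_\al\}$ meets each $[1/n,1-1/n]$ in a finite set (hence is countable, so the good levels $P$ are co-countable and dense) is precisely the ingredient the paper leaves implicit. The individual steps all check out: $G_\al\subseteq E_\al$ follows from monotonicity of $\{D_q\}$, Lemma \ref{c} supplies the two one-sided approximations $H(D_q,E_\al)\to 0$ at levels $\al\in P$, and the squeeze $[u_{n_k}]_{q'}\subseteq[u_{n_k}]_\al\subseteq[u_{n_k}]_q$ then yields $H([u_{n_k}]_\al,E_\al)\to 0$, which is exactly what Theorem \ref{aedr} needs.
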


\begin{proof}
    \textbf{\emph{Necessity}}. Suppose that $U$ is relatively compact. Given $\al \in (0,1]$.
To show that
$U(\al)$
is relatively compact in $X$, we only need to show that each sequence
in $U(\al)$ has a convergent subsequence in $(X,d)$.

 Let $\{x_n\}$ be a sequence in $U(\al)$.
Suppose
that $x_n\in [u_n]_\al$, $u_n \in U$, $n=1,2,\ldots$.
Then
 there is a subsequence $\{ u_{n_k} \}$ of $\{u_n\}$ and $u \in F^1_{USCG} (X)$
  such that
$H_{\rm end} (u_{n_k}, u) \to 0$.
So, by Theorem \ref{aec}, $H([u_{n_k}]_\al, [u]_\al) \rightarrow 0$ holds a.e. on $\al\in (0,1)$,
and therefore there is a $\beta \in (0, \al)$ such that
$H([  u_{n_k}  ]_\beta, [u]_\beta) \to 0$. Hence by Theorem \ref{rce}, $\bigcup_{k=1}^{+\infty}[  u_{n_k}  ]_\beta$ is relatively compact in $X$.
Thus
$\{x_{n_k}\}$ has a convergent subsequence in $X$, and so does $\{x_n\}$.

\emph{\textbf{Sufficiency}}.
Suppose that $U(\al)$ is relatively compact in $X$ for each $\al\in (0,1]$.
Note that $U(\al) =   \bigcup \{D:  D \in  U_{\al}\} $ for each $\al\in [0,1]$.
So, by
 Theorem \ref{rce},
we obtain that
$U_{\al}$ is relatively compact in $(K(X), H)$ for each $\al\in (0,1]$.
This means that the following
affirmation (a) is true.
\begin{description}
  \item[(a)] \ Given a sequence $\{w_n : n=1,2,\ldots\}$ in $U$ and $\al\in (0,1]$.
Then the corresponding sequence
$\{[w_n]_\al: n=1,2,\ldots\}$
has a convergent subsequence in $(K(X), H)$.
\end{description}

To show that $U$ is relatively compact
in $(F^1_{USCG} (X), H_{\rm end})$,
we only need to show that each sequence
in $U$ has a convergent subsequence in $(F^1_{USCG} (X), H_{\rm end})$.
Suppose that
$\{u_n\}$ is a sequence in $U$. Based on the above affirmation (a) and Theorem \ref{aec},
  and
 proceeding similarly to the proof of the ``Sufficiency part'' of Theorem 7.1 in \cite{huang}, it can be shown
that
$\{u_n\}$
has a subsequence $\{v_n\}$ which converges to an element $v$ in $(F^1_{USCG} (X), H_{\rm end})$.

A sketch of the
proof of the existence of $\{v_n\}$ and $v$ is given as follows.

First, we construct a subsequence $\{v_n\}$ of $\{u_n\}$
such that
$[v_n]_q$ converges to an element $u_q$ in $(K(X), H)$ for all $q\in \mathbb{Q}'$, where $\mathbb{Q}' = \mathbb{Q} \cap (0,1]$.
For $\al \in (0,1]$,
 we define
$v_\al = \bigcap_{q<\al, q\in \mathbb{Q}'} u_q$.
Then we define
 a fuzzy set
 $v$ in $X$ by using $\{v_\al, \al\in (0,1]\}$. At last, we show
that
 $v\in F^1_{USCG}(X)$, $[v]_\al= v_\al $ for each $\al\in (0,1]$,
and
$H_{\rm end} (v_n, v) \to 0$.
\end{proof}

\begin{tm}\label{cfeg}
  Let $U$ be a subset of $F^1_{USCG} (X)$. Then the following are equivalent:
\\
(\romannumeral1)
 $U$ is compact in $(F^1_{USCG} (X), H_{\rm end})$;
\\
(\romannumeral2)
 $U(\al)$
is relatively compact in $(X, d)$ for each $\al \in (0,1]$ and $U$ is closed in $(F^1_{USCG} (X), H_{\rm end})$;
\\
(\romannumeral3) $U(\al)$
is compact in $(X, d)$ for each $\al \in (0,1]$ and $U$ is closed in $(F^1_{USCG} (X), H_{\rm end})$.

\end{tm}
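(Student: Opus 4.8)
The plan is to deduce Theorem \ref{cfeg} from the already-established characterization of relative compactness (Theorem \ref{rcfegn}) together with the standard fact that in a metric space a set is compact if and only if it is relatively compact and closed, and then to settle the only nontrivial new ingredient, namely the equivalence of ``$U(\al)$ relatively compact for each $\al$'' with ``$U(\al)$ compact for each $\al$'' under the closedness hypothesis.

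First I would observe that $(F^1_{USCG}(X), H_{\rm end})$ is a metric space, so $U$ is compact in it if and only if $U$ is relatively compact and closed in it. Combining this with Theorem \ref{rcfegn}, we get immediately that (\romannumeral1) $\Leftrightarrow$ (\romannumeral2). The implication (\romannumeral3) $\Rightarrow$ (\romannumeral2) is trivial since compactness of $U(\al)$ in $(X,d)$ implies its relative compactness. It therefore remains only to prove (\romannumeral1) $\Rightarrow$ (\romannumeral3) (or equivalently (\romannumeral2) $\Rightarrow$ (\romannumeral3)): assuming $U$ is compact in $(F^1_{USCG}(X), H_{\rm end})$, I must show $U(\al)$ is actually compact, not merely relatively compact, in $(X,d)$ for each $\al\in(0,1]$.

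For that step I would fix $\al\in(0,1]$ and show every sequence $\{x_n\}$ in $U(\al)$ has a subsequence converging to a point of $U(\al)$. Write $x_n\in[u_n]_\al$ with $u_n\in U$. By compactness of $U$, pass to a subsequence with $H_{\rm end}(u_{n_k},u)\to 0$ for some $u\in U$ (using that $U$ is closed, $u$ lies in $U$). By Theorem \ref{aec}, $H([u_{n_k}]_\beta,[u]_\beta)\to 0$ holds a.e.\ on $\beta\in(0,1)$, so we may choose $\beta\in(0,\al)$ with $H([u_{n_k}]_\beta,[u]_\beta)\to 0$; in particular $\bigcup_{k}[u_{n_k}]_\beta\cup[u]_\beta$ is relatively compact in $(X,d)$ by Theorem \ref{rce} applied to the family $\{[u_{n_k}]_\beta:k\}\cup\{[u]_\beta\}$ in $K(X)$. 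Since $[u_{n_k}]_\al\subseteq[u_{n_k}]_\beta$, the sequence $\{x_{n_k}\}$ lies in this relatively compact set, hence has a subsequence $\{x_{n_{k_j}}\}$ converging to some $x\in X$. It then suffices to verify $x\in U(\al)$, and I would do this by showing $x\in[u]_\al$: since $H([u_{n_k}]_\al,[u]_\al)\to 0$ along a further a.e.-type subsequence is not guaranteed at the single level $\al$ (which may be a platform point), I instead use that $x_{n_{k_j}}\in[u_{n_{k_j}}]_\al$ together with the convergence $H_{\rm end}(u_{n_{k_j}},u)\to 0$; by Proposition \ref{aen}, for any $\zeta\in(0,\al)$ and large $j$ we get $d(x_{n_{k_j}},[u]_{\al-\zeta})\le H^*([u_{n_{k_j}}]_\al,[u]_{\al-\zeta})<\varepsilon$, so $x\in[u]_{\al-\zeta}$; letting $\zeta\to 0+$ and using $[u]_\al=\bigcap_{\beta<\al}[u]_\beta$ (Proposition \ref{repm}(\romannumeral2)) yields $x\in[u]_\al\subseteq U(\al)$.

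The main obstacle I anticipate is precisely the last point: strengthening ``relatively compact'' to ``compact'' for $U(\al)$ requires producing the limit point inside $U(\al)$, and the naive route via level-wise Hausdorff convergence at the fixed level $\al$ can fail when $\al\in P_0(u)$ (a platform point), as Examples \ref{snc} and \ref{fnc} warn. The fix above circumvents this by working at levels $\al-\zeta$ below $\al$, where Proposition \ref{aen} gives the needed one-sided estimate directly from $H_{\rm end}$-smallness, and then invoking left-continuity of cuts from below via the representation $[u]_\al=\bigcap_{\beta<\al}[u]_\beta$. Everything else is a routine assembly of Theorems \ref{rcfegn}, \ref{rce}, \ref{aec} and the metric-space fact ``compact $=$ relatively compact $+$ closed''.
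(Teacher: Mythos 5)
Your proposal is correct, and its skeleton coincides with the paper's: (\romannumeral1)$\Leftrightarrow$(\romannumeral2) via Theorem \ref{rcfegn} and ``compact $=$ relatively compact $+$ closed'', (\romannumeral3)$\Rightarrow$(\romannumeral2) trivially, and the only new content is upgrading $U(\al)$ from relatively compact to compact. The one genuine difference is how you certify that the limit point $x$ lies in $[u]_\al$. The paper simply notes that $H_{\rm end}(u_{n_k},u)\to 0$ implies $\Gamma$-convergence (Remark \ref{hkr}) and then invokes Theorem \ref{Gclnre} to get $\limsup_{k}[u_{n_k}]_\al\subseteq[u]_\al$ at every level $\al\in(0,1]$, which disposes of the platform-point worry in one line; your route instead applies Proposition \ref{aen} at levels $\al-\zeta$ and then intersects via $[u]_\al=\bigcap_{\beta<\al}[u]_\beta$, which is a correct ``by hand'' version of the same containment. (The paper's own second, alternative argument via Theorem \ref{hepc} is in fact closer in spirit to yours.) Your subsequence extraction at a level $\beta<\al$ with $H([u_{n_k}]_\beta,[u]_\beta)\to 0$ is also sound but redundant: once (\romannumeral2) is known, $U(\al)$ is already relatively compact, so the paper only needs to verify closedness of $U(\al)$, which shortens the argument.
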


\begin{proof}
By Theorem \ref{rcfegn},
  (\romannumeral1) $\Leftrightarrow$ (\romannumeral2).
  Obviously (\romannumeral3) $\Rightarrow$ (\romannumeral2).
We
shall complete the proof by showing that
 (\romannumeral1) $\Rightarrow$
 (\romannumeral3). To do this,
suppose that (\romannumeral1) is true.
To verify (\romannumeral3), from the equivalence of (\romannumeral1) and (\romannumeral2),
we only need to
show that $U(\al)$ is closed in $(X,d)$ for each $\al\in (0,1]$.

Let $\al\in (0,1]$
and
let $\{x_n\}$ be a sequence in $U(\al)$ with $x_n \to x$.
Suppose that
$x_n \in [u_n]_\al$ and $u_n \in  U$ for $n=1,2,\ldots$.
Then
there exists a subsequence $\{u_{n_k}\}$ of $\{u_n\}$ and $u\in U$
such that $H_{\rm end}(u_{n_k}, u) \to 0$.
So
by Remark \ref{hkr} $\lim_{n\to \infty}^{(\Gamma)}  u_{n_k} = u $ and therefore by Theorem \ref{Gclnre},
$\limsup_{n\to \infty}[u_{n_k}]_\alpha
\subseteq
 [u]_\alpha$.
Hence
$x\in [u]_\al$, and thus $x\in U(\al)$.

We can also show $x\in  [u]_\al \subseteq U(\al)$ in the following way.
From Theorem \ref{hepc}, $H([u_{n_k}]_\al, [u]_\al) \rightarrow 0$ holds for $\al\in (0,1)\setminus P_0(u)$.
If $\al \in (0,1) \setminus P_0(u)$, then
$x\in [u]_\al$.
If $\al \in \{1\} \cup P_0(u)$, then for all $\beta\in  (0,\al) \setminus P_0(u)$, $x\in [u]_\beta$.
Thus
$x\in [u]_\al$.
\end{proof}

\subsection{Characterizations of compactness in $(P^1_{USCB} (X), H_{\rm send})$ and $(F^1_{USCB} (X), H_{\rm send})$}\label{pfc}

In this
subsection,
we give
 the characterizations of totally bounded sets, relatively compact sets and compact sets
in
$(P^1_{USCB} (X), H_{\rm send})$.
Then,
by treating
$(F^1_{USCB} (X), H_{\rm send})$ as a subspace of $(P^1_{USCB} (X), H_{\rm send})$,
 we give the characterizations of totally bounded sets and compact sets
in $(F^1_{USCB} (X), H_{\rm send})$.
The characterization of
relatively compact sets
in
$(F^1_{USCB} (X), H_{\rm send})$ has already been given in \cite{greco}.

Suppose that
$U$ is a subset of $P^1_{USC} (X)$ and $\al\in [0,1]$.
For
writing convenience,
we denote
\begin{itemize}
  \item  $U(\al):= \bigcup_{u\in U} \langle u \rangle_\al$, and

\item  $U_\al : =  \{\langle u \rangle_\al: u \in U\}$.
\end{itemize}

\begin{tm} \label{tbpu}
   Suppose that $U$ is a subset of $P^1_{USCB} (X)$. Then $U$ is totally bounded in $(P^1_{USCB} (X), H_{\rm send})$
if and only if
$U(0)$ is totally bounded in $(X,d)$.
\end{tm}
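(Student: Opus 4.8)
The plan is to deduce the statement from Theorem~\ref{tbe} applied to the metric space $X\times[0,1]$, together with two elementary inclusions relating the union $\bigcup\{u:u\in U\}$ to $U(0)$. The case $U=\emptyset$ is trivial (both conditions hold), so one may assume $U\neq\emptyset$. First I would record that $P^1_{USCB}(X)\subseteq K(X\times[0,1])$ and that $H_{\rm send}$ on $P^1_{USCB}(X)$ is the restriction of the Hausdorff metric $H$ on $K(X\times[0,1])$ induced by $\overline{d}$; since total boundedness of a subset of a metric space depends only on its induced metric (equivalently, on whether each of its sequences has a Cauchy subsequence), $U$ is totally bounded in $(P^1_{USCB}(X),H_{\rm send})$ if and only if $U$ is totally bounded in $(K(X\times[0,1]),H)$. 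By Theorem~\ref{tbe}, with $X$ there replaced by $X\times[0,1]$, the latter holds if and only if $\mathbf{D}:=\bigcup\{u:u\in U\}$ is totally bounded in $(X\times[0,1],\overline{d})$. Thus it remains to show that $\mathbf{D}$ is totally bounded in $X\times[0,1]$ if and only if $U(0)$ is totally bounded in $(X,d)$.

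For this last equivalence I would use the chain of inclusions
\[
U(0)\times\{0\}\ \subseteq\ \mathbf{D}\ \subseteq\ U(0)\times[0,1].
\]
The left inclusion holds because $x\in U(0)$ gives some $u\in U$ with $x\in\langle u\rangle_0$, i.e.\ $(x,0)\in u\subseteq\mathbf{D}$. The right inclusion holds because if $(x,\al)\in u$ for some $u\in U$ then $x\in\langle u\rangle_\al\subseteq\langle u\rangle_0\subseteq U(0)$, where $\langle u\rangle_\al\subseteq\langle u\rangle_0$ follows from $\langle u\rangle_\al=\bigcap_{\beta<\al}\langle u\rangle_\beta$ (so the family $\{\langle u\rangle_\beta\}_{\beta\in[0,1]}$ is decreasing). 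Now, if $U(0)$ is totally bounded in $X$, then $U(0)\times[0,1]$ is totally bounded in $X\times[0,1]$ — combine a finite $\varepsilon/2$-net of $U(0)$ in $X$ with a finite $\varepsilon/2$-net of the compact set $[0,1]$ and use $\overline{d}((x,\al),(y,\beta))=d(x,y)+|\al-\beta|$ — hence its subset $\mathbf{D}$ is totally bounded. Conversely, if $\mathbf{D}$ is totally bounded in $X\times[0,1]$, then so is its subset $U(0)\times\{0\}$, and since $x\mapsto(x,0)$ is an isometry of $(X,d)$ onto $(X\times\{0\},\overline{d})$, $U(0)$ is totally bounded in $(X,d)$. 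This closes the argument; note that it parallels, in simpler form, the reasoning behind Theorem~\ref{tbfegn}, the sendograph case being governed entirely by the $0$-cut.

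I do not expect any serious obstacle here. The two points that need a little care are: the passage of total boundedness between the subspace $(P^1_{USCB}(X),H_{\rm send})$ and the ambient space $(K(X\times[0,1]),H)$, so that Theorem~\ref{tbe} applies verbatim; and the monotonicity $\langle u\rangle_\al\subseteq\langle u\rangle_0$, which is what makes the inclusion $\mathbf{D}\subseteq U(0)\times[0,1]$ valid and hence ties the total boundedness of $\mathbf{D}$ to that of $U(0)$ alone rather than to all the higher cuts.
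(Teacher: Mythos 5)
Your proof is correct, but it takes a genuinely different route from the paper's. The paper splits $H_{\rm send}$ into its two ``components'': for necessity it uses Theorem \ref{pseu}(\romannumeral2) to pass from $U$ totally bounded to $U_0$ totally bounded in $(K(X),H)$ and then Theorem \ref{tbe}; for sufficiency it extracts from a sequence in $U$ first an $H_{\rm end}$-Cauchy subsequence of $\{\overleftarrow{u_n}\}$ via Theorem \ref{tbfegn} (using that $U(\al)$ is totally bounded for every $\al$), then a further subsequence whose $0$-cuts are Cauchy in $(K(X),H)$ via Theorem \ref{tbe}, and finally reassembles these into an $H_{\rm send}$-Cauchy subsequence via Theorem \ref{pseu}(\romannumeral3). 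You instead exploit the fact that $P^1_{USCB}(X)\subseteq K(X\times[0,1])$ and that $H_{\rm send}$ is literally the restriction of the Hausdorff metric there, apply Theorem \ref{tbe} once in the ambient space $X\times[0,1]$, and then squeeze $\mathbf{D}=\bigcup\{u:u\in U\}$ between $U(0)\times\{0\}$ and $U(0)\times[0,1]$, using the monotonicity $\langle u\rangle_\al\subseteq\langle u\rangle_0$ (which indeed follows from $\langle u\rangle_\al=\bigcap_{\beta<\al}\langle u\rangle_\beta$) and the compactness of $[0,1]$. All the steps check out: total boundedness is intrinsic to the subset, so the passage to the ambient hyperspace is harmless, and the product-net argument for $U(0)\times[0,1]$ is elementary. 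Your route is shorter and more self-contained — it avoids $H_{\rm end}$, Theorem \ref{pseu} and Theorem \ref{tbfegn} altogether — whereas the paper's route fits the decomposition philosophy ($H_{\rm send}$ convergence $=$ $H_{\rm end}$ convergence plus Hausdorff convergence of $0$-cuts) that it reuses for the relative compactness and compactness characterizations in the same section.
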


\begin{proof}
\textbf{\emph{ Necessity}}. \
 Suppose that $U$ is totally bounded. By clause (\romannumeral2) of Theorem \ref{pseu},
$U_0 $ is  totally bounded in $(K(X), H)$.
 From Theorem \ref{tbe}, this is equivalent to
$U(0)$ is totally bounded in $(X,d)$.

\textbf{\emph{Sufficiency}}. \
Suppose that $U(0)$ is totally bounded.
To show that $U$ is totally bounded in $(P^1_{USCB} (X), H_{\rm send})$, we
only need to prove
that each sequence in $U$ has a Cauchy subsequence with respect to $H_{\rm send}$.

Let
$\{u_n\}$ be a sequence in $U$. Note that $U(\al)$ is totally bounded for each $\al \in [0,1]$.
Then by Theorem \ref{tbfegn},
 $\{\overleftarrow{u_n}\}$ has a Cauchy subsequence $\{v_n\}$ in $(F^1_{USCB} (X), H_{\rm end})$.
From Theorem \ref{tbe}, $\{v_n\}$ has a subsequence $\{w_n\}$
such that
$\{ [w_n]_0 \}$
is a Cauchy sequence
in $(K(X), H)$.
 Thus by clauses (\romannumeral3) of Theorem \ref{pseu},
 $\{w_n\}$ is a Cauchy sequence in $(P^1_{USCB} (X), H_{\rm send})$.
\end{proof}

\begin{tm} \label{tbfe}
   Suppose that $U$ is a subset of $F^1_{USCB} (X)$. Then $U$ is totally bounded in $(F^1_{USCB} (X), H_{\rm send})$
if and only if
$U(0)$ is totally bounded in $(X,d)$.
\end{tm}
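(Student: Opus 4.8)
The plan is to deduce this from Theorem \ref{tbpu} together with the isometric embedding of $(F^1_{USCB}(X), H_{\rm send})$ into $(P^1_{USCB}(X), H_{\rm send})$ recorded in Section \ref{fsm}. Recall that $f(u) = \overrightarrow{u} = {\rm send}\, u$ is an isometric embedding of $(F^1_{USC}(X), H_{\rm send})$ into $(P^1_{USC}(X), H_{\rm send})$ with $f(F^1_{USCB}(X)) \subseteq P^1_{USCB}(X)$, and that $\langle \overrightarrow{u}\rangle_0 = [u]_0$ for every $u\in F^1_{USC}(X)$. Consequently, writing $\overrightarrow{U} = \{\overrightarrow{u}: u\in U\}$, we have $\overrightarrow{U} \subseteq P^1_{USCB}(X)$ and $\overrightarrow{U}(0) = \bigcup_{u\in U}\langle \overrightarrow{u}\rangle_0 = \bigcup_{u\in U}[u]_0 = U(0)$.

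Next I would invoke the fact that total boundedness of a subset of a metric space is an intrinsic property: a set is totally bounded as a subset of an ambient metric space precisely when it is totally bounded in its own subspace metric, and this property is preserved under isometries. Since $f$ restricts to an isometry of $(U, H_{\rm send})$ onto $(\overrightarrow{U}, H_{\rm send})$, it follows that $U$ is totally bounded in $(F^1_{USCB}(X), H_{\rm send})$ if and only if $\overrightarrow{U}$ is totally bounded in $(P^1_{USCB}(X), H_{\rm send})$.

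Finally, applying Theorem \ref{tbpu} to the set $\overrightarrow{U} \subseteq P^1_{USCB}(X)$, the latter holds if and only if $\overrightarrow{U}(0)$ is totally bounded in $(X,d)$; and by the identity $\overrightarrow{U}(0) = U(0)$ established above this is exactly the statement that $U(0)$ is totally bounded in $(X,d)$. Chaining these equivalences yields the theorem. I do not expect a serious obstacle here: the only points needing care are the routine verifications that $\overrightarrow{U}$ lands in $P^1_{USCB}(X)$ and that total boundedness is transported correctly across the isometric embedding (in particular that it does not depend on the ambient space). Alternatively, one could argue directly, deriving necessity from \eqref{umsf} together with Theorem \ref{tbe} (so that $U_0$ is totally bounded in $(K(X),H)$, hence $U(0)$ is totally bounded in $(X,d)$), and sufficiency by repeating the subsequence-extraction argument used in the proof of Theorem \ref{tbpu}; but the reduction above is shorter and I would present that.
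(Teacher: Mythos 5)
Your proposal is correct and coincides with the paper's own argument: the paper likewise reduces the statement to Theorem \ref{tbpu} via the isometric identification $U \leftrightarrow \overrightarrow{U}$ together with the observation that $U(0) = \overrightarrow{U}(0)$. No gaps; the intrinsic nature of total boundedness under isometric embedding is exactly the point the paper relies on.
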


\begin{proof}
Note that $U$ is totally bounded in $(F^1_{USCB} (X), H_{\rm send})$
 if and only if
$\overrightarrow{U}$ is totally bounded in $(P^1_{USCB} (X), H_{\rm send})$,
and that
$U(0) = \overrightarrow{U}(0)$.
So the desired result follows from
Theorem \ref{tbpu}.
\end{proof}

\begin{tm}  \label{rcgu}
   Suppose that $U$ is a subset of $P^1_{USCB} (X)$. Then $U$ is relatively compact in $(P^1_{USCB} (X), H_{\rm send})$
if and only if
$U(0)$ is relatively compact in $X$.
\end{tm}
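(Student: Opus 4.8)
The plan is to mirror the structure of the proof of Theorem \ref{tbpu} (total boundedness) but with "Cauchy subsequence" replaced by "convergent subsequence", carrying the limit object explicitly through the argument. I would split into the two implications.

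For \emph{necessity}, suppose $U$ is relatively compact in $(P^1_{USCB}(X), H_{\rm send})$. By clause (\romannumeral2) of Theorem \ref{pseu}, $H(\langle u\rangle_0, \langle v\rangle_0) \leq H_{\rm send}(u,v)$, so the map $u\mapsto \langle u\rangle_0$ from $(P^1_{USCB}(X), H_{\rm send})$ to $(K(X), H)$ is $1$-Lipschitz (well-defined since $u\in P^1_{USCB}(X)$ forces $\langle u\rangle_0\in K(X)$); continuous images under a Lipschitz map of relatively compact sets are relatively compact, so $U_0 = \{\langle u\rangle_0 : u\in U\}$ is relatively compact in $(K(X), H)$. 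Then Theorem \ref{rce} gives that $U(0) = \bigcup\{C : C\in U_0\}$ is relatively compact in $(X,d)$.

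For \emph{sufficiency}, suppose $U(0)$ is relatively compact in $X$. Since $\langle u\rangle_0 \supseteq \langle u\rangle_\al$ for all $\al\in[0,1]$ and $u\in U$, we get $U(\al)\subseteq U(0)$, hence $U(\al)$ is relatively compact in $X$ for every $\al\in[0,1]$. I would show each sequence $\{u_n\}$ in $U$ has a subsequence converging in $(P^1_{USCB}(X), H_{\rm send})$. Apply the sufficiency direction of Theorem \ref{rcfegn} to $\{\overleftarrow{u_n}\}$ (a sequence in $F^1_{USCB}(X)\subseteq F^1_{USCG}(X)$, and the hypothesis "$U(\al)$ relatively compact for $\al\in(0,1]$" matches): extract a subsequence $\{v_n\}$ with $H_{\rm end}(\overleftarrow{v_n}, w)\to 0$ for some $w\in F^1_{USCG}(X)$; since the $0$-cuts stay inside the relatively compact set $U(0)$ one checks $w\in F^1_{USCB}(X)$. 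Next, since $\{[v_n]_0\} \subseteq K(X)$ with union in the relatively compact set $U(0)$, Theorem \ref{rce} (or Theorem \ref{come}) lets us pass to a further subsequence $\{z_n\}$ with $H([z_n]_0, A)\to 0$ for some $A\in K(X)$; combining $[z_n]_0 \supseteq [w]_0 = \overline{\cup_{\al>0}[w]_\al}$ in the Kuratowski limit and the inclusions coming from $H_{\rm end}$-convergence, one identifies the limit object $v := \overrightarrow{w} \cup (A\times\{0\}) \in P^1_{USCB}(X)$. Finally, by clause (\romannumeral4) of Theorem \ref{pseu}, $H_{\rm send}(z_n, v)\to 0$ iff $H_{\rm end}(z_n, v)\to 0$ and $H(\langle z_n\rangle_0, \langle v\rangle_0)\to 0$; the first holds because $H_{\rm end}$ ignores the $0$-level and $\overleftarrow{z_n}\to \overleftarrow{v}$, and the second is the construction of $A$.

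The main obstacle will be the bookkeeping in the sufficiency direction: correctly identifying the limit element $v\in P^1_{USCB}(X)$ and verifying $H_{\rm send}(z_n,v)\to 0$ rather than merely $H_{\rm send}$-Cauchy, which requires knowing $H_{\rm end}(z_n, \overrightarrow{w})\to 0$ \emph{and} $H([z_n]_0, A)\to 0$ are compatible, i.e. that $\langle v\rangle_\al = \langle \overrightarrow{w}\rangle_\al$ for $\al>0$ and $\langle v\rangle_0 = A$ genuinely define an element of $P^1_{USCB}(X)$ (the condition $\langle v\rangle_\al = \bigcap_{\beta<\al}\langle v\rangle_\beta$ for $\al\in(0,1]$ is inherited from $w$, and $A\supseteq \overline{\cup_{\al>0}\langle v\rangle_\al}$ follows from taking Kuratowski limits in $[z_n]_0\supseteq \overline{\{z_n>\al\}}$). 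Once that identification is pinned down, the rest is routine from Theorem \ref{pseu}(\romannumeral4) and Theorem \ref{hkg}.
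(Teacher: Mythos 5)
Your proposal is correct and follows essentially the same route as the paper: necessity via Theorem \ref{pseu}(\romannumeral2) plus Theorem \ref{rce}, and sufficiency by extracting an $H_{\rm end}$-convergent subsequence of $\{\overleftarrow{u_n}\}$ via Theorem \ref{rcfegn}, a further subsequence with Hausdorff-convergent $0$-levels via Theorem \ref{rce}, assembling the limit $v=\overrightarrow{w}\cup(A\times\{0\})$ (the paper checks $[w]_0\subseteq A$ exactly by the Kuratowski-limit argument of Remark \ref{sur}), and concluding with Theorem \ref{pseu}(\romannumeral4). The bookkeeping you flag as the main obstacle is precisely what the paper's proof carries out, so no gap remains.
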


\begin{proof}
 \emph{\textbf{ Necessity}}. \  Suppose that $U$ is relatively compact. Then by clause (\romannumeral2) of Theorem \ref{pseu},
$U_0 $ is  relatively compact in $(K(X), H)$.
 By Theorem \ref{rce},
$U(0)$ is  relatively compact in $X$.

\textbf{\emph{Sufficiency}}. \ To prove that $U$ is relatively compact, it suffices
to
show that each sequence in $U$
has a convergent subsequence
in $(P^1_{USCB} (X), H_{\rm send})$.

Let $\{u_n\}$ be a sequence in $U$.
Since $U(0)$ is relatively compact in $X$, then $U(\al)$ is relatively compact in $X$ for each $\al \in [0,1]$.
By Theorems \ref{rcfegn} and \ref{rce},
there is a subsequence $\{u_{n_k} \}$ of $\{u_n\}$,
 a $u\in F^1_{USCG} (X)$ and a $u_0 \in K(X)$
such that
$H_{\rm end} (\overleftarrow{u_{n_k}}, u) \to 0$ and $H(\langle u_{n_k}\rangle_0, u_0) \to 0$.

By Theorem \ref{hkg} and Remark \ref{hkr}, $\lim_{k\to \infty}^{(\Gamma)}  \overleftarrow{u_{n_k}} = u $ and $\lim^{(K)}_{k\to\infty} \langle u_{n_k} \rangle_0 = u_0$.
By Remark \ref{sur}, $[u]_0 \subseteq \liminf_{k\to\infty}[\overleftarrow{u_{n_k}}]_0 \subseteq \liminf_{k\to\infty}\langle u_{n_k}\rangle_0 = \lim^{(K)}_{k\to\infty} \langle u_{n_k} \rangle_0 = u_0$.
So we can define
 $w \in P^1_{USCB}(X)$
 by putting
\[
\langle w \rangle_\al=\left\{
           \begin{array}{ll}
           [u]_\al, &  \al\in(0,1],
\\
             u_0,  & \al=0.
           \end{array}
         \right.
\]
Then $u=\overleftarrow{w}$, $H_{\rm end}(u_{n_k}, w) = H_{\rm end}(\overleftarrow{u_{n_k}}, u) \to 0$
and
$H(\langle u_{n_k}\rangle_0, \langle w\rangle_0) = H(\langle u_{n_k}\rangle_0, u_0) \to 0$ for $n=1,2,\ldots$
 Thus from (\romannumeral3) or (\romannumeral4) of Theorem \ref{pseu},
 $\{u_{n_k}\}$ converges to
 $w$ in $(P^1_{USCB} (X), H_{\rm send})$.
\end{proof}

\begin{itemize}
\item
$u \in F^1_{USC} (X)$ is said to be right-continuous at $0$ if $\lim_{\delta\to 0+} H([u]_\delta, [u]_0) =0$.

  \item
$U \subseteq F^1_{USC} (X)$ is said to be equi-right-continuous at $0$ if for each $\varepsilon>0$,
there is a $\delta\in(0,1]$
such that
$H([u]_\delta, [u]_0) < \varepsilon$ for all $u \in U$.

\end{itemize}

Let $\delta\in (0,1]$
and $\xi\in [0,\delta]$. Then
$H([u]_\xi, [u]_0) = H^*([u]_0, [u]_\xi)\leq  H^*([u]_0, [u]_\delta) = H([u]_\delta, [u]_0)$.
So
$U \subseteq F^1_{USC} (X)$ is equi-right-continuous at $0$ if and only if for each $\varepsilon>0$,
there is a $\delta\in (0,1]$
such that $H([u]_\xi, [u]_0) < \varepsilon$ for all $u \in U$
and $\xi\in [0,\delta]$.

Theorem \ref{rcfe} below is presented in \cite{greco}.

\begin{tm} \cite{greco}   \label{rcfe}
   Suppose that $U$ is a subset of $F^1_{USCB} (X)$. Then $U$ is relatively compact in $(F^1_{USCB} (X), H_{\rm send})$
if and only if
$U(0)$ is relatively compact in $X$ and $U$ is equi-right-continuous at $0$.
\end{tm}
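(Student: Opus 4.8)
The plan is to transfer the problem to the space $(P^1_{USCB}(X), H_{\rm send})$, where the companion statement Theorem \ref{rcgu} already characterizes relative compactness by the single condition that $U(0)$ be relatively compact in $X$. Recall that $\overrightarrow{\,\cdot\,}$ is an isometric embedding of $(F^1_{USCB}(X), H_{\rm send})$ into $(P^1_{USCB}(X), H_{\rm send})$, that $\overleftarrow{\overrightarrow{u}} = u$, that $\overrightarrow{U}(0) = U(0)$, and that for $v \in P^1_{USCB}(X)$ Proposition \ref{pfbe}(b) says $v \in \overrightarrow{F^1_{USCB}(X)}$ if and only if $v$ is right-continuous at $0$, i.e. $\langle v\rangle_0 = \overline{\bigcup_{\delta>0}\langle v\rangle_\delta}$. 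So the task splits into two pieces: produce $H_{\rm send}$-subsequential limits inside $P^1_{USCB}(X)$, and decide when such a limit actually lies in $\overrightarrow{F^1_{USCB}(X)}$; the equi-right-continuity of $U$ is exactly what forces the second.

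\emph{Necessity.} Assume $U$ is relatively compact in $(F^1_{USCB}(X), H_{\rm send})$. Since $\overrightarrow{\,\cdot\,}$ is an isometric embedding, $\overrightarrow{U}$ is relatively compact in $(P^1_{USCB}(X), H_{\rm send})$, so Theorem \ref{rcgu} gives that $U(0) = \overrightarrow{U}(0)$ is relatively compact in $X$. For equi-right-continuity I argue by contradiction: if it fails, there are $\varepsilon > 0$ and $u_n \in U$ with $H([u_n]_{1/n}, [u_n]_0) \geq \varepsilon$ for all $n$. Passing to a subsequence, $H_{\rm send}(u_{n_k}, w) \to 0$ for some $w \in F^1_{USCB}(X)$; by Proposition \ref{sge}, also $H_{\rm end}(u_{n_k}, w) \to 0$ and $H([u_{n_k}]_0, [w]_0) \to 0$. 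Since $P_0(w)$ is countable (Lemma \ref{gnc}(\rmn4)) and $\lim_{\gamma\to 0+} H([w]_\gamma, [w]_0) = 0$ (Lemma \ref{gnc}(\rmn5)), choose $\delta_0 \in (0,1) \setminus P_0(w)$ with $H([w]_{\delta_0}, [w]_0) < \varepsilon/4$; then $H([u_{n_k}]_{\delta_0}, [w]_{\delta_0}) \to 0$ by Theorem \ref{hepc}. For $k$ so large that $1/n_k \leq \delta_0$, the monotonicity $H([u]_\xi, [u]_0) \leq H([u]_{\delta_0}, [u]_0)$ for $\xi \in [0,\delta_0]$ recorded before the statement yields $\varepsilon \leq H([u_{n_k}]_{\delta_0}, [u_{n_k}]_0) \leq H([u_{n_k}]_{\delta_0}, [w]_{\delta_0}) + H([w]_{\delta_0}, [w]_0) + H([w]_0, [u_{n_k}]_0)$, and the right side is $< \varepsilon$ for all large $k$, a contradiction.

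\emph{Sufficiency.} Assume $U(0)$ is relatively compact in $X$ and $U$ is equi-right-continuous at $0$. Given a sequence $\{u_n\} \subseteq U$, Theorem \ref{rcgu} applied to $\overrightarrow{U}$ furnishes a subsequence with $H_{\rm send}(\overrightarrow{u_{n_k}}, w) \to 0$ for some $w \in P^1_{USCB}(X)$. It suffices to show $w = \overrightarrow{v}$ for some $v \in F^1_{USCB}(X)$, since then $H_{\rm send}(u_{n_k}, v) = H_{\rm send}(\overrightarrow{u_{n_k}}, \overrightarrow{v}) \to 0$, giving a subsequence of $\{u_n\}$ convergent in $(F^1_{USCB}(X), H_{\rm send})$. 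By Proposition \ref{pfbe}(b) this reduces to verifying $\langle w\rangle_0 = \overline{\bigcup_{\al>0}\langle w\rangle_\al}$; the inclusion $\supseteq$ is automatic, so fix $x \in \langle w\rangle_0$ and $\eta > 0$. By equi-right-continuity pick $\delta \in (0,1]$ with $H([u]_\delta, [u]_0) < \eta/3$ for all $u \in U$. Since $H([u_{n_k}]_0, \langle w\rangle_0) \to 0$ (Theorem \ref{pseu}(\rmn2)) and $H(\overrightarrow{u_{n_k}}, w) = H_{\rm send}(\overrightarrow{u_{n_k}}, w) \to 0$, fix $k$ with $d(x, [u_{n_k}]_0) < \eta/3$ and $H(\overrightarrow{u_{n_k}}, w) < \min\{\eta/3,\, \delta/2\}$. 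Choose $x_k \in [u_{n_k}]_0$ with $d(x, x_k) < \eta/3$ and then $y_k \in [u_{n_k}]_\delta$ with $d(x_k, y_k) < \eta/3$; as $(y_k, \delta) \in \overrightarrow{u_{n_k}}$, there is $(z_k, \gamma_k) \in w$ with $d(y_k, z_k) + |\delta - \gamma_k| \leq H^*(\overrightarrow{u_{n_k}}, w) < \min\{\eta/3,\, \delta/2\}$, so $\gamma_k > \delta/2 > 0$, $z_k \in \langle w\rangle_{\gamma_k}$, and $d(x, z_k) < \eta$. Letting $\eta \downarrow 0$ gives $x \in \overline{\bigcup_{\al>0}\langle w\rangle_\al}$, as required.

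The main obstacle is the last estimate in the sufficiency part: one must first extract $\delta$ from the equi-right-continuity of $U$ and only afterwards choose $k$ large relative to that $\delta$, so that the level $\gamma_k$ witnessing ``$d(y_k, \langle w\rangle_{\gamma_k})$ small'' stays bounded away from $0$; reversing the order of the quantifiers would not control $\gamma_k$. The necessity part is comparatively routine once one knows, via Lemma \ref{gnc} and Theorem \ref{hepc}, how $H_{\rm end}$-convergence propagates to level sets off the countable exceptional set $P_0(w)$.
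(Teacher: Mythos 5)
Your proof is correct. The paper itself does not actually prove Theorem \ref{rcfe}: it cites Greco and only remarks afterwards that Theorem \ref{rcgu} together with Proposition \ref{psf} would recover it --- and the proof of Proposition \ref{psf}, (\romannumeral1)$\Rightarrow$(\romannumeral2), in turn invokes Theorem \ref{rcfe}, so the necessity of equi-right-continuity is never established in the paper independently of Greco. You supply exactly that missing piece, via a contradiction argument that uses Lemma \ref{gnc} to pick a level $\delta_0\in(0,1)\setminus P_0(w)$ near $0$ with $H([w]_{\delta_0},[w]_0)$ small and Theorem \ref{hepc} to propagate $H_{\rm end}$-convergence to that level; the monotonicity estimate $H([u]_{1/n_k},[u]_0)\leq H([u]_{\delta_0},[u]_0)$ recorded before the theorem then closes the contradiction. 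Your sufficiency direction plays the same role as Proposition \ref{psf}, (\romannumeral2)$\Rightarrow$(\romannumeral3), combined with Proposition \ref{pfbe}(b) and Theorem \ref{rcgu}, but is argued more directly: instead of invoking Theorem \ref{aec} to find a level $\al_0$ with $H([u_{n_k}]_{\al_0},[\overleftarrow{w}]_{\al_0})\to 0$ and deducing $\langle w\rangle_0=[\overleftarrow{w}]_0$, you chase a point $x\in\langle w\rangle_0$ into $\overline{\bigcup_{\al>0}\langle w\rangle_\al}$ by an explicit $\eta$-estimate, with the quantifier order (choose $\delta$ from equi-right-continuity first, then $k$ large relative to $\delta$) correctly arranged so that the landing level satisfies $\gamma_k>\delta/2>0$. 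The net effect is a self-contained proof of Greco's theorem from the paper's own machinery, which is something the paper does not provide.
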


$\overrightarrow{F^1_{USCB}(X)}$ need not be a closed set of $P^1_{USCB}(X)$.
For instance, $F^1_{USCB}(D)$ given in Example \ref{nce} is not a closed set of $P^1_{USCB}(D)$.
We can see that
$\overrightarrow{F^1_{USCB}(X)}$ is a closed set of $P^1_{USCB}(X)$
if and only if
$X$ has only one element.

For a set $U$ in $F^1_{USCB}(X)$, suppose that
\\
(a) \ $U$
is relatively compact in $(F^1_{USCB}(X), H_{\rm send})$;
\\
(b) \
 $\overrightarrow{U}$
is relatively compact in $(P^1_{USCB}(X), H_{\rm send})$;
\\
(c) \ The topological closure of $\overrightarrow{U}$ in $(P^1_{USCB}(X), H_{\rm send})$ is a subset of $\overrightarrow{F^1_{USCB}(X)}$.

Then
(a) holds if and only if (b) and (c) hold.

$\overrightarrow{F^1_{USCB}(X)}$ is closed in $P^1_{USCB}(X)$ if and only if for each set $U$ in $F^1_{USCB}(X)$,
(c) holds.

The following
Proposition \ref{psf} illustrates the role of the condition ``$U$ is equi-right-continuous at $0$''
in
the characterization of the relative compactness for a set $U$ in $(F^1_{USCB}(X), H_{\rm send})$ given in Theorem \ref{rcfe}.

\begin{pp} \label{psf}
  Let $U$ be a subset of $ F^1_{USCB}(X)$.
Suppose the following conditions (\romannumeral1), (\romannumeral2) and (\romannumeral3):
\\
(\romannumeral1) \
$U$ is relatively compact in $(F^1_{USCB}(X), H_{\rm send})$;
\\
(\romannumeral2) \
$U$ is equi-right-continuous at $0$;
\\
(\romannumeral3) \
The topological closure of $\overrightarrow{U}$ in $(P^1_{USCB}(X), H_{\rm send})$ is a subset of $\overrightarrow{F^1_{USCB}(X)}$.

Then the condition (\romannumeral1) implies the condition (\romannumeral2),
and the condition (\romannumeral2) implies the condition (\romannumeral3).
 If $\overrightarrow{U}$ is relatively compact in $(P^1_{USCB}(X), H_{\rm send})$,
then the conditions (\romannumeral1), (\romannumeral2) and (\romannumeral3) are equivalent to each other.

\end{pp}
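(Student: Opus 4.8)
The plan is to prove the always-valid chain (\romannumeral1)$\Rightarrow$(\romannumeral2)$\Rightarrow$(\romannumeral3) and then, assuming in addition that $\overrightarrow{U}$ is relatively compact in $(P^1_{USCB}(X),H_{\rm send})$, to close the loop with (\romannumeral3)$\Rightarrow$(\romannumeral1). The first implication (\romannumeral1)$\Rightarrow$(\romannumeral2) is immediate from Theorem \ref{rcfe}: relative compactness of $U$ in $(F^1_{USCB}(X),H_{\rm send})$ forces $U$ to be equi-right-continuous at $0$.

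For (\romannumeral2)$\Rightarrow$(\romannumeral3) I would let $v$ be any point of the closure of $\overrightarrow{U}$ in $(P^1_{USCB}(X),H_{\rm send})$ and choose $\{u_n\}\subseteq U$ with $H_{\rm send}(\overrightarrow{u_n},v)\to 0$. By Theorem \ref{pseu}(\romannumeral4) this yields $H_{\rm end}(\overrightarrow{u_n},v)\to 0$ and $H([u_n]_0,\langle v\rangle_0)\to 0$; since $H_{\rm end}(\overrightarrow{u_n},v)=H_{\rm end}(\overleftarrow{\overrightarrow{u_n}},\overleftarrow{v})=H_{\rm end}(u_n,\overleftarrow{v})$ and $\overleftarrow{v}\in F^1_{USCG}(X)$ (indeed $\overleftarrow{v}\in F^1_{USCB}(X)$, because $[\overleftarrow{v}]_\al=\langle v\rangle_\al\in K(X)$ for $\al\in(0,1]$ and $[\overleftarrow{v}]_0\subseteq\langle v\rangle_0\in K(X)$ is closed), Theorem \ref{hepc} gives $H([u_n]_\al,\langle v\rangle_\al)\to 0$ for all $\al\in(0,1)\setminus P_0(\overleftarrow{v})$, the set $P_0(\overleftarrow{v})$ being countable by Lemma \ref{gnc}(\romannumeral4). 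Now by Proposition \ref{pfbe}(b) it suffices to check $\lim_{\delta\to 0+}H(\langle v\rangle_\delta,\langle v\rangle_0)=0$; since the cuts of $v$ are decreasing with $\langle v\rangle_\delta\subseteq\langle v\rangle_0$, this amounts to showing $H^{*}(\langle v\rangle_0,\langle v\rangle_\delta)\to 0$ as $\delta\to 0+$. Fix $\varepsilon>0$; equi-right-continuity of $U$ at $0$ gives $\delta_0\in(0,1]$ with $H^{*}([u]_0,[u]_\xi)<\varepsilon/3$ for all $u\in U$ and $\xi\in(0,\delta_0]$, and I pick $\al\in(0,\delta_0)\setminus P_0(\overleftarrow{v})$. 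For $x\in\langle v\rangle_0$ and $n$ large, $x$ is within $\varepsilon/3$ of some $x_n\in[u_n]_0$ (by $H^{*}(\langle v\rangle_0,[u_n]_0)\to 0$), $x_n$ is within $\varepsilon/3$ of some $y_n\in[u_n]_\al$ (equi-right-continuity and $\al\le\delta_0$), and $y_n$ is within $\varepsilon/3$ of $\langle v\rangle_\al$ (by $H^{*}([u_n]_\al,\langle v\rangle_\al)\to 0$); hence $d(x,\langle v\rangle_\al)<\varepsilon$, so $H^{*}(\langle v\rangle_0,\langle v\rangle_\al)\le\varepsilon$ and, by monotonicity, $H^{*}(\langle v\rangle_0,\langle v\rangle_\delta)\le\varepsilon$ for all $\delta\in(0,\al]$. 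Thus $v\in\overrightarrow{F^1_{USCB}(X)}$, giving (\romannumeral3). This three-term $\varepsilon/3$ estimate, in which equi-right-continuity is exactly what bridges the $0$-cut data carried by $H_{\rm send}$ to the positive-cut data carried by $H_{\rm end}$, is the step I expect to be the main obstacle.

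Finally, suppose $\overrightarrow{U}$ is relatively compact in $(P^1_{USCB}(X),H_{\rm send})$ and (\romannumeral3) holds. Given any sequence $\{u_n\}\subseteq U$, relative compactness of $\overrightarrow{U}$ provides a subsequence with $H_{\rm send}(\overrightarrow{u_{n_k}},v)\to 0$ for some $v\in P^1_{USCB}(X)$; then $v$ is in the closure of $\overrightarrow{U}$, so by (\romannumeral3) $v=\overrightarrow{w}$ with $w\in F^1_{USCB}(X)$, and because $u\mapsto\overrightarrow{u}$ is an isometric embedding of $(F^1_{USCB}(X),H_{\rm send})$ into $(P^1_{USCB}(X),H_{\rm send})$ we get $H_{\rm send}(u_{n_k},w)=H_{\rm send}(\overrightarrow{u_{n_k}},\overrightarrow{w})\to 0$. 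Hence every sequence in $U$ has a subsequence convergent in $(F^1_{USCB}(X),H_{\rm send})$, i.e. (\romannumeral1) holds; combined with (\romannumeral1)$\Rightarrow$(\romannumeral2)$\Rightarrow$(\romannumeral3), this gives the claimed equivalence.
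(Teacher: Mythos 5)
Your proposal is correct and follows essentially the same route as the paper: Theorem \ref{rcfe} for (\romannumeral1)$\Rightarrow$(\romannumeral2), then for (\romannumeral2)$\Rightarrow$(\romannumeral3) the decomposition of $H_{\rm send}$-convergence via Theorem \ref{pseu}(\romannumeral4), a three-term $\varepsilon/3$ estimate using equi-right-continuity together with levelwise Hausdorff convergence at a level chosen outside the countable set $P_0(\overleftarrow{v})$, and finally Proposition \ref{pfbe}(b); the only cosmetic difference is that you verify clause (\romannumeral6) of Proposition \ref{pfbe} (right-continuity of $\al\mapsto\langle v\rangle_\al$ at $0$) while the paper verifies the equivalent clause (\romannumeral4) ($\langle v\rangle_0=[\overleftarrow{v}]_0$) by chaining through $[u_n]_0$.
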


\begin{proof}
By Theorem \ref{rcfe}, we have that
 (\romannumeral1)$\Rightarrow$(\romannumeral2).

Now we show that
(\romannumeral2)$\Rightarrow$(\romannumeral3).
Suppose that $\{\overrightarrow{u_n}\}$ is a sequence in $\overrightarrow{U}$ which converges to an element $u$ in $(P^1_{USCB}(X), H_{\rm send})$.
Then by Theorem \ref{pseu}(\romannumeral4),
$\lim_{n\to \infty} H(\langle \overrightarrow{u_n} \rangle_0, \langle u \rangle_0 )=0$;
that is,
$\lim_{n\to \infty} H(   [u_n]_0, \langle u \rangle_0 )=0$.

Let $\varepsilon>0$.
 Since $\overleftarrow{u}\in F^1_{USCB}(X)$, by Lemma \ref{gnc}(\romannumeral5),
$\lim_{\alpha\to 0+} H([\overleftarrow{u}]_\alpha, [\overleftarrow{u}]_0) =0$.
Note that
 $U$ is equi-right-continuous at $0$.
Then there is a $\delta>0$
such that for all $\al\in [0,\delta]$ and $n\in \mathbb{N}$,
$H([u_n]_0, [u_n]_{\al})\leq \varepsilon/3$ and
$H([\overleftarrow{u}]_{\al}, [\overleftarrow{u}]_0) \leq \varepsilon/3$.
By Theorem \ref{pseu}(\romannumeral4), $\{u_n\}$ converges to $\overleftarrow{u}$ in $(F^1_{USCB}(X), H_{\rm end})$.
Then by
Theorem \ref{aec}, $H([u_n]_\al, [\overleftarrow{u}]_\al) \rightarrow 0$ holds a.e. on $\al\in (0,1)$. So we can choose an $\al_0\in (0,\delta)$
with
$H([u_n]_{\al_0}, [\overleftarrow{u}]_{\al_0}) \rightarrow 0$.
Thus
 there is an $N$ such that
for all $n\geq N$, $H([u_n]_{\al_0}, [\overleftarrow{u}]_{\al_0}) \leq \varepsilon/3$, and
hence
$
H([u_n]_0, [\overleftarrow{u}]_0)  \leq H([u_n]_0, [u_n]_{\al_0})
+
H([u_n]_{\al_0}, [\overleftarrow{u}]_{\al_0}) + H([\overleftarrow{u}]_{\al_0}, [\overleftarrow{u}]_0 \leq \varepsilon$.
Since
 $\varepsilon>0$ is arbitrary, we have
$\lim_{n\to \infty} H([u_n]_0, [\overleftarrow{u}]_0 )=0$.

Thus $\langle u \rangle_0 = [\overleftarrow{u}]_0 $,
and hence by Proposition \ref{pfbe},
$u\in \overrightarrow{F^1_{USCB} (X)}$.
So (\romannumeral2)$\Rightarrow$(\romannumeral3).

If $\overrightarrow{U}$ is relatively compact in $(P^1_{USCB}(X), H_{\rm send})$,
then clearly
 (\romannumeral3)$\Rightarrow$(\romannumeral1),
and
thus
the conditions (\romannumeral1), (\romannumeral2) and (\romannumeral3)
are equivalent to each other.
\end{proof}

\begin{re}{\rm

For conditions (\romannumeral1), (\romannumeral2) and (\romannumeral3) in Proposition \ref{psf},
 (\romannumeral2)  does not imply (\romannumeral1);
 (\romannumeral3) does not imply (\romannumeral2).

Let $\{u_n\}$ be a sequence of fuzzy sets in $F^1_{USCB}(\mathbb{R})$ defined by
$u_n:=[0,n]_{F(\mathbb{R})}$,
$n=1,2,\ldots$.
Then
 $\{u_n\}$ is equi-right-continuous at $0$ but $\{u_n\}$ is not
relatively compact
in $(F^1_{USCB}(X), H_{\rm send})$. So (\romannumeral2)  does not imply (\romannumeral1).

Let $\{v_n\}$ be a sequence of fuzzy sets in $F^1_{USCB}(\mathbb{R})$ defined by
\[v_n(x)=
\left\{
  \begin{array}{ll}
1, & x\in [0,n], \\
1/n, & x\in [-n,0],\\
   0, & x\in \mathbb{R}\setminus  [-n, n],
  \end{array}
\right.
n=1,2,\ldots.
\]
Then $\{\overrightarrow{v_n}\}$ has no limit in $(P^1_{USCB}(X), H_{\rm send})$ and hence is closed
in $(P^1_{USCB}(X), H_{\rm send})$. However $\{v_n\}$ is not equi-right-continuous at $0$.
 So (\romannumeral3) does not imply (\romannumeral2).
}
\end{re}

\begin{re}
  {\rm
Theorem \ref{rcgu} and Proposition \ref{psf} imply Theorem \ref{rcfe}.
This is because
by Proposition \ref{psf} we can obtain that for a subset $U$ of $F^1_{USCB} (X)$,
$U$ is relatively compact in $(F^1_{USCB}(X), H_{\rm send})$
if and only if
$\overrightarrow{U}$ is relatively compact in $(P^1_{USCB}(X), H_{\rm send})$
and
$U$ is equi-right-continuous at $0$.

}
\end{re}

\begin{tm} \label{cgu}
   Suppose that $U$ is a subset of $P^1_{USCB} (X)$. Then the following
are
equivalent:
\\
(\romannumeral1)  $U$ is compact in $(P^1_{USCB} (X), H_{\rm send})$;
\\
(\romannumeral2)    $U$ is closed in $(P^1_{USCB} (X), H_{\rm send})$, and $U(0)$ is relatively compact in $(X,d)$;
\\
(\romannumeral3)
  $U$ is closed in $(P^1_{USCB} (X), H_{\rm send})$, and $U(0)$ is compact in $(X,d)$.
\end{tm}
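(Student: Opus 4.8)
The plan is to follow the pattern of the proofs of Theorems \ref{come} and \ref{cfeg}, reducing everything to the relative compactness criterion of Theorem \ref{rcgu} together with the elementary fact that, in a metric space, a set is compact if and only if it is both relatively compact and closed.

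First I would establish (\romannumeral1)$\Leftrightarrow$(\romannumeral2): since $(P^1_{USCB}(X), H_{\rm send})$ is an extended metric space, $U$ is compact there precisely when $U$ is relatively compact and closed in $(P^1_{USCB}(X), H_{\rm send})$; by Theorem \ref{rcgu} the relative compactness of $U$ is equivalent to the relative compactness of $U(0)$ in $(X,d)$, which gives the equivalence at once. The implication (\romannumeral3)$\Rightarrow$(\romannumeral2) is immediate, because compactness of $U(0)$ in $(X,d)$ implies its relative compactness.

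The remaining step is (\romannumeral1)$\Rightarrow$(\romannumeral3). I would use the map $f: P^1_{USCB}(X)\to K(X)$ defined by $f(u)=\langle u\rangle_0$, which is well defined since $\langle u\rangle_0\in K(X)$ for $u\in P^1_{USCB}(X)$, and which by Theorem \ref{pseu}(\romannumeral2) satisfies $H(f(u),f(v))\leq H_{\rm send}(u,v)$, hence is continuous. Then, if $U$ is compact in $(P^1_{USCB}(X), H_{\rm send})$, the image $f(U)=U_0$ is compact in $(K(X),H)$, and Lemma \ref{coms} yields that $U(0)=\bigcup\{C:C\in U_0\}$ is compact in $(X,d)$; combining this with the closedness of $U$ already obtained in the proof of (\romannumeral1)$\Rightarrow$(\romannumeral2) gives (\romannumeral3).

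I do not expect a genuine obstacle here: once Theorems \ref{rcgu}, \ref{pseu} and Lemma \ref{coms} are available, the argument is routine bookkeeping. The only point deserving some care is to apply the ``continuous image of a compact set is compact'' argument in the correct space, i.e. to check that $U_0\subseteq K(X)$ and that $f$ takes values in $(K(X),H)$ rather than merely in $(C(X),H)$; this is precisely where the hypothesis $U\subseteq P^1_{USCB}(X)$, which forces each $\langle u\rangle_0$ to be compact, is used.
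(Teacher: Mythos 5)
Your proposal is correct and follows essentially the same route as the paper: Theorem \ref{rcgu} for (\romannumeral1)$\Leftrightarrow$(\romannumeral2), the trivial implication (\romannumeral3)$\Rightarrow$(\romannumeral2), and for (\romannumeral1)$\Rightarrow$(\romannumeral3) the observation that Theorem \ref{pseu}(\romannumeral2) makes $u\mapsto\langle u\rangle_0$ continuous so $U_0$ is compact in $(K(X),H)$, after which Lemma \ref{coms} gives compactness of $U(0)$. You merely spell out the continuity argument that the paper leaves implicit.
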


\begin{proof}
By Theorem \ref{rcgu}, we obtain that (\romannumeral1)$\Leftrightarrow$(\romannumeral2).
Clearly (\romannumeral3)$\Rightarrow$(\romannumeral2).
We shall complete the proof by showing that
(\romannumeral1)$\Rightarrow$(\romannumeral3).
 To do this, suppose
 that $U$ is compact in $(P^1_{USCB} (X), H_{\rm send})$. Then $U$ is closed in $(P^1_{USCB} (X), H_{\rm send})$.
To verify (\romannumeral3), we only need to show that
$U(0)$ is compact in $(X, d)$.

By clause (\romannumeral2) of Theorem \ref{pseu},
$U_0$ is compact in $(K(X), H)$.
Thus by Lemma \ref{coms},
$U(0)=\bigcup\{D: D\in U_0\}$ is compact in $(X,d)$.
\end{proof}

\begin{tm} \label{cfe}
   Suppose that $U$ is a subset of $F^1_{USCB} (X)$. Then the following
are
equivalent:
\\
(\romannumeral1) $U$ is compact in $(F^1_{USCB} (X), H_{\rm send})$;
\\
(\romannumeral2)
  $U$ is closed in $(F^1_{USCB} (X), H_{\rm send})$, $U(0)$ is relatively compact in $X$
and
$U$ is equi-right-continuous at $0$;
\\
(\romannumeral3)
  $U$ is closed in $(F^1_{USCB} (X), H_{\rm send})$, $U(0)$ is compact in $X$
and
$U$ is equi-right-continuous at $0$.

\end{tm}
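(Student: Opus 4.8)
The plan is to reduce everything to the already–established characterization of \emph{relative} compactness in Theorem~\ref{rcfe}, together with the elementary fact that, in a metric space, a set is compact if and only if it is both relatively compact and closed. Recall that $H_{\rm send}$ is a genuine (finite) metric on $F^1_{USCB}(X)$, so this criterion applies verbatim to $U$.

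First I would prove (\romannumeral1)$\Leftrightarrow$(\romannumeral2). If $U$ is compact in $(F^1_{USCB}(X), H_{\rm send})$, then $U$ is closed and relatively compact, so Theorem~\ref{rcfe} yields that $U(0)$ is relatively compact in $X$ and $U$ is equi-right-continuous at $0$, which is exactly (\romannumeral2). Conversely, if (\romannumeral2) holds, then by Theorem~\ref{rcfe} $U$ is relatively compact, and being also closed it is compact. Next, (\romannumeral3)$\Rightarrow$(\romannumeral2) is immediate, since a compact subset of $(X,d)$ is in particular relatively compact.

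The only nontrivial implication is (\romannumeral1)$\Rightarrow$(\romannumeral3), where one must upgrade ``$U(0)$ relatively compact'' to ``$U(0)$ compact''. The idea is to pass to the embedded copy $\overrightarrow{U}$ of $U$ inside $(P^1_{USCB}(X), H_{\rm send})$. Since $f(u)=\overrightarrow{u}={\rm send}\,u$ is an isometric embedding with $f(F^1_{USCB}(X))\subseteq P^1_{USCB}(X)$, and compactness of a subspace is intrinsic, $\overrightarrow{U}$ is compact in $(P^1_{USCB}(X), H_{\rm send})$ whenever $U$ is compact in $(F^1_{USCB}(X), H_{\rm send})$. Applying Theorem~\ref{cgu} (the implication (\romannumeral1)$\Rightarrow$(\romannumeral3) there) to $\overrightarrow{U}$ gives that $\overrightarrow{U}(0)$ is compact in $(X,d)$; and since $\overrightarrow{U}(0)=\bigcup_{u\in U}\langle\overrightarrow{u}\rangle_0=\bigcup_{u\in U}[u]_0=U(0)$, we conclude that $U(0)$ is compact. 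Combining this with the closedness of $U$ and the equi-right-continuity at $0$ (available from (\romannumeral2), which we already know is equivalent to (\romannumeral1)), we obtain (\romannumeral3).

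Alternatively, one can bypass Theorem~\ref{cgu} and argue directly: by Theorem~\ref{pseu}(\romannumeral2) the projection $w\mapsto\langle w\rangle_0$ from $(P^1_{USCB}(X), H_{\rm send})$ to $(K(X),H)$ is non-expansive, hence continuous, so it maps the compact set $\overrightarrow{U}$ onto the compact set $\{[u]_0:u\in U\}\subseteq K(X)$, and then Lemma~\ref{coms} shows that $U(0)=\bigcup\{D:D\in\{[u]_0:u\in U\}\}$ is compact in $(X,d)$. I expect the only point requiring care to be the bookkeeping of the identification of $U$ with $\overrightarrow{U}$ and the observation that $H_{\rm send}$ is a finite metric on $F^1_{USCB}(X)$, so that the ``relatively compact $+$ closed $=$ compact'' principle is legitimately applicable; everything else is a direct consequence of the cited results.
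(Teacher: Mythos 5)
Your proposal is correct and follows essentially the same route as the paper: (\romannumeral1)$\Leftrightarrow$(\romannumeral2) via Theorem~\ref{rcfe}, the trivial (\romannumeral3)$\Rightarrow$(\romannumeral2), and (\romannumeral1)$\Rightarrow$(\romannumeral3) by passing to $\overrightarrow{U}$ and invoking Theorem~\ref{cgu} with the identification $U(0)=\overrightarrow{U}(0)$. Your alternative ending via Theorem~\ref{pseu}(\romannumeral2) and Lemma~\ref{coms} is just the argument already embedded in the proof of Theorem~\ref{cgu}, so it adds no genuinely different content.
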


\begin{proof}
By Theorem \ref{rcfe}, we obtain that (\romannumeral1)$\Leftrightarrow$(\romannumeral2).
Clearly (\romannumeral3)$\Rightarrow$(\romannumeral2).
We shall complete the proof by showing that
(\romannumeral1)$\Rightarrow$(\romannumeral3). To do this, suppose
that $U$ is compact in $(F^1_{USCB} (X), H_{\rm send})$.
Since (\romannumeral1) implies (\romannumeral2),
to verify (\romannumeral3) we only need to show
that $U(0)$ is compact in $X$.

Note that
$U$ is compact in $(F^1_{USCB} (X), H_{\rm send})$ if and only if $\overrightarrow{U}$ is compact in $(P^1_{USCB} (X), H_{\rm send})$. Thus
by
Theorem \ref{cgu}, $U(0)=\overrightarrow{U}(0)$ is compact in $X$.
\end{proof}

\begin{re}
{\rm
For a subset $U$ of $F^1_{USCB} (X)$,
\\
(a) $U$ satisfies (\romannumeral1) of Theorem \ref{cfe}
if and only if
$\overrightarrow{U}$ satisfies (\romannumeral1) of Theorem \ref{cgu};
\\
(b)
$U$ satisfies (\romannumeral2) of Theorem \ref{cfe}
if and only if
$\overrightarrow{U}$ satisfies (\romannumeral2) of Theorem \ref{cgu};
\\
(c) $U$ satisfies (\romannumeral3) of Theorem \ref{cfe}
if and only if
$\overrightarrow{U}$ satisfies (\romannumeral3) of Theorem \ref{cgu}.

Clauses (b) and (c) can be obtained
by using
 Proposition \ref{psf} and Theorem \ref{rcgu}.
Theorem \ref{cgu} and clauses (a), (b) and (c) imply Theorem \ref{cfe}.

Since $U$ is compact in $(F^1_{USCB} (X), H_{\rm send})$ if and only if $\overrightarrow{U}$ is compact in $(P^1_{USCB} (X), H_{\rm send})$, we can use
Theorem \ref{cgu} to judge the compactness of a set $U$ in $(F^1_{USCB} (X), H_{\rm send})$.

}
\end{re}

\section{Completions
of $(F^1_{USCB} (X), H_{\rm send})$,
$(F^1_{USCG} (X), H_{\rm end})$ and $(F^1_{USCB} (X),  d_\infty)$
}

In this section,
we show that $(P^1_{USCB} (\widetilde{X}), H_{\rm send})$ is a completion of
$(F^1_{USCB} (X), H_{\rm send})$,
$(F^1_{USCG} (\widetilde{X}), H_{\rm end})$ is a completion of $(F^1_{USCB} (X), H_{\rm end})$,
and
  $(F^1_{USCB} (\widetilde{X}), d_\infty)$ is a completion of $(F^1_{USCB} (X),  d_\infty)$.

We can see that for $x,y\in (X,d)$,
\begin{gather}
  d(x,y)=d_\infty(\widehat{x}, \widehat{y})= d_p(\widehat{x}, \widehat{y})= H_{\rm send} (\widehat{x}, \widehat{y})=H_{\rm send} (\overrightarrow{\widehat{x}}, \overrightarrow{\widehat{y}}), \label{emc}
\\
   H_{\rm end} (\overrightarrow{\widehat{x}}, \overrightarrow{\widehat{y}}) =H_{\rm end} (\widehat{x}, \widehat{y}) = \min\{d(x, y), 1\}  \label{emcu}.
\end{gather}

\begin{tm} \label{comfegn} Let $(X,d)$ be a metric space. Then the following
are
equivalent:
\\
(\romannumeral1)   $(X,d)$ is complete;
\\
(\romannumeral2)
  $(F^1_{USCG} (X), H_{\rm end})$ is complete.
\end{tm}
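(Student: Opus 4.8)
The statement asserts that $(X,d)$ is complete if and only if $(F^1_{USCG}(X),H_{\rm end})$ is complete, and the plan is to treat the two implications separately, the forward one $(\romannumeral1)\Rightarrow(\romannumeral2)$ being the substantive one. For the converse $(\romannumeral2)\Rightarrow(\romannumeral1)$, I would start from a Cauchy sequence $\{x_n\}$ in $(X,d)$ and pass to the fuzzy points $\widehat{x_n}\in F^1_{USCB}(X)\subseteq F^1_{USCG}(X)$. By \eqref{emcu}, $H_{\rm end}(\widehat{x_n},\widehat{x_m})=\min\{d(x_n,x_m),1\}$, so $\{\widehat{x_n}\}$ is Cauchy in $(F^1_{USCG}(X),H_{\rm end})$ and hence converges to some $u\in F^1_{USCG}(X)$. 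Since $u\in F^1_{USCG}(X)$, Lemma \ref{gnc} gives that $P_0(u)$ is countable, so I may fix $\alpha\in(0,1)\setminus P_0(u)$; Theorem \ref{aec} then yields $H(\{x_n\},[u]_\alpha)=H([\widehat{x_n}]_\alpha,[u]_\alpha)\to0$. As $H(\{x_n\},[u]_\alpha)=\sup_{a\in[u]_\alpha}d(a,x_n)$, every pair $a,b\in[u]_\alpha$ satisfies $d(a,b)\le 2\sup_{c\in[u]_\alpha}d(c,x_n)\to0$, forcing $[u]_\alpha$ to be a single point $\{x\}$ with $d(x_n,x)\to0$; thus $\{x_n\}$ converges in $X$.

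For the forward implication, let $\{u_n\}$ be Cauchy in $(F^1_{USCG}(X),H_{\rm end})$ with $(X,d)$ complete. The crucial first step is that for every rational $q\in(0,1)$ the set $\bigcup_{n}[u_n]_q$ is totally bounded in $(X,d)$. Fixing a rational $q'\in(0,q)$ and putting $\delta:=q-q'$, for each $\varepsilon\in(0,\delta)$ pick $N$ with $H_{\rm end}(u_n,u_m)<\varepsilon$ for $n,m\ge N$; Proposition \ref{aen} (with $\alpha=q$, $\beta=q'$) gives $H^*([u_n]_q,[u_N]_{q'})<\varepsilon$ for all $n\ge N$, so $\bigcup_{n\ge N}[u_n]_q$ lies in the $\varepsilon$-neighbourhood of the compact set $[u_N]_{q'}$ and is therefore totally bounded, and adjoining the finitely many compact cuts $[u_n]_q$ with $n<N$ leaves a totally bounded set. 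Completeness of $(X,d)$ then makes $D_q:=\overline{\bigcup_n[u_n]_q}$ a nonempty compact set, so by Theorem \ref{bfc} $(K(D_q),H)$ is compact; since $[u_n]_q\in K(D_q)$ for all $n$, the sequence $\{[u_n]_q\}$ has an $H$-convergent subsequence. A diagonal argument over the rationals in $(0,1)$ then produces a subsequence $\{u_{n_k}\}$ with $[u_{n_k}]_q\to A_q$ in $(K(X),H)$ for every rational $q\in(0,1)$.

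Next I would reconstruct the limit essentially as in the ``Sufficiency'' sketch following Theorem \ref{rcfegn}. By Theorem \ref{hkg} the Hausdorff limits are nested ($A_{q'}\supseteq A_q$ for rationals $q'<q$), so setting $v_\alpha:=\bigcap_{q<\alpha,\,q\in\mathbb{Q}}A_q$ for $\alpha\in(0,1]$ and $v_0:=\overline{\bigcup_{\alpha>0}v_\alpha}$ gives a decreasing family of nonempty compact sets satisfying the hypotheses of Proposition \ref{repm} (left-continuity $v_\alpha=\bigcap_{\beta<\alpha}v_\beta$ is immediate), and hence determines a unique $v\in F^1_{USCG}(X)$ with $[v]_\alpha=v_\alpha$. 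To finish, I would verify $H_{\rm end}(u_{n_k},v)\to0$ via Theorem \ref{uscgre} with the dense set $P=\mathbb{Q}\cap(0,1)$: for rational $q$, on the one hand $A_q\subseteq[v]_q$ yields $H^*([u_{n_k}]_q,[v]_q)\le H([u_{n_k}]_q,A_q)\to0$; on the other hand, $[v]_r\subseteq A_q$ for every rational $r>q$, whence $\overline{\{v>q\}}=\overline{\bigcup_{r>q,\,r\in\mathbb{Q}}[v]_r}\subseteq A_q$ and so $H^*(\overline{\{v>q\}},[u_{n_k}]_q)\le H([u_{n_k}]_q,A_q)\to0$ — which is exactly condition (\romannumeral3) of Theorem \ref{uscgre}. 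Since $\{u_n\}$ is Cauchy and has the convergent subsequence $\{u_{n_k}\}\to v$, the whole sequence converges to $v$, establishing completeness.

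The main obstacle is the total boundedness claim of the second paragraph: this is where the Cauchy hypothesis must be turned, via the cut-estimate of Proposition \ref{aen}, into a single compact set absorbing all the $q$-cuts of the sequence, so that the completeness of $X$ (through Theorem \ref{bfc}) can be used to extract Hausdorff limits of cuts — without it the reconstruction of $v$ has nothing to start from. The remaining work, namely the reassembly via Proposition \ref{repm} and the verification through the level characterization of Section \ref{lce}, is routine but relies on the platform-point and left-continuity bookkeeping provided by Lemma \ref{gnc} and Lemma \ref{c}.
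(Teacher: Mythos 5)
Your proof is correct and follows essentially the same route as the paper: the paper's forward direction simply cites Theorem \ref{tbfegn} (necessity) and Theorem \ref{rcfegn} (sufficiency), whose internal arguments --- the Proposition \ref{aen} total-boundedness estimate, the diagonal extraction of Hausdorff-convergent cuts, and the reconstruction via Proposition \ref{repm} --- are exactly what you have unpacked inline. For the converse, the paper identifies the limit of $\{\widehat{x_n}\}$ via the closedness of $\widehat{X}$ (Remark \ref{cue}), whereas you use the level characterization of Theorem \ref{aec} plus a diameter argument forcing $[u]_\al$ to be a singleton; both are valid.
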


\begin{proof}

(\romannumeral1) $\Rightarrow$ (\romannumeral2). \
Assume that (\romannumeral1) is true. To show that (\romannumeral2) is true,
we only need to show that each Cauchy sequence in $(F^1_{USCG} (X), H_{\rm end})$ is a convergent sequence in $(F^1_{USCG} (X), H_{\rm end})$.
Let $\{u_n\}$
 be a Cauchy sequence in $(F^1_{USCG} (X), H_{\rm end})$.
Then $U := \{u_n, n=1,2,\ldots\}$ is total bounded in $(F^1_{USCG} (X), H_{\rm end})$.
By
Theorem \ref{tbfegn}, for each $\al\in (0,1]$, $U(\al)$ is total bounded in $(X,d)$. Since $(X,d)$ is complete, it follows
that for each $\al\in (0,1]$, $U(\al)$ is relatively compact in $(X,d)$.
Thus by Theorem \ref{rcfegn}, $U$ is relatively compact in $(F^1_{USCG} (X), H_{\rm end})$.
So $\{u_n\}$ has a convergent subsequence in $(F^1_{USCG} (X), H_{\rm end})$,
and thus
 $\{u_n\}$ is convergent in
$(F^1_{USCG} (X), H_{\rm end})$.
So (\romannumeral2) is true.

(\romannumeral2) $\Rightarrow$ (\romannumeral1).
 \
Assume that (\romannumeral2) is true.
Let
 $\{x_n\}$ be a Cauchy sequence in $(X,d)$.
Then by \eqref{emcu},
$\{\widehat{x_n}\}$ is a Cauchy sequence in
 $(F^1_{USCG} (X), H_{\rm end})$.
So $\{\widehat{x_n}\}$ is a convergent sequence in
 $(F^1_{USCG} (X), H_{\rm end})$.
 Hence by Remark \ref{cue} (\romannumeral1),
  there exists an $x\in X$
such
that $H_{\rm end} (\widehat{x_n},\widehat{x})\to 0$.
Then by \eqref{emcu}, $d(x_n,x)\to 0$.
 Thus
 $\{x_n\}$ is a convergent sequence in $(X,d)$.
So $(X,d)$ is complete; that is, (\romannumeral1) is true.
\end{proof}

\begin{re}\label{cue}
{\rm
Define $\widehat{X} := \{\widehat{x}: x\in X\}$.
Then
$\overrightarrow{\widehat{X}} := \{\overrightarrow{\widehat{x}}: x\in X\}$.
\\
(\romannumeral1) If $S$ satisfies $\widehat{X}\subseteq S \subseteq F^1_{USC}(X)$, then $\widehat{X}$
is closed in $(S, H_{\rm end})$.
\\
 (\romannumeral2)\ (\romannumeral1) remains true if $H_{\rm end}$ is replaced by each one of $d_\infty$, $H_{\rm send}$ and $d^*_p$.
  \\
  (\romannumeral3) \  If $S$ satisfies $\overrightarrow{\widehat{X}}\subseteq S \subseteq P^1_{USC}(X)$, then $\overrightarrow{\widehat{X}}$
is closed in $(S, H_{\rm send})$.

We claim that
for $u\in F^1_{USC}(X)$, if for each $\al\in(0,1]$, $[u]_\al$ is a singleton,
then $u\in \widehat{X}$.
To show $u\in \widehat{X}$, we only need to show that $[u]_\al=[u]_\beta$ for all $\al,\beta \in (0,1]$. This is true since otherwise
there exists $\al,\beta$ such that $0<\al<\beta\leq 1$
and
$[u]_\al \supsetneqq [u]_\beta$, which
 contradicts that $[u]_\al$ and $[u]_\beta$ are singletons.

To show (\romannumeral1), we only need to show that $\widehat{X}$
is closed in $(F^1_{USC}(X), H_{\rm end})$.
 Let $x, p, q\in X$ and $\al\in [0,1]$.
Then
$\overline{d}((p, \al), {\rm end}\,\widehat{x})= \min\{\overline{d}((p, \al), (x,\al)), \al\} = \min\{d(p, x), \al\}$.
And $\max\{d(p,x), d(q,x)\} \geq \frac{1}{2} d(p,q)$ since $d(p,x) + d(q,x) \geq d(p,q)$.
Given $u \in F^1_{USC}(X)\setminus\widehat{X}$. Then there exist
$p_1,q_1\in X$ and $\al_1\in (0,1]$ such that $p_1 \not= q_1$ and $p_1, q_1 \in [u]_{\al_1}$.
Thus for each $\widehat{x}\in \widehat{X}$,
$
H_{\rm end}(u, \widehat{x}) \geq \max\{\overline{d}((p_1, \al_1), {\rm end}\,\widehat{x}),  \overline{d}((q_1, \al_1), {\rm end}\,\widehat{x})\}
 =\max\{\min\{d(p_1, x), \al_1\}, \min\{d(q_1, x), \al_1\}  \}
  \geq \min\{\frac{1}{2} d(p_1,q_1), {\al_1}\}.
$
This implies that $u$ is not in the closure of $\widehat{X}$
in $(F^1_{USC}(X), H_{\rm end})$.
Since $u \in F^1_{USC}(X)\setminus\widehat{X}$ is arbitrary,
 $\widehat{X}$
in closed in $(F^1_{USC}(X), H_{\rm end})$.
So (\romannumeral1) is true.

By \eqref{smr}, each one of the $d_\infty$ convergence and the $H_{\rm send}$ convergence implies the $H_{\rm end}$ convergence on $F^1_{USC} (X)$.
 Theorem \ref{gdpn} says that the $d_p^*$ convergence implies the $H_{\rm end}$ convergence on $F^1_{USC} (X)$.
So (\romannumeral2) follows from (\romannumeral1).

To show (\romannumeral3), we only need to show that
$\overrightarrow{\widehat{X}}$
is closed in $(P^1_{USC}(X), H_{\rm send})$.
Given $u \in P^1_{USC}(X)\setminus\overrightarrow{\widehat{X}}$.
If $u\in P^1_{USC}(X) \setminus \overrightarrow{F^1_{USC}(X)}$. Then $\langle u\rangle_0$ has at least two points. If $u\in \overrightarrow{F^1_{USC}(X)} \setminus \overrightarrow{\widehat{X}}$, then
there exists $\al\in (0,1]$ such that $\langle u\rangle_\al$ has at least two points.
So there exist
$p_1,q_1\in X$ and $\al_1\in [0,1]$ such that $p_1 \not= q_1$ and $p_1, q_1 \in \langle u\rangle_{\al_1}$.
Thus for each $\overrightarrow{\widehat{x}}\in \overrightarrow{\widehat{X}}$,
$
H_{\rm send}(u, \overrightarrow{\widehat{x}}) \geq \max\{ \overline{d}((p_1,\al_1), \overrightarrow{\widehat{x}}), \overline{d}((q_1,\al_1), \overrightarrow{\widehat{x}})\}=
 \max\{d(p_1, x), d(q_1, x)\}
  \geq \frac{1}{2} d(p_1,q_1).
$
This implies that $u$ is not in the closure of $\overrightarrow{\widehat{X}}$
in $(P^1_{USC}(X), H_{\rm send})$.
Since $u \in P^1_{USC}(X)\setminus\overrightarrow{\widehat{X}}$ is arbitrary,
 $\overrightarrow{\widehat{X}}$
in closed in $(P^1_{USC}(X), H_{\rm send})$. So (\romannumeral3) is true.

(\romannumeral2) $\Rightarrow$ (\romannumeral1) in Theorem \ref{comfegn} can also be shown as follows.
$(X, d)$ is complete if and only if $(X, d^*)$ is complete, where $d^*(x,y) = \min\{d(x,y),1\}$ for $x,y \in X$.
Note that $H_{\rm end} (\widehat{x},\,  \widehat{y}) = d^*(x,y)$.
So the desired result follows
from the fact that
$(X, d^*)$ is isometric to the closed subspace
$(\widehat{X}, H_{\rm end})$
of $(F^1_{USCG} (X), H_{\rm end})$.
}
\end{re}

\begin{pp} \label{fcm}
Let $(X,d)$ be a metric space. Then
the following
are
equivalent:
\\
(\romannumeral1) \ $(X,d)$ is complete;
\\
(\romannumeral2) \
  $(F^1_{USCB} (X), d_\infty)$ is complete.

\end{pp}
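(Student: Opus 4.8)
The plan is to prove $(\rmn1)\Leftrightarrow(\rmn2)$ along the same lines as Theorem \ref{comfegn}, but working directly with Cauchy sequences, since (unlike the situation for $H_{\rm end}$ and $H_{\rm send}$) no compactness characterization for $d_\infty$ has been established. For the direction $(\rmn2)\Rightarrow(\rmn1)$: given a Cauchy sequence $\{x_n\}$ in $(X,d)$, equation \eqref{emc} gives $d(x_n,x_m)=d_\infty(\widehat{x_n},\widehat{x_m})$, so $\{\widehat{x_n}\}$ is Cauchy in $(F^1_{USCB}(X),d_\infty)$ and hence converges to some $u\in F^1_{USCB}(X)$. By Remark \ref{cue}(\rmn2), $\widehat{X}$ is closed in $(F^1_{USCB}(X),d_\infty)$, so $u=\widehat{x}$ for some $x\in X$, and \eqref{emc} again yields $d(x_n,x)\to 0$. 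Thus $(X,d)$ is complete.

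For $(\rmn1)\Rightarrow(\rmn2)$ I would start from a Cauchy sequence $\{u_n\}$ in $(F^1_{USCB}(X),d_\infty)$. Since $[u_n]_\al\in K(X)$ for all $\al\in[0,1]$ by Proposition \ref{repm} and $H([u_n]_\al,[u_m]_\al)\le d_\infty(u_n,u_m)$, the sequence $\{[u_n]_\al\}$ is Cauchy in $(K(X),H)$ for every $\al$. By Theorem \ref{bfc}, $(K(X),H)$ is complete, so $[u_n]_\al\to v_\al$ in $(K(X),H)$ for some $v_\al\in K(X)$. Fixing $\al$ and $n$ in the Cauchy estimate and letting $m\to\infty$ shows that this convergence is uniform in $\al$: for each $\varepsilon>0$ there is $N$ with $\sup_{\al\in[0,1]}H([u_N]_\al,v_\al)\le\varepsilon$.

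Next I would verify that $\{v_\al:\al\in[0,1]\}$ satisfies (\rmn1)--(\rmn3) of Proposition \ref{repm} for $F^1_{USCB}(X)$. Each $v_\al$ lies in $K(X)$, and monotonicity $v_\al\supseteq v_\be$ for $\al\le\be$ passes to the limit via Theorem \ref{hkg} (Hausdorff convergence gives Kuratowski convergence, so if $x\in v_\be=\liminf_n[u_n]_\be$ with $[u_n]_\be\subseteq[u_n]_\al$, then $x\in\liminf_n[u_n]_\al=v_\al$). For the left-continuity $v_\al=\bigcap_{\be<\al}v_\be$, fix $\varepsilon>0$ and the corresponding $N$; since $u_N\in F^1_{USCB}(X)$, Lemma \ref{gnc}(\rmn1) gives $\lim_{\be\to\al-}H([u_N]_\be,[u_N]_\al)=0$, so a triangle inequality through $[u_N]_\be$ and $[u_N]_\al$ yields $\limsup_{\be\to\al-}H(v_\be,v_\al)\le 2\varepsilon$; as $\varepsilon$ is arbitrary, $\lim_{\be\to\al-}H(v_\be,v_\al)=0$, and Remark \ref{crg}(\rmn2) gives $v_\al=\bigcap_{\be<\al}v_\be$. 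The same scheme using Lemma \ref{gnc}(\rmn5) and Remark \ref{crg}(\rmn1) gives $v_0=\overline{\bigcup_{\al>0}v_\al}$. Hence Proposition \ref{repm} furnishes a (unique) $u\in F^1_{USCB}(X)$ with $[u]_\al=v_\al$ for all $\al$, and the uniform estimate above reads $d_\infty(u_N,u)=\sup_\al H([u_N]_\al,[u]_\al)\le\varepsilon$, so $d_\infty(u_n,u)\to 0$.

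I expect the main obstacle to be precisely this middle step: promoting the family of level-wise Hausdorff limits $\{v_\al\}$ to a genuine element of $F^1_{USCB}(X)$. It forces one to combine two ingredients — the uniformity in $\al$ coming from $d_\infty$-Cauchyness, which lets $[u_N]_\al$ approximate $v_\al$ to within $\varepsilon$ simultaneously for all levels, and the internal regularity of the single fuzzy set $u_N$ (left-continuity of its cuts on $(0,1]$ and right-continuity at $0$) provided by Lemma \ref{gnc} together with the transfer principles in Remark \ref{crg}. Once $\{v_\al\}$ is shown to meet the representation-theorem hypotheses, convergence of $\{u_n\}$ to $u$ is immediate from the same uniform estimate.
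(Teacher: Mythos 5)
Your proposal is correct and takes essentially the same route as the paper's proof: the direction (\rmn2)$\Rightarrow$(\rmn1) via the closed isometric copy $\widehat{X}$ (Remark \ref{cue}(\rmn2) and \eqref{emc}), and the direction (\rmn1)$\Rightarrow$(\rmn2) via level-wise limits in $(K(X),H)$, uniformity of the convergence in $\al$, monotonicity through Theorem \ref{hkg}, left-/right-continuity of the limit family from Lemma \ref{gnc} plus the uniform estimate, and finally Remark \ref{crg} and Proposition \ref{repm} to assemble the limit fuzzy set. The only differences are cosmetic (e.g.\ a $2\varepsilon$ rather than $\varepsilon/3$ bookkeeping in the left-continuity step).
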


\begin{proof}
 By \eqref{emc},
 $(X,d)$ is isometric to $(\widehat{X}, d_\infty)$, which by Remark \ref{cue} (\romannumeral2) is a closed subspace
of   $(F^1_{USCB} (X), d_\infty)$. So (\romannumeral2)$\Rightarrow$(\romannumeral1) is proved.

To show (\romannumeral1)$\Rightarrow$(\romannumeral2),
suppose that $(X,d)$ is complete.
Let $\{u_n\}$ be a Cauchy sequence in  $(F^1_{USCB} (X), d_\infty)$.
Then for each $\al\in [0,1]$,
$\{ [u_n]_\al\}$ is a Cauchy sequence in $(K(X), H)$, and hence by Theorem \ref{bfc}, there is a $u(\al)\in K(X)$ such that
$H([u_n]_\al,  u(\al)) \to 0$.
 We
have that:
\\
(a-1) \ For $0\leq \beta \leq \alpha \leq 1$, $u(\al) \subseteq u(\beta)$ since by Theorem \ref{hkg}, $u(\al)=\lim^{(K)}_{n\to\infty}[u_n]_\al \subseteq \lim^{(K)}_{n\to\infty}[u_n]_\beta = u(\beta)$;
\\
(a-2) \  $\lim_{n\to\infty}\sup_{\al\in [0,1]}H([u_n]_\al, u(\al)) =0$; that is, $\{H([u_n]_\al, u(\al))\}$ converges uniformly to $0$ on $\al\in [0,1]$;
\\
(a-3) \
For each $\al\in (0,1]$, $\lim_{\beta\to \al-} H(u(\beta), u(\alpha)) = 0$;
\\
(a-4) \
$\lim_{\gamma\to 0+} H(u(\gamma), u(0)) = 0$.

Given $\varepsilon>0$. As $\{u_n\}$ is a Cauchy sequence in  $(F^1_{USCB} (X), d_\infty)$, there is an $N (\varepsilon) \in \mathbb{N}$ such that
for all $n,m\geq N$, $\sup_{\al\in [0,1]}  H([u_n]_\al, [u_m]_\al)\leq \varepsilon$.
Let $\al\in [0,1]$ and $n\geq N$. Then for each $l\in \mathbb{N}$,
 $H([u_n]_\al, u(\al))\leq  H([ u_n]_\al, [u_{n+l}]_\al) +  H([u_{n+l}]_\al, u(\al)) \leq \varepsilon + H([u_{n+l}]_\al, u(\al))$.
So $H([u_n]_\al, u(\al))\leq \varepsilon$ as
 $\lim_{l\to\infty} H([u_{n+l}]_\al,  u(\al)) = 0$.
As $\al\in [0,1]$ and $n\geq N$ are arbitrary, we obtain that $\sup_{\al\in [0,1]}  H([u_n]_\al, u(\al)) \leq \varepsilon$
for all $n\geq N$.
Since $\varepsilon>0$ is arbitrary, we have that
 (a-2) is true.

Let $\al\in (0,1]$. Given $\varepsilon>0$, by (a-2), there
is an $N$ such that for $n\geq N$,
$\sup_{\xi\in [0,1]}H([u_n]_\xi, u(\xi)) \leq \varepsilon/3$.
By Lemma \ref{gnc}(\rmn1),
$\lim_{\beta\to \al-} H([u_N]_\al, [u_N]_\beta) = 0$.
Then there is a $\delta>0$ such that
for all $\beta \in [\al-\delta, \al]$,
$H([u_N]_\al, [u_N]_\beta) \leq \varepsilon/3$.
Thus for all $\beta \in [\al-\delta, \al]$,
$H(u(\alpha), u(\beta)) \leq
H(u(\alpha), [u_N]_\alpha) + H([u_N]_\alpha, [u_N]_\beta) + H([u_N]_\beta, u(\beta)) \leq \varepsilon$.
Since
$\varepsilon>0$ is arbitrary, it follows that (a-3) is true.
Similarly, by using (a-2) and the fact that for $n\in \mathbb{N}$,
$\lim_{\gamma\to 0+} H([u_n]_\gamma, [u_n]_0) = 0$,
we can show that
 (a-4) is true.

If there is a $u\in F^1_{USCB} (X)$ such that $[u]_\al=u(\al)$ for all $\al\in [0,1]$, then by (a-2),
$d_\infty(u_n, u) \to 0$, and so the proof is complete.
As $\{u(\al):\al\in [0,1]\} \subseteq K(X)$,
to prove the existence of a such $u\in F^1_{USCB} (X)$,
by Proposition \ref{repm},
we only need to show that $\{u(\al), \al\in [0,1]\}$ has the following properties:
\\
(b) for each $\al\in (0,1]$, $u(\al) =  \bigcap_{\beta < \al} u(\beta)$, and
(c) $u(0) = \overline{\bigcup_{\gamma>0} u(\gamma)}$.

By Remark \ref{crg} and (a-1), (b) and (c) follow
from (a-3) and (a-4), respectively. This
completes the proof.

(b) and (c) can also be shown in the following way.
From (a-1), $\overline{\bigcup_{\gamma>0} u(\gamma)} \subseteq u(0)$ and then $\overline{\bigcup_{\gamma>0} u(\gamma)} \in K(X)$.
By Lemma \ref{c},
for each $\al\in (0,1]$,
$\lim_{\beta\to \al-} H(u(\beta), \bigcap_{\beta < \al} u(\beta)) = 0$,
and
$\lim_{\gamma\to 0+} H(u(\gamma), \overline{\bigcup_{\gamma>0} u(\gamma)}) = 0$ (see the proof of Lemma \ref{gnc}(\rmn1)(\rmn5)).
Combined with (a-3) and (a-4), we obtain that
 (b) and (c) are true.
\end{proof}

Even if $(X,d)$ is complete,
$(F^1_{USCB} (X), H_{\rm send})$ need not be complete.
See Example \ref{nce} below.

\begin{eap} \label{nce}
  {\rm
We use $\rho_1$ to denote the induced metric on \{0,1\} by the Euclidean metric on $\mathbb{R}$.
Let $D$ denote the metric space $(\{0,1\}, \rho_1)$.
Then $D$
is a subspace of $\mathbb{R}$ and $D$ is complete.
For $n=1,2,\ldots$, let $u_n\in F^1_{USCB} (D)$ be defined by
\[
u_n(x)=\left\{
         \begin{array}{ll}
           1, & x=0, \\
           1/n, & x=1.
         \end{array}
       \right.
\]
Define $u\in P^1_{USCB} (D) \setminus \overrightarrow{F^1_{USCB} (D)}$ by putting
\[
\langle u \rangle_\al =\left\{
         \begin{array}{ll}
           \{0\}, & \al\in (0,1], \\
           \{0,1\}, & \al=0.
         \end{array}
       \right.
\]
Then $\{\overrightarrow{u_n}\}$ converges
to $u$ in $(P^1_{USCB} (D), H_{\rm send})$.
Thus
$\{u_n\}$ is a Cauchy sequence in $(F^1_{USCB} (D), H_{\rm send})$ and
 has no limit in $(F^1_{USCB} (D), H_{\rm send})$.
So
$(F^1_{USCB} (D), H_{\rm send})$ is not complete.

}
\end{eap}

We can see that $(F^1_{USCB} (X), H_{\rm send})$ is complete if and only if $X$ has only one element.

Theorem \ref{sce} below discusses the completeness of $(P^1_{USCB} (X), H_{\rm send})$
and then
Theorem \ref{scom} below gives
the completion of $(F^1_{USCB} (X), H_{\rm send})$.

\begin{tm} \label{sce} \ Let $(X,d)$ be a metric space. Then the following
are
equivalent:
\\
(\romannumeral1)  $X$ is complete;
\\
(\romannumeral2)
  $(P^1_{USCB} (X), H_{\rm send})$ is complete.

\end{tm}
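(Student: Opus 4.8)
The plan is to prove (i)$\Leftrightarrow$(ii) in close analogy with Theorem~\ref{comfegn}. One direction will come from the compactness characterizations of Subsection~\ref{pfc}, and the other from the fact that $X$ sits isometrically and closedly inside $(P^1_{USCB}(X),H_{\rm send})$.

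For (i)$\Rightarrow$(ii), I would start from a Cauchy sequence $\{u_n\}$ in $(P^1_{USCB}(X),H_{\rm send})$ and set $U:=\{u_n:n\in\mathbb{N}\}$. Being Cauchy, $U$ is totally bounded in $(P^1_{USCB}(X),H_{\rm send})$, so Theorem~\ref{tbpu} gives that $U(0)=\bigcup_{n}\langle u_n\rangle_0$ is totally bounded in $(X,d)$; since $(X,d)$ is complete, $U(0)$ is then relatively compact in $(X,d)$. By Theorem~\ref{rcgu}, $U$ is relatively compact in $(P^1_{USCB}(X),H_{\rm send})$, so $\{u_n\}$ has a subsequence converging to some $w\in P^1_{USCB}(X)$; as a Cauchy sequence with a convergent subsequence converges, $u_n\to w$. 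Hence every Cauchy sequence converges and $(P^1_{USCB}(X),H_{\rm send})$ is complete.

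For (ii)$\Rightarrow$(i), I would use \eqref{emc}, which says $d(x,y)=H_{\rm send}(\overrightarrow{\widehat{x}},\overrightarrow{\widehat{y}})$, so that $x\mapsto\overrightarrow{\widehat{x}}$ is an isometry of $(X,d)$ onto $\overrightarrow{\widehat{X}}\subseteq P^1_{USCB}(X)$. By Remark~\ref{cue}(iii) applied with $S=P^1_{USCB}(X)$, the set $\overrightarrow{\widehat{X}}$ is closed in $(P^1_{USCB}(X),H_{\rm send})$, hence complete by (ii); therefore its isometric copy $(X,d)$ is complete. Concretely, a Cauchy sequence $\{x_n\}$ in $X$ produces a Cauchy sequence $\{\overrightarrow{\widehat{x_n}}\}$ in $(P^1_{USCB}(X),H_{\rm send})$, whose limit, by closedness of $\overrightarrow{\widehat{X}}$, has the form $\overrightarrow{\widehat{x}}$ for some $x\in X$, and then $d(x_n,x)\to 0$.

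I do not expect a real obstacle, since the substantive work is already carried out in Theorems~\ref{tbpu}, \ref{rcgu} and \ref{comfegn}; the only thing to watch in (i)$\Rightarrow$(ii) is that the limit furnished by Theorem~\ref{rcgu} genuinely belongs to $P^1_{USCB}(X)$, which is built into that theorem's statement. Should a self-contained argument for (i)$\Rightarrow$(ii) be preferred, one can proceed directly: by Theorem~\ref{pseu}(ii) the sequence $\{\langle u_n\rangle_0\}$ is Cauchy in $(K(X),H)$, complete by Theorem~\ref{bfc}, with a limit $C_0$; by the identity $H_{\rm end}(u_n,u_m)=H_{\rm end}(\overleftarrow{u_n},\overleftarrow{u_m})$ the sequence $\{\overleftarrow{u_n}\}$ is Cauchy in $(F^1_{USCG}(X),H_{\rm end})$, complete by Theorem~\ref{comfegn}, with a limit $v$, and $[v]_0\subseteq C_0$ by Remarks~\ref{hkr} and \ref{sur}; the object $w\in P^1_{USCB}(X)$ defined by $\langle w\rangle_\al=[v]_\al$ for $\al\in(0,1]$ and $\langle w\rangle_0=C_0$ then satisfies $\overleftarrow{w}=v$ (checked via Proposition~\ref{repm}), so Theorem~\ref{pseu}(iv) gives $H_{\rm send}(u_n,w)\to 0$.
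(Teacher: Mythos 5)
Your proposal is correct, and your (\romannumeral2)$\Rightarrow$(\romannumeral1) direction is exactly the paper's argument (isometry $x\mapsto\overrightarrow{\widehat{x}}$ onto the closed subspace $\overrightarrow{\widehat{X}}$ via Remark \ref{cue}(\romannumeral3)). For (\romannumeral1)$\Rightarrow$(\romannumeral2), however, your primary route differs from the paper's: you treat the Cauchy sequence as a totally bounded set, pass to $U(0)$ via Theorem \ref{tbpu}, upgrade to relative compactness of $U(0)$ using completeness of $X$, invoke Theorem \ref{rcgu} to extract a convergent subsequence in $P^1_{USCB}(X)$, and finish with the standard ``Cauchy plus convergent subsequence'' argument. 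This mirrors how the paper proves Theorem \ref{comfegn}, but for Theorem \ref{sce} the paper instead argues directly: it splits the Cauchy sequence into $\{\overleftarrow{u_n}\}$ (Cauchy in $(F^1_{USCG}(X),H_{\rm end})$ by Theorem \ref{pseu}(\romannumeral1), convergent by Theorem \ref{comfegn}) and $\{\langle u_n\rangle_0\}$ (Cauchy in $(K(X),H)$ by Theorem \ref{pseu}(\romannumeral2), convergent by Theorem \ref{bfc}), checks $[u]_0\subseteq u_0$ via Remarks \ref{hkr} and \ref{sur}, assembles the limit $w$, and concludes with Theorem \ref{pseu}(\romannumeral4) --- which is precisely your ``self-contained'' fallback. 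Both routes are legitimate and non-circular (Theorems \ref{tbpu} and \ref{rcgu} are proved in Section 7 without appeal to Theorem \ref{sce}); your compactness route is shorter because the limit construction is already buried inside the sufficiency proof of Theorem \ref{rcgu}, whereas the paper's direct construction makes the limit explicit and keeps the completeness proof independent of the compactness machinery. The one point you rightly flag --- that the subsequential limit furnished by Theorem \ref{rcgu} lies in $P^1_{USCB}(X)$ --- is indeed guaranteed by the paper's definition of relative compactness in the ambient space.
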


\begin{proof}

(\romannumeral1) $\Rightarrow$ (\romannumeral2). \
Let $\{u_n\}$
 be a Cauchy sequence in $(P^1_{USCB} (X), H_{\rm send})$.
Then $\{\overleftarrow{u_n}\} \subseteq F^1_{USCB} (X)$,
 and by (\romannumeral1) of Theorem \ref{pseu},
$H_{\rm end}(\overleftarrow{u_n}, \overleftarrow{u_m}) = H_{\rm end}(u_n, u_m) \leq  H_{\rm send}(u_n, u_m)$
for $n,m=1,2,\ldots$.
Hence
 $\{\overleftarrow{u_n}\}$
  is a Cauchy sequence in $(F^1_{USCG} (X), H_{\rm end})$.
From Theorem \ref{comfegn},
there is a $u \in F^1_{USCG} (X)$ such that
  $\{\overleftarrow{u_n}\}$ converges to $u$ in $(F^1_{USCG} (X), H_{\rm end}$).

 By (\romannumeral2) of Theorem \ref{pseu}, $ H(\langle u_n \rangle_0, \langle u_m \rangle_0) \leq H_{\rm send}(u_n, u_m)$ for $n,m=1,2,\ldots$.
So
 $\{ \langle u_n \rangle_0 \}$
is a Cauchy sequence in
 $(K(X), H)$.
From Theorem \ref{bfc},
there is a $u_0 \in K(X)$
such that $\{ \langle u_n \rangle_0 \}$ converges to $u_0$ in
 $(K(X), H)$.

By Theorem \ref{hkg} and Remark \ref{hkr}, $\lim_{n\to \infty}^{(\Gamma)}  \overleftarrow{u_n} = u $ and $\lim^{(K)}_{n\to\infty} \langle u_n \rangle_0 = u_0$.
By Remark \ref{sur}, $[u]_0 \subseteq \liminf_{n\to\infty}[\overleftarrow{u_n}]_0 \subseteq \liminf_{n\to\infty}\langle u_n\rangle_0 = \lim^{(K)}_{n\to\infty} \langle u_n \rangle_0 = u_0$.
So we can define $w \in P^1_{USCB}(X)$
by putting
\[
\langle w \rangle_\al=\left\{
           \begin{array}{ll}
           [u]_\al, &  \al>0,
\\
             u_0,  & \al=0.
           \end{array}
         \right.
\]
Then $u=\overleftarrow{w}$, $H_{\rm end}(u_n, w) = H_{\rm end}(\overleftarrow{u_n}, u)$
and
$H(\langle u_n\rangle_0, \langle w\rangle_0) = H(\langle u_n\rangle_0, u_0)$ for $n=1,2,\ldots$
 Thus from (\romannumeral3) or (\romannumeral4) of Theorem \ref{pseu},
 $\{u_n\}$ converges to
 $w$ in $(P^1_{USCB} (X), H_{\rm send})$.

(\romannumeral2) $\Rightarrow$ (\romannumeral1). Note that $d(x,y) = H_{\rm send} (\overrightarrow{\widehat{x}},\,  \overrightarrow{\widehat{y}})$.
So the desired result
follows from the fact
that $(X,d)$ is isometric to$(\overrightarrow{\widehat{X}}, H_{\rm send})$,
which by Remark \ref{cue} (\romannumeral3) is
a closed subspace of
 $(P^1_{USCB} (X), H_{\rm send})$.
\end{proof}

\begin{tm} \label{scom}
 $(P^1_{USCB} (\widetilde{X}), H_{\rm send})$ is a completion of $(F^1_{USCB} (X), H_{\rm send})$.
\end{tm}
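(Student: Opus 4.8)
The plan is to verify the three defining features of a completion: that $(P^1_{USCB}(\widetilde{X}),H_{\rm send})$ is a complete metric space, that $(F^1_{USCB}(X),H_{\rm send})$ embeds into it isometrically, and that the image of this embedding is dense. For the first point, note that every element of $P^1_{USCB}(\widetilde{X})$ is a nonempty compact subset of $\widetilde{X}\times[0,1]$, so $H_{\rm send}$ on $P^1_{USCB}(\widetilde{X})$ is the restriction of the finite-valued, genuine Hausdorff metric on $K(\widetilde{X}\times[0,1])$; since $\widetilde{X}$ is complete, Theorem \ref{sce} gives that $(P^1_{USCB}(\widetilde{X}),H_{\rm send})$ is complete.

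For the embedding I would set $\Phi(u):=\overrightarrow{j(u)}$ for $u\in F^1_{USCB}(X)$, with $j$ the map introduced in Section \ref{cmfuzzy}. Here $j(u)\in F^1_{USCB}(\widetilde{X})$, because $[j(u)]_\al=[u]_\al\in K(\widetilde{X})$ for $\al\in(0,1]$ and $[j(u)]_0=[u]_0$ (the latter since $[u]_0$ is already compact, hence closed, in $\widetilde{X}$), so $\Phi(u)\in P^1_{USCB}(\widetilde{X})$. Since ${\rm send}\,(j(u))$ coincides, as a subset of $\widetilde{X}\times[0,1]$, with ${\rm send}\,u\subseteq X\times[0,1]$, and the Hausdorff distance between subsets of $X\times[0,1]$ is the same whether computed in $X\times[0,1]$ or in $\widetilde{X}\times[0,1]$, one gets $H_{\rm send}(\Phi(u),\Phi(v))=H({\rm send}\,u,{\rm send}\,v)=H_{\rm send}(u,v)$; equivalently $\Phi$ is the composite of the $H_{\rm send}$-isometry $u\mapsto j(u)$ with the isometric embedding $w\mapsto\overrightarrow{w}$ of $F^1_{USCB}(\widetilde{X})$ into $P^1_{USCB}(\widetilde{X})$.

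The substantive part is density of $\Phi(F^1_{USCB}(X))$ in $(P^1_{USCB}(\widetilde{X}),H_{\rm send})$. Given $v\in P^1_{USCB}(\widetilde{X})$ and $\varepsilon>0$, I would build the approximating $u\in F^1_{USCB}(X)$ in two steps. First, for $\delta\in(0,1)$ let $w_\delta\in F^1_{USCB}(\widetilde{X})$ be the fuzzy set with $[w_\delta]_\al=\langle v\rangle_0$ for $\al\in[0,\delta]$ and $[w_\delta]_\al=\langle v\rangle_{(\al-\delta)/(1-\delta)}$ for $\al\in(\delta,1]$; Proposition \ref{repm} applies, since all these cuts lie in $K(\widetilde{X})$, the identity $\langle v\rangle_\gamma=\bigcap_{\eta<\gamma}\langle v\rangle_\eta$ passes through the affine reparametrization of levels, and $[w_\delta]_0=\langle v\rangle_0=[w_\delta]_\delta$. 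Using that both $v$ and ${\rm send}\,w_\delta$ are closed under lowering the $[0,1]$-coordinate, a direct matching of points (for $(x,t)\in v$ use $(x,0)$ or $(x,t(1-\delta)+\delta)$ in ${\rm send}\,w_\delta$; for $(x,s)\in{\rm send}\,w_\delta$ use $(x,0)$ or $(x,(s-\delta)/(1-\delta))$ in $v$) yields $H_{\rm send}(\overrightarrow{w_\delta},v)\le\delta$. Take $\delta=\varepsilon/2$ and $w:=w_\delta$. The second step approximates ${\rm send}\,w$ by a finitely supported fuzzy set over $X$: choose a finite $(\varepsilon/6)$-net $\{(x_i,t_i)\}_{i=1}^{m}$ of the compact set ${\rm send}\,w$ that includes some point at level $1$, pick $y_i\in X$ with $\widetilde{d}(x_i,y_i)<\varepsilon/6$, and let $u\in F^1_{USCB}(X)$ be supported on $\{y_1,\dots,y_m\}$ with $u(y)=\max\bigl(\{t_i:y_i=y\}\cup\{\delta/2\}\bigr)$; the auxiliary value $\delta/2$ does no harm because $[w]_s=\langle v\rangle_0$ for all $s\le\delta$, so each $x_i$ lies in $[w]_s$ for every such $s$. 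A routine Hausdorff-distance estimate then gives $H_{\rm send}(\Phi(u),\overrightarrow{w})<\varepsilon/2$, and the triangle inequality finishes: $H_{\rm send}(\Phi(u),v)<\varepsilon$. Hence $\Phi$ maps $(F^1_{USCB}(X),H_{\rm send})$ isometrically onto a dense subset of the complete space $(P^1_{USCB}(\widetilde{X}),H_{\rm send})$.

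The main obstacle is the first step of the density argument. One cannot discretize $v$ directly by points of $X$, because an element of $P^1_{USCB}(\widetilde{X})$ may have base $\langle v\rangle_0$ strictly larger than $\widetilde{\overline{\bigcup_{\al>0}\langle v\rangle_\al}}$ (recall $P^1_{USCB}(\widetilde{X})=\{\overrightarrow{w}\cup(A\times\{0\}):w\in F^1_{USCB}(\widetilde{X}),\ A\in K(\widetilde{X})\cup\{\emptyset\}\}$), and such a ``base-only'' portion of $v$ cannot be matched by any sendograph over $X$ unless it is first raised to positive levels. The affine reparametrization defining $w_\delta$ is precisely the device that performs this lift while keeping the $H_{\rm send}$-error at most $\delta$; once it is in place, the remaining verifications (the representation-theorem conditions for $w_\delta$, the net estimate of the second step, and the embedding and completeness claims) are routine.
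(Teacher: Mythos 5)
Your proof is correct and follows the same two-step skeleton as the paper's: completeness comes from Theorem \ref{sce} applied to the complete space $\widetilde{X}$, and density is obtained by first replacing an arbitrary $v\in P^1_{USCB}(\widetilde{X})$ by the sendograph of a genuine fuzzy set in $F^1_{USCB}(\widetilde{X})$ at small $H_{\rm send}$-cost, and then discretizing down to $F^1_{USCB}(X)$. The constructions inside the two steps differ from the paper's, however. For the first step the paper simply truncates, setting $[u_\varepsilon]_\al=\langle u\rangle_\al$ for $\al>\varepsilon$ and $[u_\varepsilon]_\al=\langle u\rangle_0$ for $\al\le\varepsilon$, which already gives $H_{\rm send}(u,\overrightarrow{u_\varepsilon})\le\varepsilon$; your affine reparametrization achieves the same bound while preserving all level sets, which is more than is needed but harmless, and your $\delta(1-t)$ matching of points is right. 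For the second step the paper works level-wise: it chooses a finite $C_0\subseteq X$ with $H(C_0,[v]_0)<\varepsilon$, defines $C_\al=\{x\in C_0:\ d(x,[v]_\al)\le\varepsilon\}$, and verifies the conditions of Proposition \ref{repm} to obtain $w\in F^1_{USCB}(X)$ with $d_\infty(v,w)\le\varepsilon$. That is a strictly stronger conclusion than your sendograph-net construction, which only controls $H_{\rm send}$; the paper reuses its $d_\infty$ estimate (affirmation (b)) to prove Proposition \ref{supc} and Theorem \ref{ecom}, whereas your version would have to be reworked for those results. For the present theorem both are sufficient, and your $\varepsilon/6$-net bookkeeping (including forcing a net point at level $1$ for normality and using nestedness of the cuts of $w$ to descend to lower levels) checks out.
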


\begin{proof}
From Theorem \ref{sce}, $(P^1_{USCB} (\widetilde{X}), H_{\rm send})$ is complete.
To show that
$(P^1_{USCB} (\widetilde{X}), H_{\rm send})$ is a completion of $(F^1_{USCB} (X), H_{\rm send})$,
we only need to verify
that $\overrightarrow{F^1_{USCB} (X)}$ is dense in $(P^1_{USCB} (\widetilde{X}), H_{\rm send})$.

We claim the following affirmations (a) and (b):
\begin{description}
  \item[(a)] For each $u \in P^1_{USCB} (\widetilde{X})$ and each $\varepsilon>0$,
there exists a $v \in F^1_{USCB} (\widetilde{X})$
such that
$H_{\rm send} (u, \overrightarrow{v}) \leq \varepsilon$.

  \item[(b)] For each $v \in F^1_{USCB} (\widetilde{X})$ and each $\varepsilon>0$,
there exists a $w  \in F^1_{USCB} (X) $
such that $d_\infty (v, w) \leq \varepsilon$ (so by \eqref{smr},
$H_{\rm end} (v, w) \leq H_{\rm send} (v, w) \leq \varepsilon$, and by \eqref {spr}, $d_p (v, w) = d^*_p (v, w)\leq \varepsilon$).
\end{description}

If affirmations (a) and (b) are true, then
for each $u \in P^1_{USCB} (\widetilde{X})$ and each $\varepsilon>0$,
there is a $v \in F^1_{USCB} (\widetilde{X})$ and
$w\in F^1_{USCB} (X)$
such that
$H_{\rm send}(u, \overrightarrow{v}) \leq \varepsilon/2$
and
 $ H_{\rm send} (\overrightarrow{v}, \overrightarrow{w}) = H_{\rm send} (v, w) \leq \varepsilon/2 $.
Thus
$H_{\rm send}(u, \overrightarrow{w}) \leq H_{\rm send}(u, \overrightarrow{v})
+
H_{\rm send} (\overrightarrow{v}, \overrightarrow{w})
\leq \varepsilon $.
So
for each $u \in P^1_{USCB} (\widetilde{X})$ and each $\varepsilon>0$,
there is a
$w\in F^1_{USCB} (X)$
such that
$H_{\rm send}(u, \overrightarrow{w}) \leq \varepsilon$.
This means
that
$\overrightarrow{F^1_{USCB} (X)}$ is dense in $(P^1_{USCB} (\widetilde{X}), H_{\rm send})$.
So to show the desired result,
it suffices to
prove affirmations (a) and (b).

Let $u \in P^1_{USCB} (\widetilde{X})$ and $\varepsilon>0$. Define $u_\varepsilon \in F^1_{USCB} (\widetilde{X})$ by putting
  \[
  [u_\varepsilon]_{\al} =\left\{
         \begin{array}{ll}
           \langle u \rangle_\al , & \al \in (\varepsilon, 1], \\
           \langle u \rangle_0, & \al \in  [0,\varepsilon ].
         \end{array}
       \right.
  \]
Then
$H_{\rm send} (u, \overrightarrow{u_\varepsilon}) \leq \varepsilon$.
So affirmation (a) is proved.

Let $v \in F^1_{USCB} (\widetilde{X})$ and $\varepsilon>0$.
 We can choose
 a finite subset
$C_0$ of $X$ such that $H(C_0, [v]_0) < \varepsilon$ as follows.
Since $[v]_0\in K(\widetilde{X})$, there is a finite subset $A$ of $[v]_0$
such
that $H(A, [v]_0) < \varepsilon/2$.
As $X$ is dense in $(\widetilde{X}, \widetilde{d})$, there is a subset
$C_0$ of $X$ such that
$H(C_0, A)< \varepsilon/2$. Thus $H(C_0, [v]_0)\leq H(C_0, A) + H(A, [v]_0)   < \varepsilon$.

Define
\begin{equation}\label{caf}
  C_\al: =   \{x\in C_0:  d (x, [v]_\al ) \leq \varepsilon    \}, \ \al\in (0,1].
\end{equation}
We affirm that $\{C_\al: \al \in [0,1]\}$ has the following properties
\\
(\romannumeral1) \ $C_\al \not= \emptyset$ for all $\al\in [0,1]$.
\\
(\romannumeral2) \ $H( C_\al, [v]_\al ) \leq \varepsilon$ for all $\al\in [0,1]$.
\\
(\romannumeral3) \  $C_\al = \bigcap_{\beta<\al} C_\beta$ for all $\al\in (0,1]$.
\\
(\romannumeral4) \ $C_0 = \bigcup_{\al>0} C_\al = \overline{\bigcup_{\al>0} C_\al }$.

For each $y\in [v]_\al $,
there
 exists $z_y\in C_0$ such that
$d(y,z_y) = d(y, C_0) < \varepsilon$. Hence $z_y\in C_\al$ and thus $C_\al \not= \emptyset$.
So
(\romannumeral1) is true.

To show (\romannumeral2),
let $\al\in [0,1]$. If $\al=0$, then (\romannumeral2) is true since $H(C_0, [v]_0) < \varepsilon$.
Assume that $0<\al \leq 1$.
From \eqref{caf},
 $ H^* ( C_\al, [v]_\al ) \leq \varepsilon  $.
So to show $ H( C_\al, [v]_\al ) \leq \varepsilon  $,
 it suffices to show that
 $ H^* ( [v]_\al, C_\al) < \varepsilon  $.

Let $y\in [v]_\al$.
From $ d(y,z_y)  \geq d(y, C_\al) \geq d(y, C_0)$
and $ d(y,z_y)  = d(y, C_0)$,
we have that
$ d(y,z_y) = d(y, C_\al) = d(y, C_0)$.
Thus
\begin{align*}
  H^* ([v]_\al, C_\al) &= \sup_{y\in [v]_\al} d(y, C_\al)
  = \sup_{y\in [v]_\al} d(y, C_0)
\leq \sup_{y\in [v]_0} d(y, C_0) =  H^*([v]_0, C_0) \leq  H( C_0, [v]_0)< \varepsilon.
\end{align*}
So
(\romannumeral2) is proved.

To show (\romannumeral3),
let $\al_0 \in (0,1]$.
From the definition of $\{C_\al: \al\in [0,1]\}$,
we can see that $C_\xi \subseteq C_\eta $ for $0\leq \eta\leq \xi \leq 1$.
Thus $C_{\al_0} \subseteq \bigcap_{\beta<\al_0} C_\beta$.

By Lemma \ref{gnc} (\romannumeral1),
 $\lim_{\beta \to \al_0-} H([v]_{\al_0}, [v]_\beta ) = 0$.
Note that for each $x\in X$ and $\xi, \eta\in [0,1]$,
$|d(x, [v]_\xi )  - d(x, [v]_\eta )|  \leq H([v]_\xi, [v]_\eta)$.
Thus for each $x\in X$,
$d(x, [v]_{\al_0}) = \lim_{\beta \to \al_0-} d(x, [v]_\beta ) $.
Let $x\in \bigcap_{\beta<\al_0} C_\beta$.
Then
$d(x, [v]_{\al_0}) = \lim_{\beta \to \al_0-} d(x, [v]_\beta ) \leq \varepsilon$, which means that $x\in C_{\al_0}$.
Since $x \in \bigcap_{\beta<\al_0} C_\beta$ is arbitrary,
we have
$\bigcap_{\beta<\al_0} C_\beta \subseteq C_{\al_0}  $.
Hence
$C_{\al_0} = \bigcap_{\beta<\al_0} C_\beta$.
So (\romannumeral3) is true.

Since $C_\al \subseteq C_0$ for all $\al\in [0,1]$ and $C_0$ is finite,
it follows that
$\bigcup_{\al>0} C_\al = \overline{\bigcup_{\al>0} C_\al } \subseteq C_0$.
So to show (\romannumeral4), we only need to show
that
$C_0 \subseteq \bigcup_{\al>0} C_\al$.

Since
$[v]_0 =  \overline{\cup_{\al>0} [v]_\al}$,
it follows that for each $x\in X$,
$d(x, [v]_0) = d(x, \cup_{\al>0} [v]_\al) = \inf_{\al>0}  d(x, [v]_\al)$.
Let $x \in C_0$. Then $d(x, [v]_0) < \varepsilon$.
So
there
exists $\al>0$ such that
$d(x, [v]_\al) < \varepsilon$,
which means that
$x\in C_\al$.
Thus $C_0 \subseteq \bigcup_{\al>0} C_\al$ and (\romannumeral4) is proved.

Now,
define $w \in F(X)$ by putting
$
  [w]_{\al} = C_\al \mbox{ for all } \al \in [0,1].
$
Then
by (\romannumeral1), (\romannumeral3) and (\romannumeral4),
$w \in F^1_{USCB} (X)$.
From
(\romannumeral2), we have $d_\infty (v, w) \leq \varepsilon$, and then
 affirmation (b) is proved.
This completes the proof.
\end{proof}

\begin{pp}\label{supc}
  $(F^1_{USCB} (\widetilde{X}), d_\infty)$ is a completion of $(F^1_{USCB} (X),  d_\infty)$.
\end{pp}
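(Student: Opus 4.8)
The plan is to verify directly that $(F^1_{USCB}(\widetilde{X}), d_\infty)$ meets the two defining requirements of a completion of $(F^1_{USCB}(X), d_\infty)$: it is a complete metric space, and it contains an isometric copy of $(F^1_{USCB}(X), d_\infty)$ as a dense subset. Since $d_\infty$ is a genuine metric (not merely an extended metric) on $F^1_{USCB}$ of any metric space, there is no extended-metric subtlety to address here.

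First I would settle completeness: as $\widetilde{X}$ is complete, Proposition \ref{fcm} applied with $\widetilde{X}$ in place of $X$ gives that $(F^1_{USCB}(\widetilde{X}), d_\infty)$ is complete. Next, the isometric embedding: the map $j\colon F(X)\to F(\widetilde{X})$ introduced in Section \ref{cmfuzzy} restricts to a map from $F^1_{USCB}(X)$ into $F^1_{USCB}(\widetilde{X})$, because $[j(u)]_\al = [u]_\al$ for every $\al\in[0,1]$ and $[u]_\al\in K(X)\subseteq K(\widetilde{X})$. For $u,v\in F^1_{USCB}(X)$ the Hausdorff distance between the compact sets $[u]_\al$ and $[v]_\al$ computed in $(\widetilde{X},\widetilde{d})$ equals the one computed in $(X,d)$, so $d_\infty(j(u),j(v)) = d_\infty(u,v)$; thus $j$ is an isometric embedding, and we identify $F^1_{USCB}(X)$ with $j(F^1_{USCB}(X))$.

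The remaining, and only substantive, point is that $F^1_{USCB}(X)$ is dense in $(F^1_{USCB}(\widetilde{X}), d_\infty)$. This is precisely affirmation (b) in the proof of Theorem \ref{scom}: given $v\in F^1_{USCB}(\widetilde{X})$ and $\varepsilon>0$, one picks a finite set $C_0\subseteq X$ with $H(C_0,[v]_0)<\varepsilon$ (possible since $[v]_0\in K(\widetilde{X})$ and $X$ is dense in $\widetilde{X}$), sets $C_\al := \{x\in C_0 : d(x,[v]_\al)\le\varepsilon\}$ for $\al\in(0,1]$, checks via Proposition \ref{repm} that $\{C_\al\}$ determines some $w\in F^1_{USCB}(X)$, and verifies $H(C_\al,[v]_\al)\le\varepsilon$ for all $\al$, hence $d_\infty(v,w)\le\varepsilon$. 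I would invoke this affirmation directly rather than reproduce the construction.

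I do not expect any genuine obstacle: all the ingredients are already in place, and the proof is simply the assembly ``completeness (Proposition \ref{fcm}) $+$ isometric embedding via $j$ $+$ density (affirmation (b) of Theorem \ref{scom})''. The one spot calling for a line of care is confirming that $j$ preserves $d_\infty$, i.e.\ that the Hausdorff metric of subsets of $X$ is unchanged when they are viewed as subsets of $\widetilde{X}$; this is immediate from the definition of $H^\ast$, since all the suprema and infima involved range over points of $X$ only.
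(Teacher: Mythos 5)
Your proposal is correct and matches the paper's proof: the paper likewise obtains completeness from Proposition \ref{fcm} applied to $\widetilde{X}$ and density from affirmation (b) in the proof of Theorem \ref{scom}. The only difference is that you spell out the routine verification that $j$ is a $d_\infty$-isometry, which the paper leaves implicit.
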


\begin{proof}
By Proposition \ref{fcm},
$(F^1_{USCB} (\widetilde{X}), d_\infty)$ is complete.
 So from affirmation (b)
 in the proof of Theorem \ref{scom},
we have the desired result.
\end{proof}

\begin{tl} \label{pcom}
 $(P^1_{USCB} (\widetilde{X}), H_{\rm send})$ is a completion of $(P^1_{USCB} (X), H_{\rm send})$.
\end{tl}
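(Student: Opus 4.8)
The plan is to deduce Corollary~\ref{pcom} almost immediately from Theorems~\ref{sce} and~\ref{scom}, together with the elementary observation that $(P^1_{USCB}(X), H_{\rm send})$ sits isometrically inside $(P^1_{USCB}(\widetilde{X}), H_{\rm send})$. Recall that a completion of a metric space $(Y,\rho)$ is a complete metric space that contains an isometric dense copy of $(Y,\rho)$; so three things must be checked: completeness of the ambient space, the isometric embedding, and density.

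First I would note that $\widetilde{X}$ is complete, so by Theorem~\ref{sce} the space $(P^1_{USCB}(\widetilde{X}), H_{\rm send})$ is complete. Next I would verify the embedding. If $u \in P^1_{USCB}(X)$, then each $\langle u\rangle_\al$ is a nonempty compact subset of $X$, hence a nonempty compact subset of $\widetilde{X}$; since the condition $\langle u\rangle_\al = \bigcap_{\beta<\al}\langle u\rangle_\beta$ is purely set-theoretic and $u \in K(X\times[0,1]) \subseteq K(\widetilde{X}\times[0,1])$, we get $u \in P^1_{USCB}(\widetilde{X})$. Moreover the Hausdorff distance between two compact subsets of $X\times[0,1]$ is unchanged upon passing to the larger ambient space $\widetilde{X}\times[0,1]$, so $H_{\rm send}$ on $P^1_{USCB}(X)$ is exactly the restriction of $H_{\rm send}$ on $P^1_{USCB}(\widetilde{X})$; that is, $(P^1_{USCB}(X), H_{\rm send})$ is a metric subspace of $(P^1_{USCB}(\widetilde{X}), H_{\rm send})$.

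Finally, for density I would invoke Theorem~\ref{scom}: since $(P^1_{USCB}(\widetilde{X}), H_{\rm send})$ is a completion of $(F^1_{USCB}(X), H_{\rm send})$, the set $\overrightarrow{F^1_{USCB}(X)}$ is dense in $(P^1_{USCB}(\widetilde{X}), H_{\rm send})$. Because $\overrightarrow{F^1_{USCB}(X)} \subseteq P^1_{USCB}(X) \subseteq P^1_{USCB}(\widetilde{X})$, the intermediate set $P^1_{USCB}(X)$ is dense in $(P^1_{USCB}(\widetilde{X}), H_{\rm send})$ as well. Combining this with the completeness established in the first step shows that $(P^1_{USCB}(\widetilde{X}), H_{\rm send})$ is a completion of $(P^1_{USCB}(X), H_{\rm send})$.

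The only point that requires any care — and hence what I would call the main obstacle, though it is entirely routine — is the verification behind the isometric-subspace claim, namely that both membership in $P^1_{USCB}(\cdot)$ and the numerical value of $H_{\rm send}$ are insensitive to enlarging $X$ to $\widetilde{X}$ (using that compact sets stay compact and that Hausdorff distance of compacta is computed the same way in any ambient space containing them). Everything else in the argument is a direct citation of the results already proved.
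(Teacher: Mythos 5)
Your proposal is correct and follows essentially the same route as the paper: the paper's proof likewise observes that $\overrightarrow{F^1_{USCB}(X)} \subseteq P^1_{USCB}(X) \subseteq P^1_{USCB}(\widetilde{X})$ and deduces density of the intermediate set from Theorem \ref{scom}, with completeness coming from the same source. Your write-up merely spells out the subspace/isometry verification that the paper leaves implicit.
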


\begin{proof}
Note that
  $(F^1_{USCB} (X), H_{\rm send})$ can be seen as a subspace of $(P^1_{USCB} (X), H_{\rm send})$,
and
$(P^1_{USCB} (X), H_{\rm send})$ is a subspace of $(P^1_{USCB} (\widetilde{X}), H_{\rm send})$.
So the desired result follows from Theorem \ref{scom}.
\end{proof}

\begin{tm} \label{ecom}
 $(F^1_{USCG} (\widetilde{X}), H_{\rm end})$ is a completion of $(F^1_{USCB} (X), H_{\rm end})$.

\end{tm}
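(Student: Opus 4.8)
The plan is to verify directly the three requirements for $(F^1_{USCG}(\widetilde X),H_{\rm end})$ to be a completion of $(F^1_{USCB}(X),H_{\rm end})$: completeness of the big space, an isometric embedding of the small space, and density of the image. Completeness is immediate from Theorem \ref{comfegn} applied to the complete metric space $(\widetilde X,\widetilde d)$. For the embedding, note $F^1_{USCB}(X)\subseteq F^1_{USCG}(X)$, and for $w\in F^1_{USCB}(X)$ the fuzzy set $j(w)$ satisfies $[j(w)]_\al=[w]_\al\in K(\widetilde X)$ for $\al\in(0,1]$ and $[j(w)]_0=[w]_0\in K(\widetilde X)$, so $j(w)\in F^1_{USCB}(\widetilde X)\subseteq F^1_{USCG}(\widetilde X)$; since $H_{\rm end}(u,v)=H_{\rm end}(j(u),j(v))$ for $u,v\in F^1_{USCG}(X)$, the restriction of $j$ is an isometric embedding, and after the usual identification $(F^1_{USCB}(X),H_{\rm end})$ is a subspace of $(F^1_{USCG}(\widetilde X),H_{\rm end})$. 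Everything then reduces to proving that $F^1_{USCB}(X)$ is dense in $(F^1_{USCG}(\widetilde X),H_{\rm end})$.

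I would prove density in two approximation steps. \emph{Step A}: given $u\in F^1_{USCG}(\widetilde X)$ and $\varepsilon\in(0,1]$, define $u_\varepsilon$ by $[u_\varepsilon]_\al=[u]_\al$ for $\al\in(\varepsilon,1]$ and $[u_\varepsilon]_\al=[u]_\varepsilon$ for $\al\in[0,\varepsilon]$. Using the monotonicity of the cuts of $u$ together with Proposition \ref{repm}, the family $\{[u_\varepsilon]_\al\}$ satisfies the representation conditions, so $u_\varepsilon$ is a genuine fuzzy set; and since $[u_\varepsilon]_0=[u]_\varepsilon\in K(\widetilde X)$, in fact $u_\varepsilon\in F^1_{USCB}(\widetilde X)$. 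Moreover $[u_\varepsilon]_\al\subseteq[u]_\al$ for every $\al$, so ${\rm end}\, u_\varepsilon\subseteq{\rm end}\, u$; the two endographs coincide at every level $>\varepsilon$; and every point of ${\rm end}\, u$ at level $\le\varepsilon$ lies within vertical distance $\varepsilon$ of $\widetilde X\times\{0\}\subseteq{\rm end}\, u_\varepsilon$. Hence $H_{\rm end}(u,u_\varepsilon)\le\varepsilon$. \emph{Step B}: this is precisely affirmation (b) in the proof of Theorem \ref{scom}, which yields, for each $v\in F^1_{USCB}(\widetilde X)$ and each $\varepsilon>0$, some $w\in F^1_{USCB}(X)$ with $d_\infty(v,w)\le\varepsilon$, hence $H_{\rm end}(v,w)\le\varepsilon$ by \eqref{smr}.

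Combining the two steps, for $u\in F^1_{USCG}(\widetilde X)$ and $\varepsilon>0$ I would take $v=u_{\varepsilon/2}\in F^1_{USCB}(\widetilde X)$ with $H_{\rm end}(u,v)\le\varepsilon/2$, then $w\in F^1_{USCB}(X)$ with $H_{\rm end}(v,w)\le\varepsilon/2$, and conclude $H_{\rm end}(u,w)\le\varepsilon$ by the triangle inequality. This shows $F^1_{USCB}(X)$ is dense in $(F^1_{USCG}(\widetilde X),H_{\rm end})$ and finishes the proof. There is no serious obstacle here: the content is an assembly of Theorem \ref{comfegn} and the density construction already carried out for Theorem \ref{scom}; the only point requiring a little care is the verification in Step A that the truncation $u_\varepsilon$ is a legitimate member of $F^1_{USCB}(\widetilde X)$ together with the accompanying endograph estimate, after which the rest is routine.
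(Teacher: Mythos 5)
Your proposal is correct and follows essentially the same route as the paper: completeness of $(F^1_{USCG}(\widetilde X),H_{\rm end})$ from Theorem \ref{comfegn}, then density of $F^1_{USCB}(X)$ obtained by combining the truncation $u_\varepsilon$ (the paper's $u^\varepsilon$) with affirmation (b) from the proof of Theorem \ref{scom}. The extra care you take in checking that $u_\varepsilon\in F^1_{USCB}(\widetilde X)$ and in the endograph estimate matches what the paper leaves implicit.
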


\begin{proof}
  From Theorem \ref{comfegn}, $(F^1_{USCG} (\widetilde{X}), H_{\rm end})$ is complete.
To
 prove the desired result, it suffices to
show
that $F^1_{USCB} (X)$ is dense in $(F^1_{USCG} (\widetilde{X}), H_{\rm end})$.
By
affirmation (b) in the proof of Theorem \ref{scom},
to verify this it is enough to show
that for each $u \in F^1_{USCG} (\widetilde{X})$ and each $\varepsilon>0$,
there is a $v \in  F^1_{USCB} (\widetilde{X})$ such that $H_{\rm end} (u,v) \leq \varepsilon$.

Let $u \in F^1_{USCG} (\widetilde{X})$ and $\varepsilon>0$.
Define $u^\varepsilon \in  F^1_{USCB} (\widetilde{X})$ by putting
  \[
  [u^\varepsilon]_{\al} =\left\{
         \begin{array}{ll}
           [u]_\al , & \al \in (\varepsilon, 1], \\
          \mbox{} [u]_\varepsilon, & \al \in  [0,\varepsilon ].
         \end{array}
       \right.
  \]
Then
$H_{\rm end}  (u, u^\varepsilon) \leq \varepsilon$.
\end{proof}

\begin{tl} \label{fgecom}
 $(F^1_{USCG} (\widetilde{X}), H_{\rm end})$ is a completion of $(F^1_{USCG} (X), H_{\rm end})$.

\end{tl}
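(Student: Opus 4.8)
The plan is to obtain this corollary immediately by sandwiching a known dense subset between $F^1_{USCG}(X)$ and the ambient space $F^1_{USCG}(\widetilde{X})$, and then invoking completeness of the ambient space.

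First I would recall the two ingredients already established. By Theorem \ref{comfegn} applied to the complete metric space $(\widetilde{X},\widetilde{d})$, the space $(F^1_{USCG}(\widetilde{X}), H_{\rm end})$ is complete. By Theorem \ref{ecom}, $(F^1_{USCG}(\widetilde{X}), H_{\rm end})$ is a completion of $(F^1_{USCB}(X), H_{\rm end})$; in particular $F^1_{USCB}(X)$ is dense in $(F^1_{USCG}(\widetilde{X}), H_{\rm end})$.

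Next I would use the inclusions $F^1_{USCB}(X) \subseteq F^1_{USCG}(X) \subseteq F^1_{USCG}(\widetilde{X})$, where $(F^1_{USCG}(X), H_{\rm end})$ is regarded as a subspace of $(F^1_{USCG}(\widetilde{X}), H_{\rm end})$ via the isometric identification $u \mapsto j(u)$ described before Subsection \ref{cmpu}. Since a subset containing a dense subset of a topological space is itself dense, the density of $F^1_{USCB}(X)$ in $(F^1_{USCG}(\widetilde{X}), H_{\rm end})$ forces $F^1_{USCG}(X)$ to be dense in $(F^1_{USCG}(\widetilde{X}), H_{\rm end})$ as well. Together with the completeness noted above, this shows that $(F^1_{USCG}(\widetilde{X}), H_{\rm end})$ is a completion of $(F^1_{USCG}(X), H_{\rm end})$, which is the assertion.

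There is essentially no obstacle here: the only point that needs a word of care is that the subspace structure used in Theorem \ref{ecom} and the one used for $F^1_{USCG}(X)$ are the same, namely the restriction of $H_{\rm end}$ on $F^1_{USCG}(\widetilde{X})$, which is exactly how the embedding $j$ was set up; once this is acknowledged the argument is a one-line consequence of Theorems \ref{comfegn} and \ref{ecom}.
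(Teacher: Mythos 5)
Your proposal is correct and is essentially the paper's own argument: the paper likewise deduces the corollary from the inclusions $F^1_{USCB}(X)\subseteq F^1_{USCG}(X)\subseteq F^1_{USCG}(\widetilde{X})$ together with Theorem \ref{ecom}, with your version merely spelling out the density and completeness steps explicitly.
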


\begin{proof}
  Since $F^1_{USCB} (X) \subseteq F^1_{USCG} (X) \subseteq F^1_{USCG} (\widetilde{X})$,
the desired result
follows from
Theorem \ref{ecom}.
\end{proof}

\begin{re}
  {\rm

If $X$ is complete, then by Theorem \ref{scom}, $(P^1_{USCB} (X), H_{\rm send})$ is a completion of $(F^1_{USCB} (X), H_{\rm send})$, and thus
$(P^1_{USCB} (X), H_{\rm send})$ is complete.
So
Theorem \ref{scom} implies that $(P^1_{USCB} (X), H_{\rm send})$
is complete when $X$ is complete.
Similarly,
Proposition \ref{supc} implies that $(F^1_{USCB} (X), d_\infty)$ is complete when $X$ is complete.
Theorem \ref{ecom} implies that
 $(F^1_{USCG} (X), H_{\rm end})$
is complete when $X$ is complete.

}
\end{re}

\section{Conclusions}

In this paper, we discuss the properties and relations of
$H_{\rm end}$ metric and $H_{\rm send}$ metric on fuzzy sets in a metric space $X$.

To aid discussion, we introduce the sets $P^1_{USC}(X)$ and $P^1_{USCB}(X)$. $P^1_{USCB}(X)$ is a subset of $P^1_{USC}(X)$.
The
$F^1_{USC}(X)$ and $F^1_{USCB}(X)$ can be viewed as the subsets of $P^1_{USC}(X)$ and $P^1_{USCB}(X)$, respectively.
We
define the
$H_{\rm send}$ distance and the $H_{\rm end}$ distance on $P^1_{USC}(X)$,
and give the relations among
the $H_{\rm send}$ distance, the $H_{\rm end}$ distance and the Kuratowski convergence on $P^1_{USC}(X)$.
Then, as corollaries, we obtain the
 relations among
the $H_{\rm send}$ metric, the $H_{\rm end}$ metric and the $\Gamma$-convergence on $F^1_{USC}(X)$.

We
give the
level characterizations of $H_{\rm end}$ convergence and $\Gamma$-convergence on $F^1_{USC}(X)$.
By using the above results including the level characterizations of the $H_{\rm end}$ convergence,
we give
 the relations among the $H_{\rm end}$ metric, the $H_{\rm send}$ metric and
the $d_p^*$ metric.

Based on above results,
we
give characterizations of compactness and completions of two kinds
of
fuzzy set spaces $(F^1_{USCG} (X), H_{\rm end})$
and
$(F^1_{USCB} (X), H_{\rm send})$, respectively.
We also investigate characterizations of compactness and completions
of
$(P^1_{USCB} (X), H_{\rm send})$.
$(F^1_{USCB} (X), H_{\rm send})$ can be treated as a subspace of $(P^1_{USCB} (X), H_{\rm send})$.

Note
 that $\mathbb{R}^m$ is complete and
 for a set $V$ in $\mathbb{R}^m$, the following are equivalent:
(\romannumeral1 ) $V$ is bounded;
(\romannumeral2) $V$ is totally bounded; (\romannumeral3) $V$
is relatively compact.
We can obtain the characterizations of compactness and completions
of
$(F^1_{USCB} (\mathbb{R}^m), H_{\rm send})$, $(F^1_{USCG} (\mathbb{R}^m), H_{\rm end})$ and $(P^1_{USCB} (\mathbb{R}^m), H_{\rm send})$
by using that of
$(F^1_{USCB} (X), H_{\rm send})$, $(F^1_{USCG} (X), H_{\rm end})$ and $(P^1_{USCB} (X), H_{\rm send})$ given in this paper.

The results in this paper
have potential applications in fuzzy set research involving
$H_{\rm end}$ metric and $H_{\rm send}$ metric.

\section*{Acknowledgement}

The author would like to thank the Area Editor and the three anonymous referees
for their invaluable comments and suggestions
which greatly improves the readability of this paper.

\end{document}